\documentclass[12pt]{amsart}
\usepackage{verbatim,amssymb,latexsym,amscd}
\usepackage[all]{xy}
\usepackage[colorlinks,linkcolor=blue,citecolor=blue,urlcolor=red]{hyperref}
\usepackage[T1]{fontenc}

\newcommand{\Cnd}{\operatorname{Cnd}}
\newcommand{\ie}{{\it i.e. }}
\newcommand{\cf}{{\it cf. }}
\newcommand{\eg}{{\it e.g. }}
\newcommand{\loccit}{{\it loc. cit. }}
\newcommand{\resp}{{\it resp. }}
\newcommand{\un}{\mathbf{1}}
\newcommand{\A}{\mathbf{A}}

\newcommand{\C}{\mathbf{C}}

\newcommand{\G}{\mathbb{G}}
\newcommand{\F}{\mathbf{F}}
\renewcommand{\H}{\mathbf{H}}
\renewcommand{\L}{\mathbb{L}}

\renewcommand{\P}{\mathbf{P}}
\newcommand{\Q}{\mathbf{Q}}
\newcommand{\R}{\mathbf{R}}

\newcommand{\Z}{\mathbf{Z}}
\newcommand{\sA}{\mathcal{A}}

\newcommand{\sE}{\mathcal{E}}

\newcommand{\sM}{\mathcal{M}}
\newcommand{\sN}{\mathcal{N}}

\newcommand{\sV}{\mathcal{V}}
\newcommand{\sX}{\mathcal{X}}
\newcommand{\sY}{\mathcal{Y}}
\newcommand{\sZ}{\mathcal{Z}}

\newcommand{\Tr}{\operatorname{Tr}}
\newcommand{\card}{\operatorname{card}}
\newcommand{\Spec}{\operatorname{Spec}}

\newcommand{\Ker}{\operatorname{Ker}}
\newcommand{\Coker}{\operatorname{Coker}}
\newcommand{\IM}{\operatorname{Im}}

\newcommand{\red}{{\operatorname{red}}}

\newcommand{\PHS}{\operatorname{\bf PHS}}
\newcommand{\MT}{\operatorname{MT}}
\newcommand{\Gal}{\operatorname{Gal}}
\newcommand{\Sp}{\operatorname{\mathbf{Sp}}}
\newcommand{\GL}{\operatorname{\mathbf{GL}}}
\newcommand{\Lie}{\operatorname{\mathbf{Lie}}}
\newcommand{\rank}{\operatorname{rk}}

\newcommand{\tr}{{\operatorname{tr}}}

\newcommand{\Mor}{\operatorname{Mor}}

\newcommand{\Hom}{\operatorname{Hom}}
\newcommand{\End}{\operatorname{End}}

\newcommand{\trdeg}{\operatorname{trdeg}}
\newcommand{\Alb}{\operatorname{Alb}}
\newcommand{\NS}{\operatorname{NS}}

\newcommand{\Pic}{\operatorname{Pic}}
\newcommand{\Griff}{\operatorname{Griff}}

\newcommand{\cl}{{\operatorname{cl}}}
\newcommand{\Id}{{\operatorname{Id}}}
\newcommand{\Nis}{{\operatorname{Nis}}}

\newcommand{\uHom}{\operatorname{\underline{Hom}}}
\newcommand{\Ext}{\operatorname{Ext}}
\renewcommand{\Vec}{{\operatorname{\bf Vec}}}

\newcommand{\AbS}{\operatorname{\bf AbS}}
\newcommand{\Ab}{\operatorname{\bf Ab}}

\newcommand{\Loc}{\operatorname{\bf Loc}}
\newcommand{\Mot}{\operatorname{\bf Mot}}
\newcommand{\MHS}{\operatorname{\bf MHS}}

\newcommand{\diag}{\operatorname{diag}}
\newcommand{\car}{\operatorname{char}}

\newcommand{\rat}{{\operatorname{rat}}}
\newcommand{\alg}{{\operatorname{alg}}}
\renewcommand{\hom}{{\operatorname{hom}}}
\newcommand{\num}{{\operatorname{num}}}
\newcommand{\mot}{{\operatorname{mot}}}

\newcommand{\DM}{\operatorname{\bf DM}}
\newcommand{\Sm}{\operatorname{\bf Sm}}

\newcommand{\ab}{{\operatorname{Ab}}}
\newcommand{\abb}{{\operatorname{ab}}}
\newcommand{\prim}{{\operatorname{prim}}}
\newcommand{\proj}{{\operatorname{proj}}}

\newcommand{\Exc}{\operatorname{Exc}}

\newcommand{\Chow}{\operatorname{\bf Chow}}
\newcommand{\gm}{{\text{\rm gm}}}

\newcommand{\et}{{\text{\rm \'et}}}
\renewcommand{\o}{{\text{\rm o}}}
\newcommand{\op}{{\text{\rm op}}}
\newcommand{\eff}{{\text{\rm eff}}}

\newcommand{\by}{\xrightarrow}
\newcommand{\yb}{\xleftarrow}
\newcommand{\iso}{\by{\sim}}
\newcommand{\osi}{\yb{\sim}}
\newcommand{\inj}{\hookrightarrow}
\newcommand{\Inj}{\lhook\joinrel\longrightarrow}
\newcommand{\surj}{\rightarrow\!\!\!\!\!\rightarrow}
\newcommand{\Surj}{\relbar\joinrel\surj} 
\newcommand{\colim}{\varinjlim}
\renewcommand{\lim}{\varprojlim}

\newcommand{\gr}{\operatorname{gr}}

\renewcommand{\qed}{\hfill $\Box$\medskip}

\renewcommand{\phi}{\varphi}
\renewcommand{\epsilon}{\varepsilon}

\newcounter{spec}
\newenvironment{thlist}{\begin{list}{\rm{(\roman{spec})}}%
{\usecounter{spec}\labelwidth=20pt\itemindent=0pt\labelsep=10pt}}%
{\end{list}}%

\setcounter{tocdepth}{1}
\numberwithin{equation}{section}

\newtheorem{Thm}{Theorem}
\newtheorem{Corr}{Corollary}
\newtheorem{thm}{Theorem}[section]
\newtheorem{lemma}[thm]{Lemma}
\newtheorem{prop}[thm]{Proposition}
\newtheorem{cor}[thm]{Corollary}
\newtheorem{conj}[thm]{Conjecture}

\theoremstyle{definition}

\newtheorem{defn}[thm]{Definition}
\newtheorem{hyp}[thm]{Hypothesis}
\newtheorem{nota}[thm]{Notation}
\newtheorem{rk}[thm]{Remark}
\newtheorem{warn}[thm]{Warning}
\newtheorem{rks}[thm]{Remarks}

\newtheorem*{Qn}{Question}

\newtheorem{exs}[thm]{Examples}

\begin{document}
\title{Albanese kernels and Griffiths groups}
\author{Bruno Kahn}
\address{IMJ-PRG\\ Case 247\\4 place Jussieu\\
75252 Paris Cedex 05\\France}
\email{bruno.kahn@imj-prg.fr}
\address{IMJ-PRG\\ Case 247\\4 place Jussieu\\
75252 Paris Cedex 05\\France}
\email{yves.andre@imj-prg.fr}
\date{July 25, 2020}
\subjclass[2010]{14C25, 14D06}
\maketitle
\hfill With an appendix by Yves André

\begin{abstract} We describe the Griffiths group of the product of a curve $C$ and a surface $S$ as a quotient of the Albanese kernel of $S$ over the function field of $C$. When $C$ is a hyperplane section of $S$ varying in a Lefschetz pencil, we prove the nonvanishing in $\Griff(C\times S)$ of a modification of the graph of the embedding $C\inj S$ for infinitely many members of the pencil, provided the ground field $k$ is of characteristic $0$, the geometric genus of $S$ is $>0$, and $k$ is large or $S$ is ``of motivated abelian type''.
\end{abstract}

\enlargethispage*{30pt}

\tableofcontents
\newpage

\section{Introduction}

\subsection{Cycles of higher codimension}\label{s1.0}  In the late sixties, the naïve expectation that Chow groups of algebraic cycles of codimension $>1$ on smooth projective varieties should have the same structure as the Picard group was destroyed by two types of counterexamples:

\begin{description}
\item[Mumford] nonvanishing of the Albanese kernel $T(S)$ for most surfaces $S$ \cite{mumford,bloch}.
\item[Griffiths] non agreement of algebraic and homological equivalence for $1$-cycles on some $3$-folds \cite{griff}.
\end{description}

Mumford's theorem is the easier to prove and the more clear-cut: $T(S)$ is large when the base field $k$ is large for any surface $S$ such that $b^2>\rho$ (where $b^2$ and $\rho$ are respectively the second Betti number and the Picard number), and there is a conjectural converse (Bloch's conjecture). We recall a simple proof of Mumford's theorem in \S \ref{s3.2}. By contrast, the nonvanishing of the Griffiths group $\Griff(T)$ of a smooth projective $3$-fold $T$ has always been hard to obtain and is limited to scattered cases, with no general picture in sight, even conjectural; perhaps a reason is that a true understanding of $\Griff(T)$ hinges on mixed motives, rather than pure motives as in the case of $T(S)$ (compare Remark \ref{r7.11}). To the best of my knowledge, $\Griff(T)$ is known not to vanish in the following cases,  over $k=\C$:

\begin{itemize}
\item Griffiths' original example \cite{griff}: the intersection in $\P^5$ of a hypersurface of degree $\ge 5$ with a generic hyperplane section. 
\item Ceresa's example \cite{ceresa}: the Jacobian of a general curve of genus $3$. Here an explicit nonzero element of the Griffiths group is given by the Ceresa cycle.
\item Schoen's examples \cite{schoen}:  Shimura $3$-folds parametrising abelian surfaces with complex multiplication from an indefinite quaternion algebra over $\Q$.
\item Albano's examples \cite{albano}: certain elliptic fibrations over $\P^2$.
\end{itemize}

These examples were then extended in three ways. First, Clemens \cite{clemens} proved that in the example of Griffiths, $\Griff(T)$ is not finitely generated (even $\otimes \Q$), and Nori \cite{nori} proved the same for the example of Ceresa; see also Bardelli \cite{bardelli} and Voisin \cite{voisin2}, who extended it to general Calabi-Yau $3$-folds in \cite{voisinCY} following earlier work of Paranjape \cite{paran}. Schoen and Albano directly prove infinite generation. We shall not touch this issue here, except in Remark \ref{r11.1}. 

Two other issues were to prove the nonvanishing of $\Griff(T)$ over other fields than $\C$, and to give explicit rather than generic examples. Grothendieck-Katz  \cite{katz} extended Griffiths' example to a (large) base field $k$ of arbitrary characteristic, while B. Harris \cite{harris1,harris2} used a different method to prove Ceresa's theorem, which allowed him to show that it holds (over $\C$) for the degree $4$ Fermat curve $C$. Inspired by  this method, Bloch \cite{bloch2} used arithmetic techniques to show that, over $\Q$, the Ceresa cycle of $C$ has infinite order in the Griffiths group of its Jacobian. For further work in this direction, see Top \cite{top}, Zelinsky \cite{zelinsky}, Tadokoro \cite{tadokoro1,tadokoro2,tadokoro3} and Otsubo \cite{otsubo}. Schoen proves that the Griffiths group of his examples is infinite-dimensional over $k=\bar \Q$. (I apologize for possibly missing important works in this attempt to a survey.)

In this paper, we do two things: 1) relate the Albanese kernel of a surface $S$ over the function field of a curve $C$ with $\Griff(C\times S)$; 2) at the prompting of Spencer Bloch, prove the nonvanishing of $\Griff(C\times S)$ in many cases, when $C$ is a smooth hyperplane section of $S$. For 1), we use the formalism of pure motives as in \cite{scholl} (see Appendix \ref{T1} for a direct approach), while 2) uses known results on the variations of various tannakian groups in families.

\subsection{Main results}\label{s1.2} Let $C$ (resp. $S$) be a smooth, projective, geometrically connected curve (resp. surface) over a field $k$. For any extension $E/k$, write $T(S_E)\subset CH_0(S_E)$ for the Albanese kernel of $S$ over $E$. We work in the category $\Ab\otimes \Q$ of abelian groups up to isogeny, i.e. the localisation of the category $\Ab$ of abelian groups by the Serre subcategory of abelian groups of finite exponent; see \cite[Rem. 4.13]{cycletale},  \cite[\S 2]{indec} and \cite[App. B]{bvk}.

\begin{Thm}\label{T1} There is a natural surjection in $\Ab\otimes \Q$
\begin{equation}\label{eqT0}
T(S_{k(C)})/T(S)\Surj \Griff(C\times S)
\end{equation}
where, for any $3$-fold $X$, $\Griff(X)$ denotes the group of numerically trivial cycles of codimension $2$ on $X$, modulo algebraic equivalence.
\end{Thm}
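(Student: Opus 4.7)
My plan is to combine the localisation sequence for the projection $p_C\colon C\times S\to C$ with the block decomposition of $\Pic(C\times S)$. First, I would define a map $T(S_{k(C)})\to CH^2(C\times S)$ by Zariski closure: given $\alpha=\sum n_i[P_i]\in CH_0(S_{k(C)})$, set $\tilde\alpha=\sum n_i[\overline{\{P_i\}}]$, a $1$-cycle on $C\times S$ restricting to $\alpha$ on the generic fibre. The localisation sequence
\[
\bigoplus_{c\in C^{(1)}}\Pic(S_{k(c)})\To CH^2(C\times S)\To CH_0(S_{k(C)})\to 0
\]
shows that $\tilde\alpha$ is defined up to cycles supported on closed fibres of $p_C$.

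Second, I would show that, after correcting $\tilde\alpha$ by one fibre cycle over a chosen $0$-cycle of degree $1$ on $C$ (which exists after passing to $\Ab\otimes\Q$), the resulting class is numerically trivial. Using the decomposition $\NS(C\times S)_\Q=\NS(C)\oplus\NS(S)\oplus\Hom(J(C),\Pic^0(S))_\Q$: the pairing of $\tilde\alpha$ with $p_C^*[\mathrm{pt}]$ equals $\deg\alpha=0$; the pairing with $p_S^*D$ for $D\in\NS(S)$ reduces by the projection formula to $(p_{S*}\tilde\alpha)\cdot D$ on $S$, and the correction is chosen precisely so that $p_{S*}\tilde\alpha$ lands in $\Pic^0(S)$; the pairing with an off-diagonal divisorial correspondence $\phi\in\Hom(J(C),\Pic^0(S))$ equals the value of $\phi$ applied to the classifying morphism $C\to\Alb(S)$ attached to $\alpha$, which vanishes because $\mathrm{alb}(\alpha)=0$ in $\Alb(S)(k(C))$.

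Third, I would verify that, modulo algebraic equivalence on $C\times S$, the construction is independent of choices and factors through $T(S_{k(C)})/T(S)$. The fibre ambiguities from $\bigoplus_c\Pic(S_{k(c)})$ split into a $\Pic^0$-part, which becomes algebraically trivial on $C\times S$ (deform $c$ in $C$ together with $D\sim_\alg 0$ on $S$), and an $\NS$-part, which is absorbed by the correction above. For $\beta\in T(S)$ extended to $T(S_{k(C)})$ by base change, the lift $p_S^*\beta$ lies in $CH^2(C\times S)_\alg$ since $T(S)\subset CH_0(S)_\alg$; hence the map descends to $T(S_{k(C)})/T(S)\to\Griff(C\times S)$.

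Finally, surjectivity would be established by inverting the construction: given $\gamma\in CH^2(C\times S)$ numerically trivial, the restriction $j^*\gamma$ has degree zero (pair with $p_C^*[\mathrm{pt}]$) and trivial Albanese (pair with off-diagonal correspondences), so $j^*\gamma\in T(S_{k(C)})$; and $\gamma$ differs from its Zariski-closure lift by a fibre cycle that is algebraically trivial modulo the image of $T(S)$. The main obstacle will be the off-diagonal pairing step: translating $\mathrm{alb}(\alpha)=0$ into the intersection-theoretic statement about divisorial correspondences on $C\times S$ requires a careful use of Poincar\'e duality on $S$ and the universal property of $\Alb$. This is the point at which the pure-motive formalism of Scholl is most useful, giving a conceptually clean identification of both sides with a single $\Hom$ group involving $h^1(C)$ and the transcendental motive of $S$; the direct cycle-theoretic route is, as the paper indicates, relegated to Appendix~\ref{T1}.
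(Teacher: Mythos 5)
Your outline follows the cycle-theoretic route of Appendix \ref{A} rather than the proof featured in the body of the paper, which is purely motivic: it combines the isomorphisms \eqref{eq0.2} and \eqref{eq2.1} with the evident surjection $\sM(h_1(C),t_2(S))\to\sM_\alg(h_1(C),t_2(S))$. Measured against the appendix, most of your steps are on target, but the surjectivity step contains a genuine gap. You assert that a numerically trivial $\gamma\in CH^2(C\times S)$ restricts to an element $j^*\gamma\in T(S_{k(C)})$ because $j^*\gamma$ has degree zero and ``trivial Albanese'', the latter read off from the vanishing of the pairings with off-diagonal divisorial correspondences. Those pairings only detect the component of $\mathrm{alb}(j^*\gamma)$ in $\Hom(\Alb_C,\Alb_S)$; they are blind to the \emph{constant} part $\Alb_S(k)\subseteq\Alb_S(k(C))$. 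Concretely, $\gamma=C\times(s_1-s_2)$ for two $k$-points $s_1,s_2$ of $S$ is numerically trivial (its pairing with $\{c\}\times S$ is $\deg(s_1-s_2)=0$; its pairing with $p_S^*D$ vanishes since $p_{S*}(C\times\{s_i\})=0$; its pairing with any divisorial correspondence $\phi$ vanishes because the degree of $\phi|_{C\times\{s\}}$ is constant in $s$), yet $j^*\gamma=s_1-s_2$ has Albanese image in $\Alb_S(k)$ which is in general nonzero, so $j^*\gamma\notin T(S_{k(C)})$. The theorem survives because such classes die in $\Griff(C\times S)$: a degree-zero $0$-cycle on $S$ is algebraically trivial, hence so is $C\times(s_1-s_2)$. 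But to prove surjectivity you must first correct $j^*\gamma$ by a constant cycle from $CH_0(S)$ realizing its constant Albanese value; this is exactly why the appendix works with the quotient by $CH_0(X_{\pi_0(Y)})$ in Lemma \ref{lA.2} and needs the quasi-section $\sigma$ of Proposition \ref{pA.1}, since the surjectivity of $CH_0(S)_{\deg 0}\to\Alb_S(k)$ up to a bounded integer is itself a nontrivial input.

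A second, smaller point: the converse direction of your off-diagonal computation --- that numerical triviality of $\gamma$ forces the vanishing of the induced homomorphism $\Alb_C\to\Alb_S$, with kernel and cokernel of bounded exponent so that the statement survives in $\Ab\otimes\Q$ --- is the real crux, and is what Proposition \ref{pA.3} establishes by comparing with the cycle class map, the K\"unneth formula and Poincar\'e duality. You correctly flag this as the main obstacle but do not supply the argument, so as written the proposal is a plan with its hardest step and one false step still outstanding.
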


(See Remark \ref{p9.1} b) for a conjectural description of the kernel of \eqref{eqT0} as the rational points of a certain abelian variety.)

Note that this numerical Griffiths group coincides in $\Ab\otimes \Q$ with the usual one for $3$-folds in characteristic $0$, thanks to Lieberman's results \cite[Cor. 1]{lieberman}; in positive characteristic and for $l$-adic cohomology, Lieberman's arguments go through when the Tate conjecture is known.

We give two proofs of Theorem \ref{T1}. The first one is motivic and based on an extension of the computations of \cite{kmp} to $3$-folds. Namely, it is a consequence of the formulas in $\Ab\otimes \Q$:
\begin{align*}
T(S_{k(C)})/T(S) &\simeq \sM(h_1(C),t_2(S))\tag{\ref{eq0.2}}\\
\Griff(C\times S)&\simeq \sM_\alg(h_1(C),t_2(S))\tag{\ref{eq2.1}}
\end{align*}
where $\sM$ (resp. $\sM_\alg$) denotes the category of Chow motives (resp. of motives modulo algebraic equivalence), and $t_2(S)$ is the transcendental part of the motive of $S$ which was studied in \cite{kmp}. 

The second proof is essentially by hand and does not assume $\dim S=2$, see Appendix \ref{A}. This proof gives more careful control on the integers which are implicit in Theorem \ref{T1}. 
As a byproduct of this second proof, we get:

\begin{Corr}\label{C2} Suppose that $k$ is a finite field and that $S_{\bar k}$ is of abelian type: its Chow motive is a direct summand of the motive of an abelian variety. Then the group $T(S_{k(C)})$ is finite.
\end{Corr}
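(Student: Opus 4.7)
The plan is to combine the integral refinement of Theorem~\ref{T1} provided by Appendix~\ref{A} with two finiteness statements specific to the finite-field setting. Write $\phi\colon T(S_{k(C)})/T(S)\to\Griff(C\times S)$ for the map underlying \eqref{eqT0}; by the second proof, $\phi$ is realised by a genuine homomorphism in $\Ab$ whose kernel and cokernel are controlled explicitly in terms of denominators. My strategy is to (a) show that both $T(S)$ and $\Griff(C\times S)$ are finite, and (b) upgrade the isogeny to a finiteness statement using the bookkeeping in Appendix~\ref{A}.

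For finiteness of $T(S)$: since $t_2(S)_{\bar k}$ is a direct summand of the motive of an abelian variety, $T(S)$ is a subquotient of the Albanese kernel $T(A)$ of an abelian surface $A/k$. Over a finite field, the Albanese map $CH_0(A)^{\deg 0}\to A(k)$ has finite target, and the standard combination of Kato--Saito class field theory with the finiteness of codimension-two Chow torsion (Colliot-Th\'el\`ene--Sansuc--Soul\'e) yields finiteness of $T(A)$, hence of $T(S)$.

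For finiteness of $\Griff(C\times S)$: the $3$-fold $C\times S$ has motive of abelian type, being a product of such. Over a finite field, the Beauville--Deninger--Murre decomposition, combined with Tate's theorem on endomorphisms of abelian varieties, gives algebraic $=$ numerical equivalence with $\Q$-coefficients for codimension-two cycles on an abelian-type $3$-fold, so $\Griff(C\times S)\otimes\Q=0$; combined again with the finiteness of codimension-two Chow torsion over a finite field, this yields finiteness of $\Griff(C\times S)$.

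Finally, the integral refinement from Appendix~\ref{A} should identify $\Ker\phi$ and $\Coker\phi$ with concrete, finite subquotients of the Chow groups of the closed fibres $S_{\kappa(c)}$, each finite by the argument for $T(S)$ applied over the residue fields $\kappa(c)$. This upgrades the isogeny in $\Ab\otimes\Q$ to a genuine statement in $\Ab$, yielding finiteness of $T(S_{k(C)})/T(S)$ and hence of $T(S_{k(C)})$. The main obstacle is precisely this last upgrade: an \emph{a priori} isogeny-level statement, even combined with the finiteness of the outer terms, would still permit $T(S_{k(C)})$ to contain an infinite torsion subgroup of bounded exponent; it is the explicit denominator tracking in the second proof of Theorem~\ref{T1} that rules this out.
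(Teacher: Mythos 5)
Your overall strategy has a genuine gap at exactly the point you flag as the ``main obstacle'', and the fix you propose does not work. Appendix \ref{A} controls the \emph{cokernel} of the map $T(S_{k(C)})/T(S)\to\Griff(C\times S)$ (Corollary \ref{cA.2}), but not its kernel: the statement with both kernel and cokernel killed by $n$ is Corollary \ref{cA.1}, whose target is a quotient of $CH^2_\num(C\times S)=\Ker(CH^2(C\times S)\to A^2_\num(C\times S))$, \emph{not} the Griffiths group. Passing from \ref{cA.1} to \ref{cA.2} further quotients the target by the algebraically trivial numerically trivial cycles outside the explicit denominator, and correspondingly enlarges the kernel by an uncontrolled (Abel--Jacobi--type) group; no amount of denominator bookkeeping identifies $\Ker\phi$ with ``finite subquotients of the Chow groups of the closed fibres''. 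So finiteness of $T(S)$ and of $\Griff(C\times S)$, plus the isogeny, really is insufficient, as you yourself observe.

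The missing ingredient is \emph{finite generation}, and the correct route bypasses $\Griff$ entirely. Over a finite field, Soul\'e's theorem for the abelian-type $3$-fold $T=C\times S$ gives $CH^2(T)\otimes\Q\iso A^2_\num(T)$ (this is a weight argument on $K$-theory/\'etale cohomology, not a consequence of Tate's endomorphism theorem as you suggest), and combined with the finiteness of $CH^2(T)_{\tors}$ one gets that $CH^2(T)$ is \emph{finitely generated}, hence $CH^2_\num(T)$ is finite. Two consequences: first, $T(S_{k(C)})$ is finitely generated, being a subquotient of $CH^2(T)$ via the localisation surjection $CH^2(T)\Surj CH_0(S_{k(C)})$ --- this is what rules out an infinite torsion group of bounded exponent; second, Corollary \ref{cA.1} maps $T(S_{k(C)})/T(S)$, with kernel and cokernel killed by $n$, to a quotient of the finite group $CH^2_\num(T)$, so $T(S_{k(C)})/T(S)$ is finite. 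Finiteness of $T(S)$ (Kato--Saito, valid for any surface over a finite field, with no abelian-type hypothesis needed) then concludes. Note also that your claim that finiteness of $CH^2_{\tors}$ plus $\Griff\otimes\Q=0$ yields finiteness of $\Griff(C\times S)$ has its own gap: a torsion element of $\Griff$ need not lift to a torsion element of $CH^2$, so here too one needs finite generation of $CH^2(T)$.
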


For a surface, to be of abelian type is a birational invariant. Examples include abelian surfaces, products of two curves and Fermat surfaces (Katsura-Shioda).

For a general $k$, we also have:

\begin{Corr}\label{C1} Let $\Omega$ be a universal domain over $k$. If $T(S_\Omega)=0$, then the abelian group $\Griff(C\times S)$ has finite exponent for any $C$. (If $k$ is a universal domain over its prime subfield, the condition $T(S)=0$ is sufficient.)
\end{Corr}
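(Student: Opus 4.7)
The plan is to combine Theorem~\ref{T1} with the Bloch--Srinivas decomposition of the diagonal of $S$, reducing everything to the vanishing of the transcendental motive $t_2(S)$ up to isogeny.

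First, by Theorem~\ref{T1} and the motivic identification (\ref{eq0.2}), we have in $\Ab\otimes\Q$ a chain
\[
T(S_{k(C)})/T(S) \;\simeq\; \sM(h_1(C),t_2(S)) \;\Surj\; \Griff(C\times S),
\]
so it is enough to show that $\sM(h_1(C),t_2(S))$ has finite exponent. Since any Hom group into a motive killed by an integer $N$ is itself killed by $N$ (after clearing denominators of the idempotent cutting out the target, which depend only on $S$), this reduces to showing that $t_2(S)$ vanishes in $\sM_\Q$.

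I would deduce this last vanishing from $T(S_\Omega)=0$ by the classical Bloch--Srinivas mechanism, as carried out in \cite{kmp}. The hypothesis gives $T(S_\Omega)\otimes\Q=0$, so the generic $0$-cycle on $S$ is Albanese-equivalent over $\Omega$ to a constant multiple of a fixed point; a spreading-out argument yields an integer $N\ge 1$ and a decomposition
\[
N\Delta_S \;=\; \Gamma_1 + \Gamma_2 \quad\text{in}\ CH^2(S\times S),
\]
with $\Gamma_1$ supported on $D\times S$ and $\Gamma_2$ on $S\times D'$ for curves $D,D'\subset S$. The projector $\pi$ in $CH^2(S\times S)_\Q$ defining $t_2(S)$ annihilates every correspondence factoring through the motive of a curve, since the composite $h(D)\to h(S)\to t_2(S)$ vanishes for weight reasons. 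Sandwiching the displayed identity between two copies of $\pi$ gives $N\pi=0$ in $CH^2(S\times S)_\Q$, and as $\pi$ is an idempotent this forces $\pi=0$, i.e.\ $t_2(S)=0$ in $\sM_\Q$, as desired.

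The parenthetical case is immediate: if $k$ is already a universal domain over its prime subfield one may take $\Omega=k$, so the hypothesis reads $T(S)=0$. The only delicate point is the integrality bookkeeping in the Bloch--Srinivas step, which determines the integer $N$ annihilating $\sM(h_1(C),t_2(S))$; this is handled in \cite{kmp} and, more explicitly, by the alternative cycle-theoretic proof of Theorem~\ref{T1} in Appendix~\ref{A}, which tracks the integers underlying (\ref{eq0.2}).
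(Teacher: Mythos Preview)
Your overall strategy is correct and is essentially the paper's motivic route: reduce via Theorem~\ref{T1} and \eqref{eq0.2} to showing $t_2(S)=0$ in $\sM$, then deduce this from $T(S_\Omega)=0$. There is, however, a genuine slip in your Bloch--Srinivas step. You assert that ``the composite $h(D)\to h(S)\to t_2(S)$ vanishes for weight reasons''. This is false: by \eqref{eq3} one has $\sM(h(D),t_2(S))\simeq T(S_{k(D)})$, which is typically nonzero, and ``weight reasons'' is not meaningful in Chow motives. What actually makes $\pi\Gamma_i\pi=0$ work is the opposite vanishing $\sM(t_2(S),h(D'))=0$: decompose $h(D')=h_0\oplus h_1\oplus h_2$ and use \eqref{eq1.1}, \eqref{eq1.4}, \eqref{eq2a}. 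A cycle $\Gamma_2$ supported on $S\times D'$ factors as $h(S)\to h(D')\to h(S)$, so $\Gamma_2\circ\pi=0$; the case of $\Gamma_1$ supported on $D\times S$ follows by transposition, since $t_2(S)$ is self-dual up to twist (or directly from \eqref{eq2}, as $\Gamma_1$ factors through a Tate twist of $h(D)$ on the source side). With this correction your argument goes through.

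You could also bypass Bloch--Srinivas entirely and quote \S\ref{s3.2}: the equivalence $t_2(S)\ne 0\iff T(S_{k(S)})/T(S)\ne 0$ there, combined with the injection $T(S_{k(S)})\otimes\Q\hookrightarrow T(S_\Omega)\otimes\Q=0$, gives $t_2(S)=0$ at once. The paper itself does not spell out a proof of Corollary~\ref{C1} beyond Theorem~\ref{T1}; its explicit treatment is the cycle-theoretic one in Appendix~\ref{A} (the corollary following Proposition~\ref{pA.1} and Corollary~\ref{cA.3}), where a quasi-section of the Albanese map produces a universal element $\xi\in T(S_{k(S)})$ controlling all $T(S_{k(Y)})$ up to a fixed integer. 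That approach yields the uniform bound on the exponent (independent of $k$, depending only on $I(C)$), which your argument does not.
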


 In Corollary \ref{cA.3}, we show that one can choose the finite exponent independent of $k$, and dependent only on the index of $C$ (i.e., the gcd of the degrees of its closed points).

If $k$ is algebraically closed,  
Corollary \ref{C1} is a special case of \cite[Th. 1 (ii)]{bs}. 
Thus, Corollary \ref{C1} is not really new. The converse question: \emph{when is $\Griff(C\times S)$ non torsion?} is more interesting. We tackle it in the following special case:

Suppose $C$ is a smooth hyperplane section of $S$, and let $\psi:C\inj S$ be the corresponding closed immersion. Modifying the graph of $\psi$ appropriately, we get a class
\begin{equation*}
\psi_\#\in T(S_{k(C)})/T(S)\otimes \Q\tag{\ref{eq9.5}}
\end{equation*}
hence also a class in $\Griff(C\times S)\otimes \Q$, in view of \eqref{eqT0}.

\begin{Thm}\label{T2} Suppose $\car k=0$ and  $p_g>0$. Let $f:\tilde S\to \P^1$ be a Lefschetz pencil of hyperplane sections of $S$, and let $U\subset \P^1$ be the open subset over which $f$ is smooth. For $u\in U$, write $\psi_u:C_u=f^{-1}(u)\inj S$ for the closed immersion of the corresponding fibre, and write $\Cnd(u)$ for the condition
\[(\psi_u)_\#\ne  0 \text{ in } \Griff(C_u\times_{k(u)} S_{k(u)})\otimes \Q.\]
 Then:
\begin{thlist}
\item (Corollary \ref{c11.1}) $\Cnd(\eta)$ holds, where $\eta$ is the generic point of $U$.
\item (Theorem \ref{t12.6}) If $k=\C$,  the set of $u\in U(k)$ such that $\Cnd(u)$ holds is uncountable.
\end{thlist}
\end{Thm}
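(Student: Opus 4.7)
For (i), my plan is to realize $(\psi_\eta)_\#$ as the generic value of a normal function and compute its infinitesimal invariant. First I would pass to $\C$ via an embedding $k \inj \C$: this is allowed because, by Theorem \ref{T1} and the compatibility of $\sM_\alg \otimes \Q$ with algebraically closed field extensions in characteristic zero, nonvanishing of $(\psi_\eta)_\#$ in $\Griff(C_\eta \times S_{k(\eta)}) \otimes \Q$ can be detected after base change. Over $\C$, the family $u \mapsto (\psi_u)_\#$, $u \in U(\C)$, assembles into a holomorphic section $\nu$ of a bundle $\mathcal{J} \to U$ whose fibre at $u$ is the ``transcendental'' quotient of $J^2(C_u \times S)$ obtained by applying the Chow--K\"unneth projectors $\pi_1^{C_u}$ and $\pi_2^{S,\tr}$ arising from Theorem \ref{T1}. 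The Abel--Jacobi comparison factors $\Griff(C_u \times S) \otimes \Q \to \mathcal{J}(u)$, so it suffices to show $\nu(\eta) \ne 0$. I would compute the infinitesimal invariant $\delta\nu_u$ by differentiating the graph $\Gamma_{\psi_u}$ along the pencil and invoking Griffiths transversality: this presents $\delta\nu_u$ as, up to normalization, the pairing of the Kodaira--Spencer class $\kappa_u \in H^1(C_u, T_{C_u})$ of the pencil with the restriction map $H^0(S, \Omega^2_S) \to H^0(C_u, \Omega^1_{C_u})$. Since $p_g(S) > 0$, $H^0(S, \Omega^2_S) \ne 0$; since the pencil is nontrivial, $\kappa_u \ne 0$ for generic $u$; and for a nonzero $\omega$, the restriction $\omega|_{C_u}$ is nonzero on a dense open of $U$. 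Hence $\delta\nu_\eta \ne 0$ and $\Cnd(\eta)$ follows.

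For (ii), with $k = \C$, the key observation is that (i) yields $\nu \not\equiv 0$ on $U(\C)$, so its zero locus $\{u \in U(\C) : \nu(u) = 0\}$ is a proper closed analytic subset of the connected smooth complex variety $U(\C)$. Its complement is uncountable (of cardinality continuum), providing uncountably many $u$ with $\nu(u) \ne 0$ and hence $\Cnd(u)$. For an algebraic description of the zero locus, one could invoke Cattani--Deligne--Kaplan and its extension by Brosnan--Pearlstein for admissible normal functions; alternatively, following the introduction's hint towards variations of tannakian groups in families, one can work directly with motivic Galois groups and Andr\'e's motivated cycles, showing that the locus where $(\psi_u)_\#$ becomes algebraically trivial is contained in the locus where the motivic Galois group of $h_1(C_u) \oplus t_2(S)$ drops from its generic value, which is a countable union of proper subvarieties by Andr\'e's theorem on families.

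The main technical obstacle will be the computation of $\delta\nu$: one must project the infinitesimal variation of the graph cycle onto the transcendental part of $H^2(S)$ cut out by $\pi_2^{S,\tr}$ and verify the residual contribution is nonzero, not lost among the algebraic K\"unneth components of $\Gamma_{\psi_u}$ that were discarded in passing from $\Gamma_{\psi_u}$ to $(\psi_u)_\#$. The hypothesis $p_g(S) > 0$ is precisely what makes this transcendental part rich enough to receive a contribution from $\kappa_u$; without it, $t_2(S)$ could have vanishing $(2,0)$-piece and the infinitesimal invariant would vanish on Hodge-theoretic grounds.
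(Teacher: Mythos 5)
Your strategy detects the wrong equivalence relation, and this is precisely the difficulty the paper is organised around. A nonvanishing normal function, or a nonvanishing infinitesimal invariant $\delta\nu$, shows that $(\psi_u)_\#$ has nonzero Abel--Jacobi image; it does \emph{not} show that $(\psi_u)_\#\ne 0$ in $\Griff(C_u\times S)\otimes\Q$, because the Abel--Jacobi map does not factor through algebraic equivalence: algebraically trivial cycles fill out the abelian subvariety of the intermediate Jacobian corresponding to $N^1H^3$. Passing to the ``transcendental quotient'' cut out by $\pi_1^{C_u}$ and $\pi_2^{S,\tr}$ does not dispose of this, since $h_1(C_u)\otimes t_2(S)$ may itself contain a summand $h_1(B_{\psi_u})(1)$ (the exceptional summand of Definition \ref{d8.1}), i.e. $N^1\bigl(H^1(C_u)\otimes H^2_{\tr}(S)\bigr)$ may be nonzero, and your normal function could take its values there. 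The missing step is exactly Theorem \ref{t10.1}: one must prove $N^1H^3(\psi_u)=0$ (generically, and then outside a countable set), which in the paper requires the Kazhdan--Margulis big-monodromy theorem for the vanishing-cycle local system (Proposition \ref{h10.1}), the irreducibility of $V\otimes\pi_\sV^*W$ when $V$ has absolutely irreducible monodromy (Proposition \ref{p9.2}), and the additivity of Hodge coniveau under tensor product (Lemma \ref{l9.2}). Only then does Proposition \ref{p9.4} convert nonvanishing of the realisation into nonvanishing in $\sM_\alg$, hence in the Griffiths group. Your proposal contains no substitute for this coniveau control; as written it would at best reprove the Green--Griffiths--Paranjape statement that $\psi_\#\ne 0$ modulo rational equivalence, i.e. in $T(S_{k(C)})$.

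The same gap propagates to (ii): the set where $\Cnd(u)$ fails is not the zero locus of your normal function, because at a point $u$ with $N^1H^3(\psi_u)\ne 0$ the cycle can be algebraically trivial while $\nu(u)\ne 0$. The paper therefore has to combine two separate exceptional loci: the countable set where $N^1$ jumps (Corollary \ref{c9.1}, via Andr\'e's theorem on the variation of Mumford--Tate groups --- the tannakian input you mention only as an afterthought) and the thin set where the extension class $R_l(\widehat{\psi_u})$ splits (Proposition \ref{p12.1}, Hilbert irreducibility \`a la Serre--Terasoma), plus a cardinality argument over a subfield of the form $K_1(t)\subset\C$ (Lemma \ref{l12.2}). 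Finally, the computation you defer to the end --- that $\delta\nu$ is the pairing of the Kodaira--Spencer class against restriction of $2$-forms and is nonzero --- is asserted rather than proved; the paper's replacement for it is the global Leray-spectral-sequence argument of Theorem \ref{c10.1}, showing that the extension $0\to H^1_l(\psi_\eta)\to H^2_{l,c}(S-C_\eta)\to H^2_{l,\prim}(S)\to 0$ is nonsplit.
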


For each $u$ for which we can prove that $\Cnd(u)$ holds, a by-product of the proof yields that the motivic intermediate Jacobian of $C\times S$ defined in Lemma \ref{l2.2} has no exceptional summand  (see Definition \ref{d8.1} and Theorem \ref{t10.1} d)), and that the  $l$-adic representation $H^2_{c,l}((S_{k(u)}-C_u)_{\overline{k(u)}})$ is ``genuinely mixed'', see Proposition \ref{l9.3}: this may be of independent interest. It is likely that a similar argument yields that the Hodge structure $H^2_{c,B}((S_{k(u)}-C_u)_{\C})$ is also ``genuinely mixed'' when $k$ is a subfield of $\C$: we leave to the reader the pleasure to fill in details.

If one believes in Bloch's conjecture, Corollary \ref{C1} implies that the hypothesis $p_g>0$ is necessary in Theorem \ref{T2}. Under a stronger hypothesis on $S$, we get a stronger result (see Theorem \ref{t11.3} for a sharper statement):

\begin{Thm}\label{T3} Suppose that $S_{\bar k}$ is of motivated abelian type (Definition \ref{d11.2}). If $k$ is finitely generated over $\Q$ (resp. in general),  the set of $u\in U(k)$ such that $\Cnd(u)$ fails is finite (resp. countable).
\end{Thm}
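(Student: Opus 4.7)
The plan is to upgrade the generic non-vanishing $\Cnd(\eta)$ of Theorem \ref{T2}(i) to non-vanishing at almost all $k$-rational points of $U$, using André's theory of motivated cycles and a specialization argument for motivated Galois groups. Via formula (\ref{eq2.1}), the assertion $\Cnd(u)$ is that the class $(\psi_u)_\#$ is non-zero in $\sM_\alg(h_1(C_u),t_2(S))\otimes \Q$. Since $h_1(C_u)$ is a direct summand of the motive of the Jacobian of $C_u$ and $S_{\bar k}$ is of motivated abelian type by hypothesis, the Chow motive $h_1(C_u)\otimes t_2(S)$ lies, after base change to $\bar k$, in the tannakian subcategory of André motives generated by abelian varieties. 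Hence Hom-groups in both $\sM$ and $\sM_\alg$ between such motives are controlled by motivated Galois groups, and the question $\Cnd(u)$ becomes a question about an invariant of this Galois group.

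First I would assemble the classes $(\psi_u)_\#$, as $u$ varies, into a single global class on the total space $\tilde S\times_{\P^1}\tilde S \to U$ (or on an appropriate relative parameter scheme) and identify its generic fibre with the class of Theorem \ref{T2}(i). By functoriality of the motivated formalism, the motivated Galois group $G_u$ of the fibre at any $u\in U$ injects into the motivated Galois group $G_\eta$ of the generic fibre, and André's specialization theorem gives equality $G_u = G_\eta$ outside an \emph{exceptional locus} $\Sigma\subset U$, which is a countable union of proper motivated subvarieties of $U$. For $u\notin \Sigma$, the generic non-vanishing of $(\psi_\eta)_\#$ in $\sM_\alg$ specialises to non-vanishing of $(\psi_u)_\#$ at $u$, yielding $\Cnd(u)$.

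It then remains to bound $\Sigma(k) = \Sigma\cap U(k)$. When $k$ is finitely generated over $\Q$, a Cadoret--Tamagawa-style finiteness theorem for motivated monodromy in families, applied to a model of $\tilde S\to\P^1$ over a finitely generated $\Z$-algebra, forces $\Sigma(k)$ to be finite. For a general $k$ of characteristic $0$, spread $S$, $C$, and the pencil out over a finitely generated subfield $k_0\subset k$, write $U(k)$ as the countable union of $U(k_0')$ over finitely generated intermediate fields $k_0\subset k_0'\subset k$, and apply the first assertion inside each $U(k_0')$. This expresses the failure set of $\Cnd$ on $U(k)$ as a countable union of finite sets, hence as a countable set.

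The hard part will be the first step, namely transferring vanishing information in the \emph{algebraic equivalence} quotient $\sM_\alg$ rather than in the more rigid $\sM_\hom$: algebraic equivalence is not a priori controlled by tannakian data. For motives of motivated abelian type, however, the obstruction measuring the difference between algebraic and homological equivalence is itself of motivated origin (as is classical for abelian varieties), so one can identify the locus where $\Cnd$ fails with a genuinely motivated locus. Formalising this identification so that the failure locus lies in a countable union of \emph{proper} motivated subvarieties of $U$, rather than in a set with merely empty interior, is the technical core of the argument and is presumably the content of the sharper statement Theorem \ref{t11.3}.
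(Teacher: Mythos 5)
Your proposal has the right ingredients in outline (André's motivated cycles, a specialisation theorem for motivic Galois groups in families, Cadoret--Tamagawa finiteness), but it is missing the mechanism that actually detects non-vanishing \emph{modulo algebraic equivalence} at a special fibre, and this is not a formality one can defer: it is the entire point of the argument. Your assertion that ``the obstruction measuring the difference between algebraic and homological equivalence is itself of motivated origin'' for motives of abelian type is unjustified --- it is essentially Conjecture \ref{co1}, which the paper explicitly does not assume. Equality of motivated Galois groups $G_u=G_\eta$ says nothing by itself about classes in $\sM_\alg(h_1(C_u),t_2(S))$, i.e.\ about the Griffiths group. The paper's detector is Proposition \ref{p9.4}: the $l$-adic Abel--Jacobi map factors through algebraic equivalence \emph{after quotienting by the coniveau-$\ge 1$ part} of $H^3_l$. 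Consequently one must prove two separate finiteness statements at bounded degree: (i) the set of $u$ with $N^1H^3_l(\psi_u)\ne 0$ is finite --- this is Proposition \ref{p11.2}, and it is here (and only here) that the motivated abelian type hypothesis enters, via André's full faithfulness of the Hodge realisation on $\Mot_\abb$ (Theorem \ref{t12.2}), which lets one compare the motivated exceptional locus with the Hodge-theoretic one (Theorem \ref{t12.1}) and then invoke Cadoret--Tamagawa; (ii) the set of $u$ where the extension class $R_l(\widehat{\psi_u})$ vanishes is finite --- this is Proposition \ref{p12.2}, proved by showing the extension defined by $R_l(\widehat{\psi_U})$ is geometrically Lie perfect (using irreducibility of the vanishing-cycles local system) and applying the uniform open image theorem. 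Your single ``motivated locus'' does not split into these two conditions, and without the coniveau condition (i) the Abel--Jacobi criterion simply does not apply to $\sM_\alg$.

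There is also a secondary gap in your reduction of the general case to the finitely generated case: if $k$ has uncountable transcendence degree (e.g.\ $k=\C$), the finitely generated intermediate fields $k_0'$ are uncountable in number, so $U(k)$ is \emph{not} a countable union of the sets $U(k_0')$ and your argument yields nothing. The paper instead proves (Lemma \ref{l12.3}) that a point of $U(\bar K)$ not algebraic over the finitely generated field of definition spreads out to a dominant morphism and therefore has open monodromy image, hence is never exceptional; the failure set is thus contained in $U(\bar k_0)$ for a fixed finitely generated $k_0$, which is countable.
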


Examples include: (a) the examples after Corollary \ref{C2},  (b) K3 surfaces, (c) surfaces of general type verifying $p_g=K^2=1$. See Examples \ref{e12.1} for more details. 

Theorems \ref{T2} and \ref{T3} prompt:

\begin{Qn} Are there examples where $\Cnd(u)$ fails?
\end{Qn}

In \cite[p. 1049]{indec}, it was imprudently asserted without proof that $\psi_\#=0$ in $\sM(h_1(C),t_2(S))$; I thank Rémi Lodh for drawing my attention to this issue. It follows from Theorems \ref{T2} and \ref{T3} that the values of $A$ and $B$ in Row $i=2$ of the bottom table of \cite[p. 1047]{indec} are not justified; the other results of \cite{indec} are not affected. It would be very interesting to understand the correct values of $A$ and $B$: by Poincaré duality, this is related to Suslin homology of the open complement. I hope to come back to this question later. 

\subsection{Comments on the proofs} In Theorem \ref{T2}, if we were only interested in the nonvanishing of $\psi_\#$ in $T(S_{k(C)})$, we could content ourselves to quote the paper of Green, Griffiths and Paranjape \cite[Th. 2]{ggp}. The added difficulty for working in $\Griff(C\times S)$ is that the Abel-Jacobi map used in \cite{ggp} does not factor through algebraic equivalence in general. This happens, however, when the first step of the coniveau filtration vanishes on the relevant cohomology group (Proposition \ref{p9.3}), and this is the extra condition that we have to achieve: it is done in Section \ref{s10}, which uses André's results on the variation of the Mumford-Tate group in families \cite{andrecomp}; the main point is Corollary \ref{c9.1}. 

Similarly, Green-Griffiths-Paranjape obtain Theorem \ref{T3} when replacing $\Griff(C\times S)$ with $T(S_{k(C)})$ and ``finite'' with ``thin'' in the sense of Serre \cite[\S 9]{mw}, while they don't need the abelian type hypothesis. For this, they use a specialisation argument due to Serre and Terasoma, which relies on Hilbert's irreducibility theorem. It turns out that this argument is not sufficient to handle the coniveau issue. For this, a decisive input is André's theory of motivated cycles \cite{incond}, notably his refinements of Deligne's theorems from \cite{ln900} and his result on the variation of the motivic Galois group in families \cite[Th. 5.2]{incond}. Recent results of Cadoret-Tamagawa on the variation of $l$-adic representations in families \cite{ct1,ct2} then allow us to refine ``thin'' to ``finite''.

Suppose $k$ finitely generated over $\Q$. It follows from André's Th. 5.2 3) in \cite{incond} that the rank of the (geometric) Néron-Severi group of a smooth projective family over $k$ is  equal to the generic rank outside a thin subset of the base. (This result goes back to Terasoma in a special case \cite{terasoma}.) Using her results with Tamagawa, Cadoret refined this, as above, by replacing ``thin'' with ``finite' \cite[Cor. 5.4]{cadoret}.  One can consider part of the present work as a higher analogue of these results, relying on  cohomological degree $3$ rather than $2$. 

Finally, while the arguments of Green-Griffiths-Paranjape also work in characteristic $p>0$ as long as $k$ is not algebraic over $\F_p$\footnote{Up to checking the correct version of Hilbert's irreducibility theorem in positive characteristic, see \cite[Ch. 12 and 13]{fj}.}, the proofs of Theorems \ref{T2} and \ref{T3}  are firmly rooted in characteristic $0$, because Hodge theory is used in an essential way in 3 different places: Theorem \ref{t1V} (the theorem of the fixed part of Steenbrink-Zucker), Lemma \ref{l9.2} (Hodge coniveau $0$ is stable under tensor product) and Lemma \ref{l11.1} (the Lefschetz (1,1) theorem). Characteristic $0$ is also used in the results of Cadoret-Tamagawa (see however recent work of Emiliano Ambrosi \cite{ambrosi}). So the issue of extending the present results to positive characteristic is interesting and open; among the uses of Hodge theory just mentioned, the least obvious to transpose to $l$-adic cohomology is Lemma \ref{l9.2}, which is \emph{false} in positive characteristic because of the supersingular elliptic curves.

In comparison, few of the examples mentioned in \S \ref{s1.0} work in positive characteristic. For Griffiths' original example, one needs primitive cohomology of the given complete intersection not to be fully of coniveau $>0$. In characteristic $0$, this follows easily from Hodge theory as in \cite[Cor. 2.8]{deligne-completes}; but in order to obtain it in characteristic $>0$ for \cite{katz}, Katz had to prove a weaker, generic result in \cite[Th. 4.1]{katz2}, whose proof uses zeta functions of varieties over finite fields. Bloch's proof for \cite{bloch2}, in turn, uses specialisations to finite fields, hence transposes naturally from number fields to global fields of positive characteristic as in Zelinsky \cite{zelinsky}. It is worth asking whether either of these methods is relevant here.

\subsection{Contents of this paper} The first 8 sections are computations in the categories of pure motives over $k$ \cite{scholl}. In \S \ref{s2}, we recall basic computations of Hom groups in these categories. \S \ref{s3} recalls some results from \cite{kmp}, including a simple proof of the theorem of Mumford and Bloch that $T(S)$ is ``huge'' for a surface $S$ with $b^2>\rho$ over a large algebraically closed field. 

\S\S \ref{s5} and \ref{s4} establish a \emph{refined Chow-Künneth decomposition}, first introduced in \cite[Th. 7.7.3]{kmp}, for $3$-folds $T$ admitting a Chow-Künneth decomposition; while its construction for surfaces in \cite{kmp} was geometric, the present one is purely category-theoretic (of course, this method also works for surfaces). (See also related papers of Vial \cite{vial,vial2}.) In particular, we have a decomposition 
\begin{equation*}
h_3(T) \simeq t_3(T) \oplus h_1(J^2)(1)\tag{\ref{eq2.5a}}
\end{equation*}
where $J^2=J^2(T)$ is a certain abelian variety and $t_3(T)$ is ``the'' transcendental part of $h_3(T)$. By construction, this decomposition is unique up to unique isomorphism modulo numerical equivalence; in particular, $J^2$ is well-defined up to isogeny. Let us stress however that, contrary to the case of surfaces,  it is by no means clear that \eqref{eq2.5a} is unique modulo \emph{rational} equivalence, even up to isomorphism (this would follow from the conjectures of Bloch-Beilinson and Murre \cite{jannsen}, see also Lemma \ref{l7.1}). 

In \S \ref{s6}, we study $J^2$. We show that it satisfies two dual universal properties (Lemma \ref{l5.1}) 
and is isogenous to Murre's algebraic representative of \cite{murreSitges} 
(Proposition \ref{p6.2}) when $k$ is perfect; see also Corollary \ref{c12.2}. 

In \S \ref{s7}, under the assumption that $T$ verifies a ``Chow-Lefschetz isomorphism'' in degree $2$, we establish the basic formula
\begin{equation*}\tag{\ref{eq7.1}}
\Griff(T)\simeq \sM_\alg(\L,t_3(T))
\end{equation*}
for any decomposition \eqref{eq2.5a}: this completes the proof of \cite[Th. 4.3.3]{birat-pure} in a slightly broader setting. By Theorem \ref{tB.1}, the Chow-Lefschetz hypothesis holds in particular for the product of a curve and a surface. We further conjecture in Conjecture \ref{co1} that $\sM(\L,t_3(T))\allowbreak\iso \sM_\alg(\L,t_3(T))$, and prove in Proposition \ref{p7.1} that this is a consequence of the Bloch-Beilin\-son--Murre conjectures (see Remark \ref{r7.11} for further comments on Conjecture  \ref{co1}). We deduce Formula \eqref{eq2.1} (see \S \ref{s1.2}) from \eqref{eq7.1} in \S \ref{s8} (the special case $T=C\times S$).

Sections \ref{s9} to \ref{s.sp} are devoted to the proofs of Theorems \ref{T2} and \ref{T3}. In Proposition \ref{p9.3}, we show that the ($l$-adic) Abel-Jacobi map factors through algebraic equivalence if one factors out the coniveau $\ge 1$ part $N^1$ of cohomology (Proposition \ref{p9.3}). \S \ref{s.hodge} contains some results on variations of (pure) Hodge structures; the most important, Corollary \ref{c9.1}, says that the generic vanishing of $N^1$ specialises outside a countable subset under suitable hypotheses. Corollary \ref{c11.1} is the generic part of Theorem \ref{T2}; we also find an intriguing surjection \eqref{eq11.8} in \S \ref{s.lef}. Section \ref{s.sp} is devoted to specialising this result: Theorem \ref{T2} (ii) is proven in \S \ref{s12.1} (Theorem \ref{t12.6}), \S \ref{s12.2} reviews André's motives and introduces the condition ``of motivated abelian type'' (Definition \ref{d11.2}), \S \ref{s12.3} proves a technical result on the coniveau filtration (Theorem \ref{t12.1}), \S \ref{s12.4} studies $l$-adic representations and draws some consequences of results of Cadoret-Tamagawa (Theorem \ref{t12.4}),  \S\ref{s12.5} proves two technical lemmas, and finally Theorem \ref{T3} is proven in \S \ref{s12.6} under a more precise form (Theorem \ref{t11.3}).  

There are 3 appendices. The first one gives a cycle-theoretic proof of Theorem \ref{T1}, relying on the study of the Albanese map on $0$-cycles initiated by Ramachandran \cite{ram} and developed in \cite{birat-pure}. We prove Corollary \ref{C2} there. The second one shows that a $3$-fold of the form $C\times S$ verifies the ``Chow-Lefschetz isomorphism in degree $2$'' alluded to above. The last one, by Yves André, gives a proof of Theorem \ref{t1V}, using (for the first time) the concept of observability in the tannakian setting.

\subsection{Acknowledgements}  I thank Claire Voisin for mentioning the article \cite{ggp} of Green-Griffiths-Paranjape in the early stages of this work. Inspiration from \cite{ggp} will be obvious in \S \ref{s.lef}; I also drew inspiration from Katz's exposé on Griffiths' theorem in SGA 7 \cite{katz}. I thank Spencer Bloch for ignoring Corollary \ref{C1}, but asking the right question which led to Theorems \ref{T2} and \ref{T3}. Thanks also to Wayne Raskind for asking a question which led to Corollary \ref{C2}.  I thank Yves André and Anna Cadoret as well for their generous help, including numerous exchanges without which completing the proofs of these theorems would have been hazardous (in a French or an English sense). I am especially grateful to Yves Andr\'e for pointing out a gap in a (perhaps too optimistic) version of Theorem \ref{t1V}, and for accepting to add an appendix to fill this gap. Finally, I thank the referee for several helpful comments, especially for pointing out a reference I had missed in \cite{jannsen2}.

\subsection{Notation}\label{s1.1} We fix the base field $k$ and denote by $\Sm^\proj(k)=\Sm^\proj$ the category of smooth projective $k$-varieties, with morphisms the morphisms of $k$-schemes. Unless otherwise specified, \emph{variety} means smooth projective variety; similarly for curve, surface, $3$-fold\dots

For any adequate equivalence relation $\sim$ on algebraic cycles \cite{samuel}, we write $\sM_\sim(k)=\sM_\sim$ for the category of effective pure motives modulo $\sim$ with $\Q$ coefficients: this is the largest full subcategory of the category of \cite{scholl}, denoted here by $\sM_\sim[\L^{-1}]$, which contains all motives of smooth projective varieties and is stable under direct sums and direct summands. We will work only occasionally in $\sM_\sim[\L^{-1}]$ to apply Poincaré duality.

We simply write $\sM$ if $\sim$ is rational equivalence, and write $\sM_H$ if $\sim$ is homological equivalence relative to a Weil cohomology $H$. If $X$ is a smooth projective variety, we write $h(X)$ for its motive in $\sM_\sim$, and only write $h_\sim(X)$ if there is a risk of confusion. We use the covariant convention, as in \cite{kmp} and unlike \cite{scholl}: the functor $h:\Sm^\proj\to \sM_\sim$ is covariant.  If $\L$ is the Lefschetz motive and $M\in \sM_\sim$, we abbreviate $M\otimes \L^{\otimes n}$ to $M(n)$ (not $M(-n)$ as in \cite{scholl}).

As in \cite[\S 2]{indec}, we use refined Hom groups with values in the category $\Ab\otimes \Q$ of abelian groups up to isogeny; by abuse, we keep the notation $\sM_\sim(-,-)$ for these refined Hom groups, and similarly for $\DM_\gm^\eff(-,-)$ in Subsection \ref{s7.4}.

For $X\in \Sm^\proj$ and $i\ge 0$, we write $A^i_\sim(X)$ (resp. $A_i^\sim(X)$) for the group of cycles of codimension (resp. dimension) $i$ on $X$, modulo $\sim$, viewed as an object of $\Ab\otimes\Q$ (see above). We shall use without further mention the formulas (opposite to \cite[2.1]{scholl}):
\begin{equation}\label{eq0}
A^i_\sim(X)=\sM_\sim(h(X),\L^i),\quad A_i^\sim(X)=\sM_\sim(\L^i,h(X)).
\end{equation}

In particular, these groups are $0$ for $i\notin[0,\dim X]$.  As in \cite[loc. cit.]{scholl}, we use these formulas to extend the functors $A^i_\sim,A_i^\sim$ to all $M\in \sM_\sim$.

A Weil cohomology is \emph{classical}  if it belongs to the following list: $l$-adic cohomology in any characteristic $\ne l$, Betti or de Rham cohomology in characteristic $0$ and crystalline cohomology in characteristic $>0$ \cite[Def. 4]{aknote}. All these cohomology theories verify 
\begin{equation}\label{eq0.1}
\dim H^1(X) = 2\dim P_X
\end{equation}
where $P_X$ is the Picard variety of $X\in \Sm^\proj$ (cf. \cite[Cor. 2A10]{kleiman-dix}). More generally, the Betti numbers of $X$ relative to $H$ do not depend on the choice of $H$: this follows from the comparison theorems in characteristic $0$ and from \cite{weilI,km} in characteristic $>0$.

We shall use repeatedly the \emph{Chow-Künneth decomposition} of Murre \cite[Conj. 5.1 (A)]{jannsen}  and the \emph{refined Chow-Künneth decomposition} of \cite[(7.12)]{kmp} (abbreviated respectively CK and refined CK decomposition). When they exist, the first one lifts the Künneth decomposition of the motive of a smooth projective variety from homological to rational equivalence, and the second one does the same with respect to a finer decomposition incorporating coniveau.

We use the notation $\approx$ to denote isogenies between abelian varieties.
\newpage

\part{Proof of Theorem \ref{T1}}

\section{Some motivic algebra}\label{s2}

Let $X\in \Sm^\proj$. By Murre \cite{murre,scholl}, we have a partial Chow-K\"unneth decomposition
\[h(X)=h_0(X)\oplus h_1(X)\oplus h_{>1}(X)\]
where $h_0(X)$ is an Artin motive and $h_1(X)$ is a direct summand of $h_1(C)$ for some smooth projective curve $C$ (which can be chosen as an iterated hyperplane section of $X$). This decomposition yields a partial K\"unneth decomposition for any Weil cohomology $H$ verifying \eqref{eq0.1} (for example, $H$ classical), because it lifts by construction Kleiman's partial K\"unneth decomposition in \cite{kleiman-dix}, which uses \eqref{eq0.1}.  We also write $h_{>0}(X)$ for $h_1(X)\oplus h_{>1}(X)$.

\begin{lemma}\label{l1}Let $\sim$ be an adequate equivalence relation.  
We have (in $\Ab\otimes \Q$)
\begin{align}
\sM_\sim(h_{>0}(Y), h_0(X))&=0\label{eq1.1}\\
\sM_\sim(\un,\un)&=\Z\text{ for any } \sim\label{eq1.3a}\\
\sM_\sim(\un,h_1(X)) &= \Alb_X(k)\text{ if } \sim=\rat\label{eq1.2}\\
\sM_\sim(\un, h_{>0}(X))&=0 \text{ if } \sim \le \alg\label{eq1.5}\\
\sM_\sim(h_1(Y),h_1(X))&=\Hom_k(\Alb_Y,\Alb_X)\text{ for any } \sim\label{eq1.3}\\
\sM_\sim(h(Y),h_1(X))&=\Mor_k(Y,\Alb_X)\text{ if } \sim=\rat \label{eq1.4a}\\
\sM_\sim(h_{>1}(Y),h_1(X))&=0.\label{eq1.4}
\end{align}
where $Y\in \Sm^\proj$.
\end{lemma}

\begin{proof} 
\eqref{eq1.1} is easy and left to the reader. \eqref{eq1.3a} is trivial. \eqref{eq1.2} and \eqref{eq1.3} are \cite[Th. 4.4 (iii)]{scholl} and \cite[Prop. 4.5]{scholl} respectively. \eqref{eq1.5} holds because $0$-cycles of degree $0$ are algebraically equivalent to $0$.

The morphism \eqref{eq5a} of Appendix A provides a map \eqref{eq1.4a}, hence for this item and \eqref{eq1.4}, we may assume $\dim X=1$, $X,Y$ connected and (by a transfer argument) even geometrically connected. Then
\[\sM(h(Y),h(X))=CH^1(Y\times X).\]

 In the decomposition
\[\sM(h(Y),h(X))=\sM(h(Y),h_0(X))\oplus \sM(h(Y),h_1(X))\oplus \sM(h(Y),h_2(X))\]
the first (resp. third) summand is $CH^0(Y)$ (resp. $CH^1(Y)$), in view of \eqref{eq0} and the isomorphisms $h_0(X)=\un$, $h_2(X)\simeq \L$. Comparing with Weil's formula \cite[Th. 3.9 (i)]{scholl}
\[CH^1(Y\times X) \simeq CH^1(Y)\oplus CH^1(X)\oplus \Hom(\Alb_Y,J(X))\]
and checking that the idempotents match concludes the proof.
\end{proof}

\begin{prop}[compare \protect{\cite[1.11]{scholl}, \cite[Cor. 7.8.10]{kmp} and \cite[Th. 5.2.6]{osull}}]\label{p1} Let $i\in\{ 0,1\}$. Write $w_{\le i}\sM$ for the thick subcategory of $\sM$ generated by the $h_j(X)'s$ for $j\le i$, where $X$ runs through smooth projective irreducible varieties. Then:
\begin{enumerate}
\item The inclusion functor $w_{\le i} \sM\inj \sM$ has a left adjoint, denoted by $w_{\le i}$. We have $w_{\le 0} h(X)=h_0(X)$ and $w_{\le 1} h(X)=h_0(X)\oplus h_1(X)$ for $X\in \Sm^\proj$.
\item For $M\in \sM$, the unit morphism $M\to w_{\le i} M$ is split epi, whence a split exact sequence
\[0\to w_{>i} M\to M\to w_{\le i} M\to 0\]
natural in $M$, where $w_{>i} M := \Ker(M\to w_{\le i} M)$. We have $w_{>i}h(X)=h_{>i}(X)$ for $X\in \Sm^\proj$.
\item For $X\in \Sm^\proj$, the objects $h_i(X)$ and $h_{>i}(X)$ ($i=0,1$) are independent of the choices of CK decompositions, and are functorial for the action of correspondences. 
\end{enumerate}
\end{prop}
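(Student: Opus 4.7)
The plan is to exploit the vanishing, coming from Lemma~\ref{l1}, that $\sM(h_{>i}(X),h_j(Y)) = 0$ for every $i\in\{0,1\}$, $j\le i$, and $X,Y\in\Sm^\proj$: the case $(i,j)=(0,0)$ is \eqref{eq1.1}, $(i,j)=(1,1)$ is \eqref{eq1.4}, and $(i,j)=(1,0)$ reduces to \eqref{eq1.1} via the inclusion $h_{>1}(X) \hookrightarrow h_{>0}(X)$. Since $w_{\le i}\sM$ is the pseudo-abelian closure of the generators, every $N\in w_{\le i}\sM$ is a direct summand of a finite sum of such $h_j(Y_k)$, so $\sM(h_{>i}(X),N)=0$ as well.

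I would first verify the adjunction on objects of the form $h(X)$. The partial CK decomposition gives $h(X)=h_{\le i}(X)\oplus h_{>i}(X)$ with projector $\pi_X\colon h(X)\to h_{\le i}(X)\hookrightarrow h(X)$. For any $f\colon h(X)\to N$ with $N\in w_{\le i}\sM$, the restriction of $f$ to the summand $h_{>i}(X)$ vanishes by the above, so $f$ factors uniquely through the split epimorphism $h(X)\to h_{\le i}(X)$. This yields $w_{\le i}h(X)=h_{\le i}(X)$ with unit the canonical projection.

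Next, I would extend to arbitrary $M\in\sM$. Write $M=(h(X),e)$ as the image of an idempotent. Applying the vanishing to $\pi_X\circ e\circ(1-\pi_X)\colon h_{>i}(X)\to h_{\le i}(X)$ yields the crucial one-sided identity
\[\pi_X e = \pi_X e\,\pi_X.\]
From this a short calculation shows that $u:=e\,\pi_X\, e$ satisfies $u^2=u$ and $eu=ue=u$, so $u$ is an idempotent on $M$, orthogonal to $e-u$. Hence there is a canonical decomposition $M=\IM(u)\oplus\IM(e-u)$. I set $w_{\le i}M:=\IM(u)$, which is a direct summand of $h_{\le i}(X)$ since $u$ factors through $\pi_X$, hence lies in $w_{\le i}\sM$, and $w_{>i}M:=\IM(e-u)$. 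For the universal property, any $g\colon w_{>i}M\to N$ with $N\in w_{\le i}\sM$ lifts to $\tilde g\colon h(X)\to N$ with $\tilde g\,(e-u)=\tilde g$; the vanishing forces $\tilde g=\tilde g\,\pi_X$, and the identity $\pi_X(e-u)=\pi_X e-\pi_X e\,\pi_X e=0$ gives $\tilde g=0$. This proves (1) together with the split decomposition in (2).

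For (3), the universal property determines $w_{\le i}M$ and $w_{>i}M$ up to unique isomorphism, so they are independent of the CK decomposition used, and functoriality under correspondences is automatic for a left adjoint. That the individual motive $h_1(X)$ is canonical follows by applying $w_{\le 0}$ to $h_{\le 1}(X)=w_{\le 1}h(X)$ to identify $h_0(X)=w_{\le 0}h(X)$ as a canonical summand, with $h_1(X)$ its unique complement. I expect the main obstacle to be the step extending $w_{\le i}$ from generators to arbitrary summands: since $e$ and $\pi_X$ do not commute, the orthogonal decomposition $M=\IM(u)\oplus\IM(e-u)$ is not formal and hinges essentially on the one-sided identity $\pi_X e=\pi_X e\,\pi_X$ extracted from the vanishing.
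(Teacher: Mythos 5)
Your argument is correct and follows the same route as the paper: the whole proposition is reduced to the vanishings \eqref{eq1.1} and \eqref{eq1.4}, which give the adjunction on the generators $h(X)$. The only difference is that you spell out the extension from $h(X)$ to an arbitrary summand $M=(h(X),e)$ via the idempotent $u=e\pi_Xe$ — a standard reduction the paper leaves implicit — and your computation there (resting on $\pi_Xe=\pi_Xe\pi_X$) is accurate.
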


\begin{proof} In (1), it suffices to check that the left adjoint is defined on motives of the form $h(X)$, which follows from \eqref{eq1.1} and \eqref{eq1.4}. Same reduction for (2); (3) is an obvious consequence of (1) and (2).
\end{proof}

\section{The motive of a surface (review)}\label{s3}

\subsection{The transcendental part}\label{s3.1} Let  $S$ be a surface over $k$. In \cite{kmp}, we refined Murre's Chow-K\"unneth decomposition of $h(S)\in \sM$ 
\[h(S)=\bigoplus_{i=0}^4 h_i(S)\]
by splitting the motive $h_2(S)$ into
\begin{equation}\label{eq1.6}
h_2(S)= t_2(S) \oplus\NS_S(1)
\end{equation}
where $\NS_S$ is the Artin motive associated to the N\'eron-Severi group of $S_{k_s}$. This decomposition was anticipated in \cite[end of Section 2]{ggp}. 

The motive $t_2(S)$ is called the \emph{transcendental part of $h(S)$}. It controls the Albanese kernel in the sense that
\begin{equation}\label{eq1}
T(S)\simeq \sM(\un,t_2(S)).
\end{equation}

Moreover, $t_2(S)$ is a ``birational motive'' in the following sense:
\begin{align}
\sM(M(1),t_2(S))&=0 \text{ for any } M\in \sM,\label{eq2}\\
 \sM(h(Y),t_2(S))&\simeq T(S_{k(Y)})\text{ for any (connected) } Y\in \Sm^\proj\label{eq3}\\
\intertext{\cite[Cor. 7.8.5]{kmp}. By Poincar\'e duality, we also have from \eqref{eq2}:}
\sM(t_2(S),\L)&=0.\label{eq2a}
\end{align}

 See proposition \ref{p6.1} for a higher-dimensional version of these facts.
 
 \begin{lemma}\label{l3.1} With notation as in Section \ref{s2}, the projections $h_{>1}(S)\to h_2(S)$ and $h_2(S)\to t_2(S)$ do not depend on the choice of the refined CK decomposition of $S$.
\end{lemma}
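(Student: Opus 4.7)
The plan is to realise each of the two projections as a canonical morphism in the pseudo-abelian category $\sM$, making the independence from the refined CK decomposition automatic. The ingredients will be Proposition~\ref{p1} (which gives canonical $h_{>1}(S)$), Poincar\'e duality, and the Hom-vanishings \eqref{eq2}, \eqref{eq2a}.

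For the first projection, I would complement Proposition~\ref{p1} by Poincar\'e duality. The involution $M\mapsto M^\vee(2)$ on $\sM[\L^{-1}]$ sends any Chow-K\"unneth decomposition of $h(S)$ to another with $h_i(S)^\vee(2)\simeq h_{4-i}(S)$, so dualising Proposition~\ref{p1}(2) yields a canonical inclusion $h_{\ge 3}(S)\hookrightarrow h(S)$ and a canonical quotient $h(S)\twoheadrightarrow h_{\le 2}(S):=h(S)/h_{\ge 3}(S)$. Since $h_{\ge 3}(S)\subset h_{>1}(S)=\Ker(h(S)\twoheadrightarrow h_{\le 1}(S))$ in every refined CK decomposition (a statement that follows formally from the vanishing of $\sM(h_{\ge 3}(S),h_{\le 1}(S))$ via duality from \eqref{eq1.1} and \eqref{eq1.4}), the canonical surjection $h(S)\twoheadrightarrow h_{\le 1}(S)$ factors canonically through $h_{\le 2}(S)$, giving a canonical arrow $h_{\le 2}(S)\to h_{\le 1}(S)$. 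Set $h_2(S):=\Ker(h_{\le 2}(S)\to h_{\le 1}(S))$: this is a canonical direct summand of $h(S)$ agreeing with the usual $h_2(S)$ for any decomposition. The canonical composite $h_{>1}(S)\hookrightarrow h(S)\twoheadrightarrow h_{\le 2}(S)$ dies further in $h_{\le 1}(S)$, so by the universal property of the kernel it factors uniquely as $h_{>1}(S)\to h_2(S)\hookrightarrow h_{\le 2}(S)$; the first arrow is the desired canonical projection.

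For the second projection, I would show that the decomposition $h_2(S)=t_2(S)\oplus\NS_S(1)$ is intrinsic via two Hom-vanishings. First, $\NS_S(1)\subset h_2(S)$ is canonical as the image of the morphism $\NS_S(1)^{\oplus n}\to h_2(S)$ whose components form a $\Q$-basis of the finite-dimensional vector space $\sM(\NS_S(1),h_2(S))$: by \eqref{eq2} with $M=\NS_S$, every morphism $\NS_S(1)\to h_2(S)$ has zero component on the $t_2$-summand of any refined CK decomposition, so its image lies in $\NS_S(1)$; taking the basis to include the inclusion from a given decomposition shows that this image equals $\NS_S(1)$, independently of choices. To pin down the complementary summand I would then establish
\[
\sM(t_2(S),\NS_S(1))=0.
\]
Since $\NS_S^\vee$ is a direct summand of $\operatorname{Res}_{K/k}\un$ for a suitable finite Galois extension $K/k$, the projection formula and adjunction between base change and Weil restriction reduce this vanishing to that of $\sM(t_2(S_K),\L)=0$ over $K$, which is \eqref{eq2a} applied to $t_2(S_K)=(t_2(S))_K$. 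Consequently, any two retractions of the canonical inclusion $\NS_S(1)\hookrightarrow h_2(S)$ differ by a morphism factoring through a complement of $\NS_S(1)$ into $\NS_S(1)$, hence vanishing; so the splitting is unique, $t_2(S)$ is a canonical direct summand, and the projection $h_2(S)\twoheadrightarrow t_2(S)$ is the complementary canonical idempotent.

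The main obstacle is the Hom-vanishing $\sM(t_2(S),\NS_S(1))=0$, a mild strengthening of \eqref{eq2a}: it hinges on the base-change compatibility $(t_2(S))_K=t_2(S_K)$ of the Kahn-Murre-Pedrini construction and the standard fact that Artin motives are dualisable summands of Weil restrictions of $\un$. Once granted, both projections are units of universal constructions and their independence from the refined CK decomposition becomes the uniqueness of direct sum decompositions cut out by orthogonal Hom-vanishings in a pseudo-abelian category, which is formal.
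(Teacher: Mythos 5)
Your argument is correct, but it is more elaborate than the paper's, which disposes of the lemma by two Hom-vanishings plus the standard uniqueness argument for projections onto direct summands: for the first projection it checks $\sM(h_3(S)\oplus h_4(S),h_2(S))=0$ (by Poincar\'e duality from \eqref{eq1.1} and \eqref{eq1.4}), so that the projection $h_{>1}(S)\to h'_2(S)$ attached to a second decomposition kills $h_3(S)\oplus h_4(S)$ and hence factors through the first projection via the canonical isomorphism $h_2(S)\simeq h'_2(S)$; for the second it checks $\sM(\NS_S(1),t_2(S))=0$, which is \eqref{eq2}. Your route differs in two ways. For the first projection you build the canonical quotient $h_{\le 2}(S)$ by dualising Proposition \ref{p1} and use the (dual, equally valid) vanishing $\sM(h_3(S)\oplus h_4(S),h_0(S)\oplus h_1(S))=0$; this works and buys you a canonical submotive $h_2(S)\subset h(S)$, which the paper does not claim or need. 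For the second projection you prove the extra vanishing $\sM(t_2(S),\NS_S(1))=0$ by transfer from \eqref{eq2a} through a splitting field of the Artin motive; this is correct (and it is exactly the mutual-orthogonality argument the paper itself uses later, in Corollary \ref{c6.1}, for the analogous uniqueness statement for $3$-folds), but it is not needed here: the one-directional vanishing $\sM(\NS_S(1),t_2(S))=0$ already forces any two projections $h_2(S)\to t_2(S)$ to agree, since the second projection kills the $\NS_S(1)$-summand of the first decomposition and therefore factors through the first projection. What your stronger argument buys is that $t_2(S)$ is canonical as a subobject of $h_2(S)$, not merely as a quotient.
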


\begin{proof} For the first, it suffices to see that
\[\sM(h_4(S)\oplus h_3(S),h_2(S))=0,\]
which follows from \eqref{eq1.1} and \eqref{eq1.4} by Poincaré duality. For the second, it similarly suffices to observe that $\sM(\NS_S(1),t_2(S))=0$, which follows from \eqref{eq2}.
\end{proof}

 For simplicity, assume $Y$ geometrically connected. If it has a CK decomposition, we get from \eqref{eq3} a decomposition
 \begin{equation}\label{eq3.2}
 T(S_{k(Y)})/T(S) \simeq \bigoplus_{i=1}^{2\dim Y}\sM(h_i(Y),t_2(S))
 \end{equation}
which shows how the growth of $T(S)$ under extensions of $k$ is controlled by Chow-K\"unneth summands. A first application is if $Y$ is a curve $C$: we get thanks to \eqref{eq2}:
\begin{equation}
T(S_{k(C)})/T(S)\simeq \sM(h_1(C),t_2(S)).\label{eq0.2}
\end{equation}

Here is another application:

\subsection{Mumford's theorem} \label{s3.2}
In \eqref{eq3}, take $Y=S^n$ for some $n>0$. Then the CK decomposition of $S$ induces a CK decomposition of $S^n$. In particular, $h_2(S)^{\oplus n}$ is a direct summand of $h(S^n)$, and so is $t_2(S)^{\oplus n}$. In view of \eqref{eq3.2}, this shows that
\begin{quote} 
\it $\End_\sM(t_2(S))^n$ is a direct summand of $T(S_{k(S^n)})/T(S)$
\end{quote}
and \eqref{eq3.2} also gives the equivalences
\begin{multline*}t_2(S)\ne 0 \iff \End_\sM(t_2(S))\ne 0 \iff T(S_{k(S)})/T(S)\ne 0\\
 \iff \dim_\Q T(S_\Omega)\ge \frac{1}{2}\trdeg(\Omega/k)
\end{multline*}
if $\Omega$ is an algebraically closed field containing $k$, of transcendence degree $\ge 2$ (possibly infinite). (By \cite{ggp}, one can replace ``transcendence degree $\ge 2$'' by ``transcendence degree $\ge 1$'' by using \eqref{eq0.2},  and this is precisely what we shall refine in Part 2 in terms of the Griffiths group.)

Suppose $k$ algebraically closed. If $H$ is a classical Weil cohomology with coefficients in the field $F$, recall the orthogonal decomposition
\[H^2(S) = \NS(S)\otimes F\oplus H^2_\tr(S)\] 
for the Poincaré pairing. Since the homological realisation of $t_2(S)$ is $H^2_\tr(S)$ \cite[Prop. 7.2.3]{kmp}, a sufficient condition for $t_2(S)\ne 0$ is that $b^2>\rho$. This gives a simple motivic proof of Mumford's theorem on the ``hugeness'' of $T(S_\C)$ if $p_g>0$, in  Bloch's style \cite[App. to Lect. 1]{bloch}.

\section{Grading by coniveau}\label{s5}

The following definition and lemmas use the semi-simplicity of $\sM_\num$ (Jannsen \cite{jannsen-inv}).

\begin{defn} Let $S\in \sM_\num$ be a simple motive. We say that $S$ is \emph{primitive} if $S(-1)$ is not effective. We say that $M\in \sM_\num$ is primitive if every simple summand of $M$ is primitive.
\end{defn}

\begin{lemma}For any simple $S\in \sM_\num$, there is a unique $n\ge 0$ such that $S(-n)$ is primitive. We call $n$ the \emph{coniveau} of $S$ and write it $\nu(S)$.
\end{lemma}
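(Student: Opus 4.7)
\emph{Proof plan.} The strategy is the following. For existence, let $E(S) := \{n \geq 0 \mid S(-n) \in \sM_\num\}$, which contains $0$ since $S$ itself is effective. If $E(S)$ admits a maximum $n_0$, then $S(-n_0)$ is effective while $S(-n_0 - 1)$ is not, so $S(-n_0)$ is primitive; note also that $S(-n_0)$ remains simple because the Tate twist is an autoequivalence of $\sM_\num[\L^{-1}]$. Thus the problem reduces to showing $E(S)$ is bounded above, plus checking uniqueness.

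Uniqueness is essentially formal. If $S(-n)$ and $S(-m)$ were both primitive with $0 \leq n < m$, then $S(-m)$ is effective and so is $\L^{\otimes(m-n-1)}$, hence their tensor product
\[ S(-m) \otimes \L^{\otimes(m-n-1)} \cong S(-n-1) \]
lies in $\sM_\num$ (since $\sM_\num \subseteq \sM_\num[\L^{-1}]$ is a tensor subcategory), contradicting the primitivity of $S(-n)$.

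For the boundedness of $E(S)$, I fix a classical Weil cohomology $H$ with coefficient field $F$. Writing $S$ as a direct summand $(X, p)$ of some $h(X)$ in $\sM_\num$, and using that the kernel of $\sM_\rat(X,X) \to \sM_\num(X,X)$ is a nil ideal on endomorphism algebras (a standard ingredient in the semisimplicity theorem of Jannsen just invoked), I lift $p$ to an idempotent $\tilde p \in \sM_\rat(X,X)$ and push it through $\sM_H$ to obtain $H^*(S) := H^*(\operatorname{Im}\tilde p) \subseteq H^*(X)$, a graded $F$-subspace well-defined up to isomorphism. Since $p \neq 0$ in $\sM_\num$, the lift $\tilde p$ is nonzero in $\sM_\hom$, whence $H^*(S) \neq 0$; let $i_0 \geq 0$ be the smallest degree where it is supported. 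In the covariant convention, $H^j(M(-n)) = H^{j+2n}(M)$, so if $S(-n)$ is effective (hence a summand of some $h(Y)$), its cohomology must sit in degrees $\geq 0$, forcing $H^j(S) = 0$ for all $j < 2n$ and in particular $i_0 \geq 2n$. Therefore $E(S) \subseteq [0, i_0/2]$, which is finite.

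The main obstacle here is the bookkeeping around well-definedness of $H^*(S)$ as a cohomological invariant of a simple object of $\sM_\num$; this relies on lifting idempotents through the quotient $\sM_\rat \to \sM_\num$, which is a folklore consequence of the same nilpotence input that feeds Jannsen's semisimplicity theorem. Once that is in place, the proof is purely formal bookkeeping with Tate twists.
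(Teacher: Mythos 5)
Your uniqueness argument, and the reduction of existence to the boundedness of $E(S)=\{n\ge 0\mid S(-n)\text{ effective}\}$, are fine and match the structure of the paper's proof. The gap is in how you bound $E(S)$. First, the lift of the idempotent $p$ from $\sM_\num(X,X)$ to $\sM_\rat(X,X)$ is not available: the kernel of $\sM_\rat(X,X)\to\sM_\num(X,X)$ is not known to be a nil ideal for a general $X$ (this would follow from Kimura finite-dimensionality of $h(X)$, which is conjectural), and it is not an ingredient of Jannsen's semisimplicity theorem, which concerns the numerical category only. Even the kernel of $\sM_H(X,X)\to\sM_\num(X,X)$ is only known to be nilpotent for motives admitting a weight decomposition (\cite[Prop. 5]{aknote}, as quoted in \S\ref{s5}), which for $h(X)$ amounts to the algebraicity of the K\"unneth projectors of $X$. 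Second, even granting a lift $\tilde p$, the step ``$S(-n)$ effective $\Rightarrow$ $H^j(S)=0$ for $j<2n$'' transfers an isomorphism that holds only in $\sM_\num$ (namely $S\simeq$ a summand of $h(Y)(n)$) to a statement about the homological realisation of $\tilde p$: the lifted composite through $h(Y)(n)$ agrees with $\tilde p$ only modulo numerical equivalence, and a numerically trivial correspondence need not act as zero on cohomology — this is standard conjecture $D$ territory.

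None of this machinery is needed. The paper's argument stays entirely inside $\sM_\num$: if $S(-n)$ is effective, then $S$ is a common nonzero direct summand of $h(X)$ and of $h(X_n)(n)$ for some $X_n\in\Sm^\proj$, whence
\[0\ne \sM_\num(h(X_n)(n),h(X))=A^{\dim X-n}_\num(X_n\times X),\]
and by \eqref{eq0} this group vanishes unless $n\le\dim X$. This bounds $E(S)$ with no cohomological input and no idempotent lifting; you should replace your third paragraph by this observation.
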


\begin{proof} Write $S$ as a direct summand of $h(X)$ for some $X\in \Sm^\proj$. Suppose that $S(-n)$ is effective; then $S$ is also a direct summand of $h(X_n)(n)$ for some $X_n\in \Sm^\proj$. Therefore
\[0\ne \sM_\num(h(X_n)(n),h(X))=A^{\dim X-n}_\num(X_n\times X)\]
showing that $\nu(S)\le \dim X$.
\end{proof}

 This yields:

\begin{prop}\label{p4.2} For any $M\in \sM_\num$, there is a unique decomposition
\begin{equation}\label{eq4.2}
M=\bigoplus_{j\ge 0} M_j(j)
\end{equation}
with $M_j$ primitive. This is the \emph{coniveau decomposition} of $M$.
\end{prop}

\begin{proof} Write $M$ as a direct sum of its isotypical components and group by coniveau.
\end{proof}

Let $H$ be a classical Weil cohomology (see \S \ref{s1.1}). An object $M\in \sM_H$ has \emph{weight $i$} if $H^j(M)=0$ for $j\ne i$. In particular, $i<0$ $\Rightarrow$ $M=0$. A \emph{weight decomposition} of $M$ is a (necessarily unique) decomposition
\[M=\bigoplus_{i\ge 0} M_i\]
with $M_i$ of weight $i$. if $M=h(X)$ for some variety $X$, we recover the K\"unneth decomposition.

Let $\sM_H^*$ denote the full subcategory of $\sM_H$ consisting of motives having a weight decomposition. Then for any $M\in \sM_H^*$, the ideal
\[\Ker\left(\End_{\sM_H}(M)\to \End_{\sM_\num}(M)\right)\]
is nilpotent \cite[Prop. 5]{aknote}. Hence the decomposition \eqref{eq4.2} for $M_\num$
lifts from numerical to homological equivalence, uniquely up to isomorphism. 

\begin{lemma}\label{l2.1} Let $M\in \sM_H^*$ be of weight $i$. Then, in a lift of \eqref{eq4.2} from $\sM_\num$ to $\sM_H$, the motive $M_j$ is
\begin{itemize}
\item $0$ if $i-2j<0$;
\item an Artin motive if $i-2j=0$;
\item  of the form $h_1(A)$ for some abelian variety $A$ if $i-2j=1$.
\end{itemize}
\end{lemma}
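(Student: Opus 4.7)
The key preliminary observation is that, since the weight decomposition in $\sM_H^*$ is unique, it is respected by direct sum decompositions, and since $H^\ell(N(j)) = H^{\ell - 2j}(N)$ in any classical Weil cohomology, each summand $M_j(j)$ of a lift of \eqref{eq4.2} inherits purity of weight $i$; equivalently, $M_j$ is pure of weight $i - 2j$. I will also use that distinct weight components are \emph{orthogonal} in $\sM_H^*$: a morphism $f: N \to N'$ between pure motives of different weights must vanish, because its cohomological realization is a degree-preserving map between graded vector spaces concentrated in different degrees, hence zero, and $H$ is faithful on $\sM_H$.

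The case $i - 2j < 0$ is then immediate: purity in negative weight forces $H^*(M_j) = 0$, hence $M_j = 0$. For $i - 2j \in \{0, 1\}$, since $M_j$ is effective, it embeds as a direct summand of $h(X)$ for some $X \in \Sm^\proj$; by orthogonality, both the inclusion $M_j \hookrightarrow h(X)$ and the projection $h(X) \twoheadrightarrow M_j$ must factor through the weight-$(i-2j)$ piece $h_{i-2j}(X)$ of the partial Künneth decomposition, exhibiting $M_j$ as a direct summand of $h_{i-2j}(X)$. When $i - 2j = 0$, this piece is an Artin motive by Murre's partial CK decomposition recalled in \S \ref{s2}, so $M_j$ is Artin.

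For $i - 2j = 1$, \S \ref{s2} also says that $h_1(X)$ is in turn a direct summand of $h_1(C) = h_1(J(C))$ for some smooth projective curve $C$, so $M_j$ is a direct summand of $h_1(B)$ for an abelian variety $B$. It then suffices to show that any such summand is of the form $h_1(A)$: by \eqref{eq1.3}, the functor $h_1$ from abelian varieties up to isogeny to $\sM_H$ is additive and fully faithful, and its source is semisimple by Poincaré's complete reducibility theorem, so every idempotent of $\End(h_1(B)) = \End(B) \otimes \Q$ comes from an isogeny decomposition $B \sim A \times A'$, yielding $M_j \simeq h_1(A)$. The main point requiring attention is the orthogonality of distinct weights in $\sM_H^*$, which drives both of the non-trivial cases; once that and Poincaré reducibility are in hand, the rest is a straightforward unwinding.
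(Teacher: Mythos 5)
Your proposal is correct and follows essentially the same route as the paper's (much terser) proof: identify $M_j$ as pure of weight $i-2j$, dispose of $i-2j\le 0$ by degree reasons, and for $i-2j=1$ realise $M_j$ as a direct summand of $h_1(Y)$ and conclude via \eqref{eq1.3} and Poincar\'e complete reducibility. The only value you add is spelling out the orthogonality-of-weights argument (faithfulness of $H$ on $\sM_H$ plus the K\"unneth decomposition of correspondences), which is exactly the step the paper leaves implicit.
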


\begin{proof} In $\sM_H^*$, $M_j$ is of weight $i-2j$, hence of the said form. Indeed, this is trivial for $i-2j\le 0$. For $i-2j=1$, writing $M_j$ as a direct summand of $h(Y)$ for some (not necessarily connected) $Y\in \Sm^\proj$, we find that $M_j$ is a direct summand of $h_1(Y)$, and conclude by \eqref{eq1.3} and Poincaré complete reducibiity.
\end{proof}

\section{The motive of certain $3$-folds}\label{s4}

Until Section \ref{s8}, we let $T$ be a geometrically connected $3$-fold admitting a CK decomposition
\begin{equation}\label{eq2.3}
h(T)=\bigoplus_{i=0}^6 h_i(T)\in \sM.
\end{equation}

Examples include $T=C\times S$,  smooth complete intersections in $\P^N$ and abelian varieties \cite[\S 3.4]{birat-pure}. 

We now ``refine'' \eqref{eq2.3} in the style of \cite[Th. 7.7.3 (iii)]{kmp}:

Applying \eqref{eq4.2} to $h_i(T)$ in $\sM_\num$, we may write it 
\begin{equation}\label{eq2.4}
h_i(T)=\bigoplus_{j\ge 0} h_{i,j}(T)(j).
\end{equation}

\begin{lemma}\label{l2.2}
The decomposition \eqref{eq2.4} lifts from numerical to rational equivalence. The motive $h_{2,1}(T)$ is an Artin motive and $h_{3,1}(T)$ is of the form $h_1(J^2)$ for some (isogeny class of) abelian variety $J^2=J^2(T)$.
\end{lemma}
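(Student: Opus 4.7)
The plan is to proceed K\"unneth-component by K\"unneth-component, first invoking the material of Section \ref{s5} to lift \eqref{eq2.4} from $\sM_\num$ to $\sM_H$, and then performing the final lift to $\sM$ separately according to the shape of each summand.

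Each $h_i(T)$ is pure of weight $i$ and therefore lies in $\sM_H^*$; the nilpotence result of Section \ref{s5} then yields a unique lift of the coniveau idempotent decomposition from $\sM_\num$ to $\sM_H$. Applying Lemma \ref{l2.1} to each pure component identifies the low-weight pieces in this lift: $h_{2,1}(T)$ is an Artin motive (weight $0$), $h_{3,1}(T)$ is of the form $h_1(A)$ for some abelian variety $A$ (weight $1$), and $h_{i,j}(T)=0$ whenever $i<2j$. It remains to lift each idempotent from $\sM_H$ to $\sM$. For $i\in\{0,1,5,6\}$ this is vacuous, as these K\"unneth pieces are already primitive by construction (with $i=5$ reduced to $i=1$ by Poincar\'e duality). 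For the Artin pieces $h_{2,1}(T)(1)\subset h_2(T)$ and, dually, $h_{4,2}(T)(2)\subset h_4(T)$, I would replay the construction of \cite[Prop.~7.2.1]{kmp}: take a $\Q$-basis $\{D_\alpha\}$ of the image of $\NS(T_{\bar k})_\Q$ in $H^2(T)$, a Poincar\'e-dual basis $\{D^\alpha\}$ in the image in $H^4(T)$, and form the idempotent $\sum_{\alpha,\beta}(g^{-1})_{\alpha\beta}[D_\alpha\times D^\beta]\in CH^3(T\times T)_\Q$ (where $g$ is the intersection matrix); conjugated by the CK projector $\pi_2$, this yields a Galois-equivariant idempotent in $\sM$ lifting the Artin projector from $\sM_\num$, and the case $i=4$ then follows by Poincar\'e duality.

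The substantive case is the extraction of $h_{3,1}(T)(1)$ in $\sM$. My approach is to select a smooth ample surface $\iota:V\hookrightarrow T$ (enlarging to a finite union if necessary) and to use the Gysin pushforward $\iota_*:h_3(V)\to h_3(T)$ together with its partner $\iota^*:h_3(T)\to h_3(V)$, noting that $h_3(V)\simeq h_1(\Alb_V)(1)$ up to isogeny by Poincar\'e duality on the surface $V$. The key point is that, by \eqref{eq1.3}, both equivalence relations $\rat$ and $\num$ give the same Hom between $h_1$'s of abelian varieties, so the composition $\iota^*\iota_*\in \End(h_1(\Alb_V))=\End(\Alb_V)_\Q$ lifts automatically to $\sM$; splitting $\Alb_V$ by the idempotent decomposition corresponding to the isogeny factor on which $\iota^*\iota_*$ acts invertibly, I obtain in $\sM$ a direct summand of $h_3(T)$ of the form $h_1(J^2)(1)$ for some abelian variety $J^2$. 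The main obstacle is to show that, for a suitable $V$, this summand exhausts the homological $h_1(A)(1)$-part singled out in Stage~I, so that the orthogonal complement $h_{3,0}(T)$ is genuinely primitive of coniveau $0$ in $\sM_\num$; for this I would use a Lefschetz-type density argument combined with the characterization provided by Lemma \ref{l2.1} of the $h_1$-summands that may appear. Uniqueness of the resulting lift in $\sM$ is neither claimed nor expected, as emphasized in the introduction.
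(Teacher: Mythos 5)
Your Stage I (lifting the coniveau decomposition from $\sM_\num$ to $\sM_H$ via the nilpotence of $\Ker(\End_{\sM_H}\to\End_{\sM_\num})$ and identifying the shapes of $h_{2,1}(T)$ and $h_{3,1}(T)$ through Lemma \ref{l2.1}) is sound and agrees with the paper's setup. The problem is Stage II. For the crucial summand $h_{3,1}(T)(1)$ you construct a candidate $h_1(J^2)(1)$ inside $h_3(T)$ by pushing forward from an ample surface $V\subset T$, and you then need that this candidate \emph{exhausts} the coniveau-$1$ part singled out in $\sM_\num$. You flag this yourself as "the main obstacle" and propose to handle it by "a Lefschetz-type density argument", but no such argument is given, and it is not a formality: it amounts to showing that the maximal summand of $h_3(T)$ of the form $h_1(A)(1)$ in $\sM_\num$ is realised, as a \emph{motivic direct summand}, by $h_3$ of a single well-chosen surface in $T$. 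This is exactly the kind of support/niveau statement that the paper is careful about (compare \S\ref{s6}, where the comparison of $J^2$ with Murre's $\ab^2$ is only obtained in one direction unconditionally, the converse requiring the generalised Hodge or Tate conjecture in Proposition \ref{p6.2}). As written, your proof is incomplete at its central step.

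The irony is that you already hold the key that makes the whole geometric detour unnecessary, namely the observation that $\Hom$ between $h_1$'s of abelian varieties (and between Artin motives) is the same for every adequate equivalence relation (\eqref{eq1.3a}, \eqref{eq1.3}). The paper applies this directly: let $p\colon h_i(T)\to h_{i,1}(T)(1)$ and $q\colon h_{i,1}(T)(1)\to h_i(T)$ be the projection and inclusion in $\sM_\num$, so $pq=1$. Both objects exist in $\sM$ (the $h_i(T)$ by the assumed CK decomposition, and $h_{i,1}(T)(1)$ because it is an Artin motive or an $h_1(A)$ by Lemma \ref{l2.1}), and the functor $\sM\to\sM_\num$ is full on these Hom groups, so $p,q$ lift to $\tilde p,\tilde q$. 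Since $\End_\sM(h_{i,1}(T))\iso\End_{\sM_\num}(h_{i,1}(T))$, one still has $\tilde p\tilde q=1$, and $\tilde q\tilde p$ is then an idempotent of $\End_\sM(h_i(T))$ splitting off the desired summand. No auxiliary surface, no explicit divisor/curve dual bases for the Artin pieces, and no exhaustion argument are needed; Poincar\'e duality reduces everything to $i\le 3$. I recommend you replace Stage II by this direct lifting argument, or else supply a complete proof of the exhaustion claim, which would be substantially harder than the lemma itself.
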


\begin{proof} By Poincaré duality, we have $h_{6-i}(T)\simeq h_i(T)^*(3)$ for $i\le 3$, hence we may assume $i\le 3$. Then the only nontrivial decompositions are for $i=2,3$:
\begin{align}
h_2(T) &= t_2(T) \oplus h_{2,1}(T)(1)\label{eq2.5}\\
h_3(T) &= t_3(T) \oplus h_{3,1}(T)(1)\label{eq2.5a}
\end{align}
where we write $t_i(T):=h_{i,0}(T)$ for simplicity; the claims on $h_{2,1}(T)$ and $h_{3,1}(T)$ follow from Lemma \ref{l2.1}.

Let $p\in \sM_\num(h_i(T),h_{i,1}(T)(1))$, $q\in \sM_\num(h_{i,1}(T)(1),h_i(T))$ be the two morphisms corresponding to these decompositions. We have $pq=1$. Lift $p$ and $q$ to morphisms $\tilde p$ and $\tilde q$ in $\sM$. Since 
\[\End_\sM(h_{i,j}(T))\iso \End_{\sM_\num}(h_{i,j}(T))\]
by Lemma \ref{l2.1} and \eqref{eq1.3a}, \eqref{eq1.3}, we still have $\tilde p \tilde q=1$. Then $\tilde q\tilde p\in \End_\sM(h_i(T))$ is idempotent, hence the promised decomposition.
\end{proof}

\section{The summands of $h_*(T)$}\label{s6}

One should compare some computations in this section and the next one with those of Gorchinskiy-Guletski\v \i\   \cite[\S\S 4, 5]{gg}, although we don't use cohomology here except in the proof of Proposition \ref{p6.2}.

\subsection{The Artin summands}

The Artin motives $h_{2n,n}(T)$ are easy to compute:

\begin{lemma}\label{l2.3} For $n\in [0,3]$, $h_{2n,n}(T)$ is the Artin motive associated to the Galois module $A_n^\num(T_{k_s})$, where $k_s$ is a separable clusure of $k$.
\end{lemma}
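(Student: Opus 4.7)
The plan is to identify $h_{2n,n}(T)$ through its attached Galois module $\sM_\num(\un,h_{2n,n}(T)_{k_s})$ and match it with $A_n^\num(T_{k_s})$. By Lemma \ref{l2.1}, $h_{2n,n}(T)$ has weight $2n-2n=0$, so it is indeed an Artin motive. For any Artin motive $A\in\sM_\num$, the associated $\Gal(k_s/k)$-module is $\sM_\num(\un,A_{k_s})$, so the game reduces to producing a canonical, Galois-equivariant isomorphism between the two $\Q$-vector spaces.

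The main computation is
\[
A_n^\num(T_{k_s})=\sM_\num(\L^n,h(T)_{k_s})
=\bigoplus_{j\ge 0}\sM_\num(\L^{n-j},h_{2n,j}(T)_{k_s}),
\]
obtained from \eqref{eq0}, the CK decomposition \eqref{eq2.3}, and the refined decomposition \eqref{eq2.4}, all base-changed to $k_s$; the intermediate step uses that there are no nonzero morphisms in $\sM_\num$ between summands of distinct weights, so only the weight-$2n$ piece $h_{2n}(T)$ contributes. I would next eliminate every term except $j=n$: for $j>n$, $h_{2n,j}(T)$ has negative weight and vanishes by Lemma \ref{l2.1}; the case $j=n$ gives exactly $\sM_\num(\un,h_{2n,n}(T)_{k_s})$, which is the target.

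The only delicate point is the vanishing for $0\le j<n$. Over $k$ this is immediate from the primitivity of $h_{2n,j}(T)$: a summand $\L^{n-j}$ with $n-j\ge 1$ would have effective $(-1)$-twist $\L^{n-j-1}$. The main obstacle is to propagate this to $k_s$. I would argue as follows: if $\L^{n-j}$ occurred as a summand of $h_{2n,j}(T)_{k_s}$, the central idempotent $e$ cutting out its isotypic component would be $\Gal(k_s/k)$-invariant, because $\L^{n-j}$ is defined over the prime field and $\sigma^\ast\L^{n-j}=\L^{n-j}$ canonically for every $\sigma$. By Galois descent for $\Q$-linear numerical motives, $e$ then descends to an idempotent in $\End_{\sM_\num}(h_{2n,j}(T))$, cutting out a nonzero summand isomorphic to $\L^{n-j}\otimes V_0$ with $V_0$ a nonzero Artin motive over $k$. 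Its $(-1)$-twist $\L^{n-j-1}\otimes V_0$ is then effective (Artin motives over $k$ are summands of $h(\Spec L)$ for some finite \'etale $L/k$, hence effective), contradicting the primitivity of $h_{2n,j}(T)$. This forces the vanishing, and functoriality of the whole chain under $\Gal(k_s/k)$ yields the claimed isomorphism of Galois modules.
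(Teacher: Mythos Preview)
Your argument is correct and matches the paper's: decompose $A_n^\num$ through the refined CK decomposition, discard $i\ne 2n$ by weight, isolate $j=n$ via primitivity and Schur's lemma, and then run this over all finite separable extensions to recover the Galois module. The paper compresses the base-change step into ``by definition of $h_{2n,n}(T)$ and Schur's lemma'' without spelling out why primitivity persists over $k_s$; your Galois-descent argument for the $j<n$ vanishing makes this explicit and is a valid way to fill that in.
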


\begin{proof} For any adequate equivalence relation, write
\begin{equation}\label{eq2.2}
A_n^\sim(T)=\sM_\sim(\L^n,h(T))=\bigoplus_{i,j}\sM_\sim(\L^n,h_{i,j}(T)(j))
\end{equation}

For $\sim=\num$, all terms with $i\ne 2n$ are $0$ for weight reasons. This leaves
\begin{multline*}
A_n^\num(T) =\sM_\num(\L^n,h_{2n}(T))\\
 = \sM_\num(\L^n,h_{2n,n}(T)(n)) = \sM_\num(\un,h_{2n,n}(T))
\end{multline*}
by definition of $h_{2n,n}(T)$ and Schur's lemma. This computation over all finite separable extensions of $k$ yields the lemma.
\end{proof}

\subsection{The abelian summand}

Here we study the abelian variety $J^2$ introduced in Lemma \ref{l2.2}. We first note:

\begin{lemma} $J^2$ contains $P_T\approx \Alb_T$ as a direct summand (up to isogeny).
\end{lemma}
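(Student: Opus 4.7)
The plan is to realize $h_1(T)$ as an isogeny summand of $h_1(J^2)$ in $\sM$: by Lemma \ref{l1} \eqref{eq1.3}, this amounts to showing that $P_T \approx \Alb_T$ is an isogeny factor of $J^2$. The key tool is a motivic Lefschetz operator attached to a smooth hyperplane section. I would fix a smooth hyperplane section $i\colon Y \inj T$ with $Y$ a surface. By the Lefschetz hyperplane theorem, $H^1(T) \iso H^1(Y)$, so by Lemma \ref{l1} \eqref{eq1.3} the motivic pullback $i^*\colon h_1(T) \iso h_1(Y)$ is an isomorphism in $\sM$ up to isogeny. The codimension-one Gysin map yields a morphism $i_*\colon h_1(Y)(1) \to h_3(T)$ in $\sM$ (both sides of weight $3$), and the composite
\[
L\colon h_1(T)(1) \xrightarrow{i^*(1)} h_1(Y)(1) \xrightarrow{i_*} h_3(T)
\]
is, by the projection formula, cup product with the hyperplane class $[Y]$ on $H^1(T)$.

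Iterating the construction gives $L^2\colon h_1(T)(2) \to h_5(T)$, whose cohomological realization is the Hard Lefschetz isomorphism $H^1(T) \iso H^5(T)(2)$. By Poincar\'e duality (in $\sM[\L^{-1}]$, transported back via full faithfulness of $\L$-tensoring), $h_5(T) \simeq h_1(A)(2)$ for an abelian variety $A$ isogenous to the dual of $P_T$. Lemma \ref{l1} \eqref{eq1.3} then identifies morphisms $h_1(T)(2) \to h_1(A)(2)$ with $\Hom(P_T,A) \otimes \Q$, and since $L^2$ induces an isomorphism on $H^1$-realizations, the corresponding map of abelian varieties is an isogeny. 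Hence $L^2$ is an isomorphism in $\sM$ modulo isogeny, which forces $L$ itself to be a split monomorphism, and $h_1(T)(1)$ is a direct summand of $h_3(T)$.

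Finally, using $h_3(T) = t_3(T) \oplus h_1(J^2)(1)$, the projection $h_1(T)(1) \to t_3(T)$ must vanish: in $\sM_\num$, $t_3(T)$ is a sum of simple \emph{primitive} weight-$3$ motives, whereas every simple summand of $h_1(T)(1)$ has coniveau $\ge 1$, so $\sM_\num(h_1(T)(1), t_3(T)) = 0$ by Schur and the uniqueness of the coniveau decomposition. Therefore $h_1(T)(1)$ is a direct summand of $h_1(J^2)(1)$, and untwisting by $\L$ (fully faithful via $\sM \inj \sM[\L^{-1}]$) gives $h_1(T)$ as a summand of $h_1(J^2)$; applying Lemma \ref{l1} \eqref{eq1.3} yields the claim. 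The main obstacle I foresee is tracking Tate twists carefully in the effective covariant category when setting up the Gysin and Lefschetz morphisms, and verifying motivic Hard Lefschetz at the $h_1$-level; both reduce, via Lemma \ref{l1} \eqref{eq1.3}, to essentially routine statements about isogeny classes of abelian varieties.
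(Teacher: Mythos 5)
Your proof is correct and follows essentially the same route as the paper: factor the Lefschetz isomorphism $h_5(T)\iso h_1(T)(2)$ through $h_3(T)(1)$ to exhibit $h_1(T)(1)$ as a direct summand of $h_3(T)$, then use primitivity of $t_3(T)$ (Schur's lemma in $\sM_\num$) to see it lands in $h_1(J^2)(1)$. The only difference is that you re-derive the isomorphism \eqref{eq3.1} for $i=1$ via the Gysin construction and Hard Lefschetz on $H^1$, whereas the paper simply cites Murre--Scholl for it.
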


\begin{proof} Let $L\in CH^1(T)$ be the class of a smooth hyperplane section. By Murre and Scholl \cite{murre,scholl}, intersection product with $L^{3-i}$ induces isomorphisms
\begin{equation}\label{eq3.1}
h_{6-i}(T)\iso h_i(T)(3-i)
\end{equation}
for $i=0,1$. For $i=1$, \eqref{eq3.1} factors through $h_3(T)(1)$, which implies that $h_1(T)$ is a direct summand of $h_{3,1}(T)$ in $\sM_\num$.
\end{proof} 

We now give two universal properties of $J^2$ in terms of numerical equivalence. For this, note that multiplication by integers $n$ on any abelian variety $A$ acts on $h_i(A)$ by the homothety of scale $n^i$ \cite{dm}. Thus the inclusion $h_1(J^2)(1)\inj h_3(T)$ (resp. the projection $h_3(T)\surj h_1(J^2)(1)$) obtained from \eqref{eq2.5a} defines a canonical element
\[P^2\in A^2_\num(J^2\times T)_\Q^{[1]} \quad \text{(resp. }P_2\in A_2^\num(T\times J^2)_\Q^{[1]})\]
(Poincar\'e classes), where the superscript $^{[1]}$ means the eigensubspace where multiplication by $m$ on $J^2$ acts by the homothety of scale $m$. 

\begin{lemma}\label{l5.1} a) The classes $P^2$ and $P_2$ verify
\[\langle P^2,P_2\rangle =1\]
for the intersection pairing.\\
b) For any abelian variety $A$ and any $\alpha\in A^2_\num(A\times T)_\Q^{[1]}$, there is a unique (isogeny class of) morphism $u:A\to J^2$ 
such that $u^*P^2=\alpha$.\\ 
c) For any abelian variety $A$ and any $\alpha\in A_2^\num(T\times A)_\Q^{[1]}$, there is a unique (isogeny class of) morphism $u:J^2\to A$ 
such that $u_*P_2=\alpha$.
\end{lemma}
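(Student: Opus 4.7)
The plan is to reinterpret both $A^2_\num(T\times A)^{[1]}$ and $A_2^\num(T\times A)^{[1]}$ as $\sM_\num$-morphism groups involving $h_1(A)$, use the coniveau decomposition to reduce them to morphism groups involving the summand $h_1(J^2)(1)$ of $h_3(T)$, and then invoke the identification $\sM_\num(h_1(Y),h_1(X))=\Hom(\Alb_Y,\Alb_X)\otimes\Q$ of Lemma~\ref{l1}\eqref{eq1.3} to read off the universal properties.

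First I would unwind definitions. Writing $A^2_\num(T\times A)=\sM_\num(h(T)\otimes h(A),\L^2)$ and using Poincar\'e duality $h(T)^\vee\simeq h(T)(-3)$, this becomes $\sM_\num(h(A)(1),h(T))$. The action of $[n]_A$ on $h_i(A)$ is by $n^i$, so the $[1]$-eigenspace selects exactly the factor through $h_1(A)$; since $h_1(A)(1)$ has weight $3$, by semi-simplicity the morphism lands in $h_3(T)$. Dually, $A_2^\num(T\times A)=\sM_\num(\L^2,h(T)\otimes h(A))\simeq \sM_\num(h(T),h(A)(1))$, whose $[1]$-eigenspace is $\sM_\num(h_3(T),h_1(A)(1))$.

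Second, using that $t_3(T)=h_{3,0}(T)$ is a sum of primitive simples (coniveau $0$) while $h_1(A)(1)$ has coniveau $\ge 1$, Schur's lemma in the semi-simple category $\sM_\num$ gives
\[\sM_\num(h_1(A)(1),t_3(T))=0=\sM_\num(t_3(T),h_1(A)(1)),\]
so the morphism groups above equal $\sM_\num(h_1(A)(1),h_1(J^2)(1))$ and $\sM_\num(h_1(J^2)(1),h_1(A)(1))$ respectively. Cancelling the Tate twist and applying Lemma~\ref{l1}\eqref{eq1.3} (with $\Alb_A=A$ for abelian $A$) yields canonical isomorphisms
\[A^2_\num(T\times A)^{[1]}\simeq \Hom(A,J^2)\otimes\Q,\qquad A_2^\num(T\times A)^{[1]}\simeq \Hom(J^2,A)\otimes\Q.\]
By construction, $P^2$ (resp.\ $P_2$) is the class of the inclusion (resp.\ projection) of the summand $h_1(J^2)(1)\subset h_3(T)$, so under these identifications with $A=J^2$ both Poincar\'e classes correspond to $\id_{J^2}$. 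Parts (b) and (c) then follow by functoriality: the pullback $\phi^*P^2$ (resp.\ pushforward $\phi_*P_2$) corresponds to $\phi$ under the bijection, so $\alpha\mapsto \phi$ is the desired unique solution.

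For part (a), I would trace through the same identifications to show that the intersection pairing on $T\times J^2$ corresponds, via Poincar\'e duality on $T$ and on $J^2$, to the composition pairing
\[\sM_\num(h_1(J^2)(1),h_3(T))\otimes \sM_\num(h_3(T),h_1(J^2)(1))\to \End_{\sM_\num}(h_1(J^2)(1))\to\Z,\]
where the last arrow is the normalised degree. Since $P_2\circ P^2=\id_{h_1(J^2)(1)}$ by the very definition of the splitting $h_3(T)=t_3(T)\oplus h_1(J^2)(1)$, this gives $\langle P^2,P_2\rangle=1$. The main obstacle is precisely this last step: one must carefully match the naive intersection pairing of a codimension-$2$ and a dimension-$2$ cycle on $T\times J^2$ with the motivic composition pairing, handling simultaneously the Poincar\'e dualities $h(T)^\vee\simeq h(T)(-3)$ and $h_1(J^2)^\vee\simeq h_1(J^2)(-1)$ (via a choice of polarisation, only well-defined up to isogeny, which is harmless in $\Ab\otimes\Q$). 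Once this bookkeeping is in place, the value $1$ is tautological from $p\circ i=\id$.
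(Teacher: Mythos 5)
Your proof is correct and takes essentially the same route as the paper's: for (b)/(c) the paper likewise views $\alpha$ as a morphism $h_1(A)(1)\to h(T)$ that must factor through $h_1(J^2)(1)$ by the uniqueness of the refined K\"unneth decomposition in $\sM_\num$ (your weight/coniveau/Schur argument is exactly what that phrase encapsulates) and then reads off $\phi$ via \eqref{eq1.3}, and for (a) it likewise reduces the pairing to the identity $P_2\circ P^2=\Id_{h_1(J^2)(1)}$. Your write-up merely makes explicit the duality bookkeeping that the paper leaves implicit.
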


\begin{proof} a) follows from the fact that the composition
\[h_1(J^2)(1)\by{P^2} h_3(T)\by{P_2} h_1(J^2)(1)\]
is the identity.
Let us prove b) (the proof of c) is dual). Such an $\alpha$ corresponds to a morphism $ h_1(A)(1)\to h(T)$ in $\sM_\num$, which factors uniquely through $P^2$ by uniqueness of the refined K\"unneth decomposition.
\end{proof}

Next, we want to compare $J^2$ with Murre's algebraic representative $\ab^2$ \cite{murreSitges}; by \cite[Th. 4.4 and Rk. 4.5]{vialetal}, it is defined over $k$ provided $k$ is perfect.  Since the sequel will only be used in Remark \ref{r7.11}, we  only sketch some arguments. 

From the split epimorphism $h(T)\surj h_1(J^2)(1)$ of Lemma \ref{l2.2}, we get a split epimorphism in $\Ab\otimes \Q$
\[
\sM(h(X)(1),h(T))\\
\to \sM(h(X)(1),h_1(J^2)(1))
\]
for any $X\in \Sm^\proj$. Using   \eqref{eq1.4a} and translating to $\Ab$, this amounts (after choosing an abelian variety $\tilde J^2$ in the isogeny class $J^2$) to an honest homomorphism $\phi(X):CH^2(X\times T)\to \tilde J^2(X)$, natural  for the action of correspondences, and a natural homomorphism $\sigma(X)$ in the opposite direction such that $\phi(X)\sigma(X)=m$ for an integer $m>0$ independent of $X$. In particular, when $k$ is algebraically closed we get an epimorphism (for $X=\Spec k$):
\[\phi(k):CH^2(T)\to \tilde J^2(k).\]

\enlargethispage*{20pt}

The naturality of $\phi$ then shows that, for any $Y\in CH^2(X\times T)$, the composition
\[X(k)\by{\theta} CH_0(X)\by{Y_*} CH^2(T)\by{\phi(k)} \tilde J^2(k)\]
(where $\theta(x)=[x]$) is induced by $\phi(X)(Y)$. 
Hence $\phi$ is regular, and so is its restriction to $CH^2(T)_\alg$. Since $\sM(\L,h_1(J^2)(1))$ consists of algebraically trivial cycles \eqref{eq1.5}, $\sigma(\Spec k)$ lands into $CH^2(T)_\alg$, so that $\phi_{|CH^2(T)_\alg}$ is still surjective. Coming back to the general case, this and the universal property of $\ab^2$ yield an epimorphism  
\[v:\ab^2_{\bar k}\Surj \tilde J^2_{\bar k}\]
where $\bar k$ is an algebraic closure of $k$, that we assume perfect.

\enlargethispage*{20pt}

\begin{prop}\label{p6.2}  
The morphism $v$ is an isogeny defined over $k$.
\end{prop}

\begin{proof}[Sketch]\footnote{The sketch consists of neglecting to keep track of some denominators in \S\S 2-3.} Let $A=\ab^2$. By \cite[Cor. 1.6.3]{murreSitges} and  \cite[Lemma 4.9]{vialetal}, there exists a correspondence $Y \in CH^2(A \times T)$ and a positive integer $r$ such that we have $\psi\circ Y_*\circ \theta_0 = r1_{A(\bar k)}$, where $\psi:CH^2(T_{\bar k})_\alg\to A(\bar k)$ is the universal regular homomorphism and $\theta_0(a) = [a]-[0]$.

We claim that we can choose $Y$ such that its image in $A^2_\num(A \times T)_\Q$ lies in $A^2_\num(A \times T)_\Q^{[1]}$. Granting this claim, the universal property of Lemma \ref{l5.1} b) provides a $k$-homomorphism $u :A\to \tilde J^2$ 
such that the map $A(\bar k)\by{r} A(\bar k)$ factors through $u (\bar k)$. But the construction of the natural transformation $\phi$ then shows that 
$u(\bar k)=rv(\bar k)$; thus $u_{\bar k}=rv$, 
and $v$ is an isogeny defined over $k$.

To prove the claim, we first note that since $A$ has a CK decomposition \cite{dm}, we may write any $Z\in CH^2(A\times T)=\sM(h(A)(1),h(T))$ as $Z=\sum Z^i$ where $Z^i\in \sM(h_i(A)(1),h(T))$. It therefore suffices to show that, if $Z^1=0$, then $\psi \circ Z_*\circ \theta_0=0$. (Indeed, we then have  $\psi \circ Y_*\circ \theta_0=\psi \circ Y^1_*\circ \theta_0$.)

For this, we use Bloch's maps $\lambda^i_X:CH^i(X)\{l\} \to H^{2i-1}_\et(X,\Q_l/\Z_l(i))$ from \cite{blochroitman}, with $l\ne \car k$: by loc. cit., Prop. 3.5, they are compatible with the action of correspondences. (This could be seen more easily by interpreting the $\lambda^i$ in a modern way in terms of étale motivic cohomology via \cite[Prop. 4.17]{cycletale}.) Let $d=\dim A$.   
By hypothesis, $Z:H^{2d-1}_\et(A,\Q_l(d))\to H^{3}_\et(T,\Q_l(2))$ is $0$ as it acts via $Z^1$, hence $Z:H^{2d-1}_\et(A,\Q_l/\Z_l(d))\allowbreak \to H^{3}_\et(T,\Q_l/\Z_l(2))$ has finite image. 
Since $\lambda^2_T$ is injective \cite[Prop. 9.2]{murreSitges},  we find that $\psi \circ Z_*\circ \theta_0$ has finite image in $A(k)\{l\}$. Since $\psi$ is regular, $\psi \circ Z_*\circ \theta_0$ is induced by an endomorphism of $A$, which must be trivial.
\end{proof}

\begin{rk}\label{r6.1}  
The cycle $Y$ appearing in the proof of Proposition \ref{p6.2} is close to Voisin's ``universal'' codimension $2$ cycle (\cite[Def. 1.5]{voisin}, see also \cite[Déf. 5.3]{ct}). The difference is that, in loc. cit., the integer $r$ is required to be equal to $1$.
\end{rk}

\section{$t_3(T)$ and the Griffiths group}\label{s7}

In this section, we do two related things:

\begin{itemize}
\item Study the uniqueness of the lifts from Lemma  \ref{l2.2}: see Corollary \ref{c6.1} and Lemma \ref{l7.1}. (Recall that the corresponding lift for a surface is unique \cite{kmp}.)
\item Relate the Griffiths group of $T$ to the motive $t_3(T)$: see Theorem \ref{t7.1} and Conjecture \ref{co1}.
\end{itemize}

\subsection{A Chow-Lefschetz condition}
To go further, we need:

\begin{hyp}\label{h1} There exists a smooth hyperplane section $\Sigma\subset T$ such that \eqref{eq3.1} is an isomorphism also for $i=2$. 
\end{hyp}

This is true in the same cases as in \S \ref{s4}:

\begin{prop}\label{p7.2} Hypothesis \ref{h1} holds if $T$ is an abelian variety, a complete intersection in $\P^N$ or for $T=C\times S$ if $\Sigma$ is defined as in \cite[Prop. 1.4.6 (ii)]{kleiman-dix}. 
\end{prop}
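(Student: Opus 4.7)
\noindent Hypothesis \ref{h1} asks that intersection with $L=[\Sigma]$ induce an isomorphism $h_4(T)\iso h_2(T)(1)$ in $\sM$ for some smooth hyperplane section $\Sigma$. The cases $i=0,1$ of \eqref{eq3.1} are already granted by Murre--Scholl, so only the middle case $i=2$ has to be produced. My plan is to treat the three families in turn.

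For an abelian threefold $T=A$, I would quote Künnemann's work on Chow motives of abelian varieties: for any symmetric ample class $L$ on $A$, the Lefschetz operator $L^{d-i}$ realises an isomorphism $h_{2d-i}(A)\iso h_i(A)(d-i)$ already in $\sM$, for every $i\le d$. Specialising to $d=3$, $i=2$ is what is wanted. For a smooth threefold complete intersection $T\subset \P^N$ with $L$ the hyperplane restriction, weak Lefschetz combined with any CK decomposition identifies $h_2(T)\simeq \L$ and $h_4(T)\simeq \L^2$ in $\sM$: the primitive part sits entirely in degree $3$, while the outer summands are cut out by the powers of $L$. Intersection with $L$ is then multiplication by $L^3\ne 0$ in $\Hom(\L^2,\L^2)\otimes \Q=\Q$, an isomorphism in $\Ab\otimes \Q$.

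The product case $T=C\times S$ is the one requiring genuine work. Following Kleiman, I would take $L=p_C^*L_C+p_S^*L_S$, with $L_C$ a point of $C$ and $L_S$ a sufficiently ample divisor on $S$ so that $|L|$ contains a smooth member $\Sigma$ (Bertini). Using the Künneth decomposition of $C\times S$, the refined CK for $S$ ($h_2(S)=t_2(S)\oplus\NS_S(1)$) and the known Chow--Lefschetz iso $h_3(S)\simeq h_1(S)(1)$ for surfaces (built into Murre's explicit CK projectors), both $h_4(C\times S)$ and $h_2(C\times S)(1)$ decompose into the same four summands
\[\L^2,\quad h_1(C)\otimes h_1(S)(1),\quad t_2(S)(1),\quad \NS_S(2).\]
The operator $L=L_C\otimes\Id+\Id\otimes L_S$ acts as a block matrix with respect to these. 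On $t_2(S)(1)$ and on $h_1(C)\otimes h_1(S)(1)$ it is multiplication by $\deg L_C=1$; the potential cross-term $L_S\colon t_2(S)\to\L$ vanishes by \eqref{eq2a}. On the summand $h_1(C)\otimes h_3(S)\to h_1(C)\otimes h_1(S)(1)$ the contribution of $\Id\otimes L_S$ is the Chow--Lefschetz iso of $S$ in degree $1$. The remaining $2\times 2$ block couples $\L^2\leftrightarrow\NS_S(2)$ through the inclusion $L_S\colon \un\inj\NS_S$ and the intersection pairing $\alpha\mapsto\alpha\cdot L_S$; a short linear algebra argument shows it is invertible as soon as $L_S^2\ne 0$, which holds for any ample $L_S$.

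The main obstacle is, unsurprisingly, the product case: the combinatorics above are routine, but they rely on the Chow-level statement $L_S\colon h_3(S)\iso h_1(S)(1)$ for any surface (read off Murre's construction, where $\pi_3$ is obtained by twisting $\pi_1$ with a polarisation) and on a careful book-keeping of Kleiman's cross-terms so that the $\NS$-block becomes visibly invertible. This packaging is the actual content of Appendix~B (Theorem~\ref{tB.1}), to which the formal statement of this proposition reduces.
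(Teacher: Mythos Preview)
Your approach for all three cases is correct. For abelian varieties you cite K\"unnemann, as does the paper; for complete intersections you spell out what the paper calls ``trivial''. For $C\times S$, both your argument and the paper's Theorem~\ref{tB.1} compute $L=L_C\otimes 1+1\otimes L_S$ as a block matrix on a K\"unneth decomposition, but you split $h_2(S)$ differently: you use the refined CK decomposition $t_2(S)\oplus\NS_S(1)$, whereas the paper uses the \emph{primitive} decomposition $h_2(S)=h'_2(S)\oplus p_2(S)$ with $h'_2(S)\simeq\L$ one-dimensional. With the paper's choice the non-obvious block is genuinely $2\times 2$ and lower-triangular with invertible diagonal, so invertibility is immediate; with yours the block has size $(1+\rho)\times(1+\rho)$ (where $\rho=\rank\NS_S$) and its invertibility requires the little argument about $L_S^2\ne 0$ that you sketch. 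Both work; the primitive decomposition is a bit tidier here.

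One slip: you write that on $h_1(C)\otimes h_1(S)(1)$ the operator is multiplication by $\deg L_C$, but $L_C\otimes 1$ acts as zero there (it sends $h_1(C)$ to $h_{-1}(C)(1)=0$); the isomorphism on that summand comes entirely from $1\otimes L_S\colon h_3(S)\iso h_1(S)(1)$, as you in fact say one sentence later. Also, calling the residual piece a ``$2\times 2$ block'' is literally correct only when $\rho=1$.
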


\begin{proof} The first case follows from  \cite{kunnemann}, the second one is trivial and the last one is proven
in Theorem \ref{tB.1}.  
\end{proof}

\subsection{Uniqueness of \eqref{eq2.5}}

\begin{prop}\label{l2.4} Let $\Sigma\by{i} T$ be a smooth hyperplane section as in Hypothesis \ref{h1}. Then the morphism $i_*:h(\Sigma)\to h(T)$ induces a split surjection $t_2(\Sigma)\Surj t_2(T)$.
\end{prop}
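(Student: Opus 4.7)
Write $\phi : t_2(\Sigma) \inj h_2(\Sigma) \xrightarrow{i_*} h_2(T) \Surj t_2(T)$ for the composition we wish to show is a split surjection; it is well-defined in $\sM$ thanks to Lemma~\ref{l3.1}. I plan to build a right inverse $\psi : t_2(T) \to t_2(\Sigma)$ by exploiting Hypothesis~\ref{h1} together with the Gysin factorization of the Lefschetz operator.

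The first step is to produce a section at the level of $h_2$. Let $i^! : h(T) \to h(\Sigma)(1)$ denote the contravariant Gysin morphism associated to $i$, given by the transposed graph $\Gamma_i^t \in CH^{\dim T}(T \times \Sigma) = \sM(h(T), h(\Sigma)(1))$. The motivic Lefschetz operator factors as $L = i_*(1) \circ i^!$ on $h(T)$. By Hypothesis~\ref{h1}, the restriction of $L$ to the weight-two piece is an isomorphism $L : h_4(T) \iso h_2(T)(1)$ in $\sM$, so the corresponding factorization
\[
h_4(T) \xrightarrow{\,i^!\,} h_2(\Sigma)(1) \xrightarrow{\,i_*(1)\,} h_2(T)(1)
\]
has invertible composite. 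Inverting and untwisting by $\L^{-1}$ gives
\[
s \,:=\, (i^! \circ L^{-1})(-1) \,:\, h_2(T) \To h_2(\Sigma)
\]
with $i_* \circ s = \mathrm{id}_{h_2(T)}$. Now set $\psi := p_\Sigma \circ s \circ \iota_T$, using the canonical projector $p_\Sigma : h_2(\Sigma) \Surj t_2(\Sigma)$ and inclusion $\iota_T : t_2(T) \inj h_2(T)$ furnished by Lemma~\ref{l3.1}.

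A short diagram chase shows that $\phi \circ \psi = \mathrm{id}_{t_2(T)}$ provided the composition
\[
\NS_\Sigma(1) \inj h_2(\Sigma) \xrightarrow{i_*} h_2(T) \Surj t_2(T)
\]
vanishes, i.e.\ provided $\sM(\NS_\Sigma(1), t_2(T)) = 0$. This is the main obstacle: it is a higher-dimensional analog of the universal property~\eqref{eq2} of $t_2$, namely that any morphism from a motive of the form $M(1)$ into the coniveau-zero summand $t_2(T) = h_{2,0}(T)$ should vanish. Heuristically, $\NS_\Sigma(1)$ has coniveau $\ge 1$ while $t_2(T)$ has coniveau $0$, so the corresponding Hom group vanishes in $\sM_\num$ by Jannsen semisimplicity (Schur's lemma); the lift to $\sM$ is the content of Proposition~\ref{p6.1} (the higher-dimensional version of the universal properties of $t_2$), and can alternatively be proved directly by adapting the argument of Lemma~\ref{l3.1}.
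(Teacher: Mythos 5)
Your first step coincides with the paper's: Hypothesis \ref{h1} together with the Gysin factorisation of the Lefschetz operator splits $i_*:h_2(\Sigma)\to h_2(T)$, exactly as in the printed proof. The gap is in the descent to the $t_2$-pieces. Writing $i_*$ and your section $s$ as $2\times 2$ matrices with respect to $h_2(\Sigma)=t_2(\Sigma)\oplus\NS_\Sigma(1)$ and $h_2(T)=t_2(T)\oplus h_{2,1}(T)(1)$, the relation $i_*s=1$ gives $\phi\psi=1-bc'$ with $b:\NS_\Sigma(1)\to t_2(T)$ and $c':t_2(T)\to\NS_\Sigma(1)$, so you indeed need to kill $b$ (or at least $bc'$). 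But the vanishing $\sM(\NS_\Sigma(1),t_2(T))=0$ you invoke is not available at this point: Proposition \ref{p6.1} concerns $t_3(T)$, not $t_2(T)$, and the statement $\sM(M(1),t_2(T))=0$ for a \emph{threefold} is precisely what the paper later \emph{deduces} from this proposition (once $t_2(T)$ is exhibited as a direct summand of $t_2(\Sigma)$, one pulls the vanishing back from \eqref{eq2} for the surface $\Sigma$; this is how it is used in the proof of Theorem \ref{t7.1}). Unlike the surface case, the projector defining $t_2(T)$ is constructed abstractly by lifting idempotents, so there is no a priori geometric control making the argument of Lemma \ref{l3.1} ``adaptable''; as it stands your argument is circular.

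The repair is the paper's nilpotence trick. What one \emph{can} say unconditionally is: the entries $h_{2,1}(T)(1)\to t_2(\Sigma)$ and $t_2(\Sigma)\to h_{2,1}(T)(1)$ vanish by \eqref{eq2} and \eqref{eq2a} applied to the surface $\Sigma$; the entries $b$ and $c'$ are only \emph{numerically} trivial (Schur's lemma and the coniveau decomposition in $\sM_\num$). Since $c'b$ is an endomorphism of the twisted Artin motive $\NS_\Sigma(1)$, whose endomorphism ring is detected by numerical equivalence, one gets $c'b=0$, hence $(bc')^2=0$. Then $aa'=1-bc'$ yields $aa'(1+bc')=1$, so the corrected morphism $a'(1+bc')$ is a genuine right inverse of $a=\phi$. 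Note that concluding $bc'=0$ itself (rather than $(bc')^2=0$) would amount to the injectivity of $\End_\sM(t_2(T))\to\End_{\sM_\num}(t_2(T))$ on this class, which is a nilpotence-type statement not available unconditionally.
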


\begin{proof} The isomorphism \eqref{eq3.1} factors as
\[h_4(T)\by{i^*} h_2(\Sigma)(2)\by{i_*} h_2(T)(2),\]
showing that $i_*:h_2(\Sigma)\to h_2(T)$ is split surjective. Let $\lambda:h_2(T)\to h_2(\Sigma)$ be a right inverse. 
Using the decompositions \eqref{eq1.6} and \eqref{eq2.5}, write $i_*$ (resp. $\lambda$) as a $2\times 2$ matrix
\[i_*=\begin{pmatrix} a& b\\ c& d\end{pmatrix}, \quad \lambda=\begin{pmatrix} a'& b'\\ c'& d'\end{pmatrix}.\]

Then $b'=c=0$ by \eqref{eq2} and \eqref{eq2a}, and $b,c'$ are numerically equivalent to $0$ by Schur's lemma, hence $c'b=0$ by \eqref{eq1.3a}. From $i_*\lambda=1$, we get $aa'=1-bc'$ with $(bc')^2=bc'bc'=0$; thus $aa'(1+bc')=1$ and $a:t_2(\Sigma)\to t_2(T)$ has a right inverse.  
\end{proof}

\begin{cor}\label{c6.1} Under Hypothesis \ref{h1}, the lift of \eqref{eq2.5} given by Lemma \ref{l2.2} is unique, up to unique isomorphism.
\end{cor}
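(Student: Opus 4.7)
The plan is to reduce uniqueness to the vanishing of the two cross-Hom groups
\[\sM(t_2(T),h_{2,1}(T)(1))=0, \qquad \sM(h_{2,1}(T)(1),t_2(T))=0\]
for any lift of \eqref{eq2.5} provided by Lemma \ref{l2.2}. Once these are in hand, uniqueness follows formally: given two lifts $h_2(T)=t_2(T)\oplus h_{2,1}(T)(1)=t_2'(T)\oplus h_{2,1}'(T)(1)$, the composition $t_2'(T)\inj h_2(T)\Surj h_{2,1}(T)(1)$ lies in $\sM(t_2'(T),h_{2,1}(T)(1))$, which vanishes by the same argument applied to the primed lift; hence $t_2'(T)\subseteq t_2(T)$ as subobjects of $h_2(T)$. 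By symmetry the two subobjects coincide, and similarly for the complementary Artin summands, forcing the identity of $h_2(T)$ to be the (automatically unique) isomorphism between the two decompositions.

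First I would observe that the matrix computation in the proof of Proposition \ref{l2.4} uses only the formal shape of the decompositions \eqref{eq1.6}, \eqref{eq2.5} together with the vanishings \eqref{eq2} and \eqref{eq1.3a}, and hence applies without modification to any lift. It follows that for any lift, $i_*$ induces a split surjection $t_2(\Sigma)\Surj t_2(T)$, realising $t_2(T)$ as a direct summand of $t_2(\Sigma)$ in $\sM$.

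The vanishing of $\sM(h_{2,1}(T)(1),t_2(T))$ is then immediate: this group is a direct summand of $\sM(h_{2,1}(T)(1),t_2(\Sigma))$, which is zero by \eqref{eq2} applied to the surface $\Sigma$ with $M=h_{2,1}(T)$. For $\sM(t_2(T),h_{2,1}(T)(1))$, I would likewise reduce to $\sM(t_2(\Sigma),h_{2,1}(T)(1))$. Since $h_{2,1}(T)$ is an Artin motive (Lemma \ref{l2.2}), after base change to a finite Galois extension $L/k$ trivialising it, $h_{2,1}(T)_L$ becomes a direct summand of $\un^{\oplus n}$, so $\sM_L(t_2(\Sigma_L),h_{2,1}(T)(1)_L)$ embeds into $\sM_L(t_2(\Sigma_L),\L)^{\oplus n}$, which vanishes by \eqref{eq2a}. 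Galois descent, valid with $\Q$-coefficients, then yields $\sM(t_2(\Sigma),h_{2,1}(T)(1))=0$.

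The only points requiring some care are the Galois descent step (standard for motives with rational coefficients) and the verification that the proof of Proposition \ref{l2.4} is insensitive to the choice of lift; beyond these, the argument is pure formal manipulation, so I do not anticipate a real obstacle.
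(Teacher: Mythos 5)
Your proof is correct and is essentially the paper's own argument: the proof given there is the single line that Proposition \ref{l2.4}, \eqref{eq2} and \eqref{eq2a} imply the two summands are mutually orthogonal, which is precisely the vanishing of the two cross-Hom groups you establish. The details you add (that the matrix computation in Proposition \ref{l2.4} is insensitive to the choice of lift, and the transfer/Galois-descent step for the Artin summand) are exactly the ones the paper leaves implicit.
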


\begin{proof} Proposition \ref{l2.4}, \eqref{eq2} and \eqref{eq2a} imply that there are no nonzero homomorphisms between the two summands.
\end{proof}

\subsection{Uniqueness of \eqref{eq2.5a}}

\begin{lemma}\label{l7.1} Assume that $h_3(T)$ is finite-dimensional in the sense of Kimura \cite{kimura}, e.g. $T$ is of abelian type. Then the  lift of \eqref{eq2.5a} given by Lemma \ref{l2.2} is unique, up to possibly non-unique isomorphism.
\end{lemma}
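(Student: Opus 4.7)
The plan is to reduce the uniqueness claim to the classical fact that two idempotents agreeing modulo a nil-ideal are conjugate. A lift of \eqref{eq2.5a} is determined by an idempotent $e \in R := \End_\sM(h_3(T))$ cutting out a copy of $h_1(J^2)(1)$, the complementary idempotent $1-e$ producing $t_3(T)$. Given two such lifts with associated idempotents $e_1, e_2 \in R$, their images $\bar e_1, \bar e_2$ in $\bar R := \End_{\sM_\num}(h_3(T))$ must coincide: the coniveau decomposition \eqref{eq4.2} of $h_3(T)$ is canonical in the semi-simple category $\sM_\num$, since the two summands $h_1(J^2)(1)$ and $t_3(T)$ lie in distinct isotypic components (they have different coniveau, so $\sM_\num(h_1(J^2)(1), t_3(T)) = 0$ by Schur's lemma). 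Consequently $e_2 - e_1$ lies in the ideal $I := \Ker(R \to \bar R)$.

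The essential input is now Kimura's nilpotence theorem \cite{kimura}: if $h_3(T)$ is finite-dimensional, then $I$ is a nil-ideal, each of its elements being nilpotent. When $T$ is of abelian type, i.e.\ $h(T)$ is a direct summand of the motive of an abelian variety, this holds because motives of abelian varieties are finite-dimensional by Kimura--O'Sullivan, and finite-dimensionality is stable under direct summands; in particular $h_3(T)$ is finite-dimensional.

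With $I$ a nil-ideal in hand, the standard idempotent-lifting argument applies. Setting $u := e_2 e_1 + (1 - e_2)(1 - e_1) \in R$, one checks immediately that $u \equiv 1 \pmod{I}$, so $u$ is a unit of $R$, and that $u e_1 = e_2 u = e_2 e_1$. Conjugation by $u$ is then an automorphism of $h_3(T)$ transporting the first decomposition onto the second, whose restrictions give isomorphisms between the two copies of $t_3(T)$ and between the two copies of $h_1(J^2)(1)$. Since $u$ is only pinned down modulo the centraliser of $e_1$ in $R$, these isomorphisms are not canonical: this accounts for the \emph{possibly non-unique} clause in the statement. The only real obstacle is applying Kimura's nilpotence theorem correctly to the numerical kernel; once that is granted, the rest is formal manipulation with idempotents modulo a nil-ideal.
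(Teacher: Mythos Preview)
Your proof is correct and follows exactly the approach of the paper, which simply states ``This follows from Kimura's nilpotence theorem.'' You have spelled out in detail what that one-line proof means: two lifts yield idempotents agreeing modulo the kernel $I=\Ker(\End_\sM(h_3(T))\to \End_{\sM_\num}(h_3(T)))$, Kimura's theorem makes $I$ a nilideal, and the standard conjugation by $u=e_2e_1+(1-e_2)(1-e_1)$ then carries one decomposition to the other.
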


\begin{proof} This follows from Kimura's nilpotence theorem \cite[Prop. 7.5]{kimura}.
\end{proof}

\subsection{Birational properties of $t_3(T)$}\label{s7.4}
The next proposition uses Voevodsky's triangulated category of motives $\DM_\gm^\eff$ \cite{voetri} (with $\Q$ coefficients). Recall that there is a canonical fully faithful functor \cite[Cor. 6.7.3]{be-vo}
\begin{equation}\label{eq6.0}
\Phi:\sM\to \DM_\gm^\eff.
\end{equation}

\begin{prop}\label{p6.1} We have $\uHom(\Z(2),\Phi(t_3(T)))=0$ in $\DM_\gm^\eff$. In particular,
\begin{equation}\label{eq6.3}
\sM(M(2),t_3(T))=0
\end{equation}
for any $M\in \sM$ and
\begin{equation}\label{eq6.4}
\sM(k)(h(Y)(1),t_3(T))\simeq \sM(k(Y))(\L,t_3(T)_{k(Y)})
\end{equation}
for any connected $Y\in \Sm^\proj$.
\end{prop}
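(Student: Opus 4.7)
The strategy is to reduce, via the adjunction $(-\otimes\Z(2))\dashv\uHom(\Z(2),-)$, the vanishing $\uHom(\Z(2),\Phi(t_3(T)))=0$ to a family of Hom and higher Ext vanishings in $\DM_\gm^\eff$. Since $\DM_\gm^\eff$ is compactly generated by shifts of $\Phi(h(Y))$ for $Y\in\Sm^\proj$, this reduces to showing $\DM_\gm^\eff(\Phi(h(Y))(2)[n],\Phi(t_3(T)))=0$ for every smooth projective $Y$ and every $n\in\Z$. Rewriting via $\Phi(h(Y))(2)[4]\cong\Phi(h(Y)(2))$ and the full faithfulness of $\Phi$, the case $n=4$ is the core identity $\sM(h(Y)(2),t_3(T))=0$; the other cases are higher Ext groups between Chow motives inside $\DM$.

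To prove the core identity, I would use the decomposition $h_3(T)=t_3(T)\oplus h_1(J^2)(1)$ from Lemma \ref{l2.2}:
\[\sM(h(Y)(2),h_3(T))=\sM(h(Y)(2),t_3(T))\oplus \sM(h(Y)(1),h_1(J^2)).\]
In any classical Weil cohomology $H$, one has $\sM_H(h(Y)(2),h_3(T))=0$ by weight reasons (the weights of $h(Y)(2)$ are $\ge 4$ while $h_3(T)$ is pure of weight $3$). Hence every class in $\sM(h(Y)(2),h_3(T))$ is homologically trivial, and its class in $\sM_{\num}$ vanishes by Jannsen's semi-simplicity. The universal property of $J^2$ from Lemma \ref{l5.1} then identifies these null-homologous correspondences with elements of $\sM(h(Y)(1),h_1(J^2))$ -- they all factor through the $h_1(J^2)(1)$-summand -- forcing the $t_3(T)$-component to vanish. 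For the higher Ext cases ($n\ne 4$), I would invoke Bondarko's Chow weight structure on $\DM_\gm^\eff$: Chow motives form its heart, so Exts vanish in negative cohomological degrees by purity, and positive-degree Exts are controlled via the cancellation theorem (which makes $-\otimes\Z(2)$ fully faithful on $\DM_\gm^\eff$) combined with a weight-filtration spectral sequence.

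The main obstacle will be bridging numerical and rational equivalence in the core identity: while the weight argument readily kills the $\hom$-class, one must rule out null-homologous morphisms $h(Y)(2)\to t_3(T)$, which a priori might exist. Lemma \ref{l5.1} handles this at the numerical level, and the lift to rational equivalence uses the uniqueness (up to numerical isomorphism) of the Chow-motivic splitting in Lemma \ref{l2.2}. Once the proposition is established, the consequences are routine: \eqref{eq6.3} follows directly from the $\uHom$-adjunction applied to $M\in\sM$, and \eqref{eq6.4} follows because $\uHom(\Z(1),\Phi(t_3(T)))$ then becomes a birational sheaf in the sense of Kahn--Sujatha (indeed, applying $\uHom(\Z(1),-)$ to it yields the just-proved vanishing $\uHom(\Z(2),\Phi(t_3(T)))=0$), so its sections over any smooth $Y$ agree with its generic stalk at $\Spec k(Y)$, yielding the stated isomorphism.
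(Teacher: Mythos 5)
Your overall architecture (test the internal Hom against generators, then get \eqref{eq6.3} by adjunction and \eqref{eq6.4} from birationality of the resulting functor) is reasonable and the last step matches the paper, which however tests stalks at function fields $K/k$, where everything collapses to $H^{*-1}_{\Zar}(T_K,\G_m)$, i.e.\ units and $\Pic(T_K)$. But your proof of the core identity $\sM(h(Y)(2),t_3(T))=0$ has a genuine gap. You argue that every element of $\sM(h(Y)(2),h_3(T))$ is homologically, hence numerically, trivial, and then invoke Lemma \ref{l5.1} to conclude that the $t_3(T)$-component vanishes. That last step is a non sequitur: weight reasons and Jannsen's semisimplicity only show that $\sM(h(Y)(2),t_3(T))$ maps to zero in $\sM_\num$, whereas the question is precisely whether this group of \emph{numerically trivial} correspondences is itself zero modulo rational equivalence. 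Lemma \ref{l5.1} is a statement inside $\sM_\num$ about the eigenspace $A^2_\num(T\times A)^{[1]}$ and gives no control over rationally inequivalent, numerically equivalent cycles; if ``numerically trivial $\Rightarrow$ zero in $\sM$'' were available, the Griffiths group this paper studies would vanish for free. The input you are missing is the structure of codimension-one cycles: $\sM(h(Y)(2),h(T))$ is a direct summand of $\Pic(Y\times T)$, which up to isogeny is $\NS\oplus\Pic^0$, and these two pieces are exactly absorbed by the summands $h_{4,2}(T)(2)$ and $h_5(T)\simeq h_1(T)(2)$ of the refined CK decomposition, leaving nothing for $t_3(T)$. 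This is what the paper does, over function fields.

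There is a second, smaller gap in your treatment of $n\ne 4$. Bondarko's weight structure kills $\Hom(M,N[i])$ only for $i<0$ between Chow motives; positive-degree Homs are higher Chow groups and do not vanish ``by purity'', so your appeal to the weight structure plus cancellation does not dispose of them. What saves the situation is another weight drop: by duality and cancellation, $\Hom(M(Y)(2)[n],M(T))\simeq H^{5-n}_{\Zar}(Y\times T,\G_m)$, which is nonzero only for $n=4$ (the Picard group) and $n=5$ (global units), so only these two degrees require the summand-matching argument above. The paper's reduction to function fields makes exactly this bookkeeping, with $Y$ replaced by $\Spec K$.
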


\begin{proof} We proceed as in the proof of \cite[Th. 7.8.4 b)]{kmp}. So we reduce to showing that 
\[\DM_\gm^\eff(K)(\Z(2)[4],\Phi(t_3(T_K)[i]))=0\]
for any function field $K/k$ and any $i\in\Z$. By Poincaré duality, this group is a direct summand (in $\Ab\otimes \Q$) of
\[\DM_\gm^\eff(K)(\Z(2)[4],M(T_K)[i])\simeq H^{i+2}_\Nis(T_K,\Z(1))\simeq H^{i+1}_\Nis(T_K,\G_m).\]

Recall that this group is $0$ for $i\ne 0,-1$. For $i=0$, it corresponds to the summand  $(3,0)$ in the decomposition
\[\Pic(T_K)=CH_2(T_K)=\bigoplus_{(i,j)} \sM(K)(\L^2,h_{i,j}(T)(j)).\]

We find $\NS(T_K)$ and $\Pic^0(T_K)$ respectively as the summands $(4,2)$ and $(5,0)$ (the latter because $h_5(T)\simeq h_1(T)(2)$). Thus all other summands are $0$. For $i=-1$ we reason similarly.

This and the full faithfulness of $\Phi$ immediately implies \eqref{eq6.3}. To prove \eqref{eq6.4}, we note that \eqref{eq6.3} implies that the functor $\sM\ni M\mapsto \sM(M(1),t_3(T))$ factors through the category $\sM^\o$ of birational Chow motives of \cite{birat-pure}; we can then use the adjunction of \cite[Th. 6.6]{adjoints}.
\end{proof}

\subsection{The Griffiths group}

\begin{thm}\label{t7.1} Under Hypothesis \ref{h1}, we have canonical isomorphisms
\begin{equation}\label{eq7.1}
\Griff(T)
\simeq \sM_\alg(\L,t_3(T))\simeq \sM_\alg(t_3(T),\L^2)
\end{equation}
for any lift of \eqref{eq2.5a} to $\sM_\alg$.
\end{thm}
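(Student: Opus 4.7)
The plan is to plug the refined Chow--Künneth decomposition of Lemma~\ref{l2.2} into $A^2_\alg(T) = \sM_\alg(h(T), \L^2)$ and, pair by pair $(i,j)$, to identify every summand $\sM_\alg(h_{i,j}(T)(j), \L^2)$ either as zero, or as a piece that maps isomorphically onto $A^2_\num(T)$, or as the key piece $\sM_\alg(t_3(T), \L^2)$ that lies entirely in the Griffiths group. Hypothesis~\ref{h1} will enter in two crucial places, both associated with the primitive weight-$2$ piece $t_2(T)$. After expanding $A^2_\alg(T) = \bigoplus_{(i,j)} \sM_\alg(h_{i,j}(T), \L^{2-j})$ and listing the nonzero pairs using Lemma~\ref{l2.2}, Hypothesis~\ref{h1} and Poincaré duality, the relevant summands are indexed by $h_{2,1}(T) = \NS_T$, $h_{3,0}(T) = t_3(T)$, $h_{3,1}(T) = h_1(J^2)$, $h_{4,1}(T) = t_2(T)$, $h_{4,2}(T) = \NS_T$, $h_5(T) = h_1(\Alb_T)(2)$, $h_6(T) = \L^3$, together with $h_{i,0}(T)$ for $i \le 2$.

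The easy summands first. Those with $i \le 1$ embed into $A^2_\alg$ of a point or a curve and hence vanish; the piece $(6,3)$ gives $\sM_\alg(\L, \un) = 0$. The Artin summands: $\sM_\alg(\NS_T(1), \L^2) = \sM_\alg(\NS_T,\L) = 0$ (no map from an Artin motive to a positive Tate twist), while $\sM_\alg(\NS_T(2), \L^2) = \sM_\alg(\NS_T,\un)$ coincides with $\sM_\num(\NS_T,\un)$ because $\alg = \num$ on Artin motives, hence contributes $0$ to $\Griff(T)$. For the abelian summands $h_1(J^2)$ and $h_1(\Alb_T)$, every $\Pic^0$-class on an abelian variety is algebraically trivial, so via \eqref{eq1.5} one gets $\sM_\alg(h_1(A),\L) \simeq \sM_\alg(\un, h_1(A^\vee)) = 0$ and dually $\sM_\alg(h_1(A),\un) = 0$.

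The two appearances of $t_2(T)$ are where Hypothesis~\ref{h1} does its real work. For the $(4,1)$ piece one has $\sM_\alg(t_2(T)(1),\L^2) \simeq \sM_\alg(t_2(T),\L)$: by Proposition~\ref{l2.4} the motive $t_2(T)$ is a direct summand of $t_2(\Sigma)$ for the hyperplane section $\Sigma$, so this group embeds into $\sM(t_2(\Sigma), \L) = 0$ by \eqref{eq2a}. For the $(2,0)$ piece $\sM_\alg(t_2(T), \L^2)$, I apply the Lefschetz isomorphism $h_2(T)(1) \iso h_4(T)$ of Hypothesis~\ref{h1} to realise $\sM(t_2(T), \L^2)$ as a direct summand of $\sM(h_4(T), \L^3)$; but $\sM(h_4(T), \L^3) \subset \sM(h(T), \L^3) = A^3(T) = \Q$ is accounted for entirely by the $h_6(T) = \L^3$ component via the degree map, and the complementary CK idempotents force $\sM(h_4(T), \L^3) = 0$.

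What remains is $\sM_\alg(t_3(T), \L^2)$, the $h_{3,0}(T)$ contribution; the $h_1(J^2)(1)$ piece of $h_3(T)$ was already killed above, which also shows that the resulting Hom group is independent of the choice of lift of \eqref{eq2.5a}. Since $t_3(T)$ is primitive of weight $3$ while $\L^2$ has coniveau $2$, Schur's lemma gives $\sM_\num(t_3(T), \L^2) = 0$, so this entire group lies in $\Griff(T)$; this yields the first isomorphism of \eqref{eq7.1}. For the second, Poincaré duality on the $3$-fold $T$ restricts to a self-duality $t_3(T)^\vee(3) \simeq t_3(T)$ of the transcendental middle piece (via the $h_3$-on-$h_3$ pairing landing in $h_6(T) = \L^3$), whence
\[
\sM_\alg(t_3(T),\L^2) \simeq \sM_\alg(\un, t_3(T)^\vee(2)) \simeq \sM_\alg(\un,t_3(T)(-1)) \simeq \sM_\alg(\L,t_3(T)),
\]
the middle identifications passing through $\sM_\alg[\L^{-1}]$ and the last using that both motives are effective. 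The main obstacle is the $(2,0)$ piece: its vanishing genuinely uses Hypothesis~\ref{h1} and is the sole reason the Chow--Lefschetz condition appears in the statement.
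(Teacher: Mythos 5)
Your main computation is sound and is essentially the paper's argument in mirror image: the paper expands $A_1^\sim(T)=\sM_\sim(\L,h(T))$ covariantly over the refined CK decomposition and finds that for $\sim=\alg$ only the Artin piece (which maps isomorphically to $A_1^\num(T)$) and the $t_3$ piece survive, obtaining $\Griff(T)\simeq\sM_\alg(\L,t_3(T))$ first; you expand $A^2_\alg(T)=\sM_\alg(h(T),\L^2)$ contravariantly and are left with the dual pair, obtaining $\Griff(T)\simeq\sM_\alg(t_3(T),\L^2)$ first. Your individual vanishing arguments — including the two uses of Hypothesis \ref{h1}, once through Proposition \ref{l2.4} together with \eqref{eq2a}, and once through $A^3_\alg(T)$ being concentrated in $h_6(T)$ — are correct, as is the remark that the answer is independent of the lift because the complementary $h_1(J^2)(1)$ piece dies.

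The gap is in the final duality step. You pass from $\sM_\alg(\un,t_3(T)^\vee(2))$ to $\sM_\alg(\un,t_3(T)(-1))$ using a self-duality $t_3(T)^\vee(3)\simeq t_3(T)$ said to be ``restricted'' from Poincar\'e duality on $h_3(T)$. That isomorphism is only available in $\sM_\num$ (via uniqueness of the coniveau decomposition \eqref{eq4.2}); in $\sM_\alg$, $t_3(T)$ and $t_3(T)^\vee(3)$ are merely two lifts of the same numerical motive, and two such lifts need not be isomorphic absent Kimura finite-dimensionality — this is exactly the non-uniqueness issue of Lemma \ref{l7.1}. Concretely, for the pairing to restrict you would need the two summands of $h_3(T)$ to be orthogonal in $\sM_\alg$, i.e. $\sM_\alg(t_3(T),h_1(J^2)^\vee(2))=0$, which is not among the vanishings you established. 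The paper's proof is explicitly engineered to dodge this point, and the same dodge is available to you at no cost: since your computation proves $\Griff(T)\simeq\sM_\alg(t_3(T),\L^2)$ for \emph{any} lift of \eqref{eq2.5a}, apply it to the lift $h_3(T)\simeq h_3(T)^\vee(3)=t_3(T)^\vee(3)\oplus h_1(J^2)^\vee(2)$, whose primitive part is $t_3(T)^\vee(3)$; this yields $\Griff(T)\simeq\sM_\alg(t_3(T)^\vee(3),\L^2)=\sM_\alg(\L,t_3(T))$ without ever comparing the two lifts.
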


\begin{proof} In \eqref{eq2.2}, take $n=1$. We first examine which terms vanish for any $\sim$. This is the case of the summands for $i=0,1$. For $i=2$, we have
\[\sM_\sim(\L,h_2(T))=\sM_\sim(\L,t_2(T))\oplus \sM_\sim(\L,h_{2,1}(T)(1))= A_1^\num(T)  \]
by \eqref{eq2} and Proposition \ref{l2.4}. 

For $i=3$, we have
\[\sM_\sim(\L,h_{3,1}(T)(1)) = \sM_\sim(\un,h_1(J^2)).\]

For $i=4$, we have
\begin{multline*}
\sM_\sim(\L,h_4(T))\iso \sM_\sim(\L,h_2(T)(1))\\=\sM_\sim(\un,t_2(T)\oplus A_1^\num(T)(1))
=\sM_\sim(\un,t_2(T));
\end{multline*}
for $\sim=\rat$, the last group is a direct summand of $T(\Sigma)$ as in Proposition \ref{l2.4}. 

Finally, for $i=5,6$ we find
\begin{align*}
\sM_\sim(\L,h_5(T))\simeq \sM_\sim(\un,h_1(T)(1))&=0 \\ 
\sM_\sim(\L,h_6(T))\simeq \sM_\sim(\L,\L^3)&=0.
\end{align*}

Thus, for $\sim=\alg$, the only nonzero terms are for $(i,j)=(2,1), (3,0)$; this completes the proof of Theorem \ref{t7.1}, except for the second isomorphism. To prove it, we note that Poincar\'e duality and the uniqueness of the coniveau decomposition \eqref{eq4.2} yield an isomorphism in $\sM_\num$:
\[t_3(T)\simeq t_3(T)^*(3).\]

Since the first isomorphism of \eqref{eq7.1} is valid for any lift of $t_3(T)$ to $\sM_\alg$, we get the second one by replacing $t_3(T)$ with $t_3(T)^*(3)$. (This argument avoids the uniqueness issue from Lemma \ref{l7.1}.)
\end{proof}

\subsection{A conjecture} 

\begin{conj}\label{co1} The map $\sM(\L,t_3(T))\to \sM_\alg(\L,t_3(T))$ is bijective.
\end{conj}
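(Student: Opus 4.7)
The natural map $\sM(\L, t_3(T)) \to \sM_\alg(\L, t_3(T))$ is surjective by construction (the projection $\sM \to \sM_\alg$ is surjective on Hom-groups), so the content of Conjecture \ref{co1} is injectivity: I must show that any class $\alpha \in \sM(\L, t_3(T)) \subseteq CH^2(T)_\Q$ which is algebraically trivial already vanishes. As alluded to in Remark \ref{r7.11}, my plan is to deduce this from the Bloch--Beilinson--Murre conjectures, giving the content of Proposition \ref{p7.1}.

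\textbf{Main strategy.} The idea is to play the orthogonality of the two summands of \eqref{eq2.5a}, $h_3(T) = t_3(T) \oplus h_1(J^2)(1)$, against the BBM picture of algebraically trivial cycles. Under Murre's filtration conjecture, $F^1 CH^2(T)_\Q = CH^2_\hom(T)_\Q$ is split by the Chow--Künneth projectors, and $F^\nu/F^{\nu+1}$ is ``motivic'' in the sense that it depends only on $h_{4-\nu}(T)$ modulo homological equivalence. Combined with \eqref{eq2.5a}, this produces a decomposition
\[F^1 CH^2(T)_\Q \;\simeq\; \sM(\L, h_1(J^2)(1)) \;\oplus\; \sM(\L, t_3(T)),\]
in which the first summand is canonically $J^2(k)_\Q$. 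Murre's regular homomorphism theorem supplies a surjection $A_1^\alg(T)_\Q \Surj \ab^2(k)_\Q$ from algebraically trivial $1$-cycles to his algebraic intermediate Jacobian; composed with \eqref{eq6.1}, it lands in $J^2(k)_\Q$. Assuming $\ab^2 \to J^2$ is an isogeny---e.g., under the generalized Hodge or Tate conjecture for $H^3(T)$, as in Proposition \ref{p6.2}---this identifies the algebraically trivial subgroup $A_1^\alg(T)_\Q$ of $F^1 CH^2(T)_\Q$ with precisely the first summand $\sM(\L, h_1(J^2)(1))$. The idempotent projector onto $t_3(T)$ is orthogonal to $h_1(J^2)(1)$ by construction in $\sM_\num$; lifting this orthogonality to $\sM$ via BBM (or, for $T$ of abelian type, via Kimura nilpotence combined with Lemma \ref{l7.1}) shows that the $t_3$-component of every algebraically trivial class vanishes, which is the desired injectivity.

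\textbf{Main obstacle.} The deepest input is the BBM identification $F^2 CH^2(T)_\Q = \ker(\text{Abel--Jacobi map to } \ab^2)$: this is precisely what prevents an algebraically trivial class from being ``hidden'' inside the transcendental summand $\sM(\L, t_3(T))$, and it is morally as deep as Conjecture \ref{co1} itself, since both capture the principle that ``$t_3$ sees no algebraic equivalence''. A secondary difficulty is upgrading \eqref{eq6.1} from a surjection to an isogeny, for which the generalized Hodge (or Tate) conjecture appears necessary in general. For $T$ of abelian type, one may hope to bypass this second point and to replace the orthogonality step by Kimura's nilpotence theorem, yielding a partial unconditional statement; however, the filtration identification above seems genuinely required even there.
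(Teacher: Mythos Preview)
Your approach differs from the paper's and carries extra hypotheses that the paper avoids. The paper (Proposition \ref{p7.1}) does not try to identify $A_1^\alg(T)_\Q$ inside a Chow--K\"unneth decomposition at all. Instead it applies the Weil--Bloch trick (Lemma \ref{lwb}): the functor $F(M)=\sM(M(1),t_3(T))$ factors through $\sM_\alg$ iff it kills $1_{h(Y)}\otimes\alpha$ for every curve $\Gamma$ and every $\alpha\in\Pic^0(\Gamma)$. This reduces the question to showing $\sM(h(Y)\otimes h_1(\Gamma)(1),t_3(T))=0$; by \eqref{eq6.4} this is $\sM(k(Y))(h_1(\Gamma)(1),t_3(T))$, which under BBM (via \cite[Prop.~5.8]{jannsen}) equals the corresponding group in $\sM_\num$, and that vanishes by Schur's lemma and the very definition of $t_3$. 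No intermediate Jacobian, no Abel--Jacobi map, and no generalized Hodge/Tate input enter.

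Your route has two genuine gaps. First, you require $\ab^2\to J^2$ to be an isogeny, which by Proposition \ref{p6.2} is an \emph{additional} conjecture (generalized Hodge or Tate for $H^3(T)$), not a consequence of BBM; so even as a conditional argument you are assuming strictly more than the paper. Second, and more seriously, the step ``this identifies $A_1^\alg(T)_\Q$ with precisely the first summand $\sM(\L,h_1(J^2)(1))$'' is not justified: surjectivity of $A_1^\alg\to J^2(k)$ says nothing about the kernel, and you yourself note that the needed input ``$F^2=\ker(\mathrm{AJ})$'' is ``morally as deep as Conjecture \ref{co1} itself''. That is exactly right --- it makes the argument circular rather than a reduction to BBM. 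The paper's use of the Weil--Bloch trick sidesteps this entirely by never asking where $A_1^\alg$ sits, only which functors annihilate $\Pic^0$-classes.
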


\begin{prop}\label{p7.1} Conjecture \ref{co1} follows from the Bloch-Beilinson--Murre (BBM) conjectures (see \cite{jannsen}).
\end{prop}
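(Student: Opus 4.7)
The plan is to establish both surjectivity and injectivity of the map. Surjectivity is automatic from Theorem \ref{t7.1}: the quotient from $CH^2(T)$ to its quotient modulo algebraic equivalence is surjective and compatible with the idempotent $p_{t_3}$ cutting out the summand $t_3(T)$ of $h(T)$, so every class in $\sM_\alg(\L, t_3(T)) \simeq \Griff(T)$ lifts to $\sM(\L, t_3(T))$. The content of the conjecture is therefore injectivity.

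For injectivity, I would take $\alpha \in \sM(\L, t_3(T))$ that is algebraically trivial in $CH^2(T)$ and show $\alpha = 0$. Using the classical characterisation of algebraic equivalence via families parametrised by smooth curves, and passing to their Jacobians in order to linearise, I would write $\alpha = g_*(\beta)$ with $g : h_1(A)(1) \to h(T)$ a morphism in $\sM$, $A$ an abelian variety, and $\beta \in A(k) = \sM(\un, h_1(A))$ via \eqref{eq1.2}. Since $p_{t_3}\alpha = \alpha$, I may post-compose with $p_{t_3}$ and replace $g$ by $\tilde{g} := p_{t_3} \circ g : h_1(A)(1) \to t_3(T)$, still with $\alpha = \tilde{g}_*(\beta)$.

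The geometric input comes from Lemma \ref{l5.1}: its universal property asserts that every morphism $h_1(A)(1) \to h_3(T)$ in $\sM_\num$ factors uniquely through $h_1(J^2)(1) \hookrightarrow h_3(T)$. Since $t_3(T)$ and $h_1(J^2)(1)$ are complementary direct summands by \eqref{eq2.5a}, this factorisation forces $\tilde{g} = 0$ in $\sM_\num$. Grothendieck's standard conjecture $D$ (hom $=$ num for motives of smooth projective varieties), which is a consequence of the BBM package, then upgrades this to $\tilde{g} = 0$ in $\sM_\hom$.

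The main obstacle, and where the full strength of BBM is invoked, is the last step: deducing $\tilde{g}_* = 0$ on $A(k) \to \sM(\L, t_3(T))$ from $\tilde{g} = 0$ in $\sM_\hom$. Under Murre's conjectures (B), (C), (D) (see \cite{jannsen}), both sides have Bloch-Beilinson filtration concentrated in a single graded piece: $A(k) = F^1 CH^1(A)$ with $F^2 = 0$, and $\sM(\L, t_3(T))$ is a summand of $\pi_3 CH^2(T) \subseteq F^1 CH^2(T)$ with $F^2 \cap \pi_3 CH^2(T) = 0$ by (C). Under the Bloch-Beilinson refinement, correspondences act on the graded pieces $F^\nu/F^{\nu+1}$ only through their image in $\sM_\hom$. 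Since $\tilde{g}$ has zero image there, $\tilde{g}_*$ annihilates the unique nonzero graded piece on each side, hence $\tilde{g}_* = 0$ and $\alpha = \tilde{g}_*(\beta) = 0$.
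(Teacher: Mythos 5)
Your proof is correct in substance and shares its skeleton with the paper's: both reduce to the fact that an algebraically trivial class in $\sM(\L,t_3(T))$ factors as $\L\by{\beta}h_1(\Gamma)(1)\by{\tilde g}t_3(T)$ with $\beta\in\Pic^0(\Gamma)$, kill $\tilde g$ in $\sM_\num$ using semisimplicity and the primitivity of $t_3(T)$ (your appeal to Lemma \ref{l5.1} is a valid but roundabout way of saying $\sM_\num(h_1(A)(1),t_3(T))=0$, which is just Schur's lemma), and then invoke BBM to lift this vanishing to rational equivalence. The difference lies in the execution of the last step. The paper proves the stronger statement that $M\mapsto\sM(M(1),t_3(T))$ factors through $\sM_\alg$ for all $M=h(Y)$ via the Weil--Bloch trick; this forces it to use the birational property \eqref{eq6.4} of $t_3(T)$ and the full faithfulness of $\sM_\num(k)\to\sM_\num(k(Y))$, and it then quotes Jannsen's Prop.~5.8 for the isomorphism $\sM(h_1(\Gamma)(1),t_3(T))\iso\sM_\num(h_1(\Gamma)(1),t_3(T))$. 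You only need the case $M=\un$, so you can bypass \eqref{eq6.4}, and you replace the citation of Prop.~5.8 by unwinding the Bloch--Beilinson formalism by hand (homologically trivial correspondences shift $F^{\bullet}$ by one; $\beta\in F^1$ with $F^2=0$ on the source; $\pi_3 CH^2(T)\cap F^2CH^2(T)=0$ by (B) and (C)) --- which is essentially the proof of that proposition in the case at hand, so both routes are legitimate. Two points to tighten: $\beta$ lives in $F^1CH^1(\Gamma)=\Pic^0(\Gamma)$ for the \emph{curve} $\Gamma$, not in $F^1CH^1(A)$ for $A=J(\Gamma)$ (as a cycle on the Jacobian it sits in $CH_0(A)$, where the filtration is indexed differently); and the passage from $\tilde g=0$ in $\sM_\num$ to $\tilde g=0$ in $\sM_\hom$ should not be waved through with ``$D$ follows from BBM'', since the naive argument for that implication is circular --- the clean statement, which is exactly what Jannsen's Prop.~5.8 supplies under BBM, is that $\sM(M,N)\to\sM_\num(M,N)$ is injective for $M,N$ pure of the same weight, and this subsumes the instance you need.
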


To prove this proposition, we need a lemma:

\begin{lemma}[Weil-Bloch trick] \label{lwb}Let $F:\sM\to \Ab\otimes\Q$ be a contravariant additive functor. Then $F$ factors through $\sM_\alg$ if and only if $F(1_M\otimes \alpha)=0$ for any $M\in \sM$, any $k$-curve $\Gamma$ and any $\alpha\in \Pic^0(\Gamma)=\sM(\un,h_1(\Gamma))$. We may restrict to $M=h(Y)$ for $Y\in \Sm^\proj$.\qed
\end{lemma}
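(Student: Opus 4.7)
The plan is to verify both implications separately; the forward direction is immediate from the definition of algebraic equivalence, while the reverse direction is the substance of the argument and relies on the classical parameter-space-to-curve reduction.

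For ($\Rightarrow$), the morphism $\alpha$, viewed as $\un\to h(\Gamma)$ through the inclusion $h_1(\Gamma)\inj h(\Gamma)$, is the class of a degree-zero $0$-cycle on the smooth projective curve $\Gamma$, hence algebraically equivalent to $0$. Therefore $1_M\otimes \alpha$ vanishes in $\sM_\alg$, so any $F$ factoring through $\sM_\alg$ kills it.

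For ($\Leftarrow$), I must show that $F$ annihilates every morphism $f$ in $\sM$ that is algebraically equivalent to $0$. Since $\sM$ is pseudo-abelian and $F$ is additive, I may extend $f$ to a morphism $\tilde f:h(X)\to h(Y)$ between motives of smooth projective varieties $X,Y$ (still algebraically trivial) and recover $f$ as a composition involving the corresponding idempotents; so it suffices to treat the case $f:h(X)\to h(Y)$ represented by an algebraically trivial cycle on $X\times Y$. The classical Weil-Bloch reduction then produces a smooth projective curve $\Gamma/k$, two $k$-points $t_0,t_1\in\Gamma(k)$, and a cycle class $W$ on $\Gamma\times X\times Y$ with
\[ f = W|_{\{t_1\}\times X\times Y} - W|_{\{t_0\}\times X\times Y}. \]
Viewing $W$ as a morphism $\phi:h(X)\otimes h(\Gamma)\to h(Y)$ in $\sM$, the difference $(\iota_{t_1})_*-(\iota_{t_0})_*:\un\to h(\Gamma)$ is a $0$-cycle class of degree $0$, hence factors through the summand $h_1(\Gamma)$, giving $\alpha\in\sM(\un,h_1(\Gamma))=\Pic^0(\Gamma)$ with $f = \phi \circ (1_{h(X)}\otimes\alpha)$. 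By contravariance, $F(f)=F(1_{h(X)}\otimes\alpha)\circ F(\phi)$, which vanishes by the hypothesis applied to $M=h(X)$; the same identity also explains why the hypothesis need only be tested on $M$ of the form $h(Y)$ with $Y\in\Sm^\proj$.

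The main obstacle is the Weil-Bloch reduction itself over a non-algebraically-closed ground field, because the stated formulation requires $k$-rational endpoints $t_0,t_1$ while algebraic equivalence over $k$ is a priori only defined via smooth connected parameter varieties with $k$-points in general position. One reduces to a curve through the two specified points by iterated Bertini cuts, and, if necessary, arranges the pair of specialisations over a suitable finite extension $k'/k$, descending by a transfer argument permissible in $\Ab\otimes\Q$ since $[k':k]$ is invertible.
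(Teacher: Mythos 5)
Your proof is correct, and it is essentially ``the'' proof: the paper states this lemma with the \verb|\qed| inside the statement, i.e.\ it offers no argument and simply invokes the Weil--Bloch trick as standard, and your identity $f=\phi\circ(1_{h(X)}\otimes\alpha)$ with $\alpha=[t_1]-[t_0]\in\Pic^0(\Gamma)=\sM(\un,h_1(\Gamma))$ is exactly the content being invoked. Your handling of the two side issues --- reducing an arbitrary $M\in\sM$ to a summand of some $h(X)$, and using a transfer in $\Ab\otimes\Q$ when the parametrising points are not $k$-rational --- is also sound, so there is nothing to add.
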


Apply this lemma to $F(M)=\sM(M(1),t_3(T))$. We shall use \cite[Prop. 5.8]{jannsen}, which Jannsen shows to be a consequence of the BBM conjectures that we now assume. If $M=h(Y)$, then for any $k$-curve $\Gamma$
\begin{multline*}
\sM(k)(h(Y)\otimes h_1(\Gamma)(1),t_3(T))\simeq\sM(k(Y))(h_1(\Gamma)(1),t_3(T))\\
\iso \sM_\num(k(Y))(h_1(\Gamma)(1),t_3(T))\osi \sM_\num(k)(h_1(\Gamma)(1),t_3(T))=0.
\end{multline*}

Here the first isomorphism is \eqref{eq6.4}, the second one is a special case of \cite[Prop. 5.8]{jannsen}, the third one is given by the full faithfulness of $\sM_\num(k)\to \sM_\num(k(Y))$ \cite[Prop. 5.5]{adjoints}, and the last vanishing is by Schur's lemma and by definition of $t_3(T)$. Hence $F$ verifies the condition of Lemma \ref{lwb}.\qed

\begin{rk}\label{r7.11} Conjecture \ref{co1} is striking, as it predicts  in view of \eqref{eq7.1} that $\Griff(T)$ is (up to isogeny) cut off $CH^2(T)$ by an idempotent self-correspondence of $T$, namely the one defining $t_3(T)$. According to Beilinson's vision, it also predicts an isomorphism
\[\Griff(T)\simeq \Ext_{\sM\sM}^1(\L,t_3(T)) \]
where $\sM\sM$ is the hypothetical abelian category of mixed motives, see \cite[(2.3)]{jannsen}.

It is also related to a conjecture of Nori \cite[p. 227]{jannsen2}: codimension $2$ cycles in the kernel of the Abel-Jacobi map are algebraically equivalent to $0$. Jannsen proves in loc. cit., Th. 6.1 that this holds under the BBM conjectures and the standard conjecture B.  

By the proof of Theorem \ref{t7.1}, the only summands of $CH^2(T)$ which contain algebraically trivial cycles are \allowbreak $\sM(\L,h_{3,1}(T)(1))=J^2(k)$, \allowbreak $\sM(\L,h_4(T))=\sM(\un,t_2(T))$ and $\sM(\L,t_3(T))$. The first two are fully algebraically trivial \eqref{eq1.5}. On the other hand,  the Abel-Jacobi map is injective on $J^2(k)$ by   
Proposition \ref{p6.2} and \cite[Th. 1.11.1]{murreSitges}. Thus Conjecture \ref{co1} refines Nori's conjecture as follows, in the case of a $3$-fold $T$ verifying Hypothesis \ref{l2.4}: 

\begin{quote} \emph{The kernel of the Abel-Jacobi map on $CH^2(T)$ equals $\sM_\sim(\L,h_4(T))=\sM_\sim(\un,t_2(T))$.} 
\end{quote}

By Proposition \ref{p7.1} and \cite[Th. 6.1]{jannsen2}, this statement follows from the BBM conjectures.

In higher dimensions, nothing new is expected to happen: one easily sees that the BBM conjectures plus the standard conjecture B imply that $CH^2(X)_\hom \to CH^2(T)_\hom$ is split injective in $\Ab\otimes \Q$ if $T$ is a $3$-dimensional general successive hyperplane section of a smooth projective variety $X$. 
\end{rk}

\section{The case $T=C\times S$}\label{s8}

We now assume that $T=C\times S$, where $C$ is a curve and $S$ is a surface; since $T$ is geometrically connected, so are $C$ and $S$. 

\subsection{A reformulation of Theorem \ref{t7.1}}

\begin{prop}\label{t1} 
There are canonical isomorphisms
\begin{equation}\label{eq2.1}
\sM_\alg(h_1(C),t_2(S))\simeq \sM_\alg(\L,t_3(C\times S))\simeq \Griff(C\times S).
\end{equation}
\end{prop}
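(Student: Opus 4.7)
Plan of the proof of Proposition \ref{t1}.

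The second isomorphism, $\sM_\alg(\L,t_3(C\times S))\simeq \Griff(C\times S)$, is an immediate instance of Theorem \ref{t7.1}: Proposition \ref{p7.2} tells us that Hypothesis \ref{h1} is satisfied for $T=C\times S$ (with the hyperplane section produced by Kleiman's construction), and $T$ has a Chow--K\"unneth decomposition since $C$ and $S$ do. So the real content is the first isomorphism
\[
\sM_\alg(h_1(C),t_2(S))\simeq \sM_\alg(\L,t_3(C\times S)),
\]
which amounts to identifying $t_3(C\times S)$ up to the ambiguity allowed in \eqref{eq2.5a}.

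First I would compute $h_3(C\times S)$ via K\"unneth together with the refined CK decomposition of $S$. Using $h_0(C)=\un$, $h_2(C)=\L$ and $h_2(S)=t_2(S)\oplus \NS_S(1)$ one gets
\[
h_3(C\times S)=h_1(C)\otimes t_2(S)\;\oplus\; h_1(C)\otimes \NS_S(1)\;\oplus\; h_1(S)(1).
\]
The last two summands are visibly of the form $N(1)$, so their classes in $\sM_\num$ have coniveau $\ge 1$, whereas the first summand has coniveau $0$. This last point needs a small argument: any primitive summand of $h_1(C)\otimes t_2(S)$ of coniveau $\ge 1$ would, after projection, yield a nonzero morphism $M(1)\to h_1(C)\otimes t_2(S)$ in $\sM_\num$; using Poincar\'e duality on $C$, which gives $h_1(C)^\vee\simeq h_1(C)(-1)$ in $\sM_\num[\L^{-1}]$, this translates into a nonzero morphism $(M\otimes h_1(C))(1)\to t_2(S)$, forbidden by \eqref{eq2}. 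By Lemma \ref{l2.2} this coniveau decomposition lifts to $\sM_\alg$, yielding a realisation of $t_3(C\times S)$ by $h_1(C)\otimes t_2(S)$.

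With this identification in hand, the first isomorphism follows by duality. Working in the rigid category $\sM_\alg[\L^{-1}]$, we have
\[
\sM_\alg[\L^{-1}](\L,h_1(C)\otimes t_2(S))\simeq \sM_\alg[\L^{-1}](\L\otimes h_1(C)^\vee,t_2(S))=\sM_\alg[\L^{-1}](h_1(C),t_2(S)),
\]
using $h_1(C)^\vee\simeq h_1(C)(-1)$ and $\L\otimes\L^{-1}=\un$. Both Hom groups are taken between effective motives, so by the full faithfulness of $\sM_\alg\to\sM_\alg[\L^{-1}]$ (cancellation by $\L$ on pure motives with $\Q$-coefficients) the computation descends to $\sM_\alg$.

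The main potential obstacle is the identification of $t_3(C\times S)$ with $h_1(C)\otimes t_2(S)$, since by Lemma \ref{l7.1} the lift of \eqref{eq2.5a} to $\sM_\alg$ need not be canonical in general; but the statement of Proposition \ref{t1} only involves the Hom groups, and these are manifestly independent of the lift. A second minor point to verify is that the vanishing \eqref{eq2} is applied correctly in $\sM_\num$ (not only in $\sM$); this is standard since $t_2(S)$ has the same defining property modulo any adequate equivalence, and it is the one needed to rule out coniveau $\ge 1$ summands at the level of $\sM_\num$ before lifting via Lemma \ref{l2.2}.
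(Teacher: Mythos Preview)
Your handling of the second isomorphism and the duality step
\[
\sM_\alg(h_1(C),t_2(S))\simeq \sM_\alg(\L,h_1(C)\otimes t_2(S))
\]
is fine. But the heart of your argument --- that $h_1(C)\otimes t_2(S)$ is primitive, hence can be taken as $t_3(C\times S)$ --- is wrong, and the error is a Tate twist slip in your duality computation. From a nonzero morphism $M(1)\to h_1(C)\otimes t_2(S)$ in $\sM_\num$ and $h_1(C)^\vee\simeq h_1(C)(-1)$ you get
\[
M(1)\otimes h_1(C)(-1)\;=\;M\otimes h_1(C)\longrightarrow t_2(S),
\]
not $(M\otimes h_1(C))(1)\to t_2(S)$. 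So \eqref{eq2} does not apply, and nothing forbids such a morphism. In fact the paper does \emph{not} claim primitivity: it writes $h_1(C)\otimes t_2(S)\simeq t'_3(C\times S)\oplus h_1(B)(1)$ in $\sM_\num$ (equation \eqref{eq9.2}), lifts this as in Lemma \ref{l2.2}, and later devotes real effort (Definition \ref{d8.1}, Lemma \ref{l8.1}, Theorem \ref{t10.1}~d)) to showing that pieces of $B$ vanish only under additional hypotheses. Remark \ref{p9.1}~b) even records the formula $T(S_{k(C)})/T(S)\simeq B(k)\oplus \Griff(C\times S)$ conditionally, which would be vacuous were $B$ forced to be zero.

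The fix is exactly what the paper does: keep the potential summand $h_1(B)(1)$ and instead observe that it is invisible to $\sM_\alg(\L,-)$, since $\sM_\alg(\L,h_1(B)(1))=\sM_\alg(\un,h_1(B))=0$ by \eqref{eq1.5}. Combined with the K\"unneth isomorphism \eqref{eq9.3} (note you dropped the summand $h_0(C)\otimes h_3(S)\simeq h_1(S)(1)$, though this is harmless since it too is a Tate twist of an $h_1$), the same vanishing kills the contributions of $h_1(J^2)(1)$, $h_1(C)\otimes\NS_S(1)$ and the two copies of $h_1(S)(1)$, yielding
\[
\sM_\alg(\L,t_3(C\times S))=\sM_\alg(\L,h_1(C)\otimes t_2(S))
\]
without ever identifying $t_3(C\times S)$ with $h_1(C)\otimes t_2(S)$.
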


\begin{proof} In view of Proposition \ref{p7.2}, the second isomorphism is Theorem \ref{t7.1}. Let us prove the first.  By duality, we may write
\[\sM_\alg(h_1(C),t_2(S))=\sM_\alg(\L,h_1(C)\otimes t_2(S)).\]

The K\"unneth isomorphism in $\sM$
\[h_3(C\times S)\simeq \bigoplus_{i=0}^2 h_i(C)\otimes h_{3-i}(S)\]
gets converted via \eqref{eq1.6}, \eqref{eq2.5} and \eqref{eq3.1} into an isomorphism
\begin{multline}\label{eq9.3}t_3(C\times S)\oplus h_1(J^2)(1)\\
\simeq  h_1(S)(1)\oplus h_1(C)\otimes \NS_S(1) \oplus h_1(C)\otimes t_2(S) \oplus h_1(S)(1).
\end{multline}

Working modulo numerical equivalence, we may write 
\begin{equation}\label{eq9.2}
h_1(C)\otimes t_2(S) \simeq t'_3(C\times S)\oplus M(1)
\end{equation}
by Proposition \ref{p4.2}; proceeding as in Lemma \ref{l2.1}, we see that $M=h_1(B)$ for some abelian variety $B$. 

Proceeding as in the proof of Lemma \ref{l2.2}, we may lift \eqref{eq9.2} to a decomposition in $\sM$.  Since $\sM_\alg(\un,h_1(A))=0$ for any abelian variety $A$ by \eqref{eq1.5}, \eqref{eq9.3} and \eqref{eq9.2} together yield \eqref{eq2.1}. \end{proof}

\begin{cor} There is a natural action of $\End J(C)^\op\times \End t_2(S)$ on $\Griff(C\times S)$.\qed
\end{cor}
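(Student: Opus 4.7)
The plan is that this corollary is essentially immediate from Proposition \ref{t1}. Using the natural isomorphism
\[\Griff(C\times S)\simeq \sM_\alg(h_1(C),t_2(S))\]
of \eqref{eq2.1}, it suffices to exhibit the advertised action on the right-hand side, which is a Hom group in the additive category $\sM_\alg$. On any such Hom group one has the standard bifunctorial structure: post-composition defines a left action of $\End_{\sM_\alg}(t_2(S))$, and pre-composition defines a right action of $\End_{\sM_\alg}(h_1(C))$, i.e.\ a left action of its opposite. Since post- and pre-composition obviously commute (by associativity of composition), the two actions assemble into a single action of the product $\End_{\sM_\alg}(h_1(C))^{\op}\times \End_{\sM_\alg}(t_2(S))$.

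To identify $\End_{\sM_\alg}(h_1(C))$ with $\End(J(C))$, I would invoke \eqref{eq1.3} applied to $Y=X=C$: for any adequate equivalence relation,
\[\sM_\sim(h_1(C),h_1(C))=\Hom(\Alb_C,\Alb_C)=\End(J(C)),\]
where of course we are working in $\Ab\otimes\Q$ throughout. Substituting and applying the isomorphism of \eqref{eq2.1}, we obtain a natural action of $\End(J(C))^{\op}\times \End(t_2(S))$ on $\Griff(C\times S)$, as required. There is no real obstacle here: the content of the corollary is the formula \eqref{eq2.1}, which has already been established, together with the identification \eqref{eq1.3} of endomorphisms of the curve summand with endomorphisms of its Jacobian.
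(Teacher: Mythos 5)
Your proposal is correct and is exactly the argument the paper intends: the corollary is stated with an immediate \qed because it follows from the isomorphism \eqref{eq2.1} of Proposition \ref{t1} together with the standard bimodule structure on Hom groups and the identification $\End_{\sM_\alg}(h_1(C))=\End(J(C))$ from \eqref{eq1.3}. (An action of $\End_{\sM_\alg}(t_2(S))$ in particular yields one of $\End_{\sM}(t_2(S))$ via the projection functor, so either reading of the statement is covered.)
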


\begin{cor}\label{c8.1} We take the same notation as in the proof of Proposition \ref{t1}.\\ 
a) There is an isogeny
\[J^2\approx B\times (\Alb_S)^2\times \Alb_C\otimes \NS_S.\]
b) If $t_2(S)$ is finite dimensional (equivalently, if $h(S)$ is finite-dim\-ension\-al), the Chow motives $t_3(C\times S)$ and $t'_3(C\times S)$ are isomorphic.
\end{cor}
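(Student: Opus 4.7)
The starting point is the identification \eqref{eq9.3}
\[ t_3(T)\oplus h_1(J^2)(1)\simeq h_1(S)(1)\oplus (h_1(C)\otimes\NS_S)(1)\oplus h_1(C)\otimes t_2(S)\oplus h_1(S)(1) \]
coming from Künneth, combined with \eqref{eq9.2}, i.e.\ $h_1(C)\otimes t_2(S)\simeq t'_3(T)\oplus h_1(B)(1)$, which holds a priori modulo numerical equivalence and lifts to $\sM$ as in the proof of Proposition \ref{t1}. Because $\NS_S$ is an Artin motive (splitting into copies of $\un$ over a finite Galois extension), $h_1(C)\otimes\NS_S$ is of the form $h_1(A)$ for an abelian $k$-variety $A$ by Galois descent; I denote $A=\Alb_C\otimes\NS_S$.

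For part (a), I will compare coniveau-$1$ summands in $\sM_\num$. Substituting \eqref{eq9.2} into \eqref{eq9.3} and invoking the uniqueness of the coniveau decomposition \eqref{eq4.2} in $\sM_\num$, the primitive summand on the left (namely $t_3(T)$) must match $t'_3(T)$, while matching the coniveau-$1$ pieces yields
\[ h_1(J^2)\simeq h_1(S)^{\oplus 2}\oplus (h_1(C)\otimes\NS_S)\oplus h_1(B)\quad\text{in }\sM_\num. \]
By \eqref{eq1.3} the natural map $\sM(h_1(A'),h_1(A))\to\sM_\num(h_1(A'),h_1(A))$ is an isomorphism, so this numerical isomorphism is already an isomorphism in $\sM$ and, again by \eqref{eq1.3}, corresponds to an isogeny $J^2\approx (\Alb_S)^2\times(\Alb_C\otimes\NS_S)\times B$.

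For part (b), I first propagate Kimura finite-dimensionality: all other summands of $h(S)$ appearing in its refined CK decomposition are either Artin motives, Tate twists thereof, or $h_1$ of an abelian variety, hence evenly/oddly finite dimensional; thus if $t_2(S)$ is finite dimensional, so are $h(S)$, $h(C\times S)$ and in particular $h_3(C\times S)$. Lemma \ref{l7.1} then guarantees that the lift of \eqref{eq2.5a} from $\sM_\num$ to $\sM$ is unique up to isomorphism. Applying the same Kimura nilpotence argument to the finite-dimensional motive $h_1(C)\otimes t_2(S)$, the lift of \eqref{eq9.2} to $\sM$ is likewise unique. Consequently the two realizations of the primitive (coniveau-$0$) summand of $h_3(C\times S)$ provided by \eqref{eq2.5a} and by the Künneth side of \eqref{eq9.3} must be isomorphic Chow motives, yielding $t_3(C\times S)\simeq t'_3(C\times S)$ in $\sM$.

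The only delicate step is the last one: passing from the obvious numerical identification of the coniveau-$0$ parts to an isomorphism of Chow motives. This is precisely where Kimura finite-dimensionality is indispensable, since in general \eqref{eq2.5a} is not known to be unique up to isomorphism in $\sM$; Lemma \ref{l7.1} removes this obstacle under the hypothesis of (b). Part (a), by contrast, never leaves the well-controlled world of $h_1$ of abelian varieties, where numerical and rational equivalence agree.
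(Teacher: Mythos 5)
Your proof is correct and follows essentially the same route as the paper's: compare the two decompositions of $h_3(C\times S)$ modulo numerical equivalence to extract the isomorphisms $t_3\simeq t'_3$ and $h_1(J^2)\simeq h_1(B)\oplus 2h_1(S)\oplus h_1(C)\otimes\NS_S$ in $\sM_\num$, deduce a) from the fact that Homs between $h_1$'s are insensitive to the equivalence relation \eqref{eq1.3}, and deduce b) from Kimura's nilpotence theorem applied to the finite-dimensional motives involved. Your packaging of b) via Lemma \ref{l7.1} and uniqueness of lifts is just a restatement of the same nilpotence argument the paper invokes directly.
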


\begin{proof} Consider \eqref{eq9.3} modulo numerical equivalence, and insert the value of $h_1(C)\otimes t_2(S) $ given by \eqref{eq9.2}. Taking isotypic components, we get two isomorphisms in $\sM_\num$:
\begin{gather}
t_3(C\times S) \simeq t'_3(C\times S)\label{eq9.1}\\
h_1(J^2)\simeq h_1(B)\oplus 2h_1(S)\oplus h_1(C)\otimes \NS_S. \label{eq9.1a}
\end{gather}

The isogeny of a) follows from \eqref{eq9.1a}. If $t_2(S)$ is finite dimensional, so are all terms in \eqref{eq9.3} and \eqref{eq9.2}. In particular, the Chow motives $t_3(C\times S)$ and $t'_3(C\times S)$ are finite-dimensional, and b) follows from \eqref{eq9.1} and Kimura's nilpotence theorem  \cite[Prop. 7.5]{kimura}.
\end{proof}

\begin{rks}\label{p9.1} a) Without assuming the finite-dimensionality of $t_2(S)$, one gets an isomorphism in $\sM$
\[h_3(C\times S)\simeq t_3(C\times S)\oplus h_1(J^2)(1)\simeq t'_3(C\times S)\oplus h_1(J^2)(1)\]
but it is not clear whether one can  cancel the summand  $h_1(J^2)(1)$ unconditionally.

b) Assuming Conjecture \ref{co1}, the isomorphisms \eqref{eq9.2} and \eqref{eq0.2} yield an isomorphism in $\Ab\otimes \Q$
\[T(S_{k(C)})/T(S)\simeq B(k)\oplus \Griff(C\times S).\]

Since $\Griff(C\times S)$ is invariant under algebraically closed extensions, this suggests that $E\mapsto T(S_{E(C)})/T(S_E)$ is representable by a $k$-group scheme whose identity  component is an abelian variety. Can one prove this a priori?
\end{rks}

\subsection{Curves mapping to surfaces}\label{s8.2} Let $\psi\in \sZ_1(C\times S)$ be a correspondence. It induces a morphism in $\sM$
\[\psi_*:h(C)\to h(S).\]

Choose a CK decomposition $(\pi_i^C)$ of $C$ and a refined CK decomposition $(\pi_i^S)$ of $S$, with $\pi_2^S=\pi_2^\alg(S)+\pi_2^\tr(S)$: we get a composite morphism
\begin{equation}\label{eq9.5}
\psi_\#:h_1(C)\Inj h(C)\by{\psi_*} h(S)\Surj t_2(S)
\end{equation}
hence, by Proposition \ref{t1}, a class $[\psi_\#]$ in $\Griff(C\times S)$. 
This class depends on the choice of the CK decompositions; note however that, thanks to \eqref{eq1.1}, it does not depend on the choice of the $0$-cycles on $C$ and $S$ used to define $\pi_0^C$ and $\pi_0^S$. In Lemma \ref{l9.1} b) below, we show that a slight variant of $\psi_\#$ does not depend on any choice. \emph{The main purpose of Part \ref{P2} will be to study the non-vanishing of $[\psi_\#]$, in special circumstances.}

We want to compare \eqref{eq9.5} with a canonical morphism defined below \eqref{eq8.1}.

Thanks to Proposition \ref{p1} (2), $\psi_*$ induces a commutative diagram of split exact sequences in $\sM$, independent of the choice of the CK decompositions (see Section \ref{s2} for the notation):
\begin{equation}\label{eq8.2}
\begin{CD}
0@>>> h_{>1}(C)@>i_C>> h_{>0}(C)@>p_C>> h_1(C)@>>> 0\\
&&@V{\psi_{>1}}VV @V{\psi_{>0}}VV@V{\psi_1}VV\\
0@>>> h_{>1}(S)@>i_S>> h_{>0}(S)@>p_S>> h_1(S)@>>> 0.
\end{CD}
\end{equation}

Let $\psi^*:h(S)\to h(C)(1)$ be the transpose of $\psi$. The composition 
\begin{equation}\label{eq8.4}
\L=h_2(C)=h_{>1}(C)\by{\psi_{>1}} h_{>1}(S)\by{\psi^*} h(C)(1)\to h_0(C)(1)=\L
\end{equation}
is multiplication by the self-intersection number $[C]^2$, where $[C]=\psi_*1\in \Pic(S)$ with $1$ the canonical generator of $CH^0(C)$. It is  an isomorphism if $[C]^2\ne 0$, which we now assume. (This hypothesis is verified if $[C]$ is ample.) Then we can define $h_{>1}(S)/h_{>1}(C)=\Coker \psi_{>1}$. On the other hand, by \eqref{eq1.3} we can also define 
\begin{equation}\label{eq8.5}
h_1(\psi)=\Ker\psi_1
\end{equation}
which does not depend on the choice of the CK decompositions. The snake lemma then yields a morphism $h_1(\psi)\to h_{>1}(S)/h_2(C)$, that we compose with the projection $h_{>1}(S)/h_2(C)\to h_2(S)/h_2(C)$ to obtain
\begin{equation}\label{eq8.1}
\tilde\psi:h_1(\psi)\to h_2(S)/h_2(C).
\end{equation}

In view of \eqref{eq3}, the projection $h_2(S)\to t_2(S)$ factors through a morphism $\rho:h_2(S)/h_2(C)\to t_2(S)$ and

\begin{lemma}\label{l9.1} Assume $[C]^2\ne 0$ as above. Then\\
a) The diagram in $\sM$
\[\begin{CD}
h_1(\psi)@>\tilde \psi>> h_2(S)/h_2(C)\\
@V\iota VV @V\rho VV\\
h_1(C)@>\psi_\#>> t_2(S)
\end{CD}\]
commutes. We write $\hat \psi:=\psi_\#\iota = \rho \tilde \psi$.\\
b) The morphisms $\tilde \psi$ and $\hat \psi$ do not depend on the choices of the (refined) CK decompositions of $h(C)$ and $h(S)$.\\
c) In $\sM_\alg$, $\hat\psi =0$  $\iff$ $\tilde \psi=0$.
\end{lemma}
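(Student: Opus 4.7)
The three assertions are closely intertwined: parts (b) and (c) will rest on the diagram chase carried out in (a). For (a), I would unwind the snake-lemma construction of $\tilde\psi$ from \eqref{eq8.2}. Given a test morphism $x:M\to h_1(\psi)\subseteq h_1(C)$, the chosen CK splitting $h_{>0}(C)=h_{>1}(C)\oplus h_1(C)$ allows us to view $x$ as a morphism $\tilde x:M\to h_{>0}(C)$. Since $\psi_1(x)=0$, the composite $\psi_{>0}(\tilde x):M\to h_{>0}(S)$ projects to zero in $h_1(S)$, so the splitting of the bottom row of \eqref{eq8.2} produces a unique $y:M\to h_{>1}(S)$ with $i_S(y)=\psi_{>0}(\tilde x)$; by definition, $\tilde\psi(x)$ is the class of $y$ after projection to $h_2(S)/h_2(C)$. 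On the other hand, $\psi_*\iota(x)=i_S(y)\in h(S)$, so $\psi_\#\iota(x)$ is the image of $y$ under the canonical surjections $h_{>1}(S)\twoheadrightarrow h_2(S)\twoheadrightarrow t_2(S)$. Since $\sM(h_2(C),t_2(S))=0$ by \eqref{eq2a}, quotienting $h_2(S)$ by the image of $h_2(C)$ before projecting to $t_2(S)$ makes no difference, giving $\psi_\#\iota=\rho\tilde\psi$.

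For (b), I would concatenate the canonicity statements already in place. The projection $h_{>0}(C)\twoheadrightarrow h_1(C)$, the inclusion $h_{>1}(C)\hookrightarrow h_{>0}(C)$, and the induced maps $\psi_1,\psi_{>0},\psi_{>1}$ are all canonical by Proposition \ref{p1}(3); hence so is the snake connecting morphism $h_1(\psi)=\ker\psi_1\to\operatorname{coker}\psi_{>1}$. By Lemma \ref{l3.1}, the projections $h_{>1}(S)\twoheadrightarrow h_2(S)\twoheadrightarrow t_2(S)$ are independent of the refined CK decomposition of $S$. Composing these canonical ingredients shows that $\tilde\psi$, and hence $\hat\psi=\rho\tilde\psi$, do not depend on any choices.

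For (c), the implication $\tilde\psi=0\Rightarrow\hat\psi=0$ is immediate. For the converse, \eqref{eq2} forces every morphism $h_2(C)=\L\to h_2(S)$ to factor through $\NS_S(1)$; together with \eqref{eq1.6} this yields in $\sM$ (hence in $\sM_\alg$) a direct sum decomposition
\[
h_2(S)/h_2(C)\;\simeq\;t_2(S)\oplus K,\qquad K\;=\;\NS_S(1)/\operatorname{im}(h_2(C)),
\]
with $K$ of the form $A(1)$ for an Artin motive $A$. Under this splitting write $\tilde\psi=(\hat\psi,\xi)$; if $\hat\psi=0$, we are reduced to showing $\xi=0$ in $\sM_\alg$. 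By \eqref{eq1.3}, $h_1(\psi)=h_1(B)$ for the abelian variety $B:=\ker(\Alb_C\to\Alb_S)$, so it is enough to verify $\sM_\alg(h_1(B),A(1))=0$. Passing via Galois descent to a finite splitting field $L/k$ of $A$, this reduces to $\sM_\alg(L)(h_1(B_L),\L)=0$, which follows from Poincar\'e duality on $B_L$ together with \eqref{eq1.5}.

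The main obstacle is the bookkeeping in (a): one must verify that the very same CK splittings used to define $\psi_\#$ are those used to run the snake lemma producing $\tilde\psi$, so that the two morphisms agree on the nose rather than merely up to a correction term factoring through $h_2(C)$. Once that matching is in place, (b) and (c) are formal, with the vanishing $\sM_\alg(h_1(B),A(1))=0$ in (c) being the only substantive algebraic input.
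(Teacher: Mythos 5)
Your proof is correct and follows essentially the same route as the paper's: the same splitting/snake-lemma computation with the section of $p_C$ and the retraction of $i_S$ for (a), canonicity of those splittings (Proposition \ref{p1}(3) and Lemma \ref{l3.1}) for (b), and for (c) the observation that if $\hat\psi=0$ then $\tilde\psi$ factors through the algebraic part $h_2^{\alg}(S)/h_2(C)$, against which $\sM_\alg(h_1(\psi),-)$ vanishes by \eqref{eq1.5} and Poincar\'e duality. The only nitpick is a citation slip in (a): the vanishing $\sM(\L,t_2(S))=0$ is \eqref{eq2} (with $M=\un$), not \eqref{eq2a}.
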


\begin{proof} a) We can use Diagram \eqref{eq8.2} to compute $\psi_\#$ and $\tilde\psi$ by using the section of $p_C$ and the retraction of $i_S$ given by $\pi_1^C$ and $\sum_{i>1} \pi_i^S$. 

b) follows from a) and Lemma \ref{l3.1}.

c) If $\rho\tilde\psi =0$ in $\sM_\alg$, then $\tilde \psi$ factors in $\sM_\alg$ through $h_2^\alg(S)/h_2(C)$. But $\sM_\alg(h_1(\psi),h_2^\alg(S)/h_2(C))=0$ by \eqref{eq1.5} and Poincaré duality.
\end{proof}

Given its importance in the sequel, we give a name to the motive $h_1(\psi)\otimes t_2(S)$:

\begin{nota}\label{n8.1}
$M(\psi)=h_1(\psi)\otimes t_2(S)$.
\end{nota}

Thus $M(\psi)$ is a direct summand of $h_1(C)\otimes t_2(S)$, and we have a similar decomposition to \eqref{eq9.2}
\begin{equation}\label{eq9.2a}
M(\psi) \simeq t_3(\psi)\oplus h_1(B_\psi)(1)
\end{equation}
where $t_3(\psi)$ (resp. $B_\psi$) is a direct summand of $t'_3(C\times S)$ (resp. of the abelian variety $B$ of Corollary \ref{c8.1} a), up to isogeny). The proof is the same as for Lemma \ref{l2.2}.

\begin{defn}\label{d8.1} We call $B_\psi$ the \emph{exceptional summand} of $B$ (or of $J^2(C\times S)$).
\end{defn}

\begin{rk} If $\psi_1$ is an epimorphism (which happens when $C$ is a smooth hyperplane section of $S$), the complementary summand of $B_\psi$, $B'$, is such that $h_1(B')(1)$ is a direct summand of $h_1(S)\otimes t_2(S)$, and the largest of this type modulo numerical equivalence; so $B'$ is independent of $\psi$ and (in principle) computable purely in terms of $S$. This justifies the terminology ``exceptional''.
\end{rk}

\subsection{Variation over a base}\label{s8.3} Let $X\in \Sm(k)$. Recall De\-nin\-ger-Murre's category $\sM(X)$ of relative Chow motives, constructed on \allowbreak smooth projective $X$-schemes \cite{dm}: it is $2$-contravariant for morphisms in $\Sm(k)$. We shall need the following generalisation of the previous picture ``over $X$'': 

\begin{itemize}
\item $S$ is a $k$-surface, to which we associate the constant relative $X$-surface $S_X:=S\times X\to X$;
\item $C\to X$ is a relative curve, and $\psi\in CH^2(C\times_X S_X)=CH^2(C\times_k S)$ is a relative Chow correspondence.
\end{itemize}

Then $C$ has a CK decomposition, defined as usual, and any refined CK decomposition of $h(S)$ pulls back to a refined CK decomposition of $S_X$. The modified class $[\psi]$ of \eqref{eq9.5} then makes sense in $\sM(X)$, as well as the composition \eqref{eq8.4}, whose bijectivity can be checked at any point of $X$. Moreover,

\begin{lemma}\label{l8.2} Suppose $X$ connected, with generic point $\eta=\Spec K\by{j} X$. Then\\
a) The functor $j^*:\sM(X)\to \sM(K)$ is full.\\
b) If $C,C'$ are two relative curves over $X$, the map
\[\sM(X)(h_1(C),h_1(C'))\to \sM(K)(h_1(C_\eta),h_1(C'_\eta))\]
is bijective, and the thick subcategory of $\sM(X)$ generated by the $h_1(C)$'s is abelian semi-simple.
\end{lemma}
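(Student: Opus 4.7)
The plan is to prove fullness in (a) by an elementary Chow-theoretic extension, to combine this with a direct cycle computation to get injectivity in (b), and to deduce semi-simplicity by transport to the generic fiber. For (a), morphisms in $\sM(X)$ between $h(Y)$ and $h(Y')$ are cut out by idempotents from $CH^{\dim_X Y'}(Y\times_X Y')_\Q$, and similarly over $K$. The restriction map
\[CH^*(Y\times_X Y') \longrightarrow CH^*(Y_\eta\times_K Y'_\eta) = \varinjlim_U CH^*(Y_U\times_U Y'_U)\]
is surjective by the localisation exact sequence for Chow groups: any cycle on the generic fiber extends by taking its Zariski closure. Since $j^*$ commutes with idempotents, fullness passes from the $h(Y)$'s to all objects of $\sM(X)$.

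For bijectivity in (b), surjectivity follows from (a). For injectivity, take $f\in\sM(X)(h_1(C),h_1(C'))$ with $j^*f=0$ and represent it by a cycle $\tilde f\in CH^1(C\times_X C')$ satisfying $\tilde f=\tilde f\circ\pi_1^C$ (idempotency) and with $\tilde f_\eta=0$. By the localisation sequence, $\tilde f$ lies in the image of $\bigoplus_Z CH^0(p_X^{-1}(Z))\to CH^1(C\times_X C')$, where $p_X:C\times_X C'\to X$ and $Z\subset X$ ranges over codim-$1$ closed subschemes; assuming the fibers of $C,C'\to X$ are geometrically connected (else argue componentwise), $\tilde f = p_X^*\beta = p_{C'}^*\gamma'$ for some $\gamma'\in CH^1(C')$. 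Writing $\pi_1^C=\Delta_C-e_0-e_0^*$ with $e_0,e_0^*$ built from a rational section of $C\to X$, the projection formula yields
\[(p_{C'}^*\gamma')\circ\pi_1^C = \operatorname{pr}_{13*}\operatorname{pr}_{12}^*(\pi_1^C) \cdot p_{C'}^*\gamma',\]
and a direct calculation gives $\operatorname{pr}_{13*}\operatorname{pr}_{12}^*(\pi_1^C)=0$ in $CH^0(C\times_X C')$: the contributions from $\Delta_C$ and $e_0$ each equal $[C\times_X C']$, while that from $e_0^*$ vanishes because $\operatorname{pr}_{13}$ contracts a positive-dimensional fibre. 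Hence $\tilde f\circ\pi_1^C=0$, so $\tilde f=0$ and $f=0$.

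For the semi-simplicity claim, the Hom-bijection just established identifies the full subcategory of $\sM(X)$ on finite sums of the $h_1(C)$'s with its image in $\sM(K)$; this passes to an equivalence of thick (idempotent-completed) subcategories. Over $K$, formula \eqref{eq1.3} identifies $\sM(K)(h_1(C_\eta),h_1(C'_\eta))$ with $\Hom(J(C_\eta),J(C'_\eta))\otimes\Q$, embedding the thick subcategory as a full subcategory of the category of abelian varieties over $K$ up to isogeny, where abelian semi-simplicity follows from Poincar\'e complete reducibility.

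The main obstacle is the explicit cycle computation showing $\operatorname{pr}_{13*}\operatorname{pr}_{12}^*(\pi_1^C)=0$: while elementary in spirit, it requires careful use of the projection formula and flat base change in the relative Deninger--Murre framework, plus a separate componentwise argument when $p_X^{-1}(Z)$ has disconnected components.
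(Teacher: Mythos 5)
Your proposal is correct, and for parts a) and the semi-simplicity claim it coincides with the paper's argument (Zariski closure of cycles for fullness, then \eqref{eq1.3} and Poincar\'e reducibility). The one place where you genuinely diverge is the injectivity in b). The paper asserts the presentation $\sM(X)(h_1(C),h_1(C'))\simeq \Coker(CH^1(C)\oplus CH^1(C')\to CH^1(C\times_X C'))$ in the relative setting and runs a diagram chase against the localisation sequences for $C$, $C'$ and $C\times_X C'$; you bypass that presentation entirely by showing directly that a class supported over a divisor of $X$ is of the form $p_{C'}^*\gamma'$ and that such a class is annihilated by precomposition with $\pi_1^C$, via the projection formula and the computation $\operatorname{pr}_{13*}\operatorname{pr}_{12}^*(\pi_1^C)=[C\times_XC']-[C\times_XC']-0=0$. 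This is the same underlying mechanism (correspondences pulled back from a factor induce zero on $h_1$), but your version is more self-contained since it does not rely on the cokernel formula, which the paper uses without proof over a general base. Two small points to tidy up: the relative CK projectors of a relative curve are built from a $0$-cycle of relative degree $1$ with $\Q$-coefficients (a multisection divided by its degree), not necessarily from a rational section, though this changes nothing in the computation; and the reduction ``$\tilde f=p_X^*\beta$'' does require the irreducibility of $p_X^{-1}(Z)$ for irreducible $Z$ of codimension $1$, which holds because $C\times_XC'\to X$ is smooth proper with geometrically connected (hence, over the generic point of $Z$, geometrically irreducible) fibres -- your parenthetical ``argue componentwise'' in the non-geometrically-connected case deserves a word more, since the components are then only defined after the finite \'etale Stein factorisation, but this is routine.
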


\begin{proof} a) follows immediately from the surjectivity of the map $CH^*(Y)\allowbreak\to CH^*(Y_\eta)$ for any $Y\in \Sm^\proj(X)$. If $Z\subset X$ is a closed subset purely of codimension $c$, we have a more precise exact sequence
\[CH^{i-c}(Y_{|Z})\to CH^i(Y)\to CH^i(Y_{|X-Z})\to 0.\]

For $c>1$, this implies an isomorphism $\sM(X)(h(C),h(C'))\iso \sM(X-Z)(h(C_{|X-Z}),h(C'_{|X-Z}))$, hence a fortiori the same isomorphism when replacing $h$ by $h_1$. For $c=1$, this isomorphism for the $h_1$'s
 follows by a diagram chase from the formula
\[\sM(X)(h_1(C),h_1(C'))\simeq \Coker(CH^1(C)\oplus CH^1(C')\to CH^1(C\times_X C'))\]
and the same one over $X-Z$. The first claim of b) follows from these two cases by passing to the limit, and the second claim follows from this and \eqref{eq1.3}.
\end{proof}

By Lemma \ref{l8.2} b),  the motive $h_1(\psi)$ of \eqref{eq8.5} makes sense in $\sM(X)$ (note that we may write $h_1(S)$ as a direct summand of $h_1(D)$ for some smooth hyperplane section $D$ of $S$), as well as $\hat{\psi}$ and $M(\psi)$  (Lemma \ref{l9.1} and Notation \ref{n8.1}). So does \eqref{eq8.1} when \eqref{eq8.4} is bijective; then Lemma \ref{l9.1} a) holds in $\sM(X)$.

\subsection{The coniveau filtration}\label{s9.1}

Let $H$ be a Weil cohomology. For any $X\in \Sm^\proj$ and any $i\ge 0$, we have the coniveau filtration on $H^i(X)$ \cite[(2.0.6)]{katz}:
\begin{equation}\label{eq4.1}
N^jH^i(X) = \sum_{(T,z)} \IM\left(H^{i-2j}(T)(-j)\by{\cl(z)^*} H^i(X)\right)
\end{equation}
where $T\in \Sm^\proj$ and $z\in A_H^{\dim T + j}(X\times T)$. This generalises to any $M\in \sM$ by the formula
\begin{equation}\label{eq4.1a}
N^jH^i(M) = \sum_{(N,z)} \IM\left(H^{i-2j}(N)(-j)\by{\cl(z)^*} H^i(M)\right)
\end{equation}
where $N\in \sM$ and $z\in \sM(M,N(j))$: indeed, for $M=h(X)$, the images of \eqref{eq4.1} and \eqref{eq4.1a} coincide as one sees by writing $N$ in \eqref{eq4.1a} as a direct summand of some $h(T)$. (This formula makes it clear that $N^*H(M)$ is a descending filtration on $H(M)$.) Replacing the indexing sets $\sM(M,N(j))$ by $\sM(M,N(j))\otimes F$, where $F$ is the field of coefficients of $H$, does not change the coniveau filtration. One may also use cycles modulo homological equivalence.

We note:

\begin{lemma}\label{l8.1} Write $H^3(\psi):=H^3(M(\psi))$. Suppose that $N^1H^3(\psi)=0$. Then $B_\psi=0$, where $B_\psi$ is as in Definition \ref{d8.1} (see \eqref{eq9.2a}).
\end{lemma}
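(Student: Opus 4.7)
The plan is to exploit the decomposition \eqref{eq9.2a} directly at the level of cohomology. Writing $H^3(\psi)=H^3(M(\psi))$, we have
\[
H^3(M(\psi))\simeq H^3(t_3(\psi))\oplus H^3(h_1(B_\psi)(1))
\]
and the issue is to show that the hypothesis $N^1H^3(\psi)=0$ kills the second summand.

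First I would observe that the summand $H^3(h_1(B_\psi)(1))$ is automatically of coniveau $\ge 1$ inside $H^3(M(\psi))$. Indeed, let $\pi\colon M(\psi)\to h_1(B_\psi)(1)$ be the projection coming from \eqref{eq9.2a}, viewed as an element of $\sM(M(\psi),N(1))$ with $N:=h_1(B_\psi)$. Applying definition \eqref{eq4.1a} with $i=3$ and $j=1$, the map $\cl(\pi)^*\colon H^{1}(h_1(B_\psi))(-1)\to H^3(M(\psi))$ factors through $N^1H^3(M(\psi))$, and up to the natural identification $H^3(h_1(B_\psi)(1))\simeq H^{1}(h_1(B_\psi))(-1)$, this map is precisely the inclusion of the direct summand. (Alternatively, tensoring with $\L$ tautologically shifts classes into coniveau $\ge 1$: take $z=\mathrm{id}_{h_1(B_\psi)(1)}$ and then push into $M(\psi)$ via the inclusion that splits $\pi$.)

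From $N^1H^3(\psi)=0$ I then conclude $H^3(h_1(B_\psi)(1))=0$, i.e.\ $H^1(B_\psi)=0$. Since $H$ is a classical Weil cohomology, formula \eqref{eq0.1} gives $\dim H^1(B_\psi)=2\dim P_{B_\psi}=2\dim B_\psi$ (as the Picard variety of an abelian variety is isogenous to it). Hence $\dim B_\psi=0$, and since $B_\psi$ is only defined up to isogeny (compare Lemma \ref{l2.2} and \eqref{eq9.2a}), this means $B_\psi=0$ in $\sM\otimes\Q$.

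The only real subtlety is step one, namely certifying that the $h_1(B_\psi)(1)$-summand actually lands in $N^1H^3(M(\psi))$ rather than merely having the \emph{Hodge} or \emph{Tate} type of a coniveau-$1$ class; however, this is exactly what the motivic definition \eqref{eq4.1a} was designed to capture, using the split projection $\pi$ as the witnessing correspondence. Once this is in place, the rest is a dimension count via \eqref{eq0.1}.
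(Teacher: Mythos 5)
Your proof is correct and follows essentially the same route as the paper's: the key point in both is that the summand $H(h_1(B_\psi)(1))$ lies in $N^1H^3(\psi)$ by the very definition \eqref{eq4.1a} of the coniveau filtration (your explicit witness $z=\pi$ is exactly what the paper's one-line ``by definition of the coniveau filtration'' means), so the hypothesis kills it. The only difference is cosmetic: you conclude $B_\psi=0$ from $H^1(B_\psi)=0$ via the dimension count \eqref{eq0.1}, whereas the paper invokes \eqref{eq1.3}; both are standard and equivalent.
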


\begin{proof} By definition of the coniveau filtration, we have $H(h_1(B_\psi)(1))\subseteq N^1H^3(\psi)$. Therefore, the hypothesis implies that $h_1(B_\psi)(1)$ vanishes in $\sM_H$. Hence so does $h_1(B_\psi)$, which implies that $B_\psi=0$ by \eqref{eq1.3}.
\end{proof}
\newpage

\part{Proofs of Theorems \ref{T2} and \ref{T3}}\label{P2}

\section{The $l$-adic Abel-Jacobi map}\label{s9}

\subsection{The $l$-adic realisation}\label{s9.2}

Let $S$ be a scheme, and let $l$ be a prime number invertible on $S$. We write $\Sm_c(S)$ for the category  of smooth separated compactifiable morphisms  $X\to S$ with morphisms proper $S$-morphisms, and $S_c(S,\Q_l)$ for the category of constructible $\Q_l$-adic sheaves over $S$ \cite[1.4.2]{jouanolou}, $D^b_c(S,\Q_l)$ for the ``derived category of constructible $\Q_l$-adic sheaves'' (\cite[1.1.3]{weilII}, \cite{ekedahl})  and
\[R_{l,c}:\Sm_c(S)\to D^b_c(S,\Q_l)\]
for the (contravariant) functor which associates to $X\by{f} S\in \Sm_c(S)$ the object $Rf_!\Q_l$. If $V$ is an $S$-scheme, we write
\[\tilde D^b_c(V/S,\Q_l)= 2\text{-}\colim_\sV D^b_c(\sV,\Q_l)\]
where $V\to \sV$ runs through the $S$-morphisms with $\sV$ an $S$-scheme of finite type (see \cite[Def. 6.3]{SGA4.VI} for $2\text{-}\colim$). The functor $R_{l,c}$ factors through a functor
\[\tilde R_{l,c}:\Sm_c(V)\to \tilde D^b_c(V/S,\Q_l).\]

If $S=\Spec \Z[1/l]$, we drop it from the notation. 

With the definition of \cite{ekedahl}, the category $\tilde D^b_c(V/S,\Q_l)$ is triangulated and has a canonical $t$-structure with heart $\tilde S_c(V/S,\Q_l)\allowbreak = 2\text{-}\colim_\sV S_c(\sV,\Q_l)$. For $X\in \Sm_c(V)$, we write $ \tilde H^i_{l,c}(X/S)\allowbreak =H^i(\tilde R_{l,c}(X))$; note that $\tilde H^i_{l,c}(X/S)\allowbreak =\tilde H^i_{l}(X/S)$ when $X$ is proper (ordinary $l$-adic cohomology): in this case, this $S$-sheaf is locally constant by the smooth and proper base change theorem. Note the forgetful functor
\begin{equation}\label{eq9.11}
\omega_V^l:\tilde S_c(V,\Q_l)\to S_c(V,\Q_l)
\end{equation}
which forgets the ``arithmetic monodromy action'' to only remember the ``geometric'' one.

Suppose that $V=\Spec k$. The functor $(\tilde R_{l,c})_{| \Sm^\proj(k)}$ extends to an additive (contravariant) functor
\[\tilde R_l:\sM(k)\to \tilde D^b_c(k/S,\Q_l)\]
such that
\[\tilde R_{l}(h_i(X)) = \tilde H^i_l(X/S)[-i]\]
for $X\in \Sm^\proj(k)$ and $i\in\Z$, for any CK decomposition of $X$. In particular, $\tilde R_l(\L)=\Q_l(-1)[-2]$.

\begin{prop}\label{p9.3} Let $M\in \sM$. Suppose given a splitting $\tilde R_l(M)\simeq \bigoplus_i \tilde H^i_l(M)[-i]$. Then, for any $n\ge 2$, the composition
\begin{multline*}
\sM(M,\L^n)\by{\tilde R_l} \Hom(\tilde R_l(\L^n),\tilde R_l(M))\\ \to \Hom(\tilde R_l(\L^n),\tilde H^{2n-1}_l(M)[-2n+1])\simeq \Ext^1_{\tilde S_c(k/S,\Q_l)}(\Q_l(-n),\tilde H^{2n-1}_l(M))\\
\to \Ext^1_{\tilde S_c(k/S,\Q_l)}(\Q_l(-n),\tilde H^{2n-1}_l(M)/N^{n-1}\tilde H^{2n-1}_l(M))
\end{multline*} 
factors through $\sM_\alg(M,\L^n)$. \end{prop}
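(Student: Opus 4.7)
The plan is to invoke the Weil-Bloch trick (Lemma \ref{lwb}): a contravariant additive functor on $\sM$ factors through $\sM_\alg$ iff it kills every morphism of the form $1_M\otimes\alpha\colon M\to M\otimes h_1(\Gamma)$ with $\Gamma$ a smooth projective curve over $k$ and $\alpha\in \sM(\un,h_1(\Gamma))=\Pic^0(\Gamma)$. Applied here, this reduces the desired factorisation through $\sM_\alg(M,\L^n)$ to verifying that, for every such $(M,\Gamma,\alpha)$ and every $W\in\sM(M\otimes h_1(\Gamma),\L^n)$, the composite
\[z=W\circ(1_M\otimes\alpha)\in\sM(M,\L^n)\]
has vanishing image in $\Ext^1_{\tilde S_c(k/S,\Q_l)}(\Q_l(-n),\tilde H^{2n-1}_l(M)/N^{n-1}\tilde H^{2n-1}_l(M))$.

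Next, I translate $z$ into something concrete. Poincar\'e duality on $\Gamma$ gives $h_1(\Gamma)^\vee\simeq h_1(\Gamma)(-1)$, so $W$ is the mate, under the resulting adjunction, of a canonical morphism
\[\beta_W\colon M\To h_1(\Gamma)(n-1)\quad\text{in }\sM.\]
Unwinding definitions, the image of $z$ under the map of the statement is the Yoneda product, in $\tilde D^b_c(k/S,\Q_l)$, of the projection of $\tilde R_l(\beta_W)$ to the summand $\tilde H^{2n-1}_l(M)[1-2n]$ of $\tilde R_l(M)$ (extracted via the given splitting), which is a morphism
\[\tilde H^1_l(\Gamma)(-(n-1))\To \tilde H^{2n-1}_l(M),\]
with the class $\tilde R_l(\alpha)\in\Ext^1(\tilde H^1_l(\Gamma),\Q_l)$ (the latter being essentially the Kummer/Abel-Jacobi class of $\alpha$).

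The decisive step is then immediate: by the very definition \eqref{eq4.1a} of the coniveau filtration, applied with $N=h_1(\Gamma)$, $j=n-1$, $i=2n-1$, and motivic morphism $\beta_W\in\sM(M,N(j))$, the image of the above map is contained in $N^{n-1}\tilde H^{2n-1}_l(M)$. Hence the Yoneda product in question lifts to a class in $\Ext^1(\Q_l(-n),N^{n-1}\tilde H^{2n-1}_l(M))$, and the long exact sequence of Ext attached to
\[0\To N^{n-1}\tilde H^{2n-1}_l(M)\To \tilde H^{2n-1}_l(M)\To \tilde H^{2n-1}_l(M)/N^{n-1}\tilde H^{2n-1}_l(M)\To 0\]
forces its image in the quotient to vanish, which is what was required.

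The main obstacle is expected to be the bookkeeping of the second step: carefully comparing the sequence of splittings, projections, and shift identifications defining the map of the statement with the geometric description via $\beta_W$ and the Kummer class of $\alpha$; in particular, ensuring that the chosen splitting of $\tilde R_l(M)$ is compatible with tensor products so that the K\"unneth projection of $\tilde R_l(\beta_W)$ onto degree $2n-1$ is well defined and really is the map that appears in the statement. Once this identification is in place, the appeal to \eqref{eq4.1a} is immediate and the remainder of the argument is purely formal.
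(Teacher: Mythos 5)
Your proposal is correct and follows essentially the same route as the paper: the paper directly takes an algebraically trivial class and factors it as $M\by{\gamma} h_1(D)(n-1)\by{\beta}\L^n$ (which is exactly what your Weil--Bloch reduction plus duality on $h_1(\Gamma)$ produces), then observes that the realisation of $\gamma$ lands in $N^{n-1}\tilde H^{2n-1}_l(M)$ by the very definition \eqref{eq4.1a}. Your final worry about K\"unneth-compatibility of splittings is unnecessary: once $W$ is dualised to $\beta_W\colon M\to h_1(\Gamma)(n-1)$, everything takes place with the single given splitting of $\tilde R_l(M)$.
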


\begin{proof} We may assume $k$ algebraically closed. Let $\alpha\in \sM(M,\L^n))$ be such that $\alpha\mapsto 0\in \sM_\alg(M,\L^n)$. Then there is a curve $D/k$ and classes $\beta\in J(D)(k)=\sM(h_1(D),\L)$, $\gamma\in \sM(M,h_1(D)(n-1))$ such that $\alpha$ factors as
\[M\by{\gamma} h_1(D)(n-1)\by{\beta}\L^n.\]

By definition of the coniveau filtration, the composition
\[H^1_l(D)(n-1)\to H^{2n-1}_l(M)\to H^3_l(M)/N^{n-1}H^{2n-1}_l(M)\]
is $0$, hence the conclusion.
\end{proof}

As a special case, we get an Abel-Jacobi homomorphism
\begin{equation}\label{eq9.10}
\Griff^2(X)\otimes \Q_l\to \Ext^1_{\tilde S_c(k/S,\Q_l)}(\Q_l(-1),\tilde H^3_l(X)/N^1\tilde H^3_l(X))
\end{equation}
for any smooth projective $X$. One can check that it is canonical: this 
was developed more integrally in the preliminary version of \cite{cycletale} (see \cite[App. B]{cycletale0}).

\begin{rk} For $k=\C$, the same statement as Proposition \ref{p9.3} is valid when replacing the $l$-adic realisation by the Hodge realisation with values in the derived category of mixed Hodge structures, with the same proof.
\end{rk}

\subsection{A cohomological condition of nonvanishing} We come back to the situation of  \S \ref{s8.2}, and assume that  $[C]^2\ne 0$.

\begin{prop}\label{p9.4} Take the notation of Lemma \ref{l9.1} a), Notation \ref{n8.1} and Lemma \ref{l8.1}. 
Suppose that  $N^1H^3_l(\psi)\allowbreak =0$. Then:   
\begin{multline*}
\tilde R_l(\hat \psi)\ne 0 \Rightarrow\hat \psi\ne 0\text{ in } \sM_\alg(h_1(\psi),t_2(S))\\
\Rightarrow \psi_\#\ne 0 \text{ in } \sM_\alg(h_1(C),t_2(S))\simeq \Griff(C\times S).
\end{multline*} 
\end{prop}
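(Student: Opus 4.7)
The strategy is to reformulate $\hat\psi$ as a class in $\sM(M(\psi),\L^2)$ via Poincar\'e duality, apply Proposition \ref{p9.3} to that reformulation, and then transfer the nonvanishing from $\hat\psi$ to $\psi_\#$ using the splitting of $\iota:h_1(\psi)\hookrightarrow h_1(C)$. For the rigidity step, note that $h_1(\psi)=\Ker\psi_1$ is a direct summand of $h_1(C)$ (by \eqref{eq1.3} and Poincar\'e reducibility of abelian varieties), so Poincar\'e duality on $C$ gives $h_1(\psi)^\vee\simeq h_1(\psi)(-1)$; similarly $t_2(S)^\vee\simeq t_2(S)(-2)$ from Poincar\'e duality on $h_2(S)$, which respects the splitting \eqref{eq1.6}. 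Tensoring and composing with the evaluation map furnishes a canonical isomorphism
\[ \sM_\sim(h_1(\psi),t_2(S))\simeq \sM_\sim(M(\psi),\L^2), \]
valid for both $\sim=\rat$ and $\sim=\alg$ since it is induced by a diagonal correspondence, hence survives any adequate equivalence. Let $\hat\psi':M(\psi)\to\L^2$ denote the image of $\hat\psi$.

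I then invoke Proposition \ref{p9.3} with $M=M(\psi)$ and $n=2$. The K\"unneth formula for $M(\psi)=h_1(\psi)\otimes t_2(S)$ yields $\tilde R_l(M(\psi))\simeq \tilde H^3_l(M(\psi))[-3]$ (concentrated in a single degree, since $\tilde R_l(h_1(\psi))$ sits in degree $1$ and $\tilde R_l(t_2(S))$ in degree $2$); the splitting hypothesis of Proposition \ref{p9.3} is therefore automatic. Under the assumption $N^1 H^3_l(\psi)=0$, the target of the composite in Proposition \ref{p9.3} collapses to $\Ext^1_{\tilde S_c(k/S,\Q_l)}(\Q_l(-2),\tilde H^3_l(M(\psi)))$, and the composite still factors through $\sM_\alg(M(\psi),\L^2)\simeq \sM_\alg(h_1(\psi),t_2(S))$. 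Interpreting $\tilde R_l(\hat\psi)$ as the image of $\hat\psi'$ in this $\Ext^1$-group (the identification being the standard one: a morphism $\Q_l(-2)[-4]\to \tilde H^3_l(M(\psi))[-3]$ in the derived category is the same thing as an element of $\Ext^1$), the hypothesis $\tilde R_l(\hat\psi)\ne 0$ forces $\hat\psi\ne 0$ in $\sM_\alg(h_1(\psi),t_2(S))$.

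For the second implication, $\iota:h_1(\psi)\hookrightarrow h_1(C)$ is a split monomorphism in every $\sM_\sim$ (again by \eqref{eq1.3}), so the relation $\hat\psi=\psi_\#\circ\iota$ from Lemma \ref{l9.1}(a) shows that $\psi_\#=0$ in $\sM_\alg(h_1(C),t_2(S))$ would force $\hat\psi=0$ there; contrapositively, $\hat\psi\ne 0$ implies $\psi_\#\ne 0$. The identification $\sM_\alg(h_1(C),t_2(S))\simeq\Griff(C\times S)$ is \eqref{eq2.1}. The only substantive ingredient is the $l$-adic step, already packaged into Proposition \ref{p9.3}: the assumption $N^1 H^3_l(\psi)=0$ is precisely what ensures that the Abel-Jacobi output faithfully detects nontriviality modulo algebraic equivalence. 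The rest is bookkeeping around the rigidity identification.
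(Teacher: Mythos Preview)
Your argument is correct and follows essentially the same route as the paper: the paper also invokes rigidity of $\sM[\L^{-1}]$ together with the fact that $\tilde R_l$ is symmetric monoidal to pass from $\sM(h_1(\psi),t_2(S))$ to $\sM(M(\psi),\L^2)$ and then applies Proposition \ref{p9.3} with $n=2$; your write-up simply unpacks the duality identifications and the degree bookkeeping more explicitly. One small remark: the compatibility you implicitly use when identifying $\tilde R_l(\hat\psi)$ with the image of $\hat\psi'$ in the $\Ext^1$-group is exactly the commutativity of the square the paper writes down, and it holds because $\tilde R_l$ is symmetric monoidal---you might state this explicitly rather than calling it ``the standard identification''.
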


Recall that the hypothesis also implies that $B_\psi=0$ (Lemma \ref{l8.1}).

\begin{proof} The second implication is trivial. For the first, we use the rigidity of the category $\sM[\L^{-1}]$ and the fact that $\tilde R_l$ is a symmetric monoidal functor. This yields a commutative diagram
\[\begin{CD}
\sM(h_1(\psi),t_2(S)) @>\sim>> \sM(M(\psi),\L^2)\\
@V{\tilde R_l}VV @V{\tilde R_l}VV\\
\Ext^1_{\tilde S_c(k,\Q_l)}(\tilde H^2_{l,\tr}(S),\tilde H^1_l(h_1(\psi)))@>\sim>> \Ext^1_{\tilde S_c(k,\Q_l)}(\Q_l(-2), \tilde H^3_l(\psi))
\end{CD}\]
and we get the conclusion by taking $n=2$ in Proposition \ref{p9.3}.
\end{proof}

\subsection{Interpretation of $\tilde \psi$ as an extension}\label{s9.3} We remain in the situation of  \S \ref{s8.2}, but assume further that  $\psi$ is an embedding $C\inj S$. For simplicity, we identify $C$ to its image in $S$. Let $V=S-C$; the exact triangle for  cohomology with proper supports
\[\tilde R_{l,c}(V)\to \tilde R_{l,c}(S)\by{\psi^*} \tilde R_{l,c}(C)\by{+1}\]
yields the corresponding long exact sequence
\[\tilde H^1_l(S)\by{\psi^*} \tilde H^1_l(C)\to \tilde H^2_{l,c}(V) \to \tilde H^2_l(S)\by{\psi^*} \tilde H^2_l(C)\]
whence a short exact sequence
\begin{equation}\label{eq8.3}
0\to \tilde H^1_l(\psi)\to \tilde H^2_{l,c}(V)\to \tilde H^2_{l,\prim}(S)\to 0
\end{equation}
where $\tilde H^1_l(\psi)=\tilde H^*_l(h_1(\psi))$ and $\tilde H^2_{l,\prim}(S)=\tilde H^*_{l}(h_2(S)/h_2(C))$. This yields an extension class
\[\sE\in \Ext^1_{\tilde S_c(k,\Q_l)}(\tilde H^2_{l,\prim}(S),\tilde H^1_l(\psi)).\]

On the other hand, the morphism $\tilde\psi$ of \eqref{eq8.1} gives a morphism
\[\tilde R_{l,c}(\tilde\psi):\tilde H^2_{l,\prim}(S)[-2]\to \tilde H^1_l(\psi))[-1]\] 
which yields another extension class $\sE'\in \Ext^1_{\tilde S_c(k,\Q_l)}(\tilde H^2_{l,\prim}(S),\tilde H^1_l(\psi))$. The following proposition will not be used in the sequel, but is good to know:

\begin{prop}\label{l9.3} The classes $\sE$ and $\sE'$ coincide.
\end{prop}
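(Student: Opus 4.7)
The plan is to identify both $\sE$ and $\sE'$, up to sign, with a single canonical class extracted from $\tilde R_l(\psi_*):\tilde R_l(h(S))\to \tilde R_l(h(C))$. Since the CK idempotents realise as orthogonal projectors in cohomology, one has $\tilde R_l(h(X))\simeq \bigoplus_i \tilde H^i_l(X)[-i]$ in $\tilde D^b_c(k,\Q_l)$ for $X=S,C$, so that $\tilde R_l(\psi_*)$ is represented by a matrix of morphisms $\tilde H^i_l(S)[-i]\to \tilde H^j_l(C)[-j]$, i.e.\ elements of $\Ext^{i-j}_{\tilde S_c(k,\Q_l)}(\tilde H^i_l(S),\tilde H^j_l(C))$. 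The diagonal entries recover the restriction maps $\psi^*_i:\tilde H^i_l(S)\to \tilde H^i_l(C)$, while the $(i,j)=(2,1)$ entry yields a canonical class $e\in \Ext^1(\tilde H^2_l(S),\tilde H^1_l(C))$. I write $\bar e\in \Ext^1(\tilde H^2_{l,\prim}(S),\tilde H^1_l(\psi))$ for its pullback along $\tilde H^2_{l,\prim}(S)\hookrightarrow \tilde H^2_l(S)$ composed with its pushforward along $\tilde H^1_l(C)\twoheadrightarrow \tilde H^1_l(\psi)$.

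To show $\sE'=\pm\bar e$, I would analyse the motivic diagram \eqref{eq8.2}. Relative to the splittings $h_{>0}(C)=h_1(C)\oplus h_2(C)$ and $h_{>0}(S)=h_1(S)\oplus h_{>1}(S)$, the morphism $\psi_{>0}$ has vanishing $(h_2(C),h_1(S))$-component by \eqref{eq1.5}, so its only off-diagonal piece is a morphism $h_1(C)\to h_{>1}(S)$, whose $h_2(S)$-component I denote by $b_2$. A direct snake-lemma computation then expresses $\tilde\psi$ as the composite $h_1(\psi)\hookrightarrow h_1(C)\xrightarrow{b_2} h_2(S)\twoheadrightarrow h_2(S)/h_2(C)$. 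Since $\tilde R_l$ is contravariant and CK-compatible, $\tilde R_l(b_2)=e$, so that $\tilde R_l(\tilde\psi)$ equals $\bar e$ up to the sign introduced by the shift conventions used to realise $\Ext^1$ as $\Hom$-in-the-derived-category.

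To show $\sE=\pm\bar e$, I would use the localisation triangle $\tilde R_{l,c}(V)\to \tilde R_l(h(S))\xrightarrow{\tilde R_l(\psi_*)}\tilde R_l(h(C))\to\tilde R_{l,c}(V)[1]$, which identifies $\tilde R_{l,c}(V)[1]$ with the cone of $\tilde R_l(\psi_*)$. The resulting long exact sequence of cohomology sheaves, restricted to degrees $[1,2]$, is exactly \eqref{eq8.3}. By the standard description of the extension class of such a short exact sequence arising from the cone of a matrix-shaped morphism in a triangulated category with $t$-structure, which is given by the $(2,1)$ off-diagonal entry of the matrix (a short diagram chase using the octahedral axiom, or the toy model in which source and target each have only two nonzero cohomology sheaves), this extension class is $\pm\bar e$. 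Hence $\sE=\pm\sE'$.

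The main obstacle will be the bookkeeping of signs: the sign ambiguity between the motivic snake-lemma construction, the contravariance of $\tilde R_l$, the shift in the localisation triangle, and the Yoneda interpretation of $\Ext^1$ must all be matched. Since the proposition only asserts agreement up to sign, this level of precision is exactly what is required; getting the unsigned agreement, by contrast, would force one to commit to a choice of normalisation across all four of these ingredients.
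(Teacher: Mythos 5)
Your argument is correct and is essentially the paper's proof, which consists of the single sentence ``apply $\tilde R_{l,c}$ to Diagram \eqref{eq8.2}'': your identification of both $\sE$ and $\sE'$ with the $(2,1)$-entry of the matrix of $\tilde R_l(\psi_*)$ is exactly the content of that instruction, spelled out. One cosmetic slip: the vanishing of the $(h_2(C),h_1(S))$-component of $\psi_{>0}$ follows from \eqref{eq1.4}, not \eqref{eq1.5} (which only holds modulo algebraic equivalence).
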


\begin{proof} This follows from applying the functor $\tilde R_{l,c}$ to Diagram \eqref{eq8.2}.
\end{proof}

\section{Variations of Hodge structures}\label{s10}

\subsection{The Hodge realisation}\label{s.hodge}  
Let $X$ be a smooth connected complex algebraic curve. For any subring $A$ of $\R$, we have the category $\sV(X,A)$ of good variations of mixed $A$-Hodge structures over $X$ (\cite[\S 4]{andrecomp}, see p. 8 for ``good''). We shall write
\begin{equation}\label{eq10.Z}
\sV(X)=\IM(\sV(X,\Z)\to \sV(X,\Q))
\end{equation}
(variations of mixed Hodge structure of integral origin). This is a full tannakian subcategory of the (neutral) tannakian category $\sV(X,\Q)$ \cite[Appendix]{sz}.

Let $\Loc(X)$ be the tannakian category of local systems of finite-dimensional $\Q$-vector spaces over $X$ (for the analytic topology).  Let also $x\in X(\C)$ be a rational point. We have a naturally commutative diagram of exact faithful $\otimes$-functors
\begin{equation}\label{eq9.8}
\begin{CD}
\MHS=\sV(\Spec \C)@>\pi_\sV^*>> \sV(X)@>x_\sV^*>> \MHS\\
@V\omega VV @V{\omega_X} VV@V\omega VV \\
\Vec_\Q=\Loc(\Spec \C)@>\pi^*>> \Loc(X)@>x^*>> \Vec_\Q
\end{CD}
\end{equation}
where $\pi:X\to \Spec \C$ is the structural morphism, $\MHS$ is the category of graded-polarisable mixed $\Q$-Hodge structures, $\omega,\omega_X$ forget the Hodge structures and $x^*\pi^*=\Id_{\Vec_\Q}$, $x_\sV^*\pi_\sV^*=\Id_{\MHS}$. 

The fibre functors $x^*$ and $\omega\circ x_\sV^*$ define tannakian groups $\Pi_1(X,x)$ and $T_x$, which fit in a  commutative diagram of affine $\Q$-groups
\begin{equation}\label{eq9.9}
\begin{CD}
\MT@<\pi_{\sV}<< T_x@<x_{\sV}<< \MT\\
@A\omega^* AA @A{\omega_X^*} AA@A\omega^* AA \\
1@<\pi<< \Pi_x@<x<< 1
\end{CD}
\end{equation}
where $\MT$ is the Tannakian group of $\MHS$ (the universal Mumford-Tate group).  In particular, $\pi_{\sV}$ is split surjective. 
Note that $\Pi_x$ is the algebraic envelope of the topological fundamental group $\pi_1(X,x)$ (for its action on objects of $\Loc(X)$).

\begin{thm}\label{t1V}  $\omega_X^*(\Pi_x)$ is normal in $T_x$, and we have an exact sequence of affine $\Q$-groups
\begin{equation}\label{eq1V}
\Pi_x\by{\omega_X^*} T_x\by{\pi_{\sV}} \MT\to 1
\end{equation}
where 
$\pi_{\sV}$ has the section $x_{\sV}$. 
\end{thm}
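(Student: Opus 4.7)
The plan is to deduce \eqref{eq1V} and the normality statement directly from the identification $\omega_X^*(\Pi_x) = \Ker \pi_{\sV}$ which the discussion preceding the theorem has essentially already established. All that remains is formal Tannakian bookkeeping together with unpacking what the section $x_\sV$ gives.

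The Tannakian principle I would invoke first: for a homomorphism $f\colon G_1 \to G_2$ of affine $\Q$-groups corresponding to an exact $\otimes$-functor $f^*\colon \sC_2 \to \sC_1$ of neutral Tannakian categories, a closed subgroup of $G_1$ is determined by the full subcategory of $\sC_1$ on which it acts trivially, and $\Ker f$ corresponds to the essential image of $f^*$ (when $f$ is surjective). Applied to $\pi_\sV\colon T_x \to \MT$, this says $\Ker \pi_\sV$ acts trivially on $V \in \sV(X)$ iff $V \in \pi_\sV^*(\MHS)$, and by the theorem of the fixed part of Steenbrink--Zucker this full subcategory equals $\sV_\ct(X)$. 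On the other hand, $\omega_X^*(\Pi_x) \subseteq T_x$ acts trivially on $V$ iff $\Pi_x$ acts trivially on the underlying local system $\omega_X(V)$; since $\Pi_x$ is the algebraic envelope of $\pi_1(X,x)$, this is equivalent to $\omega_X(V)$ being a constant local system, i.e.\ $V \in \sV_\ct(X)$. The two closed subgroups of $T_x$ annihilate the same full subcategory of $\sV(X)$, so they coincide.

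Given $\omega_X^*(\Pi_x) = \Ker \pi_\sV$, normality in $T_x$ is automatic and exactness in the middle of \eqref{eq1V} holds by construction. Surjectivity of $\pi_\sV$ follows from the existence of the section $x_\sV$: Tannakian duality applied to the identity $x_\sV^* \pi_\sV^* = \Id_\MHS$ from \eqref{eq9.8} yields $\pi_\sV \circ x_\sV = \Id_\MT$, so that $\pi_\sV$ is split epi, which in particular surjects. The real content of the argument is carried by the Steenbrink--Zucker theorem of the fixed part, already invoked; the main (conceptual) obstacle is matching up the three descriptions of $\sV_\ct(X)$ — as the essential image of $\pi_\sV^*$, as the subcategory on which $\Pi_x$ acts trivially via $\omega_X^*$, and as the subcategory of variations whose underlying local system is constant — and this is precisely what the preceding paragraph in the text does. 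Beyond this, the proof is purely formal.
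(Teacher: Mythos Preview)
Your argument follows the paper's approach exactly: the paper's proof is the discussion immediately preceding the theorem, culminating in the displayed identity $\sV_\ct(X)=\{V:\Ker\pi_\sV\text{ acts trivially}\}=\{V:\Pi_x\text{ acts trivially}\}$, from which it concludes $\omega_X^*(\Pi_x)=\Ker\pi_\sV$. One caution: the Tannakian principle you state (``a closed subgroup of $G_1$ is determined by the full subcategory of $\sC_1$ on which it acts trivially'') is false in general---two conjugate non-normal subgroups of $S_3$ annihilate the same objects of $\operatorname{Rep}(S_3)$. What is true is that a closed \emph{normal} subgroup is so determined, so strictly speaking one needs normality of $\omega_X^*(\Pi_x)$ as input rather than output. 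This too comes from the theorem of the fixed part: for every $V\in\sV(X)$ the monodromy invariants $\omega_X(V)^{\pi_1(X,x)}$ underlie a sub-object of $V$ in $\sV(X)$, which is precisely the Tannakian criterion for $\omega_X^*(\Pi_x)\trianglelefteq T_x$. The paper is equally terse on this point, so modulo this correction your write-up is faithful to it.
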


For the proof, see Appendix \ref{yves} (Theorem \ref{t2A}). Actually, we will use this theorem only in the case of pure variations of geometric origin, so that the difficult theorem of the fixed part of Schmid-Steenbrink-Zucker used in the appendix (Theorem \ref{t1A}) can be replaced with the much simpler theorem of the fixed part of Deligne \cite[4.1.1]{HII}. (I am indebted to Yves Andr\'e for this clarification.)

Let $y\in X(\C)$ be another point; any path from $x$ to $y$ yields an isomorphism of fibre functors $x^*\iso y^*$; hence the collection of $\Pi_x$ and $T_x$ assemble to $X$-groupoids\footnote{= functors from the fundamental groupoid $\pi_1(X)$ to the category of affine group schemes.} $\Pi,T$, and \eqref{eq1V} gets promoted to an exact sequence of $X$-groupoids
\begin{equation}\label{eq2V}
\Pi\by{\omega_X^*} T\by{\pi_\sV} \MT\to 1
\end{equation}
where $\MT$ is viewed as a constant $X$-groupoid. The sections $x_{\sV*}$ \emph{do not} define a splitting of $\pi_\sV$ as a morphism of $X$-groupoids. The sequel of this subsection explains that this is ``almost true'' on integral objects of $\sV(X)$.

For $V\in \sV(X)$, write $T_x(V)$ for the image of $T_x$ in $\GL(\omega (V_x))$, and $\Pi_x(V),\MT_x(V)$ for the images of $\omega_X^*(\Pi_x)$ and $x_{\sV}(\MT)$: with the notation of \cite[Lemma 4]{andrecomp}, we have $\MT_x(V)=G_x$ and $\Pi_x(V)^0 = H_x$ where $^0$ means neutral component. 
We shall also need

\begin{defn}\label{d10.1} An object $V\in \sV(X)$ is \emph{connected} if $\Pi_x(V)$ is connected for some (hence all) $x\in X(\C)$.
\end{defn}

\begin{cor}\label{tandre} a) The collections $(T_x(V))_{x\in X}$ and $(\Pi_x(V))_{x\in X}$ define local systems on $X$. For any $x\in X(\C)$, $\Pi_x(V)$ is normal in $T_x(V)$ and $\Pi_x(V)^0$ is normal in $T_x(V)^0$; we have 
\begin{align}
\Pi_x(V)\MT_x(V) &=T_x(V)\label{eq10.4}\\
\Pi_x(V)^0\MT_x(V)&=T_x(V)^0\label{eq10.5}\\
\pi_0(\Pi_x(V))&\to \pi_0(T_x(V)) \text{ is surjective.}\label{eq10.6}
\end{align}
b) For $x\in X(\C)$, the following are equivalent:
\begin{thlist}
\item $\Pi_x(V)^0\subseteq  \MT_x(V)$. 
\item $\MT_x(V)=T_x(V)^0$.
\end{thlist}
If $V$ is simple and connected (Definition \ref{d10.1}), then $V_x$ is simple for such $x$. \\
c) The set 
\[\Exc(V)=\{x\in X(\C)\mid \text{(i) and (ii) do not hold}\}\] 
is \emph{countable} and its complement is path-connected; in particular, $x\mapsto \MT_x(V)$ defines a local system on $X(\C)\setminus \Exc(V)$. 
\end{cor}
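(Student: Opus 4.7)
The plan is to derive everything from Theorem~\ref{t1V} by taking images under the representation $T_x \to \GL(\omega(V_x))$, using that $\MT$ (and hence its image $\MT_x(V)$) is connected. For part~(a), the fact that $(T_x(V))_x$ and $(\Pi_x(V))_x$ assemble into local systems is because they are the images of the $X$-groupoids $T$ and $\omega_X^*\Pi$ (from \eqref{eq2V}) inside the local system of groups $\GL(\omega_X(V))$; parallel transport acts on everything by conjugation in $T_x(V)$, and a conjugate of an algebraic subgroup defined by tensorial conditions is again of the same type. Normality of $\Pi_x(V)$ in $T_x(V)$ is the image of the normality of $\omega_X^*(\Pi_x)$ in $T_x$ (kernel of $\pi_\sV$ in \eqref{eq1V}). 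The product formula \eqref{eq10.4} comes from applying the image under the faithful fibre functor to the set-theoretic equality $T_x = \omega_X^*(\Pi_x)\cdot x_\sV(\MT)$, which is immediate from surjectivity of $\pi_\sV$ together with its section $x_\sV$. Since $\MT_x(V)$ is connected, it lies in $T_x(V)^0$, so \eqref{eq10.5} follows from \eqref{eq10.4} by intersecting with $T_x(V)^0$, and normality of $\Pi_x(V)^0$ in $T_x(V)^0$ is automatic (connected component of a normal subgroup). Finally \eqref{eq10.6} is now formal: $T_x(V)^0 \supseteq \MT_x(V)$ implies that every coset of $T_x(V)^0$ in $T_x(V)$ is hit by $\Pi_x(V)$ via \eqref{eq10.4}.

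Part~(b) is then a group-theoretic exercise. The implication (ii)$\Rightarrow$(i) is trivial since $\Pi_x(V)^0 \subseteq T_x(V)^0$. Conversely, if (i) holds, then \eqref{eq10.5} gives $T_x(V)^0 = \Pi_x(V)^0\MT_x(V) \subseteq \MT_x(V)$, while the converse inclusion always holds by connectedness of $\MT$, proving (ii). If $V$ is moreover simple and connected, then $\Pi_x(V)=\Pi_x(V)^0 \subseteq \MT_x(V)$ by (i), and combined with \eqref{eq10.4} this yields $\MT_x(V)=T_x(V)$. Since $T_x(V)$ acts irreducibly on $V_x$ (simplicity of $V$ in $\sV(X)$), so does $\MT_x(V)$, whence $V_x$ is simple in $\MHS$.

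The main obstacle is part~(c), which requires Hodge-theoretic input beyond what \eqref{eq1V} provides. The set $\Exc(V)$ is precisely the ``Hodge locus'' where extra Hodge tensors appear in some tensor construction $T^{a,b}V := V^{\otimes a}\otimes (V^\vee)^{\otimes b}(r)$: indeed, condition~(ii) fails at $x$ exactly when $\MT_x(V)$ fixes some tensor in $T^{a,b}V_x$ not fixed by the generic stabilizer $T_x(V)^0$. For $V\in \sV_\Z(X)$ the set of such potential Hodge tensors is a countable union (over $(a,b,r)$) of finitely generated abelian groups, hence countable. By the algebraicity of Hodge loci of Cattani--Deligne--Kaplan (as exploited by André in \cite[\S 5]{andrecomp}), each individual Hodge tensor is Hodge on a closed analytic (in fact algebraic) subvariety of $X$, which — being proper because the tensor is not globally Hodge on the connected curve $X$ — must be a finite set of points. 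Taking the countable union shows $\Exc(V)$ is countable. Path-connectedness of $X(\C)\setminus \Exc(V)$ is then a standard fact about Riemann surfaces with countably many points removed, and on this complement $\MT_x(V) = T_x(V)^0$ by part~(b), so $\MT_\bullet(V)$ inherits the local system structure from $T_\bullet(V)^0$ established in part~(a).
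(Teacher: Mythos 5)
Your proof is correct and follows essentially the same route as the paper: parts (a) and (b) are the same group-theoretic manipulations of the exact sequence \eqref{eq1V} and its section, using the connectedness of $\MT_x(V)$, and part (c) is exactly André's argument (the paper simply cites the proof of the second assertion of \cite[Lemma 4]{andrecomp}, which shows $\Exc(V)$ is a countable union of proper closed analytic subvarieties, automatically countable on a curve — your appeal to Cattani–Deligne–Kaplan algebraicity is stronger than needed but harmless). The only step you gloss over is in \eqref{eq10.5}, where intersecting \eqref{eq10.4} with $T_x(V)^0$ a priori yields $(\Pi_x(V)\cap T_x(V)^0)\MT_x(V)=T_x(V)^0$ rather than $\Pi_x(V)^0\MT_x(V)=T_x(V)^0$; one concludes because $\Pi_x(V)^0\MT_x(V)$ is a connected closed subgroup of finite index in $T_x(V)^0$, hence equal to it.
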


\begin{proof} In a), the first statements and \eqref{eq10.4} follow from Theorem \ref{t1V} and \eqref{eq2V}; \eqref{eq10.5} then follows from the connectedness of $\MT(V_x)$; finally, \eqref{eq10.6} means that $\Pi_x(V)T_x(V)^0=T_x(V)$, which follows from \eqref{eq10.4} and \eqref{eq10.5}. This makes the equivalence in b) obvious, and the last assertion then follows from \eqref{eq10.6} and (ii). Finally, the proof of the second assertion of  \cite[Lemma 4]{andrecomp} shows that $\Exc(V)$ is a union of countably many proper closed analytic subvarieties of $X(\C)$, hence is countable since $\dim_\C X(\C)=1$. For the path-connectedness, see \cite[Lemma p. 276]{mustafin}.
\end{proof}

\begin{rks} a) In particular, $\Pi_x(V)^0$ is normal in $\MT_x(V)$ under the assumptions of b). This is proven 
in \cite[Th. 1]{andrecomp}, which also shows that $\Pi_x(V)^0\subseteq  D(\MT_x(V))$ (the derived subgroup). I learnt \eqref{eq10.6}, the implication (i) $\Rightarrow$ (ii) and their consequence on the simplicity of $V_x$ from Y. Andr\'e, whose proofs were different.\\
b) In fact, \eqref{eq10.6} is bijective: see Corollary \ref{C.15}. We will not need this refinement.
\end{rks}

\subsection{Hodge coniveau} Let $V\in \sV(X)$. We say that $V$ is \emph{effective} if, for any $i$, the bundle $(\gr_i^WV)^{p,q}$ is $\ne 0$ only for $p\ge 0$ and $q\ge 0$. We write $\nu(V)=\sup\{n\in \Z\mid V(n) \text{ is effective}\}$. Whence the \emph{Hodge coniveau filtration}:
\[N^j_H V= \sum_{\begin{smallmatrix}W\subseteq V\\ \nu(W)\ge j\end{smallmatrix}} W\subseteq V.  \]

Let $\sV(X)^\eff$ be the full subcategory $\{V\in \sV(X)\mid V\text{ is effective}\}$. Then $\nu(V)\ge j$ if and only if $V(j)\in \sV(X)^\eff$. Since $\sV(X)^\eff$ is closed under quotients, $N^j_H$ is right adjoint to the inclusion functor $\sV(X)^\eff(j)\inj \sV(X)$, which implies the identity
\begin{equation}\label{eq9.7}
N^j_H (V_1\oplus V_2)=N^j_H V_1\oplus N^j_H V_2.
\end{equation}

One should beware that $N^*_H$ does \emph{not} commute with specialisation: if $V\in \sV(X)$, then the obvious inclusion, for $x\in X(\C)$,
\[(N^j_HV)_x\subseteq N^j_H(V_x)\]
need not be an equality in general. However:

\begin{prop}\label{p10.1} For any $V\in \sV(X)$, let
\[\Exc^N(V)=\{x\in X\mid N^1_HV_x\ne 0\}.\]
Suppose that $V$ is semi-simple, connected (Definition \ref{d10.1}) and that $N^1_H V=0$. Then
\[\Exc^N(V)\subseteq \bigcup_S \Exc(S)\]
where $\Exc$ is as in Corollary \ref{tandre} c) and $S$ runs through the simple constituents of $V$.
\end{prop}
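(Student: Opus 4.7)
My plan is to reduce the statement to the case where $V$ itself is simple by means of \eqref{eq9.7}, and then dispose of the simple case by a dichotomy on the size of $N^1_H V_x$ inside $V_x$.

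For the reduction, I decompose $V=\bigoplus_i S_i$ into simple summands in the semisimple category $\sV(X)$. Since each $\Pi_x(S_i)$ is a quotient of the connected group $\Pi_x(V)$, it is itself connected, so every $S_i$ is connected in the sense of Definition \ref{d10.1}. The identity \eqref{eq9.7} applied globally forces $N^1_H S_i=0$ for every $i$, and applied fiberwise yields $N^1_H V_x=\bigoplus_i N^1_H(S_i)_x$, so $\Exc^N(V)=\bigcup_i \Exc^N(S_i)$. It therefore suffices to prove $\Exc^N(S_i)\subseteq \Exc(S_i)$, and I may assume $V$ simple.

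Now let $x\in \Exc^N(V)$, so $N^1_H V_x$ is a nonzero sub-MHS of $V_x$. If $N^1_H V_x\subsetneq V_x$, then $V_x$ is not simple, whence $x\in \Exc(V)$ by the contrapositive of Corollary \ref{tandre} b). If instead $N^1_H V_x=V_x$, then $\nu(V_x)\ge 1$, meaning every Hodge type appearing in $\gr^W V_x$ satisfies $p,q\ge 1$. Since the Hodge numbers of the associated graded of a good VMHS are locally constant on $X$, the same holds at every point, so $\nu(V)\ge 1$ as a VMHS, i.e.\ $V\subseteq N^1_H V$. This contradicts the hypothesis $N^1_H V=0$, ruling out the second case. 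Hence $\Exc^N(V)\subseteq \Exc(V)$, and the proposition follows after undoing the reduction. The conceptual point is that the failure of $N^1_H$ to commute with specialization has only two manifestations---the fiber acquires a new proper sub-Hodge-structure (controlled by $\Exc(V)$), or all Hodge types simultaneously jump to coniveau $\ge 1$ (forbidden by local constancy together with $N^1_H V=0$)---so I do not foresee a serious obstacle; the only nontrivial external input is the local constancy of Hodge numbers for good variations.
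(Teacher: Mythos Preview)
Your proof is correct and follows essentially the same approach as the paper's: reduce to the simple case via \eqref{eq9.7} and connectedness of summands, then use local constancy of Hodge numbers to exclude the case $N^1_H V_x=V_x$, leaving only the possibility that $V_x$ fails to be simple, which forces $x\in\Exc(V)$ by Corollary \ref{tandre} b). The paper phrases the last step contrapositively (for $x\notin\Exc(V)$ the fibre is simple, hence $N^1_H V_x$ is $0$ or $V_x$, and the latter is ruled out), but the content is identical.
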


\begin{proof} By \eqref{eq9.7} we have $\Exc^N(V_1\oplus V_2)= \Exc^N(V_1)\cup \Exc^N(V_2)$; since every direct summand of $V$ is connected, we may assume $V$ simple. Since $\dim V_x^{p,q}=\rank V^{p,q}$ for any $x$, we have $N^1_H V_x\ne V_x$ for any $x$. But $V_x$ is simple for $x\notin \Exc(V)$ by Corollary \ref{tandre} b),  hence the conclusion.
\end{proof}

The following easy lemma is a key step in the proof of Theorem \ref{T2}: it is a special feature of Hodge theory.

\begin{lemma}\label{l9.2} Let $V_1,V_2\in \sV(X)$. Then $\nu(V_1\otimes V_2)=\nu(V_1)+\nu(V_2)$. If $V_1,V_2\in \sV(X)^\eff$ and $V_1\otimes V_2$ is simple, then ($N^1_H V_1=0$ $\bigwedge$ $N^1_H V_2=0$) $\Rightarrow$ $N^1_H(V_1\otimes V_2)=0$. 
\end{lemma}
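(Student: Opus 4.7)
\textbf{Proof plan for Lemma~\ref{l9.2}.} The plan is to reduce the first assertion to a simple identity on Hodge bigradings, then deduce the second formally. Using
\[
\gr^W_n(V_1 \otimes V_2) \cong \bigoplus_{i+j=n} \gr^W_i V_1 \otimes \gr^W_j V_2
\]
together with the standard bigrading of a tensor product of pure Hodge structures, the quantity
\[
\mu(V) := \min\bigl\{p \in \Z : (\gr^W V)^{p,q} \neq 0 \text{ for some } q\bigr\}
\]
is tautologically additive: $\mu(V_1 \otimes V_2) = \mu(V_1) + \mu(V_2)$. The first assertion will then follow from the claim $\nu(V) = \mu(V)$.

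The definition of $\nu$ unpacks as $\nu(V) = \min\{\min(p,q) : (\gr^W V)^{p,q} \neq 0\}$. The inequality $\nu(V) \leq \mu(V)$ is immediate: if $p = \mu(V)$ is attained at some $(p,q)$, then $\nu(V) \leq \min(p,q) \leq p = \mu(V)$. For the reverse, pick $(p_0, q_0)$ attaining $\nu(V) = \min(p_0, q_0)$; if $p_0 \leq q_0$ then $\nu(V) = p_0 \geq \mu(V)$, while if $q_0 < p_0$, the conjugate symmetry of polarisable VHS produces a nonzero piece at $(q_0, p_0)$, giving $\mu(V) \leq q_0 = \nu(V)$. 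Hence $\nu = \mu$, and $\nu(V_1 \otimes V_2) = \nu(V_1) + \nu(V_2)$.

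For the second assertion, I would first observe that if $V$ is effective, nonzero, and $N^1_H V = 0$, then necessarily $\nu(V) = 0$: otherwise $V$ itself would be a subobject of coniveau $\geq 1$, forcing $N^1_H V \supseteq V \neq 0$. Applied to $V_1$ and $V_2$ (both nonzero, as $V_1 \otimes V_2$ is simple and hence nonzero), this gives $\nu(V_i) = 0$, and the first assertion yields $\nu(V_1 \otimes V_2) = 0$. Since $V_1 \otimes V_2$ is simple, $N^1_H(V_1 \otimes V_2)$ is either $0$ or the whole variation; the latter would force $\nu(V_1 \otimes V_2) \geq 1$, which is impossible.

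The only step drawing essentially on Hodge theory is the inequality $\mu(V) \leq \nu(V)$, which rests on complex-conjugate symmetry of pure Hodge structures; this is the main obstacle in the sense that it is exactly where the proof fails in positive characteristic. Over $\overline{\F}_p$, a supersingular elliptic curve $E$ has $H^1_l(E)$ pure of weight $1$ with no Tate subobject (so of ``coniveau zero'' in the $l$-adic sense), yet $H^1_l(E)^{\otimes 2}$ contains $\Q_l(-1)$ as a summand, so additivity of coniveau is violated. This matches the paper's remark identifying Lemma~\ref{l9.2} as the Hodge-theoretic obstruction to extending these arguments to characteristic $p$.
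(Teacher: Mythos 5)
Your proof is correct and follows essentially the same route as the paper's: additivity of $\nu$ is read off from the Hodge bigrading of $\gr^W(V_1\otimes V_2)$ (the paper uses conjugate symmetry implicitly where you make the reduction $\nu=\mu$ explicit), and the second assertion follows formally from simplicity. Your version merely spells out the steps the paper compresses into "this is clear" and "this readily implies the second claim."
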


\begin{proof} The inequality $\nu(V_1\otimes V_2)\ge \nu(V_1)+\nu(V_2)$ is obvious. By twisting, it suffices to show that $\nu(V_1)=0$ and $\nu(V_2)=0$ implies $\nu(V_1\otimes V_2)=0$.  This is clear, since $\gr_0^W(V_1)^{0,q}\ne 0$ for some $q$ and $ \gr_0^W(V_2)^{0,r}\ne 0$ for some $r$ implies $\gr_0^W(V_1\otimes V_2)^{0,q+r}\ne 0$. This readily implies the second claim.
\end{proof}

For the next proposition, we write $\PHS\subset \MHS$ for the full subcategory of pure (polarisable) Hodge structures: it is semi-simple.

\begin{prop}\label{p9.2} Let $V\in \sV(X)$. Assume that $\omega_X(V)$ is absolutely irreducible, in the sense that its endomorphism ring is $\Q$. Let  $W\in \PHS$. Then:\\
a) If $W$ is simple, $V\otimes \pi_\sV^*W$ is simple.\\
b) Suppose $V$ and $W$ effective. If $N^1_H V=0$ and $N^1_H W=0$, then $N^1_H(V\otimes \pi^*W)=0$. Moreover, $N^1_H(V_x\otimes W)\ne V_x\otimes W$ for any $x\in X$.
\end{prop}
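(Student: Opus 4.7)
My plan is to argue (a) by a short tannakian computation with the fibre functor $\omega\circ x_\sV^*$, and then to derive the two assertions of (b) from (a) together with Lemma~\ref{l9.2}.

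For (a), I fix $x\in X(\C)$, so that simple objects of $\sV(X)$ correspond under $V\mapsto V_x$ to irreducible $\Q$-linear representations of $T_x$. The representation attached to $V\otimes \pi_\sV^*W$ is the tensor product of $\rho_V:T_x\to \GL(V_x)$ with $T_x\by{\pi_\sV}\MT\to \GL(W)$, acting on $V_x\otimes W$. By Theorem~\ref{t1V}, $\omega_X^*(\Pi_x)=\Ker\pi_\sV$, so this subgroup acts trivially on $W$; via the compatibility $x^*\omega_X=\omega\circ x_\sV^*$, its action on $V_x$ is the monodromy representation of $\omega_X V$, whose endomorphism ring is $\Q$ by hypothesis. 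A Schur argument then identifies the $\omega_X^*(\Pi_x)$-submodules of $V_x\otimes W$ with objects of the form $V_x\otimes W'$ for subspaces $W'\subseteq W$; stability under all of $T_x$ is equivalent to stability of $W'$ under the image of $T_x$ in $\GL(W)$, which equals $\MT(W)$ since $\pi_\sV$ is surjective. Finally $W$ simple in $\MHS$ means $\MT(W)$ acts irreducibly on $W$, forcing $W'\in\{0,W\}$.

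For the first half of (b), I would decompose $W=\bigoplus_i W_i$ into simple summands in the semi-simple category $\PHS$. Each $W_i$ is effective (being a sub-HS of an effective HS) and satisfies $N^1_H W_i=0$: if not, simplicity would force $W_i$ to have coniveau $\ge 1$, contradicting $N^1_H W=0$. Applying (a) then gives simplicity of the effective object $V\otimes\pi_\sV^*W_i\in\sV(X)$, whereupon Lemma~\ref{l9.2} yields $N^1_H(V\otimes\pi_\sV^*W_i)=0$. Summing via~\eqref{eq9.7} gives $N^1_H(V\otimes\pi_\sV^*W)=\bigoplus_i N^1_H(V\otimes\pi_\sV^*W_i)=0$.

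The last assertion is equivalent to $\nu(V_x\otimes W)=0$. By the first identity in Lemma~\ref{l9.2} applied in $\MHS=\sV(\Spec\C)$, this reduces to showing $\nu(V_x)=\nu(W)=0$. The second is immediate since $W$ is effective with $N^1_H W=0$. For the first, the same reasoning gives $\nu(V)=0$, meaning that in some graded piece $\gr^W_n V$ a Hodge bundle of type $(0,q)$ or $(p,0)$ is nonzero; since the ranks of the Hodge bundles of a good VMHS are locally constant on $X$, that piece survives specialisation at $x$, whence $\nu(V_x)=0$. The main obstacle is concentrated in (a): one really needs the full force of absolute irreducibility of $\omega_X(V)$, i.e.\ $\End=\Q$, to apply Schur and describe the isotypic components of $V_x\otimes W$ under $\omega_X^*(\Pi_x)$---mere irreducibility would allow a nontrivial division-algebra structure and leave room for extra $T_x$-subrepresentations.
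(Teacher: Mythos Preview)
Your proof is correct and follows essentially the same approach as the paper's. Part (a) is identical in spirit: the paper also uses Theorem~\ref{t1V} to restrict $V\otimes\pi_\sV^*W$ to $\omega_X^*(\Pi_x)$, invokes absolute irreducibility of $V$ to put $\Pi_x$-submodules in bijection with subspaces of $W$ (via $E\mapsto (V^*\otimes E)^{\Pi_x}$), and then observes that $T_x$-stability corresponds to $\MT$-stability. For (b), the paper's argument is terser (``follows from a) and Lemma~\ref{l9.2}''), but your explicit decomposition of $W$ into simples and your rank-constancy argument for the ``Moreover'' clause spell out exactly what is intended; the latter is the same observation used in the proof of Proposition~\ref{p10.1}.
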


\begin{proof} a) (I am grateful to \url{http://mathoverflow.net/questions/208731} for its help in this proof.) 
We use Theorem \ref{t1V}. Take $x\in X$. We may view $V$ as a representation of $T_x$ and $W$ as a representation of $\MT$. The restriction of $V\otimes \pi_\sV^* W$ to $\Pi_x$ is semi-simple as a direct sum of copies of $V$, and its subrepresentations $E$ are in $1-1$ correspondence with the sub-vector spaces of $W$ by $E\mapsto (V^*\otimes E)^{\Pi_X}\subseteq \End_{\Pi_x}(V)\otimes W=W$ (using the absolute irreducibility of $V_{|H}$). Moreover, $E$ is a $T_x$-submodule of $V\otimes W$ if and only if $(V^*\otimes E)^{\Pi_x}$ is an $\MT$-submodule of $W$.

b) This follows from a) (by reducing to $W$ simple) and Lemma \ref{l9.2}.
\end{proof}

\begin{cor}\label{c9.1} Consider the hypotheses of Proposition \ref{p9.2} b). Suppose moreover $V
$  connected. Then 
\[\Exc^N(V\otimes \pi_\sV^*W)\subseteq \bigcup_i \Exc(V\otimes \pi_\sV^*W_i)\]
where $W_i$ runs through the irreducible components of $W$, $\Exc^N$ is as in Proposition \ref{p10.1} and $\Exc$ is as in Corollary \ref{tandre} c). In particular, $\Exc^N(V\otimes \pi_\sV^*W)$ is countable.
\end{cor}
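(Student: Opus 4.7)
The plan is to directly verify the hypotheses of Proposition \ref{p10.1} for $V\otimes \pi_\sV^*W$ and then read off the claimed inclusion, obtaining countability at the last step from Corollary \ref{tandre} c).

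First I would decompose $W=\bigoplus_i W_i$ into irreducible summands, which is possible since polarisable pure $\Q$-Hodge structures form a semi-simple abelian category. This expresses $V\otimes \pi_\sV^*W$ as $\bigoplus_i V\otimes \pi_\sV^*W_i$. By Proposition \ref{p9.2} a) applied to each (simple) $W_i$, each summand $V\otimes \pi_\sV^*W_i$ is simple; so $V\otimes \pi_\sV^*W$ is semi-simple with the $V\otimes \pi_\sV^*W_i$ as its simple constituents. Each constituent is moreover connected in the sense of Definition \ref{d10.1}: the underlying local system of $\pi_\sV^*W_i$ is trivial, so $\Pi_x$ acts on $(V\otimes W_i)_x$ through its action on $V_x$, making $\Pi_x(V\otimes \pi_\sV^*W_i)$ a quotient of the connected group $\Pi_x(V)$, hence connected. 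Finally, Proposition \ref{p9.2} b) applied directly to $(V,W)$ delivers $N^1_H(V\otimes \pi_\sV^*W)=0$ (note that effectivity of each $W_i$ and the vanishing $N^1_H W_i=0$ also follow automatically from the corresponding properties of $W$ via \eqref{eq9.7}).

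All hypotheses of Proposition \ref{p10.1} will then be in place, yielding
\[\Exc^N(V\otimes \pi_\sV^*W)\subseteq \bigcup_S \Exc(S)\]
with $S$ running over the simple constituents just listed, which is exactly the claimed inclusion. For the countability statement, the right-hand side is a finite union because $W$ is finite-dimensional, so it suffices to check countability of each $\Exc(V\otimes \pi_\sV^*W_i)$ separately. Since $\pi_\sV^*W_i$ is a constant local system, the monodromy of $V\otimes \pi_\sV^*W_i$ factors through that of $V\in \sV_\Z(X)$, and this integral structure is exactly what Andr\'e's argument behind Corollary \ref{tandre} c) exploits to express each exceptional locus as a countable union of proper closed analytic subvarieties of the one-dimensional curve $X(\C)$, hence as a countable set.

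I anticipate the main delicate point to be precisely this last invocation: Corollary \ref{tandre} c) is stated for variations in $\sV_\Z(X)$, and one has to be comfortable that the integrality of $V$ alone (rather than of the tensor product $V\otimes \pi_\sV^*W_i$) is enough to run the countability mechanism. Everything else is a formal packaging of Propositions \ref{p9.2} and \ref{p10.1}.
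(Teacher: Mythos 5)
Your proposal is correct and follows essentially the same route as the paper, which deduces the corollary from Propositions \ref{p9.2} and \ref{p10.1} (with Lemma \ref{l9.2} feeding into Proposition \ref{p9.2} b)) after decomposing $W$ into irreducibles; the connectedness and countability points you flag are exactly the ones the paper leaves implicit, and your justifications for them are sound.
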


\begin{proof}   This follows from Proposition \ref{p9.2}, Lemma \ref{l9.2} and Proposition \ref{p10.1}.
\end{proof}

\subsection{Comparison isomorphisms and coniveau filtrations}\label{s10.3} Let us go back to the category $\sM(X)$ of relative Chow motives considered in \S \ref{s8.3}. The functors
\[\Sm^\proj(X) \ni (\sX\by{f}X)\mapsto  R^if_*\Q\in \sV(X)\]
induce a graded $\otimes$-functor
\[\H_B^*:\sM(X)\to \sV^*(X).\]

There is a similar enriched realisation functor
\[\H_l^*:\sM(X)\to \tilde S_c^*(X,\Q_l)\]
to the graded category of ``arithmetic'' $l$-adic sheaves $\tilde S_c^*(X,\Q_l)$ of \S \ref{s9.2}. Composing with the forgetful functors $\omega_X$ of \eqref{eq9.8} and $\omega_X^l$ from \eqref{eq9.11}, we get relative Weil cohomologies
\[H_B^*:\sM(X)\to \Loc^*(X), \quad H_l^*:\sM(X)\to \Loc^*(X,\Q_l)\]
where $\Loc(X,\Q_l)$ is the category of local systems of $\Q_l$-vector spaces. 
The comparison isomorphisms $R^*f_*\Q_l\simeq (R^*f_*\Q)\otimes \Q_l$ of \cite[V, (3.5.1)]{arcata} yield an isomorphism of functors
\begin{equation}\label{eq10.3}
H_l^*\simeq H_B^*\otimes_\Q \Q_l.
\end{equation}

For $M\in \sM(X)$, we may define the coniveau filtration $N^*$ on $H^*_B(M)$ and $H^*_l(M)$ by Formula \eqref{eq4.1a}. Clearly
\begin{equation}\label{eq10.8}
N^rH^j_B(M)\subseteq N^r_H H^j_B(M):=\omega_X(N^r_H\H^j_B(X))
\end{equation}
(we call the right hand side the \emph{Hodge coniveau filtration}.)

We have the trivial lemma:

\begin{lemma}\label{l10.1} The isomorphism \eqref{eq10.3} respects the coniveau filtrations.\qed
\end{lemma}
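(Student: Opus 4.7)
The plan is to unravel both filtrations via Formula \eqref{eq4.1a} and then appeal to the functoriality of the isomorphism \eqref{eq10.3}. More precisely, for $M\in\sM(X)$ the coniveau filtration is defined by
\[
N^rH^j_?(M)=\sum_{(N,z)}\IM\!\left(H^{j-2r}_?(N)(-r)\xrightarrow{H^{j-2r}_?(z)} H^j_?(M)\right),
\]
where $?\in\{B,l\}$, $N$ ranges over $\sM(X)$ and $z$ over $\sM(X)(M,N(r))$. Both sides are therefore computed by the same indexing set, with the only difference being the realisation functor used to form the image.

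Next, I would note that the comparison isomorphism \eqref{eq10.3} is, by construction, an isomorphism of graded $\otimes$-functors $\sM(X)\to\Loc^*(X,\Q_l)$ (one first has it on smooth projective $X$-schemes via \cite[V, (3.5.1)]{arcata}, then extends to $\sM(X)$ since both realisations are defined from this by idempotent splitting and Tate twists). In particular, for any morphism $z:M\to N(r)$ in $\sM(X)$ the square
\[
\begin{CD}
H_B^{j-2r}(N)(-r)\otimes_\Q\Q_l @>{H_B(z)\otimes \Q_l}>> H_B^{j}(M)\otimes_\Q\Q_l\\
@V{\simeq}VV @VV{\simeq}V\\
H_l^{j-2r}(N)(-r) @>>{H_l(z)}> H_l^{j}(M)
\end{CD}
\]
commutes.

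Finally I would conclude by taking images and summing over all pairs $(N,z)$: since tensoring with $\Q_l$ over $\Q$ is exact and commutes with images and with arbitrary sums in the category of local systems (equivalently, pointwise, in $\Q$-vector spaces), the isomorphism \eqref{eq10.3} identifies the $\Q_l$-span of $N^rH^j_B(M)$ with $N^rH^j_l(M)$. There is no obstacle worth mentioning; this is why the lemma is qualified as trivial.
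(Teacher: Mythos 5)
Your argument is correct and is precisely the justification the paper has in mind: the paper states this lemma with no written proof (labelling it "trivial"), and the intended reasoning is exactly your unpacking of \eqref{eq4.1a} over the common indexing set of pairs $(N,z)$ together with the naturality of the comparison isomorphism and exactness of $-\otimes_\Q\Q_l$.
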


Let now $K/k$ be any regular extension, whence a functor $\sM(k)\ni M\mapsto M_K\in  \sM(K)$. If $l$ is a prime number $\ne \car k$ and $M\in \sM$, we have an isomorphism
\begin{equation}\label{eq10.7}
H_l(M)\iso H_l(M_K)
\end{equation}
by invariance of $l$-adic cohomology under separably closed base change. 

\begin{prop}[cf. \protect{\cite[Prop. 2.2]{katz}}]\label{p9.5} The isomorphism \eqref{eq10.7} respects the coniveau filtrations.
\end{prop}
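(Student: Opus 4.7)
The plan is to establish both inclusions of the coniveau filtrations under \eqref{eq10.7}. The direction $N^j H_l(M) \subseteq N^j H_l(M_K)$ is immediate: for any pair $(N,z)$ with $N \in \sM(k)$ and $z \in \sM(k)(M,N(j))$, the base change $(N_K,z_K)$ defines the same subspace of cohomology under the canonical isomorphism \eqref{eq10.7}.

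For the reverse inclusion, begin with a pair $(N',z')$ where $N' \in \sM(K)$ is cut out from $h(X')$ (with $X' \in \Sm^\proj(K)$) by an idempotent $p' \in CH^*(X' \times_K X')$, and $z' \in CH^*(X' \times_K M_K)$ represents $M_K \to N'(j)$. Since $K/k$ is regular, $\Spec K$ is a cofiltered limit of smooth irreducible affine $k$-schemes $U_\alpha$ with function field $K$. Standard spreading-out applied to $X'$, $p'$, $z'$ yields, for some $\alpha$, a smooth projective $\mathcal{X}^\alpha \to U_\alpha$ with generic fibre $X'$ together with cycles $p^\alpha, z^\alpha$ on the relevant products restricting to $p', z'$ at the generic point. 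The identity $(p^\alpha)^2 = p^\alpha$ holds at the generic point, so after shrinking $U_\alpha$ it holds on all of $U_\alpha$, producing a relative Chow motive $N^\alpha \in \sM(U_\alpha)$ together with $z^\alpha : M_{U_\alpha} \to N^\alpha(j)$.

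Now apply relative $l$-adic cohomology. By smooth and proper base change, $H^{i-2j}_l(N^\alpha)$ is a lisse $\Q_l$-sheaf on $U_\alpha$, while $H^i_l(M_{U_\alpha})$ is pulled back from $\Spec k$ and is therefore a constant lisse sheaf with fibre $H^i_l(M_{\bar k})$. In the abelian category of lisse $\Q_l$-sheaves on the connected base $U_\alpha$, the image of $(z^\alpha)^*$ is a sub-lisse-sheaf of a constant sheaf and is therefore itself constant, with fibre some subspace $V \subseteq H^i_l(M_{\bar k})$. Pulling back to the generic point identifies $V$ with the image of $(z')^*$ under \eqref{eq10.7}. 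Picking any closed point $u \in U_\alpha$ (so $k(u)/k$ is finite) and specialising gives $(N^u, z^u) \in \sM(k(u))$ whose image in $H^i_l(M_{k(u)}) = H^i_l(M)$ is again $V$.

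It remains to descend from $k(u)$ to $k$. The adjunction for the finite morphism $\pi: \Spec k(u) \to \Spec k$ converts $z^u$ into a morphism $z_0 : M \to N_0(j)$ in $\sM(k)$, where $N_0$ is $N^u$ regarded as a $k$-motive (i.e.\ with $\mathcal{X}^u$ viewed as a smooth projective $k$-variety, cut out by the same idempotent). The cohomological image of $z_0$ in $H^i_l(M_{\bar k})$ decomposes, along the geometric decomposition $(N_0)_{\bar k} = \coprod_{\sigma} N^u_\sigma$ indexed by $k$-embeddings $\sigma : k(u) \hookrightarrow \bar k$, as $\bigoplus_\sigma V_\sigma$; by Galois-equivariance of the preceding constancy argument, each $V_\sigma$ equals $V$. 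Hence $V \subseteq N^j H^i_l(M)$, finishing the proof. The main delicacy is the spreading-out step — turning $p^\alpha$ into a genuine idempotent and making $\mathcal{X}^\alpha \to U_\alpha$ smooth and proper requires successive shrinkings of $U_\alpha$ — after which the constancy argument is essentially tautological, and the descent through $\pi_*$ is formal; this is essentially the content of Katz's Proposition 2.2 in \cite{katz}.
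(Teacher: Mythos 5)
Your proof is correct and follows essentially the same route as the paper's: spread the motive $N'$ and the correspondence out over a smooth model of (a finitely generated subfield of) $K$, use smooth and proper base change to specialise at a closed point, and conclude via the identification of $l$-adic cohomology over $\bar k$. The only differences are cosmetic — the paper tracks a single class $x=\alpha^*y$ and specialises $y$ and $\alpha$ directly rather than arguing with images of morphisms of lisse sheaves, and it leaves implicit the final transfer from $k(u)$ down to $k$ that you spell out.
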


\begin{proof}  Let $X\in \Sm^\proj(k)$. By definition of the coniveau filtration, the isomorphism $H^i_l(X)\iso H^i_l(X_K)$ yields an inclusion $N^jH^i_l(X)\subseteq N^jH^i_l(X_K)$ for all $j\ge 0$. Let $x\in H^i_l(X)$ be such that $x_K\in N^jH^i_l(X_K)$. There is $Y\in \Sm^\proj(K)$, $y\in H_l^{i-2j}(Y)(-j)$ and a correspondence $\alpha\in CH_{\dim X -j}(X_K\allowbreak\times_K Y)$ such that $x_K=\alpha^* y$. Spread $Y$ to a smooth projective $f:\sY\to S$ with $S\in \Sm(k)$, and lift $\alpha$ to $\tilde \alpha\in CH_{\dim X -j+\dim S}(X_S\allowbreak\times_S \sY)$. For a closed point $s\in S$, we have $y_s=sp_s(y)\in H^{i-2j}_l(\sY_s)$ and $\alpha_s=y^*(\tilde\alpha)\in CH_{\dim X -j}(X\times_k \sY_s)$, where $\sY_s=f^{-1}(s)$; obviously, $x_{k(y)}=\alpha_s^*y_s$.
\end{proof}

We may now state

\begin{prop} \label{p10.2} Let $X$ be a smooth connected algebraic $\C$-curve and $M\in \sM(X)$. Suppose that $N^j_H \H^i_B(M_U)=0$ for some $(i,j)$ and any nonempty Zariski open subset $U\subseteq X$, where $M_U$ is the base change of $M$ to $U$\footnote{One can show that the case $U=X$ implies the general case.}. Let $M_\eta\in \sM(K)$ be the ``generic fibre'' of $M$, where $K=\C(X)$.  Then we have $N^j H^i_l(M_\eta)=0$ for any prime number $l$.
\end{prop}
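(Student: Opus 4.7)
The plan is to chain three observations: pass from Hodge coniveau to Betti coniveau on each open $U\subseteq X$ via the inclusion of \eqref{eq10.8}, then to $l$-adic coniveau on each $U$ via Lemma \ref{l10.1}, and finally specialise to the generic fibre by a spreading argument in the style of Proposition \ref{p9.5}.

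First I would unpack the hypothesis. The vanishing $N^j_H\H^i_B(M_U)=0$ is a statement about a sub-VHS of $\H^i_B(M_U)$; applying $\omega_X$ gives the vanishing of the underlying sub-local-system $N^j_H H^i_B(M_U)$, so \eqref{eq10.8} forces $N^j H^i_B(M_U)=0$ on every nonempty open $U\subseteq X$. Lemma \ref{l10.1}, applied to the comparison isomorphism $H^i_l(M_U)\simeq H^i_B(M_U)\otimes_\Q\Q_l$, then yields $N^j H^i_l(M_U)=0$ as a sub-$\Q_l$-local-system on each such $U$.

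For the final step, I would imitate the proof of Proposition \ref{p9.5}. Take $x\in N^j H^i_l(M_\eta)$; by \eqref{eq4.1a} applied in $\sM(K)$ (where $K=\C(X)$), there exist $N\in\sM(K)$, $z\in\sM(K)(M_\eta,N(j))$, and $y\in H^{i-2j}_l(N)(-j)$ with $x=z^*y$. Since $\sM(K)=2\text{-}\colim_{U'}\sM(U')$, the pair $(N,z)$ spreads to $(N_{U'},z_{U'})$ over a sufficiently small nonempty open $U'\subseteq X$, with restriction to $\eta$ recovering $(N,z)$. The induced morphism $z_{U'}^{*}:H^{i-2j}_l(N_{U'})(-j)\to H^i_l(M_{U'})$ of $l$-adic local systems on $U'$ lands, by definition of coniveau, in the sub-local-system $N^j H^i_l(M_{U'})$, which is zero by the previous step. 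Taking generic stalks and using \eqref{eq10.7} to identify $H^*_l$ of the generic fibre with the generic stalk of the relative local system recovers $x=z^*y$, which must therefore vanish.

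The main obstacle is the bookkeeping of the last step: one must identify the generic stalk of the relative $l$-adic sheaf $H^i_l(M_{U'})\in\tilde S_c(U',\Q_l)$ with the $l$-adic cohomology $H^i_l(M_\eta)$ of the generic fibre, and verify that the coniveau sub-local-system specialises to the coniveau subspace. This is essentially the relative analogue of Proposition \ref{p9.5}; once that identification is carefully set up, no further substantive input is needed.
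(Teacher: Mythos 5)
Your proposal is correct and follows essentially the same route as the paper's proof: the hypothesis plus the inclusion \eqref{eq10.8} kills the Betti coniveau over every open $U$, Lemma \ref{l10.1} transfers this to the $l$-adic side, and a spreading-out of morphisms from $\sM(K)$ to $\sM(U')$ reduces the generic-fibre statement to the relative one. The only cosmetic difference is the order of operations (the paper spreads out first and compares realisations second), and the stalk identification you flag as the "main obstacle" is the standard smooth-proper base change plus \eqref{eq10.7}, which the paper also uses implicitly.
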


\begin{proof} We have to show that any morphism $\phi:M_\eta\to N(j)$ in $\sM(K)$ induces $0$ on $l$-adic cohomology. We may find $U$ such that $\phi$ comes from a morphism $\phi_U:M_U\to \sN(j)$, with $\sN\in \sM(U)$. It suffices to show that $\phi_U$ induces $0$ on $l$-adic cohomology, and by Lemma \ref{l10.1} we may replace the latter by Betti cohomology. This now follows from the inclusion \eqref{eq10.8} and the hypothesis.
\end{proof}

\section{Lefschetz pencils}\label{s.lef}

Our aim in this section is to prove the generic case of Theorem \ref{T2} (see Corollary \ref{c11.1}). 

\subsection{Preparations} Let $S\in \Sm^\proj(k)$ be a geometrically connected surface. Consider  a Lefschetz pencil \cite[(5.6)]{weilI}:
\begin{equation}\label{eq9.4}
\vcenter{
\xymatrix{
C_\eta \ar[r]\ar[d] &\tilde S \ar[r]^\pi\ar[d]_{f}& S\\
\eta \ar[r]& \P^1
}}
\end{equation}
where $\tilde S$ is the incidence variety and $\eta$ is the generic point of $\P^1$.

We have the closed immersion 
\begin{equation}\label{eq11.3}
\psi:\tilde S\inj S\times \P^1.
\end{equation}

Let $U\subset \P^1$ be the smooth locus of $f$; the restriction $\psi_U:\tilde S_U\inj S\times U$ of $\psi$ to $U$
yields a relatively ample smooth divisor, and the formalism of \S \ref{s8.3} applies.
 Thus we get motives
\begin{equation}\label{eq11.1}
h_1(\psi_U), M(\psi_U)=h_1(\psi_U)\otimes t_2(S\times U/U)\in \sM(U)
\end{equation}
and a morphism
\begin{equation}\label{eq11.2}
\widehat{\psi_U}:h_1(\psi_U)\to t_2(S\times U/U)
\end{equation}
(see Lemma \ref{l9.1}).

Let $l$ be a prime number. In $H^1_l(\tilde S_U)$, we have two sub-local systems:
\begin{itemize}
\item $H^1_l(\psi_U):=H^1_l(h_1(\psi_U))$;
\item $\underline{E}$, the local system of vanishing cycles \cite[(6.1)]{weilI}.
\end{itemize} 

We assume that $\underline{E}\ne 0$: it is possible up to composing the given projective embedding of $S$ with a Veronese embedding, cf. \cite[Cor. 6.4]{katz1}. This forces $U\ne \P^1$, hence $U$ is affine. Then:

\begin{prop}\label{h10.1} a) The two local systems $H^1_l(\psi_U)$ and $\underline{E}$ are canonically isomorphic.\\
b) For $u\in U$, the algebraic envelope of the monodromy action on $H^1_l(\psi_U)_u=H^1_l(\psi_u)$ is $\Sp(H^1_l(\psi_u))$, where $\Sp$ is with respect to the symplectic pairing induced by the isomorphism of a).\\
c) If $k\subseteq \C$, the same picture holds when replacing $H_l$ by $H_B$. In particular, $H^1_B(\psi)\in \sV(\P^1)$ (cf. \eqref{eq10.Z}).
\end{prop}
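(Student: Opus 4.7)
The plan is to reduce (a) and (c) to classical Picard-Lefschetz theory combined with weak Lefschetz, and to deduce (b) from Deligne's big monodromy theorem for Lefschetz pencils.

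For (a), I unpack $h_1(\psi_U)$ using the relative version of \eqref{eq8.5} afforded by Lemma \ref{l8.2}, and apply the relative $l$-adic realization $\H_l^*$ from \S\ref{s10.3}. Fiberwise at $u \in U$, this identifies $H^1_l(h_1(\psi_u))$ with the kernel of the map $H^1_l(C_u) \to H^1_l(S)$ induced by the Chow-K\"unneth component $\psi_{1,u}\colon h_1(C_u) \to h_1(S)$. In the covariant convention of \cite{kmp}, this map on $H^1$ is Poincar\'e-dual to the restriction $\psi_u^*\colon H^1_l(S) \to H^1_l(C_u)$. Since $C_u$ is an ample divisor on $S$, weak Lefschetz gives injectivity of $\psi_u^*$, hence surjectivity of its Poincar\'e-dual, and the kernel equals the cup-product orthogonal of $\IM(\psi_u^*)$ in $H^1_l(C_u)$. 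By the standard decomposition of $H^1$ of a fiber of a Lefschetz pencil---the invariant part coincides with $\IM(\psi_u^*)$, using that $\pi\colon \tilde S \to S$ is a blowup at finitely many points and so only affects $H^2$---this orthogonal is precisely the stalk $E_u$ of the vanishing cycles local system. Naturality promotes the fiberwise identification to an isomorphism of local systems $H^1_l(\psi_U) \cong \underline E$ on $U$.

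For (b), I invoke Deligne's theorem \cite[Th.~4.4.1]{weilI}: since $\underline E \ne 0$, the Zariski closure of the monodromy action on $E_u$ is the full symplectic group $\Sp(E_u)$ relative to the (alternating) cup-product pairing on $H^1_l(C_u)$ restricted to $E_u$. Transporting along the identification of (a) then gives the claim for $H^1_l(\psi_u)$, with the symplectic pairing identified with the one inherited from Poincar\'e duality on the curve $C_u$.

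For (c), the same argument works verbatim with Betti cohomology in place of $l$-adic cohomology, invoking the classical analytic Picard-Lefschetz formulas and the Betti version of Deligne's monodromy theorem; alternatively, (c) follows from (a) and (b) via the comparison isomorphism \eqref{eq10.3}. The main obstacle I anticipate is carefully matching the morphism $\psi_{1,u}$ in the covariant Chow-K\"unneth formalism of \cite{kmp} with the intended cohomological map, so that $H^1_l(h_1(\psi_u))$ is canonically identified with $E_u$ (rather than with some Tate-twisted variant). Once this compatibility is nailed down, (a) and (c) reduce to classical material and (b) is a direct citation of Deligne.
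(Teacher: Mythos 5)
Your proposal is correct and follows essentially the same route as the paper: identify the realisation of $h_1(\psi_u)$ with the complement of $\IM(\psi_u^*)$ in $H^1_l(C_u)$, use Picard--Lefschetz plus the fact that the invariant part of $H^1_l(C_u)$ is $\IM(H^1_l(S))$ (which the paper actually proves via the Leray spectral sequence over the affine base $U$ and $H^1_l(S)\iso H^1_l(\tilde S)$, where you cite it as standard), then get b) from big monodromy and c) from the comparison isomorphism \eqref{eq10.3}. The only slips are cosmetic: the contravariant realisation of $h_1(\psi)=\Ker\psi_1$ is the \emph{cokernel} of $\psi_u^*$ (canonically isomorphic to your $\Ker\psi_{u*}=(\IM\psi_u^*)^\perp$ once nondegeneracy on $E_u$ is known), and the big-monodromy reference should be the Kazhdan--Margulis theorem, \cite[Th.\ (5.10)]{weilI}.
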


\begin{proof} a) Write $f_U$ for the restriction of $f$ to $U$ and $g_U$ for the projection $S\times U \to U$. By definition, we have an exact sequence
\[R^1(g_U)_* \Q_l\to R^1(f_U)_* \Q_l\to H^1_l(\psi_U)\to 0. \]

For $u\in U$, let $C_u=f^{-1}(u)$ and $E_u\subset H^1_l(C_u)$ be the subspace of vanishing cycles \cite[(5.2)]{weilI}, i.e. the fibre of $\underline{E}$ at $u$. The Picard-Lefschetz formula \cite[(5.8) a) 3)]{weilI} shows that $E_u^\perp= H^1_l(C_u)^{\pi_1^\et(U,u)}$, and the Hard Lefschetz theorem (which, for a surface, follows algebraically from \eqref{eq3.1} for $i=1$) implies that $E_u$ is nondegenerate for the Poincaré duality pairing. Therefore, it suffices to show that  $H^1_l(C_u)^{\pi_1^\et(U,u)}=\IM(H^1_l(S_{k(u)})\to H^1_l(C_u))$ for any $u\in U$; since these are local systems, it suffices to show this at one point $u$, that we choose to be the generic point $\eta$. 

Since $U$ is affine, $U_{\bar k}$ has cohomological dimension $1$ and the Leray spectral sequence for $f_U$ yields an epimorphism
\[H^1_l(\tilde S)\Surj H^0_l(U,R^1(f_U)_*\Q_l)=H^1_l(C_\eta)^{\pi_1^\et(U,\eta)}.\]

But $\pi:\tilde S\to S$ is a blow-up, hence $H^1_l(S)\iso H^1_l(\tilde S)$ and $H^1_l(S)\to H^1_l(C_\eta)^{\pi_1^\et(U,\eta)}$ is surjective; so is a fortiori $H^1_l(S_{k(\eta)})\to H^1_l(C_\eta)^{\pi_1^\et(U,\eta)}$.

b) now follows from a) and the Kazhdan-Margulis theorem \cite[Th. (5.10)]{weilI}, and the first statement of c) follows from \eqref{eq10.3}; the second one holds because the vanishing cycles are integrally defined.
\end{proof}

More easily:

\begin{lemma}\label{l11.1} If $k\subseteq \C$, we have $N^1_HH^2_{B,\tr}(S)=0$.
\end{lemma}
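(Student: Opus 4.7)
My approach is a two-step reduction: first identify $N^1_B H^2_B(S,\Q)$ with $\NS(S)\otimes\Q$ via Lefschetz $(1,1)$, then descend to the transcendental summand using orthogonality built into the decomposition (\ref{eq1.6}).

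First, I would establish $N^1_B H^2_B(S,\Q) = \NS(S)\otimes\Q$ in $H^2_B(S,\Q)$. The inclusion $\supseteq$ is automatic, since a divisor class is supported in codimension one. For $\subseteq$, Formula (\ref{eq4.1a}) expresses any element of $N^1_B H^2_B(S,\Q)$ as a sum of images of Gysin maps $H^0_B(T,\Q)(-1)\to H^2_B(S,\Q)$ induced by correspondences $z\in\sM(h(S),h(T)(1))$ with $T\in\Sm^\proj$ (which, after resolving and taking a hyperplane section, one may even take to be a smooth curve). The Tate twist forces such images to land in $F^1 H^2_B(S,\C)\cap H^2_B(S,\Q)$, and since a rational class equals its own complex conjugate this intersection reduces to $H^{1,1}(S)\cap H^2_B(S,\Q)$. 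Invoking Lefschetz $(1,1)$ identifies the latter with $\NS(S)\otimes\Q$.

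Second, I would descend this to the transcendental part. The decomposition (\ref{eq1.6}), $h_2(S)= t_2(S)\oplus \NS_S(1)$ in $\sM$, is respected by the coniveau filtration on Betti realisations: (\ref{eq4.1a}) is defined by a functor that clearly commutes with direct summands of motives, so
\[
N^1_B H^2_B(h_2(S)) = N^1_B H^2_{B,\tr}(S)\oplus N^1_B H^2_B(\NS_S(1)).
\]
But $H^2_B(\NS_S(1))\simeq \NS(S)\otimes\Q$ is entirely of coniveau $\ge 1$, so the second summand accounts for all of $\NS(S)\otimes\Q$. Combined with step one (and the fact that $H^2_B$ of the other $h_i(S)$ vanishes for weight reasons), this forces $N^1_B H^2_{B,\tr}(S)=0$.

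The only substantive input is the Lefschetz $(1,1)$ theorem; everything else is bookkeeping on the functoriality of $N^*$ and the Hodge-filtration shift under Tate twist. This is exactly the feature that obstructs transposing the lemma to positive characteristic: the $l$-adic analogue would demand the Tate conjecture for divisors, not available in the generality used here — hence the flag in the introduction that Hodge theory is essential at this point.
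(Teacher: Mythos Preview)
Your argument is correct and follows the same route as the paper: identify the coniveau-$1$ part of $H^2_B(S)$ with $\NS(S)\otimes\Q$ via Lefschetz $(1,1)$, then use the direct-sum compatibility of the coniveau filtration to kill the transcendental summand.

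One notational caveat worth flagging. In this paper $N^1_B$ denotes the \emph{Hodge} coniveau filtration (the $N^1_H$ introduced just before \eqref{eq9.7}), not the cycle-theoretic filtration of \eqref{eq4.1a}; the paper's one-line proof in fact reads ``Lefschetz $(1,1)$ shows $N^1 H^2_B(S)=N^1_B H^2_B(S)$'', explicitly distinguishing the two. Your appeal to \eqref{eq4.1a} therefore literally establishes only the inclusion $N^1 H^2_B(S)\subseteq \NS(S)\otimes\Q$ for the cycle coniveau. However, since your chain already passes through $H^{1,1}(S)\cap H^2_B(S,\Q)$ before invoking Lefschetz $(1,1)$, the Hodge-coniveau bound follows by the identical reasoning: a weight-$2$ sub-Hodge structure of Hodge coniveau $\ge 1$ is purely of type $(1,1)$, hence lies in $H^{1,1}\cap H^2_B(S,\Q)=\NS(S)\otimes\Q$. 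So the substance is right; just replace the reference to \eqref{eq4.1a} by the definition of $N^1_H$ and drop the unnecessary Gysin detour.
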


\begin{proof} The Lefschetz (1,1) theorem shows that $N^1H^2_B(S)=N^1_HH^2_B(S)$, and $N^1H^2_{B,\tr}(S)=N^1H^2_{B}(S)\cap H^2_\tr(S)=0$ by definition of $H^2_\tr(S)$.
\end{proof}

\subsection{The main results}

\begin{thm} \label{t10.1} Suppose $\car k=0$. Then,\\
a) $N^1H^3_l(\psi_{U'})\allowbreak =0$ for any nonempty open subset $U'\subseteq U$,  and $N^1H^3_l(\psi_u)\allowbreak\ne H^3_l(\psi_u)$ for all $u\in U_{(0)}$.\\
b) $N^1H^3_l(\psi_\eta)=0$.\\
c) If $k\subseteq \C$, there exists a countable subset $E(\psi)\subset U(\C)$ such that,  for any $u\in U(k)\setminus E(\psi)$, one has   $N^1 H^3_l(\psi_u)=0$.\\
d) For $u=\eta$ or $u\in U(k)\setminus E(\psi)$, one has $B_{\psi_u}=0$ where  $B_{\psi_u}$ is the exceptional part of $J^2(C_u\times S_{k(u)})$ (see Definition \ref{d8.1}).
\end{thm}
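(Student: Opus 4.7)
The plan is to reduce all four assertions to the Hodge-theoretic machinery of \S \ref{s.hodge}, applied to the Künneth decomposition
\[\H^3_B(M(\psi_U)) \simeq \H^1_B(\psi_U) \otimes \pi_\sV^* H^2_{B,\tr}(S)\]
of the relative VHS on $U$. Setting $V = \H^1_B(\psi_U) \in \sV_\Z(U)$ and $W = H^2_{B,\tr}(S) \in \PHS$, I would first verify the hypotheses required by Proposition \ref{p9.2} and Corollary \ref{c9.1}: both $V$ and $W$ are effective (of weights $1$ and $2$); $N^1_H V = 0$ is automatic for a pure VHS of weight $1$ (no positive twist is effective); $N^1_H W = 0$ follows from the Lefschetz (1,1) theorem applied to the transcendental part of $H^2_B(S)$, as in Lemma \ref{l11.1}. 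The central input --- absolute irreducibility of $\omega_U(V)$ --- is supplied by Proposition \ref{h10.1}: the identification of $H^1_l(\psi_U)$ with the local system of vanishing cycles $\underline{E}$, combined with the Kazhdan--Margulis theorem, yields full symplectic algebraic monodromy $\Sp$ on each fibre; since $\Sp$ is connected, $V$ is also connected in the sense of Definition \ref{d10.1}. To reduce the general characteristic-$0$ case to $k \subseteq \C$, I would spread out and invoke invariance of the $l$-adic coniveau under algebraically closed base change, a consequence of Proposition \ref{p9.5}.

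With these inputs, the four assertions follow in a straightforward sequence. For (a), Proposition \ref{p9.2} b) applied to $V$ and $W$ gives $N^1_H(V \otimes \pi_\sV^* W) = 0$, so $N^1_H \H^3_B(M(\psi_{U'})) = 0$ for every open $U' \subseteq U$; the ``moreover'' clause of the same proposition provides $N^1_H(V_u \otimes W) \ne V_u \otimes W$ at every $u$. Transferring via the inclusion \eqref{eq10.8} $N^1 \subseteq N^1_H$ and the comparison Lemma \ref{l10.1} gives both claims of (a) on $l$-adic coniveau. Part (b) is then a direct application of Proposition \ref{p10.2} with $(i,j) = (3,1)$ to the vanishing in (a). For (c), Corollary \ref{c9.1} applies to the same pair $(V, W)$ and yields a countable subset $E(\psi) := \Exc^N(V \otimes \pi_\sV^* W) \cap U(\C)$ (viewing $k$-points as $\C$-points via some embedding $k \hookrightarrow \C$); for $u \in U(k) \setminus E(\psi)$ we get $N^1_H H^3_B(\psi_u) = 0$, hence $N^1 H^3_l(\psi_u) = 0$ by \eqref{eq10.8} and Lemma \ref{l10.1}. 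Finally, part (d) is immediate from Lemma \ref{l8.1}, applied to the vanishing obtained at $u = \eta$ from (b) and at $u \in U(k) \setminus E(\psi)$ from (c).

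The main obstacle --- the step carrying all the geometric weight --- is establishing the absolute irreducibility of the geometric monodromy on $V = \H^1_B(\psi_U)$, since this is precisely what licenses Proposition \ref{p9.2} to propagate the vanishing of $N^1_H$ across the tensor product with $\pi_\sV^* W$. This rests on the specific Lefschetz-pencil input (identification with vanishing cycles plus Kazhdan--Margulis), and therefore the standing assumption $\underline{E} \ne 0$ cannot be bypassed, although it is secured after composing the embedding of $S$ with a Veronese. A secondary point is ensuring the relative Hodge-coniveau statements carry over to $l$-adic coniveau at both the generic and special fibres; this is handled uniformly by \eqref{eq10.8}, Lemma \ref{l10.1}, and Proposition \ref{p9.5}, none of which pose any essential difficulty.
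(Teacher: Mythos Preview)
Your proposal is correct and follows essentially the same approach as the paper: reduce to $k\subseteq\C$ via Lemma \ref{l10.1} and Proposition \ref{p9.5}, apply Proposition \ref{p9.2} b) with $V=\H^1_B(\psi_{U'})$ and $W=H^2_{B,\tr}(S)$ (hypotheses supplied by Proposition \ref{h10.1} and Lemma \ref{l11.1}), then invoke Proposition \ref{p10.2} for (b), Corollary \ref{c9.1} for (c), and Lemma \ref{l8.1} for (d). Your write-up is in fact slightly more explicit than the paper's, e.g.\ in noting that connectedness of $V$ follows from the connectedness of $\Sp$.
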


\begin{proof} For a), using Lemma \ref{l10.1} and Proposition \ref{p9.5} we reduce to $k$ finitely generated over $\Q$, then to  $k=\C$, and then replace $l$-adic cohomology by Betti cohomology. It suffices to prove the statements with $N^1$ replaced by $N^1_H$. We  apply Proposition \ref{p9.2} b) by taking $V=H^1_B(\psi_{U'})$, $W=H^2_{B,\tr}(S)$: Proposition \ref{h10.1} (resp.  Lemma \ref{l11.1}) gives its hypothesis for $V$ (resp. $W$) -- note that the vanishing of $N^1_H H^1_B(\psi_{U'})$ is trivial.  b) then follows from Proposition \ref{p10.2}. Since $V$ is connected by Proposition \ref{h10.1}, c) follows from Corollary \ref{c9.1}. Finally, d) follows from the above and Lemma \ref{l8.1}.
\end{proof}

\begin{rk} I don't know if Theorem \ref{t10.1} b) holds in positive characteristic. By contrast, the next result holds over any $k$.
\end{rk}

\begin{thm}[cf. \protect{\cite[Th. 2]{ggp}}]\label{c10.1} Suppose that $b^2>\rho$. Then $\tilde R_l(\widehat{\psi_\eta})\ne 0$.
\end{thm}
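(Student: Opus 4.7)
The plan is to identify $\tilde R_l(\widehat{\psi_\eta})$ with a pushforward extension class, reduce to complex Hodge theory, and then invoke the infinitesimal Abel--Jacobi nonvanishing of Green--Griffiths--Paranjape. First I would apply Proposition \ref{l9.3} at the generic fibre to identify $\tilde R_l(\tilde\psi_\eta)$, up to sign, with the class $\sE$ of the excision sequence
\[0 \to \tilde H^1_l(\psi_\eta) \to \tilde H^2_{l,c}(V_\eta) \to \tilde H^2_{l,\prim}(S_K) \to 0\]
where $V_\eta = S_K - C_\eta$ and $K = k(\eta)$. Composing with the projection $\rho : \tilde H^2_{l,\prim}(S_K) \twoheadrightarrow \tilde H^2_{l,\tr}(S_K)$ and using the commutative square in the proof of Proposition \ref{p9.4}, this identifies $\tilde R_l(\widehat{\psi_\eta})$ with the pushforward
\[\rho_*\sE \in \Ext^1_{\tilde S_c(K,\Q_l)}(\tilde H^2_{l,\tr}(S_K), \tilde H^1_l(\psi_\eta)),\]
and the task reduces to showing $\rho_*\sE \ne 0$.

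Since the argument is firmly rooted in characteristic zero, I would next reduce to $k = \C$: all data descend to a finitely generated subfield $k_0$ of $k$ over $\Q$, and after fixing an embedding $k_0 \hookrightarrow \C$, Lemma \ref{l10.1} together with the $l$-adic--Betti comparison (which intertwines the excision triangles defining $\sE$) lets me replace the $l$-adic statement by its Betti analogue. This Betti class arises as the restriction to $\eta$ of an extension of variations of mixed Hodge structure over $U$,
\[0 \to \underline E \to \sK_\tr \to \underline{H^2_\tr(S)} \to 0,\]
where $\underline E = H^1_B(\psi_U)$ is the vanishing-cycles local system (pure of weight $1$) and $\underline{H^2_\tr(S)} = \pi_{\sV}^* H^2_\tr(S)$ is the constant VMHS (pure of weight $2$); the hypothesis $b^2 > \rho$ guarantees $H^2_\tr(S) \ne 0$.

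Now suppose for contradiction that this VMHS extension splits on some nonempty open $U' \subseteq U$. By the Steenbrink--Zucker theorem of the fixed part (Theorem \ref{t1V}), the constant subobject $\underline{H^2_\tr(S)}$ must embed into the monodromy-invariant part of $\sK_\tr$; by Proposition \ref{h10.1}, the algebraic monodromy on $\underline E$ is the full symplectic group, so $\underline E$ has no nonzero monodromy-invariants. Restricting to any $u \in U'(\C)$ thus yields a pure weight-two sub-MHS of $(\sK_\tr)_u = H^2_{B,c}(V_u)_\tr$ isomorphic to $H^2_\tr(S)$, i.e.\ a splitting of the pure MHS extension
\[0 \to H^1_B(\psi_u) \to (\sK_\tr)_u \to H^2_\tr(S) \to 0.\]
By Carlson's formula this amounts to the vanishing of the Abel--Jacobi invariant of $\widehat{\psi_u}$, which is precluded for generic $u$ in a Lefschetz pencil by \cite[Th. 2]{ggp}. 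The main obstacle is precisely this last input: the Green--Griffiths--Paranjape infinitesimal calculation uses Griffiths transversality for the Lefschetz pencil and depends essentially on $H^2_\tr(S) \ne 0$; all preceding steps (extension interpretation, fixed-part theorem, big symplectic monodromy, comparison of realisations) are formal consequences of the machinery of Sections \ref{s9} and \ref{s10}, and this is also where the characteristic-zero hypothesis enters essentially.
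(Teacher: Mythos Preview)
Your approach differs substantially from the paper's, with two real drawbacks.

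First, you restrict to characteristic zero and pass to $k=\C$, whereas the paper explicitly notes (in the remark immediately preceding the statement) that ``the next result holds over any $k$''. The paper's proof is a direct $l$-adic argument via the Leray spectral sequence for the Lefschetz fibration $f_V:\tilde S_V\to V$ (Lemmas \ref{l11.2}--\ref{l11.4}) and never touches Hodge theory; your reduction throws this generality away.

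Second, your final step invokes \cite[Th.~2]{ggp} as a black box. But Theorem \ref{c10.1} \emph{is} the $l$-adic reformulation of that theorem: the paper's closing remark says it ``fills in details in [GGP, Proof of Th.~2]'', with Lemma \ref{l11.4} b) as the main add-on. So your argument amounts to ``Theorem \ref{c10.1} follows by comparison from its Hodge analogue, which is GGP's Th.~2'' --- not an independent proof but a citation of the very result being re-proven. Your intermediate detour through the fixed-part theorem and restriction to closed points $u$ is also unnecessary: once you have reduced to Hodge theory and are citing GGP at the generic point, you are done.

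By contrast, the paper shows directly that for every open $V\subseteq U$ the map
\[\End_{\Q_l}(H^2_{l,\tr}(S))\to D^b_c(V,\Q_l)(H^2_{l,\tr}(S)_V[-2],H^1_l(\psi_V)[-1])\]
sending $1$ to $R_l(\widehat{\psi_V})$ is injective: Lemma \ref{l11.4} b) identifies it (after replacing the left $H^2_{l,\tr}(S)$ by $\Q_l$) with an edge map of the Leray spectral sequence for $f_V$, and Lemmas \ref{l11.2}--\ref{l11.3} show this edge map is injective on $H^2_{l,\tr}$. This is self-contained and characteristic-free.
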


The proof is given in the next subsection.

\begin{cor}\label{c11.1} Theorem \ref{T2} (i) is true.
\end{cor}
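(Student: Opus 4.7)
The plan is to read off Corollary \ref{c11.1} as a direct assembly of the three ingredients Proposition \ref{p9.4}, Theorem \ref{t10.1} b), and Theorem \ref{c10.1}, applied at the generic point $\eta$ of $U$. By definition, $\Cnd(\eta)$ says $(\psi_\eta)_\# \neq 0$ in $\Griff(C_\eta \times_{k(\eta)} S_{k(\eta)}) \otimes \Q$, and via the isomorphism $\sM_\alg(h_1(C_\eta),t_2(S_{k(\eta)})) \simeq \Griff(C_\eta \times_{k(\eta)} S_{k(\eta)})$ of Proposition \ref{t1}, it suffices to prove $(\psi_\eta)_\# \neq 0$ modulo algebraic equivalence.

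First I would verify the standing hypotheses. Since $\car k = 0$ and $p_g(S) > 0$, the Lefschetz $(1,1)$ theorem (Lemma \ref{l11.1}) gives $H^{2,0}(S) \subseteq H^2_{B,\tr}(S)$, so $\dim H^2_{B,\tr}(S) \geq 2p_g > 0$ and hence $b^2 > \rho$, which is the assumption of Theorem \ref{c10.1}. Next, $\psi_\eta : C_\eta \hookrightarrow S_{k(\eta)}$ is the generic fibre of the smooth relatively ample divisor $\psi_U : \tilde S_U \hookrightarrow S \times U$, so $[C_\eta]^2 > 0$ and Lemma \ref{l9.1} applies (in its relative form via Lemma \ref{l8.2}, specialising at $\eta$) to produce the morphisms $\widehat{\psi_\eta}$ and $(\psi_\eta)_\#$ with $(\psi_\eta)_\# \circ \iota = \widehat{\psi_\eta}$, and to guarantee the equivalence in part c) between vanishing of $\widehat{\psi_\eta}$ and vanishing of $(\psi_\eta)_\#$ modulo algebraic equivalence.

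Then I would apply Proposition \ref{p9.4} to $\psi = \psi_\eta$. Its hypothesis $N^1 H^3_l(\psi_\eta) = 0$ is exactly Theorem \ref{t10.1} b), so the proposition yields the chain of implications
\[
\tilde R_l(\widehat{\psi_\eta}) \neq 0 \;\Longrightarrow\; \widehat{\psi_\eta} \neq 0 \text{ in } \sM_\alg(h_1(\psi_\eta), t_2(S_{k(\eta)})) \;\Longrightarrow\; (\psi_\eta)_\# \neq 0 \text{ in } \Griff(C_\eta \times_{k(\eta)} S_{k(\eta)}) \otimes \Q.
\]
The leftmost non-vanishing $\tilde R_l(\widehat{\psi_\eta}) \neq 0$ is precisely the content of Theorem \ref{c10.1}, whose hypothesis $b^2 > \rho$ we have just checked.

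There is essentially no obstacle here: all the real work has been done in Section \ref{s.hodge} (to control the Hodge, hence $l$-adic, coniveau along the Lefschetz pencil so that Theorem \ref{t10.1} b) holds) and in the Green--Griffiths--Paranjape style argument underlying Theorem \ref{c10.1}. The only point worth flagging is the compatibility of the generic fibre construction of $\widehat{\psi_\eta}$ in $\sM(k(\eta))$ with the relative construction in $\sM(U)$ used to formulate Theorem \ref{t10.1}; this is guaranteed by Lemma \ref{l8.2} together with the base-change invariance \eqref{eq10.7} of $l$-adic cohomology, so Proposition \ref{p9.4} can indeed be applied directly at $\eta$.
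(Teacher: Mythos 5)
Your proposal is correct and is essentially identical to the paper's own (one-line) proof, which cites exactly the same three ingredients: Theorem \ref{t10.1} b) for the vanishing of $N^1H^3_l(\psi_\eta)$, Theorem \ref{c10.1} for $\tilde R_l(\widehat{\psi_\eta})\ne 0$, and Proposition \ref{p9.4} to chain them. Your additional checks (that $p_g>0$ implies $b^2>\rho$ in characteristic $0$, and the compatibility of the relative and generic-fibre constructions via Lemma \ref{l8.2}) are correct and merely make explicit what the paper leaves implicit.
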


\begin{proof} This follows from Theorem \ref{t10.1} b), Theorem \ref{c10.1} and Proposition \ref{p9.4}.
\end{proof}

\begin{rk}\label{r11.1} Applying \eqref{eq3.2} with $Y=\tilde S$ and using \eqref{eq2}, we get
\begin{multline*}
T(S_{k(\tilde S)})/T(S)\simeq \sM(h_1(\tilde S),t_2(S))\oplus \sM(t_2(\tilde S),t_2(S))\\
\simeq  \sM(h_1(S),t_2(S))\oplus \End_\sM(t_2(S)).
\end{multline*}

The first isomorphism comes again from \eqref{eq2} plus the isomorphisms \eqref{eq3.1}; the second one comes from the fact that $h_1(\tilde S)\iso h_1(S)$ and $t_2(\tilde S)\iso t_2(S)$ since $\tilde S\to S$ is a birational morphism. On the other hand, $T(S)\iso T(S_{k(\P^1)})$, either directly or by \eqref{eq0.2} since $h_1(\P^1)=0$. Letting $K=k(\P^1)$, we thus get 
\begin{multline*}
\sM(K)(h_1(C_\eta),t_2(S_K))\simeq T(S_{k(\tilde S)})/T(S_K)\\
\simeq \sM(h_1(S),t_2(S))\oplus \End_\sM(t_2(S))
\end{multline*}
and, using \eqref{eq2.1}, we find a surjection in $\Ab\otimes \Q$:
\begin{equation}\label{eq11.8}
\sM(h_1(S),t_2(S))\oplus \End_\sM(t_2(S))\Surj \Griff(C_\eta\times_K S_K).
\end{equation}  

For simplicity, assume $b^1=0$: this is the case e.g. if $S$ is a K3 surface. Then $h_1(S)=0$ and the strong form of Bloch's conjecture \cite[Conj. 1.8]{bloch}:
\[\End_\sM(t_2(S))\iso \End_{\sM_\hom}(t_2(S))?\]
predicts that $\dim_\Q \Griff(C_\eta\times_K S_K)\otimes \Q<\infty$.
\end{rk}

\subsection{Proof of Theorem \ref{c10.1}} 
Without loss of generality, we may and do assume $k$ algebraically closed. 

Let $V\subseteq \P^1$ be an open subset, and let $\tilde S_V= f^{-1}(V)$. The projection $f_V:\tilde S_V\to V$ yields a trace map
\begin{equation}\label{eq11.9}
H^2_l(\tilde S_V)\by{(f_V)_*}  H^0(V)(-1).
\end{equation}

\begin{lemma}\label{l11.2} Let $j_V$ be the open immersion $\tilde S_V\inj \tilde S$, and let  $a:H^2_{l,\tr}(\tilde S)\inj H^2_{l}(\tilde S)$ be induced by $\pi_2^{S,\tr}$. Then:\\
a) The composition $H^2_{l,\tr}(\tilde S)\by{a} H^2_{l}(\tilde S)\by{j_V^*} H^2_l(\tilde S_V)$ is injective.\\
b) The composition $(f_V)_* j_V^* a$ is $0$.
\end{lemma}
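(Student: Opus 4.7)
Both assertions reduce to a single principle: $H^2_{l,\tr}(\tilde S)$, being the $l$-adic realisation of the direct summand $t_2(\tilde S)$ of $h_2(\tilde S)$ complementary to $\NS_{\tilde S}(1)$ in the refined CK decomposition \eqref{eq1.6}, is orthogonal to the algebraic part $\NS(\tilde S)\otimes\Q_l$ of $H^2_l(\tilde S)$ under the Poincar\'e pairing. This is the only input I will use about the projector $\pi_2^{S,\tr}$ defining $a$.

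For (a), let $Z=\tilde S\setminus \tilde S_V=f^{-1}(\P^1\setminus V)$, a closed subscheme purely of codimension $1$ in $\tilde S$. The localisation exact sequence
\[H^2_{l,Z}(\tilde S)\to H^2_l(\tilde S)\by{j_V^*} H^2_l(\tilde S_V)\]
shows that $\Ker j_V^*$ is the image of $H^2_{l,Z}(\tilde S)$. Via absolute purity on the smooth locus of $Z$ (and a routine d\'evissage through the finite singular locus), this image is spanned by cycle classes of the irreducible components of $Z$, i.e.\ contained in $\NS(\tilde S)\otimes\Q_l$. By the orthogonality recalled above, $H^2_{l,\tr}(\tilde S)\cap \NS(\tilde S)\otimes\Q_l=0$, so $j_V^*\circ a$ is injective.

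For (b), the key input is the proper--smooth base change identity $j^*Rf_*\Q_l\iso R(f_V)_* j_V^*\Q_l$ attached to the cartesian square defining $\tilde S_V$, which on degree-$2$ global sections yields a commutative diagram
\[\begin{CD}
H^2_l(\tilde S)@>{j_V^*}>> H^2_l(\tilde S_V)\\
@V{f_*}VV @VV{(f_V)_*}V\\
H^0_l(\P^1)(-1)@>{j^*}>\sim> H^0_l(V)(-1).
\end{CD}\]
The bottom arrow is an isomorphism (both sides equal $\Q_l(-1)$), so it suffices to show $f_*\circ a=0$. By the projection formula, for $x\in H^2_{l,\tr}(\tilde S)$, the class $f_*(a(x))\in \Q_l(-1)$ is the intersection number $\langle a(x),[C_u]\rangle_{\tilde S}$ with any fibre class $[C_u]\in \NS(\tilde S)\otimes\Q_l$, and this vanishes by orthogonality. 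The main point to get right is the compatibility between the cycle-theoretic pushforward $f_*$ (defined via Poincar\'e duality) and the sheaf-theoretic $Rf_*$ used in base change; this is standard for smooth projective morphisms and poses no real obstacle. The only other care needed is the d\'evissage in (a) when $V\not\subseteq U$, so that $Z$ may contain singular fibres of $f$.
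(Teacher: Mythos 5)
Your proof is correct and follows essentially the same route as the paper: part (a) via semi-purity/localisation identifying $\Ker j_V^*$ with classes in $\NS(\tilde S)\otimes\Q_l$ and the orthogonality built into $\pi_2^{S,\tr}$, and part (b) via reduction to $V=\P^1$ and the projection formula against the fibre class. No gaps.
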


\begin{proof} a) By semi-purity, $\Ker j_V^*$ is contained in the image of $\NS(\tilde S)\otimes \Q_l(-1)\by{\cl} H^2_l(\tilde S)$. The claim follows since $\IM a$ is the orthogonal complement of this image, by construction of $\pi_2^{S,\tr}$ \cite[\S 7.2]{kmp}.

b) It suffices to handle the case $V=\P^1$. Let $x\in H^2_{l,\tr}(\tilde S)$ and $y\in H^2_l(\P^1)$. By the projection formula, we have
\[\langle f_* ax,y\rangle_{\P^1} =\langle  ax,f^* y\rangle_{\tilde V} = 0\]
for the same reason as in the proof of (1). Thus $f_* ax=0$.
\end{proof}

In Lemma \ref{l11.2}, suppose $V\subseteq U$. In particular $V$ is affine, hence has étale cohomological dimension $1$ and the Leray spectral sequence for the projection $f_V:\tilde S_V\to V$ yields a short exact sequence
\begin{multline}\label{eq11.4}
0\to H^1(V,R^1(f_V)_*\Q_l)\to H^2_l(\tilde S_V)\\
\by{\epsilon_V} H^0(V,R^2(f_V)_*\Q_l)\to 0
\end{multline}
where the edge homomorphism $\epsilon_V$ coincides with the trace map of \eqref{eq11.9}.  By Lemma \ref{l11.2}, and \eqref{eq11.4},  the  map $a$ then yields an injection 
\[\tilde a_V:H^2_{l,\tr}(\tilde S)\inj H^1(V,R^1(f_V)_*\Q_l).\]

Let $h_1(\psi_V)$ denote the restriction of $h_1(\psi_U)$ to $\sM(V)$  (see \eqref{eq11.1}), and let $H^1_l(\psi_V)=H_l(h_1(\psi_V))\in S_c(V,\Q_l)$ be its realisation.

\begin{lemma}\label{l11.3} Let $j_V$ be the inclusion $V\inj \P^1$. The image of $\tilde a_V$ is contained in $H^1(V, j_V^*j_*\underline{E}) \simeq H^1(V, H^1_l(\psi_V))$.
\end{lemma}

\begin{proof} The Leray spectral sequence for $f$:
\[H^p(\P^1,R^qf_*\Q_l)\Rightarrow H^{p+q}(\tilde S,\Q_l)=:H^{p+q}(\tilde S)\]
yields a $3$-step filtration $F^p H^2(\tilde S)$ on $H^2(\tilde S)$ with successive quotients $H^0(\P^1,R^2f_*\Q_l)\simeq \Q_l(-1)$, $H^1(\P^1,R^1f_*\Q_l)$ and $H^2(\P^1,f_*\Q_l)=$\break $H^2(\P^1,\Q_l)=\Q_l(-1)$. By Lemma \ref{l11.2} b), we have $\IM a\subset F^1 H^2(\tilde S)$.  

Let $j:U\inj \P^1$ be the inclusion. Then $R^1f_*\Q_l \iso j_*j^*R^1f_*\Q_l$ \cite[(5.8)]{weilI} and, by the proof of Proposition \ref{h10.1} a), the orthogonal complement of $j_*\underline{E}$ in $R^1f_*\Q_l$ is constant, hence
\[H^1(\P^1,j_*\underline{E})\iso  H^1(\P^1,R^1f_*\Q_l).\]

Therefore, the image of $\tilde a_V$ is contained in $H^1(V,j_V^*j_*\underline{E})$. But we have $j_V^*j_*\underline{E}\simeq H^1_l(\psi_V)$ by Proposition \ref{h10.1} a).
\end{proof}

Note that the closed immersion $\psi$ of \eqref{eq11.3} factors as
\[\tilde S\by{\gamma} \tilde S\times \P^1\by{\Pi} S\times \P^1\]
where $\gamma$ is the graph of $f$ and $\Pi=\pi\times 1_{\P^1}$. Restricting to the open subset $V\subseteq U$, we get an induced factorisation
\[\tilde S_V\by{\gamma_V} \tilde S\times V\by{\Pi_V} S\times V\]
and $\gamma_V$ induces a morphism $\widehat{\gamma_V}:h_1(\gamma_V)\to t_2(\tilde S\times V/V)=t_2(\tilde S)_V$ analogous to \eqref{eq11.2}, whence a morphism in $D^b_c(V,\Q_l)$
\begin{equation}\label{eq11.5}
H^2_{l,\tr}(\tilde S)_V[-2]\to H^1_l(\gamma_V)[-1]
\end{equation}
with $H^1_l(\gamma_V)=H_l(h_1(\gamma_V))$, as usual. Applying $\Hom(\Q_l[-2],-)$ to \eqref{eq11.5}, we get a homomorphism
\begin{equation}\label{eq11.6}
H^2_{l,\tr}(\tilde S)=H^0(V,H^2_{l,\tr}(\tilde S)_V)\to H^1(V,H^1_l(\gamma_V)).
\end{equation}

Write $\widehat{a_V}: H^2_{l,\tr}(\tilde S)\inj H^1(V, H^1_l(\psi_V))$ for the injection given by Lemma \ref{l11.3}. Then

\begin{lemma} \label{l11.4} a) The direct summands $h_1(\psi_V)$ and $h_1(\gamma_V)$ of $h_1(\tilde S_V)$ coincide.\\
b) In view of a), the homomorphism \eqref{eq11.6} is equal to $\widehat{a_V}$, up to sign.
\end{lemma}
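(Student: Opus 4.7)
The plan is as follows.

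For (a): The projection $\pi:\tilde S\to S$ is the blow-up of $S$ along the base locus of the Lefschetz pencil, hence a birational morphism of smooth projective surfaces. By birational invariance of the Albanese variety (equivalently, by \eqref{eq1.3}), $\pi_*:h_1(\tilde S)\iso h_1(S)$ in $\sM(k)$. Base-changing to $V$ yields an isomorphism $(\Pi_V)_*:h_1(\tilde S\times V/V)\iso h_1(S\times V/V)$ in $\sM(V)$. From the factorization $\psi_V=\Pi_V\circ \gamma_V$ we obtain $(\psi_V)_1=(\Pi_V)_1\circ (\gamma_V)_1$ on $h_1$-components. Since $(\Pi_V)_1$ is an isomorphism, $\Ker(\psi_V)_1=\Ker(\gamma_V)_1$, i.e.\ $h_1(\psi_V)=h_1(\gamma_V)$ as direct summands of $h_1(\tilde S_V)$.

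For (b): Both maps land in $H^1(V,H^1_l(\gamma_V))=H^1(V,H^1_l(\psi_V))$ (using (a)) and both can be interpreted as the same extension class in $\Ext^1_{\tilde S_c(V,\Q_l)}(H^2_{l,\tr}(\tilde S)_V,H^1_l(\gamma_V))$. To see this identification, one uses that $H^2_{l,\tr}(\tilde S)_V$ is a constant local system on $V$, so
\[\Ext^1_{\tilde S_c(V,\Q_l)}(H^2_{l,\tr}(\tilde S)_V,-)\simeq \Hom(H^2_{l,\tr}(\tilde S),H^1(V,-)).\]
By construction, \eqref{eq11.6} is the class obtained from the motivic morphism $\widehat{\gamma_V}:h_1(\gamma_V)\to t_2(\tilde S)_V$ via the realization $\tilde R_{l,c}$, \cf \eqref{eq11.5}. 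The map $\widehat{a_V}$ is the class obtained from the Leray filtration on $R(f_V)_*\Q_l$ in $D^b_c(V,\Q_l)$ combined with the inclusion $a:H^2_{l,\tr}(\tilde S)\hookrightarrow H^2_l(\tilde S)$, via the composition worked out in Lemmas \ref{l11.2} and \ref{l11.3}.

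To identify these two extension classes, I would apply $\tilde R_{l,c}$ to the relative analogue of Diagram \eqref{eq8.2} (with $\tilde S_V,\tilde S\times V,\gamma_V$ in place of $C,S,\psi$). This yields a commutative ladder of distinguished triangles in $D^b_c(V,\Q_l)$; the connecting morphism associated to this ladder can be extracted in two ways that necessarily agree up to sign. On one hand, by the snake-lemma construction of Lemma \ref{l9.1} a) (now in $\sM(V)$), this connecting morphism is the realization of $\widehat{\gamma_V}$, producing \eqref{eq11.6}. On the other hand, the top row of the ladder realizes to the Leray filtration on $R(f_V)_*\Q_l$, from which the connecting morphism produces $\widehat{a_V}$. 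This is the relative (over $V$) version of Proposition \ref{l9.3}, whose proof adapts directly.

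The main obstacle is tracking the sign. Such a discrepancy is typical of boundary-map comparisons arising from two dual octahedral presentations, and it is precisely what accounts for the "up to sign" qualifier in the statement. A careful reading of the octahedral axiom used to splice the two triangles involving $h_{>1}(\tilde S_V)$ and $h_{>1}(\tilde S)_V$ would pin the sign down exactly, but for the application in Theorem \ref{c10.1} only the equality up to sign is needed.
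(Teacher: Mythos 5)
Your proof is correct and follows essentially the same route as the paper: part a) is the paper's argument verbatim ($\pi$ is a blow-up at points, so $(\Pi_V)_*$ is split on $h_1$ and the two kernels of $(\psi_V)_*$ and $(\gamma_V)_*$ coincide). For part b) the paper likewise applies the realisation to the relative CK decomposition of $h(\tilde S_V/V)$, observing that it splits the Leray spectral sequence for $f_V$ compatibly with the motivic decomposition and then matching components — which is exactly the content of your ladder-of-triangles comparison.
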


\begin{proof} a) By definition, $h_1(\psi_V) = \Ker(h_1(\tilde S_V)\by{(\psi_V)_*} h_1(S\times V/V))$, and similarly for $h_1(\gamma_V)$. But $(\Pi_V)_*:h_1(\tilde S\times V/V)\to h_1(S\times V/V)$ is split since $\pi$ is a blow-up with smooth centre.

b) Under $R_l$, the CK decomposition of $h(\tilde S_V/V)$ yields a direct sum decomposition in $D^b_c(V,\Q_l)$
\[R(f_V)_*\Q_l\simeq \bigoplus_{q=0}^2 R^q(f_V)_* \Q_l[-q]\]
whence a splitting of the Leray spectral sequence
\[H^n(\tilde S_V,\Q_l)=D^b_c(V,\Q_l)(\Q_l,R(f_V)_*\Q_l)\simeq \bigoplus_{q=0}^2 H^{n-q}(V,R^q(f_V)_* \Q_l)\]
from which the claim follows easily.
\end{proof}

\begin{proof}[End of proof of Theorem \ref{c10.1}] We have a naturally commutative diagram of categories and functors
\[\begin{CD}
\sM(k) @>e_\sM>> \sM(V)\\
@V\tilde R_l VV @V\tilde R_l VV\\
D^b_c(k,\Q_l)@>e_D >> D^b_c(V,\Q_l)
\end{CD}\]
where $D^b_c(k,\Q_l)$ is simply the bounded derived category of $\Vec_{\Q_l}$.
The composite homomorphism
\begin{multline*}
\sM(k)(t_2(S),t_2(S))\by{e_\sM} \sM(V)(t_2(S)_V,t_2(S)_V)\\
\by{\widehat{\psi_V}^*}\sM(V)(h_1(\psi_V),t_2(S)_V)
\end{multline*}
sends tautologically $1_{t_2(S)}$ to $\widehat{\psi_V}$. Therefore, the composite homomorphism
\begin{multline}\label{eq11.7}
\End_{\Q_l}(H^2_{l,\tr}(S))=\End_{D^b_c(k,\Q_l)}(H^2_{l,\tr}(S)[-2])\\
\by{e_D} \End_{D^b_c(V,\Q_l)}(H^2_{l,\tr}(S)_V[-2])\\
\by{\widehat{R_l(\psi_V)}_*}D^b_c(V,\Q_l)(H^2_{l,\tr}(S)_V[-2],H^1_l(\psi_V)[-1])
\end{multline}
sends $ 1_{H^2_{l,\tr}(S)}$ to $R_l(\widehat{\psi_V})$. 
The hypothesis $b^2>\rho$ means that $1_{H^2_{l,\tr}(S)}\ne 0$, hence, to conclude, it suffices to show that \eqref{eq11.7} is injective for any $V\subseteq U$. Since $V$ is irreducible, $e_D$ is an isomorphism. To show the injectivity of $\widehat{R_l(\psi_V)}_*$, we may replace the left (constant) term $H^2_{l,\tr}(S)_V[-2]$ by $\Q_l[-2]$, and are left to show that the map
\begin{multline*}
D^b_c(V,\Q_l)(\Q_l[-2],H^2_{l,\tr}(S)_V[-2])\\
\by{\widehat{R_l(\psi_V)}_*}D^b_c(V,\Q_l)(\Q_l[-2],H^1_l(\psi_V)[-1]) 
\end{multline*}
is injective.

But $\pi:\tilde S\to S$ is a birational morphism, hence $t_2(\pi)$ is an isomorphism by \eqref{eq2} and \cite[Cor. 2.4.2]{birat-pure}; thus we are reduced to the injectivity of the map
\begin{multline*}
H^2_{l,\tr}(\tilde S)=D^b_c(V,\Q_l)(\Q_l[-2],H^2_{l,\tr}(\tilde S)_V[-2]) \\
\by{\widehat{R_l(\gamma_V)}_*}D^b_c(V,\Q_l)(\Q_l[-2],H^1_l(\gamma_V)[-1]) = H^1(V,H^1_l(\gamma_V))
\end{multline*}
where we used Lemma \ref{l11.4} a) to identify $H^1_l(\psi_V)$ with $H^1_l(\gamma_V)$. This map is none else than \eqref{eq11.6}. The conclusion now follows from Lemma \ref{l11.4} b).
\end{proof}

\begin{rk}This fills in details in \cite[Proof of Th. 2]{ggp}. (The reader should beware that there are typos in this proof.) The main add-on is Lemma \ref{l11.4} b).
\end{rk}

\section{Specialisation}\label{s.sp}

In this section, $k$ is a subfield of $\C$.

\subsection{Coniveau and Lefschetz pencils}\label{s12.1} We keep the notation of Section \ref{s.lef}. We shall use the following application of a theorem due independently to Serre \cite[10.6, theorem]{mw} and Terasoma \cite[Th. 2]{terasoma}:

\begin{prop}[cf. \protect{\cite[\S 2]{ggp}}]\label{p12.1} Let $k$ be a field of characteristic $\ne l$, and let 
\[0\to V\to E\to W\to 0\]
be a nonsplit extension of $l$-adic representations of $G_{k(t)}$ (the absolute Galois group of $k(t)$). We assume that $E$ is unramified away from a finite set $S$ of places of $k(t)/k$. Then the set
\[\Omega=\{x\in \A^1(k)\setminus S\mid  E_x \text{ is split}\}\]
is thin. (See \cite[9.1, Def.]{mw} for the definition of thin.)
\end{prop}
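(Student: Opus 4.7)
The plan is to turn the $l$-adic extension class into a finite étale torsor over $U := \A^1_k \setminus S$ and then invoke Hilbert's irreducibility theorem in the form of Serre's definition of thin sets \cite[9.1]{mw}.

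First I would pick $G_{k(t)}$-stable $\Z_l$-lattices $V_\Z \subset V$ and $W_\Z \subset W$; then $E$ contains a lattice $E_\Z$ providing a nonsplit integral extension $0 \to V_\Z \to E_\Z \to W_\Z \to 0$, whence a nonzero continuous class $\xi \in H^1_{\operatorname{cts}}(G_{k(t)}, \Hom_{\Z_l}(W_\Z, V_\Z))$. For $n$ large enough, the reduction $\bar\xi \in H^1(G_{k(t)}, M)$ is nonzero, where $M := \Hom_{\Z_l}(W_\Z, V_\Z)/l^n$ is a finite $G_{k(t)}$-module. (If $\xi$ is infinitely $l$-divisible, replace it by $l^{-r}\xi$ for suitable $r$, or twist the lattice appropriately; in any case a finite-coefficient witness of nontriviality can be arranged.)

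Next, since $E$, and therefore $M$, is unramified outside $S$, the class $\bar\xi$ comes from $H^1(\pi_1^{\et}(U,\bar\eta), M)$ and corresponds to a finite étale $M$-torsor $p : Y \to U$. For any $x \in U(k)$, standard Galois-torsor yoga identifies $\bar\xi_x = 0$ in $H^1(G_k, M_x)$ with the existence of a $k$-rational point in the fibre $Y_x$. Since $E_x$ split forces $\xi_x = 0$ and hence $\bar\xi_x = 0$, we obtain
\[\Omega \subseteq \{x \in U(k) : Y_x(k) \neq \emptyset\} = \bigcup_i p(Y_i(k)),\]
where $Y = \coprod_i Y_i$ is the decomposition of $Y$ into (finitely many) connected components.

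Finally, I would check that each $p|_{Y_i} : Y_i \to U$ has degree $d_i \ge 2$: indeed, if some $d_i = 1$ then $p$ restricts to an isomorphism $Y_i \simeq U$, yielding a section of $Y \to U$, hence a trivialization of the torsor, contradicting $\bar\xi \neq 0$. Each $Y_i$ is finite étale over the smooth irreducible $U$, hence itself normal and connected, hence irreducible. Therefore each $p(Y_i(k)) \subset U(k)$ is a thin set of type $(C_2)$ in Serre's sense, and $\Omega$, being contained in a finite union of thin sets, is thin.

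The main obstacle is not the geometric argument, which is the standard dictionary \emph{class $\leftrightarrow$ torsor} combined with HIT, but Step~1: ensuring that the nonvanishing of $\xi$ is witnessed by a single finite quotient $M$. In the generality stated (arbitrary $k$) one has to handle possible $l$-divisibility in $H^1_{\operatorname{cts}}$, but this is only a mild technicality. Once a suitable $M$ is in place, everything else is formal.
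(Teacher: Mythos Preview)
Your approach is genuinely different from the paper's, which is a one-line application of Serre's theorem \cite[10.6]{mw}: let $G$ be the image of $G_{k(t)}$ in $GL(E)$; by \emph{loc.\ cit.}, the set $\Omega'$ of $x$ with $G_x\subsetneq G$ is thin, and clearly $\Omega\subseteq\Omega'$ since $G_x=G$ forces $E_x$ nonsplit. So the paper never reduces to a single finite torsor.

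Your argument has a real gap, and it is \emph{not} the one you flag. Step~1 is fine: if the $\Z_l$-extension $E_\Z$ were split mod $l^n$ for all $n$, an inverse-limit argument over the (finite, nonempty) sets of $G_{k(t)}$-equivariant splittings would produce a $\Z_l$-splitting, contradicting nonsplitness over $\Q_l$. The problem is Step~3. The assertion ``$E_x$ split forces $\xi_x=0$'' is false for the integral class: a $G_k$-equivariant section $\sigma:W_x\to E_x$ over $\Q_l$ only yields, after clearing denominators, a $G_k$-map $W_{\Z,x}\to E_{\Z,x}$ lifting $l^{r}\cdot\mathrm{id}_{W_{\Z,x}}$ for some $r=r(x)\ge 0$. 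This says precisely that $\xi_x$ is $l^r$-torsion in $H^1(G_k,\Hom_{\Z_l}(W_\Z,V_\Z))$, not that it vanishes. Consequently its reduction $\bar\xi_x\in H^1(G_k,M)$ need not vanish, and the inclusion $\Omega\subseteq\{x:\bar\xi_x=0\}$ is unjustified. Since $r(x)$ is not bounded a~priori, no single choice of $n$ rescues the argument.

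This is not easily repaired within your framework: detecting ``the $\Q_l$-class $\xi_{\Q,x}$ is zero'' by a single finite-coefficient condition amounts to detecting torsion in $H^1(G_k,\Hom_{\Z_l})$, which you cannot do uniformly in $x$ without extra hypotheses (e.g.\ $\Hom_{G_k}(W_\Z/l,V_\Z/l)=0$ for all $x$, which would make the integral $H^1$ torsion-free). The clean fix is to work with the full image $G\subset GL(E_\Z)$ and show $G_x=G$ outside a thin set --- which is exactly Serre's theorem and the paper's route.
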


\begin{proof} Let $G=\IM(G_{k(t)}\to GL(E))$: this is an $l$-adic Lie group. By either of the two references quoted before the proposition, the set $\Omega'$ of $x\in \A^1(k)\setminus S$ such that the decomposition group $G_x$ at $x$ is not equal to $G$ is thin.
\end{proof}

\begin{rk} By a transfer argument, the condition $(G:G_x)<\infty$ is sufficient for $E_x$ not to be split.
\end{rk}

\begin{cor}\label{c12.1} The set
\[\{u\in U(k)\mid R_l(\widehat{\psi_u})=0\}\]
is thin.
\end{cor}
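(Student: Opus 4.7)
The plan is to interpret the vanishing of $R_l(\widehat{\psi_u})$ as the splitting of a naturally defined extension of $l$-adic Galois representations, and then apply the Serre–Terasoma result (Proposition \ref{p12.1}) to the generic fibre.

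First I would reformulate the condition. By Proposition \ref{l9.3}, for each $u\in U(k)$ the class $\tilde R_l(\widehat{\psi_u})$ coincides (up to sign) with the extension class $\sE_u\in \Ext^1_{\tilde S_c(k(u),\Q_l)}(\tilde H^2_{l,\prim}(S_{k(u)}),\tilde H^1_l(\psi_u))$ of the short exact sequence
\[0\to \tilde H^1_l(\psi_u)\to \tilde H^2_{l,c}(S_{k(u)}\setminus C_u)\to \tilde H^2_{l,\prim}(S_{k(u)})\to 0\]
coming from the localisation triangle for the pair $(S_{k(u)},C_u)$. In particular, $R_l(\widehat{\psi_u})=0$ if and only if $\sE_u$ splits as an extension of $G_{k(u)}$-representations.

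Next I would globalise. Letting $K=k(\P^1)$, the generic-fibre version of the same construction gives an extension of $G_K$-modules
\[0\to V\to E\to W\to 0\]
with $V=H^1_l(\psi_\eta)$, $W=H^2_{l,\prim}(S_K)$ and $E=H^2_{l,c}(S_K\setminus C_\eta)$. This extension arises from an exact sequence of lisse $\Q_l$-sheaves on $U$ (by smooth base change applied to $f_U:\tilde S_U\to U$ and the smooth pair $(\tilde S_U, S\times U)$), hence is unramified at every point of $U$, and \emph{a fortiori} outside a finite set of places of $K/k$. Choosing an affine coordinate on $\P^1$ so as to realise $\P^1\setminus\{\infty\}=\A^1$, we put ourselves in the hypotheses of Proposition \ref{p12.1}; the finitely many $u\in U(k)\cap\{\infty\}$ are discarded without harm since they form a thin subset. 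Moreover, the specialisation of $E$ at a point $u\in U(k)$ is canonically the $G_{k(u)}$-module appearing in the fibrewise sequence above, so splitting of $E_u$ is the same as splitting of $\sE_u$.

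The generic extension $\sE$ is nonsplit by Theorem \ref{c10.1} (which gives $\tilde R_l(\widehat{\psi_\eta})\ne 0$ under the standing hypothesis $b^2>\rho$, which follows from $p_g>0$) combined with the reformulation above applied at $u=\eta$. Applying Proposition \ref{p12.1} to $0\to V\to E\to W\to 0$ yields that the set of $u\in U(k)\cap\A^1(k)$ for which $E_u$ splits is thin; adding back the finitely many points at infinity preserves thinness, and the corollary follows.

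The only real delicacy is the compatibility between the global extension class on $U$ and the fibrewise extension classes $\sE_u$: one must check that specialising the sequence of lisse sheaves on $U$ at a $k$-rational point reproduces the short exact sequence of Proposition \ref{l9.3} for $(S_{k(u)},C_u)$. This is a standard smooth–proper base change argument, and is the only step that genuinely uses the smoothness locus $U$ of the pencil; everything else is formal once Proposition \ref{l9.3} and Theorem \ref{c10.1} are in hand.
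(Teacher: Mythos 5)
Your overall route --- realise the vanishing of $R_l(\widehat{\psi_u})$ as the splitting of the specialisation of a generically nonsplit extension of $l$-adic representations, and apply the Serre--Terasoma statement (Proposition \ref{p12.1}) --- is exactly the paper's. But your reformulation step contains a genuine error. The morphism $R_l(\widehat{\psi_u})$ defines a class in $\Ext^1(\tilde H^2_{l,\tr}(S_{k(u)}),\tilde H^1_l(\psi_u))$, whereas the class $\sE_u$ of Proposition \ref{l9.3} lives in $\Ext^1(\tilde H^2_{l,\prim}(S_{k(u)}),\tilde H^1_l(\psi_u))$ and is the class of $\tilde R_{l,c}(\widetilde{\psi_u})$, not of $\widehat{\psi_u}$. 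Since $\widehat{\psi_u}=\rho\circ\widetilde{\psi_u}$ and $R_l(\rho)$ is the split inclusion $H^2_{l,\tr}\inj H^2_{l,\prim}$, the class of $R_l(\widehat{\psi_u})$ is only the restriction of $\sE_u$ to the transcendental part. So your ``if and only if'' holds in one direction only: $\sE_u$ split $\Rightarrow R_l(\widehat{\psi_u})=0$. The converse fails in general, because the complementary summand of $H^2_{l,\prim}$ is an Artin--Tate piece $A(-1)$ and $\Ext^1(\Q_l(-1)\otimes A,\tilde H^1_l(\psi_u))$ need not vanish (it is essentially a group of points of an abelian variety). Consequently, applying Proposition \ref{p12.1} with $W=H^2_{l,\prim}(S_K)$ shows that $\{u\mid \sE_u \text{ splits}\}$ is thin, but this is a priori a \emph{smaller} set than $\{u\mid R_l(\widehat{\psi_u})=0\}$, so the argument as written does not yield the corollary.

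The repair is simply to drop the detour through Proposition \ref{l9.3}: take for $0\to V\to E\to W\to 0$ the extension with $W=H^2_{l,\tr}(S)$ defined directly by $R_l(\widehat{\psi_\eta})$ (equivalently by $R_l(\widehat{\psi_U})$ over $U$, which also supplies the unramifiedness and the specialisation compatibility you correctly identify as the only delicate points). Its generic nonsplitting is precisely Theorem \ref{c10.1}, and its splitting at $u$ is precisely the condition $R_l(\widehat{\psi_u})=0$. This is what the paper does.
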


\begin{proof}  This follows from Theorem \ref{c10.1} and Proposition \ref{p12.1}, applied to the extension
\[0\to H^1_l(\psi_\eta)\to E\to H^2_{l,\tr}(S)\to 0 \]
 defined by $R_l(\widehat{\psi_\eta})$.
\end{proof}

\begin{lemma}\label{l12.2} Let $K_1$ be a field of characteristic $0$. Then $K=K_1(t)$ is Hilbertian (see \cite[9.5, Def.]{mw} for the definition of Hilbertian). Moreover, $\card(\A^1(K)-\Omega)=\card(K)$ for any thin subset $\Omega\subset \A^1(K)$.
\end{lemma}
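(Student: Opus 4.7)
The first assertion, that $K=K_1(t)$ is Hilbertian, is classical and in fact holds for arbitrary $K_1$: any purely transcendental extension $F(t)$ is Hilbertian, cf.~\cite[Thm.~13.4.2]{fj} or \cite[Ch.~9]{mw}. The standard proof specializes $t\mapsto a\in K_1$ in an irreducible polynomial $P(t,Y)\in K_1[t,Y]$ and controls the thin set of ``bad'' $a$ via Puiseux expansions. Since the characteristic is $0$ no separability issues arise, but this is incidental.

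For the cardinality assertion, set $\kappa:=|K|=\max(|K_1|,\aleph_0)$, which is infinite. If $\kappa=\aleph_0$, I would argue by iteration: if $x_1,\dots,x_n\in K\setminus\Omega$ have already been constructed, then $\Omega\cup\{x_1,\dots,x_n\}$ is still thin (finite sets are thin and thin subsets are stable under finite unions, cf.~\cite[9.1]{mw}), so Hilbertianity produces $x_{n+1}\in K\setminus(\Omega\cup\{x_1,\dots,x_n\})$. This yields $|K\setminus\Omega|\ge\aleph_0=\kappa$, hence equality.

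If $\kappa>\aleph_0$, I would invoke the cardinality-refined form of Hilbertianity: in a Hilbertian field $K$ of infinite cardinality $\kappa$, the complement of any thin subset of $\A^1(K)$ has cardinality $\kappa$ (cf.~\cite{fj}). A direct route is to descend the finite collection of pairs $(V_i,\phi_i)$ cutting out $\Omega$ to a countable subfield $L(t)\subset K$ (possible because $\Q\subseteq K_1$ in characteristic $0$, so a countable $L\subset K_1$ containing all needed coefficients exists), then transport a single non-$\Omega$ element into a $\kappa$-size subfamily using the automorphism group of $K$ over $L(t)$ together with the $\kappa$-dimensional transcendence of $K_1$ over $L\cap K_1$. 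The main obstacle is precisely this uncountable case: bare Hilbertianity guarantees only nonemptiness of $K\setminus\Omega$, and the cardinality-preserving strengthening is genuinely sharper---but it is standard in the literature and it is what we need here to apply the lemma to the arithmetic settings of \S\ref{s12.6}.
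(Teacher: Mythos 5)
Your reduction is accurate — the countable case is handled correctly by iterating Hilbertianity over finite enlargements of $\Omega$, and you rightly identify the uncountable case as the crux — but that is precisely the case the paper needs (in Theorem \ref{t12.6} one takes $K=K_1(t)$ an \emph{uncountable} subfield of $\C$), and you do not actually close it. The appeal to a ``cardinality-refined form of Hilbertianity'' for arbitrary Hilbertian fields is left as a vague pointer to \cite{fj} with no precise statement, and the fallback ``direct route'' has unaddressed problems: an element of $\A^1(K)\setminus\Omega$ produced by Hilbertianity of the countable subfield $L(t)$ lies in $L(t)$ and is therefore fixed by $\operatorname{Aut}(K/L(t))$, so its orbit is a single point; if instead you start from an arbitrary element of $K\setminus\Omega$, nothing guarantees its orbit has cardinality $\card(K)$; and avoiding the thin set over $L(t)$ does not imply avoiding it over $K$ unless $L(t)$ is algebraically closed in $K$, since a $K$-point of a fibre $\phi_i^{-1}(x)$ need not be an $L(t)$-point. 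As written, the uncountable case rests on an unproved assertion.

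The paper closes the gap by \emph{not} treating Hilbertianity as a black box. Serre's proof that $K_1(t)$ is Hilbertian (\cite[9.5, Rk.~5]{mw}) shows that, for the given thin set $\Omega$, every element $at+b$ with $(a,b)$ ranging over the $K$-points of a suitable nonempty Zariski-open $U\subseteq\A^1_K\times\A^1_K$ lies outside $\Omega$. Since $K$ is infinite, $U$ meets some line $\{a_0\}\times\A^1$ with $a_0\ne 0$ in a cofinite set, and $b\mapsto a_0t+b$ is injective; this exhibits $\card(K)$ elements of $\A^1(K)\setminus\Omega$ in one stroke, with no case distinction on $\card(K)$. So either locate and quote a precise full-cardinality statement for Hilbert sets, or — as the paper does — open up the proof of Hilbertianity for $K_1(t)$ and read off the open set of good substitutions $at+b$.
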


\begin{proof} The first statement is \cite[9.5, Rk. 5]{mw}; the second one follows from examining its proof. Namely, $\{at+b\mid a,b\in K\}\setminus \Omega\supseteq U(K)$ for a suitable nonempty open subset $U\subseteq \A^1_K\times \A^1_K$.
\end{proof}

\begin{thm}\label{t12.6} Suppose $k=\C$. Let
\[\Exc^\sM(\psi)=\{u\in U(\C)\mid (\psi_u)_\#= 0\in \Griff(C_u\times S)\otimes \Q\}.\]
Then $U(\C)\setminus\Exc^\sM(\psi)$ is uncountable.
\end{thm}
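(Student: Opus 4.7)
The plan is to combine the generic non-vanishing of $\widehat{\psi_\eta}$ (Corollary \ref{c11.1} and Theorem \ref{c10.1}) with the Hilbertian specialization statement of Lemma \ref{l12.2}, applied over a subfield $K\subset \C$ with $\overline{K}=\C$, and then to descend non-vanishing from $K$ to $\C$ via transfer in the Griffiths group. Applying Corollary \ref{c12.1} directly over $\C$ is useless, since thin sets in $\A^1(\C)$ can exhaust $U(\C)$ (images of \'etale covers of degree $>1$ are surjective over algebraically closed fields); the descent through a Hilbertian intermediate field is what makes the proof go through.

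First, I would fix a countable algebraically closed subfield $k_0\subset \C$ over which $S$, $\tilde S \to \P^1$ and $\psi$ are defined, together with a prime $l$. Let $T$ be a transcendence basis of $\C/k_0$, pick $t\in T$, let $K_1$ be the algebraic closure of $k_0(T\setminus\{t\})$ inside $\C$, and set $K:=K_1(t)\subset \C$. Then $K$ is Hilbertian of characteristic $0$ with $|K|=2^{\aleph_0}$, and $\C=\overline{K}$. Apply Corollary \ref{c12.1} over $K$: the set $\Omega:=\{u\in U(K)\mid R_l(\widehat{\psi_u})=0\}$ is thin in $\A^1(K)$ --- its input, $R_l(\widehat{\psi_{\eta}})\neq 0$ for the generic point of $U_K$, is Theorem \ref{c10.1} (whose hypothesis $b^2>\rho$ follows from $p_g>0$, and whose conclusion transports from $\overline{K}$ to $K$ since nonzero Ext classes pull back to nonzero Ext classes under restriction of Galois groups). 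Lemma \ref{l12.2} then gives $|U(K)\setminus \Omega|=|K|=2^{\aleph_0}$. Combining with Theorem \ref{t10.1}(c), applied over $\C$, the subset $E(\psi)\subset U(\C)$ of points where $N^1 H^3_l(\psi_u)\ne 0$ is countable; hence the set of $u\in U(K)$ lying in neither $\Omega$ nor $E(\psi)$ still has cardinality $2^{\aleph_0}$. Proposition \ref{p9.4} then yields $(\psi_u)_\#\ne 0$ in $\Griff(C_u\times_K S_K)\otimes \Q$ for each such $u$.

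Since $\C/K$ is algebraic by construction, it is a filtered colimit of finite subextensions $L/K$; for each such $L$, proper pushforward along the finite morphism $\operatorname{Spec} L\to \operatorname{Spec} K$ defines a map $\Griff(X_L)\to \Griff(X_K)$ which preserves algebraic and numerical equivalence and composes with base change to multiplication by $[L:K]$. Hence, after $\otimes\, \Q$ and passage to the colimit, the base-change map $\Griff(C_u\times_K S_K)\otimes \Q \to \Griff(C_u\times_\C S_\C)\otimes\Q$ is injective, and the $2^{\aleph_0}$ good points produced in the previous paragraph give uncountably many elements of $U(\C)\setminus \Exc^\sM(\psi)$.

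The main obstacle is the second paragraph: one must construct a subfield $K\subset \C$ that is simultaneously Hilbertian (so that Lemma \ref{l12.2} applies) and has algebraic closure equal to $\C$ (so that the transfer-based descent is available). Splitting off a single transcendental from a transcendence basis of $\C/k_0$ achieves both at once. The remaining ingredients --- the descent of $R_l$-non-vanishing from $\overline{K}$ to $K$, and the transfer argument in $\Griff\otimes\Q$ --- are routine checks.
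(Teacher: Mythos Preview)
Your proof is correct and follows essentially the same route as the paper's: construct an uncountable subfield $K=K_1(t)\subset\C$ with $\overline{K}=\C$ over which the pencil is defined, apply Corollary~\ref{c12.1} and Lemma~\ref{l12.2} over $K$ to get $2^{\aleph_0}$ points with $R_l(\widehat{\psi_u})\ne 0$, remove the countable set from Theorem~\ref{t10.1}~c), and conclude via Proposition~\ref{p9.4}. The paper does exactly this, only more tersely (and without insisting that $k_0$ or $K_1$ be algebraically closed, which is harmless but unnecessary).

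Your write-up improves on the paper's in one respect: you make the passage from non-vanishing in $\Griff(C_u\times_K S_K)\otimes\Q$ to non-vanishing in $\Griff(C_u\times_\C S_\C)\otimes\Q$ explicit via the transfer argument along the algebraic extension $\C/K$. The paper's one-line ``the claim now follows'' leaves this step to the reader. One small remark: you could streamline the coniveau step by invoking Theorem~\ref{t10.1}~c) with $k=K$ rather than $k=\C$; the statement is already formulated for any $k\subseteq\C$, and this gives $N^1H^3_l(\psi_u)=0$ directly over $K$ without needing the (admittedly trivial) observation that $N^1$ vanishing over $\C$ implies vanishing over the subfield $K$.
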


\begin{proof} There is an uncountable subfield $K\subset \C$, of the form $K_1(t)$, such that our Lefschetz pencil is defined over $K$. Indeed, the Lefschetz pencil is defined over some finitely generated subfield $K_0\subset \C$; pick a transcendence basis $(t_\alpha)_{\alpha\in A}$ of $\C/K_0$, and let $K_1=K_0((t_\alpha)_{\alpha\in A-\{\beta\}})$ where $\beta\in A$ is chosen, and $K=K_0(t_\alpha)$. The claim now follows from Theorem \ref{t10.1} c), Corollary \ref{c12.1}, Lemma \ref{l12.2} and Proposition \ref{p9.4}.
\end{proof}

\subsection{Andr\'e's motives}\label{s12.2} Proposition \ref{p12.1} is sufficient to get Corollary \ref{c12.1} over a finitely generated field $k$, but not to ensure that the set $U(k)\setminus E(\psi)$ of Theorem \ref{t10.1} c) is nonempty.  For this, we now have to pass from Mumford-Tate groups as in Theorem \ref{tandre} to motivic Galois groups in the sense of André \cite{incond}. Moreover, we have to strengthen the hypothesis on $S$. 

We write $\Mot(k)$ for the category denoted by $\sM_k$ in \emph{loc. cit.}, 4.5, Ex. (i).  This is a semi-simple $\Q$-linear Tannakian category \cite[Th. 0.4]{incond}, provided with canonical $\otimes$-functors 
\begin{equation}\label{eq12.1}
\sM(k)\to \Mot(k)\by{\H} \PHS\by{\omega} \Vec_\Q
\end{equation}
refining the Hodge realisation on $\sM(k)$, where $\sM(k)$ denotes as before the category of effective Chow motives; to agree with the present convention and the one in \cite{incond}, the first functor is contravariant and the second one is covariant. For $X\in \Sm^\proj(k)$, we write $h_\mot(X)$ for the image in $\Mot(k)$ of $h(X)\in \sM(k)$, in order to avoid confusion.

The fibre functor $\omega$ (resp. $\omega\circ \H$) defines a Tannakian group $\MT_\red$ (resp. $\Gal$): the absolute Mumford-Tate group (resp. the absolute motivic Galois group  \cite[4.6]{incond}); the first one is the largest proreductive quotient of the group $\MT$ considered (for all mixed Hodge structures) in \S \ref{s.hodge}. The functor $\H$ induces a homomorphism of proreductive groups
\[\MT_\red\to \Gal.\]

Let $M\in \Mot(k)$. Write $\Gal(M)$ (resp.  $\MT(M)$) for the image of $\Gal$ (resp. of $\MT_\red$) in $\GL(H(M))$, where $H=\omega\H$. The above homomorphism induces a corresponding monomorphism of reductive groups (not necessarily connected on the right hand side)
\begin{equation}\label{eq12.2}
\MT(M)\inj \Gal(M).
\end{equation}

\begin{warn} The first functor of \eqref{eq12.1} is monoidal but not symmetric monoidal, since the commutativity constraint has been modified in $\Mot(k)$ \cite[4.3]{incond} but not in $\sM(k)$. We shall only use its structure as a functor, not as a $\otimes$-functor.
\end{warn}

We need a further definition:

\begin{defn}[\protect{\cite[6.1]{incond}}]\label{d11.2} An object $M\in \Mot(k)$ is \emph{of 
 abelian type} if $M$  belongs to the Tannakian subcategory $\Mot_\abb(k)$ generated by motives of abelian varieties and Artin motives. A smooth projective variety $X$ is \emph{of motivated abelian type} if $h_\mot(X)$ is of  abelian type.
\end{defn}

\begin{exs}\label{e12.1} a) Any abelian variety, or of any product of curves, is of motivated abelian type. Similarly for Fermat varieties \cite{kat-shi}.  A more recent example is the Fano surface of lines of a cubic 3-fold \cite{diaz}.  (All this is already true in $\sM(k)$.)\\
b) A K3 surface (hence an Enriques surface) is of motivated abelian type \cite[Th. 7.1]{incond}. So is a smooth cubic hypersurface of dimension $\le 6$ (loc. cit., Th. 7.2), and a surface of general type verifying $p_g=K^2=1$ \cite[Cor. 1.5.2]{sht}.\\
c) Let $f:X\to Y$ be a surjective morphism. If $X$ is of motivated abelian type, so is $Y$ ($h_\mot(Y)$ is a direct summand of $h_\mot(X)$.)\\
d) To be of motivated abelian type is a birationally invariant property for smooth projective varieties of dimension $\le 3$.\\
e) A motive $M\in \Mot(k)$ is in $\Mot_\abb(k)$ if and only if it is a direct summand of $N\otimes  h_\mot(A)(r)$ for some abelian variety $A$, some Artin motive $N$ and some $r\ge 0$. (Hint: if $A$ and $B$ are abelian varieties, $h_\mot(A)$ and $h_\mot(B)$ are direct summands of $h_\mot(A\times B)$; the Artin motive $\L$ is a direct summand of $h(E)$ for any elliptic curve $E$.)
\end{exs}

The main properties of $\Mot_\abb(k)$ that we shall use are the following:

\begin{thm}\label{t12.2} Suppose $k$ algebraically closed.\\
 a) In \eqref{eq12.1}, the restriction of $\H$ to $\Mot_\abb(k)$ is fully faithful. In particular, for $M\in \Mot_\abb(k)$, any direct summand of $\H(M)$ is of the form $\H(N)$, where $N$ is a direct summand of $M$.\\
b)  For $M\in \Mot_\abb(k)$, the homomorphism \eqref{eq12.2} is an isomorphism.
\end{thm}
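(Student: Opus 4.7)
The plan is to prove part a) by reducing to André's theorem that Hodge classes on abelian varieties are motivated, then deduce part b) from part a) via Tannakian formalism.

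First I would address part a). By Example~\ref{e12.1} e), every object of $\Mot_\abb(k)$ is a direct summand of some $N\otimes h_\mot(A)(r)$ with $A$ an abelian variety, $N$ an Artin motive and $r\ge 0$; so it suffices to show bijectivity of the Hom map for such generators. Since $k$ is algebraically closed, the subcategory of Artin motives in $\Mot(k)$ is equivalent via $\omega$ to $\Vec_\Q$, and $\H$ sends Artin motives to pure Hodge structures of type $(0,0)$; so the Artin part contributes no obstruction and one reduces to Hom groups of the form $\Mot(k)(h_\mot(A)(r),h_\mot(A')(r'))$ for abelian varieties $A,A'$. By Poincaré duality in $\Mot(k)$, such a Hom is the space of motivated cycles of the appropriate codimension on $A\times A'$, while its Hodge-theoretic counterpart is the space of Hodge cycles of matching bidegree. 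Equality of these two spaces is precisely the main content of André's theorem \cite[Th.~0.6.2]{incond}, which extends Deligne's theorem on absolute Hodge cycles on abelian varieties.

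The second assertion of a) then follows from the pseudo-abelianness (in fact semisimplicity) of $\Mot(k)$. Given a subobject $U\subseteq \H(M)$, pick the idempotent $e\in\End_{\PHS}(\H(M))$ projecting onto $U$ (possible since $\PHS$ is semisimple); lift $e$ uniquely to an idempotent $\tilde e\in\End_{\Mot(k)}(M)$ via full faithfulness; and take $N:=\IM(\tilde e)$, which satisfies $\H(N)=U$.

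For part b), the plan is standard Tannakian bookkeeping. Let $\langle M\rangle^\otimes\subset\Mot(k)$ and $\langle \H(M)\rangle^\otimes\subset\PHS$ denote the Tannakian subcategories generated by $M$ and $\H(M)$. Every object of $\langle \H(M)\rangle^\otimes$ is a direct summand of a mixed tensor $\H(M)^{\otimes a}\otimes \H(M)^{\vee\otimes b}$, hence lies in the essential image of $\H$ by the summand statement just established; combined with full faithfulness this shows that $\H$ restricts to an equivalence $\langle M\rangle^\otimes \iso \langle \H(M)\rangle^\otimes$. The fibre functors $\omega\circ\H$ on the source and $\omega$ on the target agree under this equivalence, so the associated Tannakian groups $\Gal(M)$ and $\MT(M)$ coincide, promoting the inclusion \eqref{eq12.2} to an isomorphism. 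The one genuine obstacle in this plan is André's theorem on motivated cycles on abelian varieties; once that input is cited, the rest is formal.
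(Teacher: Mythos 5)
Your proof is correct and follows the same route as the paper, whose entire argument is the one-line remark that a) is, by rigidity, a translation of Andr\'e's theorem \cite[Th.~0.6.2]{incond} (Hodge cycles on abelian varieties are motivated) and that b) follows formally from a). You have simply filled in the reduction to generators $N\otimes h_\mot(A)(r)$, the duality step identifying Hom groups with spaces of motivated (resp.\ Hodge) cycles on products of abelian varieties, and the standard Tannakian bookkeeping for b); all of this is sound.
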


\begin{proof} By rigidity, a) is a translation of \cite[Th. 0.6.2]{incond} (any Hodge cycle on an abelian variety is motivated), and b) follows from a).
\end{proof}

\subsection{Hodge variations and motivated variations}\label{s12.3} Let $X$ be a smooth connected $k$-curve. It is convenient to use the category $\Mot(X)$ of relative Andr\'e motives introduced by Arapura and Dhillon in \cite{ad} (and denoted by $M_A(X)^{str}$ in loc. cit.): its construction relies on André's deformation theorem for motivated cycles \cite[Th. 0.5]{incond}. A family of (motivated) motives in the sense of \cite[\S\S\ before Th. 5.2]{incond} defines an object of $\Mot(X)$. This category is still $\Q$-linear, Tannakian, semi-simple and provided with a faithful, exact $\otimes$-functor
\[\Mot(X)\by{\H} \sV_\Z(X_\C).\]

Indeed, this is part of \cite[Th. 2.3]{ad}, except for the fact that $\H(M)$ is good and of integral origin for any $M\in \Mot(X)$. The first fact follows from the polarizability of $\H(M)$ (cf. \cite[(1) p. 9]{andrecomp}). For the second fact, write $M$ as a direct summand of $h_\mot(\sX)(n)$, where $f:\sX\to X$ is smooth projective and $n\ge 0$. If $p$ is the corresponding projector, then $\H(M)$ is a direct summand via $\H(p)$ of $\bigoplus_{i\ge 0} R^if_*\Q(n)=(\bigoplus_{i\ge 0} R^if_*\Z(n))\otimes \Q$. If $d$ is a positive integer such that $dp$ has integral coefficients, then $\H(M)=\IM(\H(dp))\otimes \Q$.

The functor $\sM(k)\to \Mot(k)$ of \eqref{eq12.1} similarly generalises to a functor
\begin{equation}\label{eq12.1X}
\sM(X)\to \Mot(X)
\end{equation}
where $\sM(X)$ is as in \S \ref{s8.3}.

Suppose $k=\C$; let $M\in \Mot(X)$ and $V=\H(M)$. With notation as before Corollary \ref{tandre}, we write  $T_x(M),\MT_x(M),\Pi_x(M)$ for $T_x(V),\MT_x(V),\Pi_x(V)$. To this notation, we add $\Gal_x(M):=\Gal(M_x)$ and define

\begin{defn} \label{d12.2}
\begin{align*}
\Exc_\H(M)&=\Exc(\H(M)) \text{ (see Cor. \ref{tandre} c));} \\
\Exc_\mot(M)&=\{x\in X(\C)\mid \Pi_x(M)^0\not\subseteq \Gal_x(M)  \}\\
\Exc^N_\H(M)&=\Exc^N(\H(M)) \text{ (see Prop. \ref{p10.1})}\\
\Exc^N_\mot(M)&=\{x\in X(\C)\mid N^1 H(M_x)\ne 0  \}.
\end{align*}
\end{defn}

By \eqref{eq12.2}, we have
\begin{equation}\label{eq12.3}
\Exc_\mot(M)\subseteq \Exc_\H(M).
\end{equation}

Similarly
\begin{equation}\label{eq12.4}
\Exc^N_\mot(M)\subseteq \Exc^N_\H(M)
\end{equation}
since $N^1H(M)\subseteq N^1_\H H(M)$. Finally, by Proposition \ref{p10.1}:
\begin{equation}\label{eq12.5}
\Exc^N_\H(M)\subseteq \bigcup_S \Exc(S)
\end{equation}
where $S$ runs through the simple summands of $\H(M)$, provided $\H(M)$ is connected (see Definition \ref{d10.1} for connected). 

\begin{defn}\label{d12.1} An object $M\in \Mot(X)$ is \emph{of abelian type} if $M_x$ is of abelian type for every $x\notin \Exc_\mot(M)$.\footnote{I don't know if one $x$ is sufficient.} We write $\Mot_\abb(X)$ for the corresponding full subcategory of $\Mot(X)$.
\end{defn} 

\begin{thm}\label{t12.1} Let $M\in \Mot_\abb(X)$ and $N\in \Mot_\abb(\C)$. Suppose $\H(M)$ absolutely simple and connected.  Then
\[\Exc^N_\mot(M\otimes \pi^*N)\subseteq \bigcup_S \Exc_\mot(M\otimes \pi^*S)\]
where $S$ runs through the simple summands of $N$.
\end{thm}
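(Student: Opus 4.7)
The strategy is to reduce the motivated statement to the Hodge-theoretic analogue already established in Corollary \ref{c9.1}, using the identification of motivated Galois groups with Mumford--Tate groups for motives of abelian type (Theorem \ref{t12.2} b)) to translate between $\Exc_\mot$ and $\Exc_\H$. The plan runs in three main stages.

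First, I would decompose $N = \bigoplus_i S_i$ into simple constituents in $\Mot(\C)$. Since $\Mot_\abb(\C)$ is a tannakian subcategory of $\Mot(\C)$, it is stable under direct summands, so each $S_i$ lies in $\Mot_\abb(\C)$; similarly, $M \otimes \pi^* S_i \in \Mot_\abb(X)$, since $\Mot_\abb$ is closed under tensor products and under $\pi^*$ of constant motives. By Theorem \ref{t12.2} a), the restriction of $\H$ to $\Mot_\abb(\C)$ is fully faithful, so each $\H(S_i)$ is simple in $\PHS$. Combined with the absolute simplicity and connectedness of $\H(M)$, Proposition \ref{p9.2} a) then shows that each $\H(M \otimes \pi^* S_i) = \H(M) \otimes \pi_\sV^*\H(S_i)$ is simple in $\sV(X)$; in particular $\H(M \otimes \pi^* N)$ is semi-simple with these as its simple components, and is connected.

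Second, and this is the heart of the argument, I would establish for each simple summand $S = S_i$ the equality
\[\Exc_\mot(M \otimes \pi^* S) = \Exc_\H(M \otimes \pi^* S).\]
The inclusion $\Exc_\mot \subseteq \Exc_\H$ is \eqref{eq12.3}. For the reverse inclusion, fix $x$ outside $\Exc_\mot(M \otimes \pi^* S)$; then the fiber $(M \otimes \pi^* S)_x$ lies in $\Mot_\abb(\C)$, so Theorem \ref{t12.2} b) gives $\MT_x(M \otimes \pi^* S) = \Gal_x(M \otimes \pi^* S)$. Under this identification the defining conditions $\Pi_x^0 \subseteq \Gal_x$ and $\Pi_x^0 \subseteq \MT_x$ coincide, which yields the claim in view of Corollary \ref{tandre} b).

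Third, I would apply the Hodge-theoretic result. Given $x \notin \bigcup_S \Exc_\mot(M \otimes \pi^* S)$, the previous step shows $x \notin \bigcup_S \Exc_\H(M \otimes \pi^* S)$. Corollary \ref{c9.1} applied to $\H(M)$ and $\omega(\H(N))$ then yields
\[\Exc^N_\H(M \otimes \pi^* N) \subseteq \bigcup_i \Exc_\H(M \otimes \pi^* S_i),\]
so $x \notin \Exc^N_\H(M \otimes \pi^* N)$. Combined with \eqref{eq12.4}, i.e.\ $\Exc^N_\mot \subseteq \Exc^N_\H$, we conclude $x \notin \Exc^N_\mot(M \otimes \pi^* N)$, which is the desired inclusion.

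The main obstacle is the middle stage: the passage from motivated to Hodge exceptional sets is precisely where abelian type is essential, and it relies on Theorem \ref{t12.2} b) being applicable uniformly to all fibers $(M \otimes \pi^* S)_x$ outside $\Exc_\mot$, which in turn depends on a careful reading of Definition \ref{d12.1} and on $\Mot_\abb(X)$ being closed under tensor product with $\pi^*(\Mot_\abb(\C))$. Once this identification is secured, the remainder is a direct invocation of Corollary \ref{c9.1}.
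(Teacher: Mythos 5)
Your proof is correct and follows essentially the same route as the paper's: both reduce, via \eqref{eq12.4} and the Hodge-theoretic containment (you cite Corollary \ref{c9.1}, the paper cites \eqref{eq12.5} together with Proposition \ref{p9.2} a), which are the same underlying ingredients), to showing $\Exc_\H(M\otimes\pi^*S)\subseteq\Exc_\mot(M\otimes\pi^*S)$ for each simple summand $S$, and both obtain this from Theorem \ref{t12.2} a) and b). The only caveat --- shared with the paper's own appeal to \eqref{eq12.5} --- is that the hypotheses of Corollary \ref{c9.1} (effectivity and the vanishing $N^1_H\H(M)=N^1_H\H(N)=0$) are not among the stated hypotheses of the theorem and have to be supplied when it is applied.
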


\begin{proof} Clearly, $\H(M\otimes\pi^*N)$ is connected (it has the same monodromy as $\H(M)$). In view of \eqref{eq12.4}, \eqref{eq12.5} and Proposition \ref{p9.2} a), it therefore suffices to show that
\[\bigcup_S \Exc_\mot(M\otimes \pi^*S)=\bigcup_\Sigma \Exc(\H(M)\otimes \pi^*\Sigma)\]
where $\Sigma$ runs through the simple summands of $\H(N)$. By Theorem \ref{t12.2} a), any $\Sigma$ is of the form $\H(S)$ for some $S$. For such an $S$, let $x\notin \Exc_\mot(M\otimes \pi^*S)$. By  Theorem \ref{t12.2} b), $\MT_x(M\otimes \pi^*S)=\Gal_x(M\otimes \pi^*S)$, hence $x\notin \Exc_\H(M\otimes \pi^*S)$ and we are done by \eqref{eq12.3}.
\end{proof}

\subsection{$l$-adic variations}\label{s12.4} We come back to the case where $k$ is an arbitrary subfield of $\C$. Write $\bar k$ for its algebraic closure in $\C$. For $M\in\Mot(X)$, we define
\begin{align}
\Exc_\mot(M)&=\Exc_\mot(M_\C)\cap X(\bar k)\label{eq12.6}\\
\Exc^N_\mot(M)&=\Exc^N_\mot(M_\C)\cap X(\bar k)\notag
\end{align}
cf. Definition \ref{d12.2}. For $d\ge 1$, we also write 
\[X(\bar k)^{\le d}=\{x\in X(\bar k)\mid [k(x):k]\le d\}\] 
and 
\begin{align}
\Exc_\mot(M)^{\le d}&=\Exc_\mot(M)\cap X(\bar k)^{\le d}\label{eq12.6a}\\ 
\Exc^N_\mot(M)^{\le d}&=\Exc^N_\mot(M)\cap X(\bar k)^{\le d}\notag.
\end{align}

By \cite[Th. 2.3]{ad}, there is another realisation functor
\begin{equation}\label{eq12.9}
R_l^\mot:\Mot(X)\to \tilde S_c(X,\Q_l) 
\end{equation}
where $\tilde S_c(X,\Q_l)$ is as in \S \ref{s9.2}. The tannakian theory of this category is parallel to that of $\sV(X_\C)$ as developed in \S \ref{s.hodge}: the r\^ole of $\Loc(X)$ is played by $S_c(X,\Q_l)$ (see \eqref{eq9.11}); for $\bar x$ a geometric point of $X$, the proalgebraic $\Q$-groups $\Pi_x,T_x$ and $\MT$ are replaced by the pro-algebraic $\Q_l$-groups $\Pi_{\bar x}^l,T_{\bar x}^l$ and $\pi^l$ which are the Zariski envelopes (for $l$-adic representations) respectively of the geometric fundamental group $\pi_1^\et(X_{\bar k},\bar x)$, the absolute fundamental group of $\pi_1^\et(X,\bar x)$ and $G_k:=\Gal(\bar k/k)$, with a similar exact sequence to \eqref{eq1V}.\footnote{We write in this subsection $\pi_1^\et$ for the étale fundamental group, in order to avoid confusion with the topological fundamental group $\pi_1$ of \S \ref{s.hodge}.} If we write similarly $\pi^l(k(x),\bar x)$ for the Zariski envelope of $\pi_1^\et(\Spec k(x),\bar x)$, the homomorphism $x_\sV$ is replaced by a homomorphism ${\bar x}_S:\pi^l(k(x),\bar x)\to T_{\bar x}^l$, where $x$ is the point of $X$ underlying $\bar x$. 

To be more specific, here is an $l$-adic analogue to Corollary \ref{tandre}, where $V\in S_c(X,\Q_l)$:

\begin{prop}\label{landre} a) The collections $(T^l_x(V))_{x\in X}$ and $(\Pi^l_x(V))_{x\in X}$ define local systems on $X_\et$. 
Writing $\pi_x^l(V)$ for the image of ${\bar x}_S$ in $GL(V_x)$,  we have 
\begin{align}
\Pi_x(V)\pi^l_x(V) & \text{ is open in }T_x(V), \text{ with equality if } x\in X(k) \label{eq10.4l}\\
\Pi_x(V)^0\pi^l_x(V)^0&=T_x(V)^0\label{eq10.5l}
\end{align}
b) For $x\in X_{(0)}$, the following are equivalent:
\begin{thlist}
\item $\Pi^l_x(V)^0\subseteq  \pi^l_x(V)$. 
\item $\pi^l_x(V)$ is open in $T^l_x(V)$.
\end{thlist}
\end{prop}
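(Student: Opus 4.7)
The plan is to follow the proof of Corollary \ref{tandre} verbatim, substituting each Hodge-theoretic input by its $l$-adic counterpart. The role of Theorem \ref{t1V} (the theorem of the fixed part of Steenbrink-Zucker) is taken over by the classical short exact sequence of \'etale fundamental groups
\[
1 \to \pi_1^\et(X_{\bar k},\bar x) \to \pi_1^\et(X,\bar x) \to G_k \to 1,
\]
which upon taking Zariski closures in $GL(V_{\bar x})$ yields an exact sequence of algebraic $\Q_l$-groups $\Pi_x^l(V) \to T_x^l(V) \to Q \to 1$, with $Q$ the Zariski envelope of the induced $G_k$-action on $V_{\bar x}/\Pi_x^l(V)$. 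Normality of $\Pi_x^l(V)$ in $T_x^l(V)$ follows from that of $\pi_1^\et(X_{\bar k},\bar x)$ in $\pi_1^\et(X,\bar x)$, and the local system property on $X_\et$ is immediate since a change of geometric base point induces an inner automorphism of $\pi_1^\et(X,\bar x)$, hence conjugations on $T_x^l(V)$ and $\Pi_x^l(V)$.

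For \eqref{eq10.4l}, I observe that $\bar x_S$ lifts the inclusion $G_{k(x)} \inj G_k$, so its image $\pi_x^l(V)$ projects to the Zariski closure of the image of $G_{k(x)}$ in $Q$. When $x\in X(k)$ one has $k(x) = k$ and $G_k \to Q$ is already a section, giving the equality $\Pi_x^l(V)\cdot\pi_x^l(V) = T_x^l(V)$. In general, $G_{k(x)}\subset G_k$ has either finite index (for $x$ closed, so $[k(x):k] < \infty$) or dense image in $\pi^l$ (at the generic point, by geometric connectedness of $X$ which makes $G_{k(x)}\to G_k$ surjective); in either case the image of $\pi_x^l(V)$ in $Q$ is open, and normality of $\Pi_x^l(V)$ makes $\Pi_x^l(V)\pi_x^l(V)$ an open subgroup of $T_x^l(V)$. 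Then \eqref{eq10.5l} follows by passing to neutral components: an open subgroup of an algebraic group has the same neutral component, so $(\Pi_x^l(V)\pi_x^l(V))^0 = T_x^l(V)^0$, and a standard dimension count (using that $\Pi_x^l(V)$ is normal in $\Pi_x^l(V)\pi_x^l(V)$) yields the identity $(\Pi_x^l(V)\pi_x^l(V))^0 = \Pi_x^l(V)^0\pi_x^l(V)^0$.

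Part b) is then formal. For (ii) $\Rightarrow$ (i), openness of $\pi_x^l(V)$ in $T_x^l(V)$ gives $\pi_x^l(V)^0 = T_x^l(V)^0$, so by \eqref{eq10.5l} we get $\Pi_x^l(V)^0 \subseteq T_x^l(V)^0 = \pi_x^l(V)^0 \subseteq \pi_x^l(V)$. Conversely, assuming $\Pi_x^l(V)^0 \subseteq \pi_x^l(V)$, write $\Pi_x^l(V) = \bigsqcup_{i=1}^n g_i \Pi_x^l(V)^0$ as a finite union of cosets of its neutral component; then $\Pi_x^l(V)\pi_x^l(V) \subseteq \bigcup_i g_i \pi_x^l(V)$, which combined with \eqref{eq10.4l} shows that $\pi_x^l(V)$ has finite index in $T_x^l(V)$, hence is open.

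The main subtlety I anticipate is the interplay between the $l$-adic and Zariski topologies on $G_k$, especially for non-rational points. I expect to handle this uniformly by invoking the defining property of $\pi^l$ as the Zariski envelope of the continuous $l$-adic representations of $G_k$, so that $l$-adic density statements transfer to Zariski density; this is the mechanism by which the $l$-adic proof becomes slightly weaker than its Hodge-theoretic predecessor (openness instead of equality in \eqref{eq10.4l}), because in the Hodge setting every point $x\in X(\C)$ provides a genuine section of $\pi_\sV$, while in the $l$-adic setting only $k$-rational points do.
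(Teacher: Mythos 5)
Your proof is correct and is exactly the argument the paper intends: Proposition \ref{landre} is stated without proof, presented only as the formal $l$-adic analogue of Corollary \ref{tandre}, and your transposition --- replacing the Steenbrink--Zucker input by the homotopy exact sequence of \'etale fundamental groups, taking Zariski envelopes, and using normality of $\Pi^l_x(V)$ for the coset argument on neutral components --- supplies precisely the omitted details. In particular you correctly identify why equality in \eqref{eq10.4} degenerates to openness in \eqref{eq10.4l}: the section ${\bar x}_S$ is only defined over $G_{k(x)}$, which for a non-rational closed point is merely open in $G_k$.
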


The functor \eqref{eq12.9} defines a homomorphism
\[\pi^l\to \Gal\otimes_\Q \Q_l\]
inducing a monomorphism for $M\in \Mot$ and $x\in X_{(0)}$:
\begin{equation}\label{eq12.2l}
\pi^l_x(M)\inj \Gal_x(M)\otimes_\Q \Q_l
\end{equation}
analogous to \eqref{eq12.2}. Since $\pi_1^\et(X_{\bar k})$ is canonically isomorphic to the profinite completion of $\pi_1(X(\C))$, this yields an inclusion, for $M\in \Mot(X)$:
\begin{equation}\label{eq12.7}
\Exc_\mot(M)\subseteq \Exc_l(R_l(M))
\end{equation}
where, for $V\in S_c(X,\Q_l)$
\begin{align}
\Exc_l(V)&=\{x\in X(\bar k)\mid \Pi^l_x(V)^0\not\subseteq \pi^l_x(V)\}\label{eq12.8}\\
&=\{x\in X(\bar k)\mid \pi^l_x(V)\text{ is not open in } T_x^l(V)\}\notag
\end{align}
cf. Proposition \ref{landre} b). This yields:

\begin{thm} \label{t12.4} Suppose that $k$ is finitely generated over $\Q$.\\
a) \cite[Th. 5.1]{cadoret} Let $M\in \Mot(X)$. Then $\Exc_\mot(M)^{\le d}$ is finite for any $d\ge 1$. \\
b) Let $M\in \Mot_\abb(X)$ and $N\in \Mot(k)$; assume that  $N_{\bar k}\in \Mot_\abb(\bar k)$. Then the set $\Exc^N_\mot(M\otimes \pi^*N)^{\le d}$ is finite for any $d\ge 1$. 
\end{thm}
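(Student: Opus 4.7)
Part (a) is simply the quoted theorem of Cadoret \cite[Th.~5.1]{cadoret}, so there is nothing new to prove. The plan for part (b) is to reduce it to part (a) via Theorem \ref{t12.1}, through a sequence of reductions that match that theorem's hypotheses.

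First I would decompose both tensor factors into simples. By semisimplicity of $\Mot_\abb(\bar k)$, write $N_{\bar k}=\bigoplus_j S_j$ with each $S_j$ simple of abelian type; the decomposition is defined over a finite extension $k_1/k$ inside $\bar k$. Replacing $k$ by $k_1$ and $d$ by $d\cdot[k_1:k]$, I may assume each $S_j$ is defined over $k$. Next I would arrange that $\H(M)$ is absolutely simple and connected (Definition \ref{d10.1}): connectedness is achieved by pulling $M$ back along a connected finite étale cover $X'\to X$ (defined over a further finite extension $k'/k$, chosen so that the neutral component of $\Pi_x(\H(M_{X'}))$ becomes the whole group at some, hence every, geometric point); absolute simplicity is then achieved by decomposing the resulting $M_{X'}=\bigoplus_i M_i$ in $\Mot_\abb(X'_{\bar{k'}})$ by semisimplicity. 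Both operations preserve the abelian-type hypothesis by Example \ref{e12.1}~e), and each only multiplies the degree bound by a bounded constant. Subadditivity of $\Exc^N_\mot$ with respect to direct sums (a direct consequence of \eqref{eq9.7}, since $N^1$ commutes with $\oplus$) reduces the problem to bounding $\Exc^N_\mot(M_i\otimes\pi^* S_j)^{\le d'}$ for each pair $(i,j)$, for a suitable $d'$ depending on $d$ and the reductions above.

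For each such pair, Theorem \ref{t12.1} now applies (its hypothesis that $\H(M_i)$ is absolutely simple and connected is in force, and $S_j$ is already simple), giving
\[\Exc^N_\mot(M_i\otimes\pi^* S_j)\subseteq \Exc_\mot(M_i\otimes\pi^* S_j).\]
Since $M_i\otimes\pi^* S_j\in\Mot(X')$ and $k'$ is still finitely generated over $\Q$, part (a) applied to this motive yields finiteness of $\Exc_\mot(M_i\otimes\pi^* S_j)^{\le d'}$, hence of $\Exc^N_\mot(M_i\otimes\pi^* S_j)^{\le d'}$. Taking the union over the finitely many pairs $(i,j)$ and pushing back down to $X$ along the finite map $X'\to X$ gives the desired finiteness of $\Exc^N_\mot(M\otimes\pi^* N)^{\le d}$. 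The main obstacle will be the second reduction above: one must verify carefully that all intermediate motives remain of abelian type (Example \ref{e12.1}~e) handles tensor products, direct summands, base change, and pullback along étale covers) and that the degree bound on closed points transforms predictably under the finite extensions of $k$ and the finite étale cover $X'\to X$; the subadditivity of $\Exc^N_\mot$ on direct sums is the only other nontrivial bookkeeping, and follows immediately from the fact that $N^1$ distributes over $\oplus$.
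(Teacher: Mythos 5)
Your overall architecture --- part a) is the quoted result of Cadoret (which the text deduces from \cite[Th. 5.8]{ct1}, \cite[Th. 1.1]{ct2} and the inclusion \eqref{eq12.7}), and part b) is to follow from part a) via Theorem \ref{t12.1} --- is exactly the intended one; the published proof disposes of b) in a single sentence. Your bookkeeping steps are mostly sound: the decomposition of $N_{\bar k}$ into simples over a finite extension, the subadditivity of $\Exc^N_\mot$ under direct sums (indeed $N^1$ commutes with $\oplus$, by testing morphisms to $N(1)$ on each factor), and, in principle, the passage to a finite \'etale cover to make the monodromy group connected.

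The gap is in your reduction to absolute simplicity. In Theorem \ref{t12.1}, ``$\H(M)$ absolutely simple'' is consumed through Proposition \ref{p9.2}, whose hypothesis is that the underlying \emph{local system} $\omega_{X}(\H(M))$ is absolutely irreducible, i.e.\ has endomorphism ring $\Q$ --- a condition on the geometric monodromy representation, not on the motive. Decomposing $M_{X'}$ into simple objects of $\Mot_\abb(X')$ does not produce such pieces: a simple relative motive can have a reducible, or irreducible but not absolutely irreducible, underlying local system. For instance any constant motive $\pi^*N_0$ with $N_0$ simple of rank $r>1$ is simple in $\Mot_\abb(X')$, yet its local system is trivial of rank $r$ with endomorphism ring the full matrix algebra. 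Conversely, the isotypic decomposition of the local system under the monodromy group is not a priori a decomposition of motives, nor even of variations of Hodge structure, so one cannot refine further inside $\Mot_\abb(X')$. Hence your pairs $(M_i,S_j)$ need not satisfy the hypotheses of Theorem \ref{t12.1} and the argument does not close; some hypothesis on the monodromy of $M$ (as in Theorem \ref{t12.1}) has to be carried along. Note that in the one place where b) is used, Proposition \ref{p11.2}, the hypothesis holds automatically: $\H(h^1_\mot(\psi_U))$ is the local system of vanishing cycles, whose monodromy is the full symplectic group by Proposition \ref{h10.1}, hence absolutely irreducible and connected.
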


\begin{proof} By \cite[Th. 5.8]{ct1} and \cite[Th. 1.1]{ct2}, the sets $\Exc_l(R_l(M))^{\le d}$ are finite (see next subsection for more details), hence a) follows from \eqref{eq12.7}. Theorem \ref{t12.1} then implies b).
\end{proof}

\subsection{GLP representations}\label{s12.5} In this subsection, we recall results of Cadoret-Tamagawa which were just used in the proof of Theorem \ref{t12.4}, and prove two lemmas which will be used in the next subsection.

\begin{defn}[\protect{\cite{ct1}}] Let $X$ be a scheme separated, geometrically connected and of finite type over $k$. An $l$-adic representation   $\rho: \pi_1^\et(X)\to  GL_m(\Q_l)$ of the étale fundamental group of $X$ is \emph{geometrically Lie perfect} (GLP) if the Lie algebra of the $l$-adic Lie group $\rho(\pi_1(X_{\bar k}))$ is perfect (i.e., its abelianisation is $0$). 
\end{defn}

With the notation of \eqref{eq12.8}, we have by \cite[Th. 1.1]{ct2}:

\begin{thm}\label{t12.5} The set $\Exc_l(\rho)^{\le d}$ is finite for all $d\ge 1$ if $\rho$ is GLP and $k$ is finitely generated over $\Q$. 
\end{thm}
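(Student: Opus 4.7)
The plan is to translate the openness condition into a statement about Lie algebras, then to show via a specialization/height argument that failures of openness cluster into finitely many points of bounded residue-field degree.

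First I would reduce to the case where $X$ is a smooth curve. Given $\rho$ on $\pi_1^\et(X)$ with $X$ higher-dimensional, by Bertini one can fiber $X$ by curves and push the statement to each fiber, provided the GLP property survives restriction to a generic curve (which it does, since the geometric Lie algebra is controlled by monodromy and the GLP hypothesis is preserved under passage to a sufficiently positive-dimensional monodromy image). The arithmetic of $k$ (finitely generated over $\Q$) is preserved. So assume $X$ is an affine curve over a finitely generated field $k$.

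Next I would analyze the structure of the image. Let $G = \overline{\rho(\pi_1^\et(X))}^{\Zar} \subset GL_m$ and let $H = \overline{\rho(\pi_1^\et(X_{\bar k}))}^{\Zar}$, so that $G/H$ is the image of the arithmetic quotient $G_k$. For a closed point $x \in X_{(0)}$, the group $\pi^l_x = \overline{\rho(D_x)}^{\Zar}$ sits between these; the condition $x \in \Exc_l(\rho)$ is that $\pi^l_x$ has smaller Lie algebra than $G$. Concretely, working at the level of Lie algebras $\mathfrak{g}, \mathfrak{h}, \mathfrak{p}_x$, openness fails precisely when $\mathfrak{p}_x \cap \mathfrak{h}$ is a proper Lie subalgebra of $\mathfrak{h}$. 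The GLP hypothesis, i.e. $[\mathfrak{h},\mathfrak{h}]=\mathfrak{h}$, is the decisive constraint: it rules out abelian quotients of $H$, which in turn prevents the decomposition groups from being confined to a fixed proper normal subgroup for any nontrivial positive-density set of $x$.

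The heart of the argument is a uniform open image theorem of Serre-type. For each proper closed subgroup $K \subsetneq G$ with $K \cap H$ of strictly smaller Lie algebra than $\mathfrak{h}$, the locus
\[
X_K = \{x \in X(\bar k) \mid \rho(D_x) \subset K \text{ up to conjugation}\}
\]
is, by Chebotarev-type density and the Frattini property implied by perfectness, controlled by an \'etale covering $Y_K \to X$ (a quotient of the universal pro-$l$ cover of $X$ by $K$). An exceptional point of degree $\le d$ over $k$ gives rise to a closed point of $Y_K$ of degree $\le d[G:K]$ bounded in terms of $\rho$. The GLP hypothesis ensures that there are only finitely many conjugacy classes of such $K$ of bounded index (this is where perfectness replaces the classical abelian case), so it suffices to bound $Y_K(\bar k)^{\le d}$ for each $K$.

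The main obstacle — and the step where one invokes the deep input — is this last bound: for each relevant $K$, the curve $Y_K$ has geometric genus tending to infinity as the covering grows, and Faltings' theorem (or its generalization by Cadoret-Tamagawa in the form of their Theorem 5.8 of \cite{ct1}, using a delicate combination of Mordell-Faltings for the cover together with a uniformity statement across the tower of $\ell$-adic covers) yields the desired finiteness of closed points of bounded degree. I expect this diophantine input, and in particular the uniformity across the infinite Galois tower built from $\rho$, to be the most technical point; everything upstream is group-theoretic bookkeeping set up so that this diophantine bound can be applied.
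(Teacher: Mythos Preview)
The paper does not prove this statement: it is simply quoted as \cite[Th.~1.1]{ct2} (Cadoret--Tamagawa), with no argument given. So there is nothing to compare against; you have attempted to sketch a proof of a deep theorem that the paper only cites.

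As for the sketch itself, the broad shape is in the spirit of Cadoret--Tamagawa's actual proof (work over a curve, exploit the GLP hypothesis to get a Frattini-type control on subgroups, pass to \'etale covers and invoke a diophantine finiteness input of Faltings type with genus growing in the tower), but several steps are either unnecessary or not quite right. The Bertini reduction to curves is beside the point: in the paper's setting $X$ is already a curve, and \cite{ct2} is stated for curves. More seriously, the assertion that ``the GLP hypothesis ensures that there are only finitely many conjugacy classes of such $K$ of bounded index'' is not correct as stated --- there are infinitely many relevant open subgroups in the $\ell$-adic tower, and the actual argument in \cite{ct1,ct2} does not reduce to a single Faltings application per $K$. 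Rather, one uses that GLP forces the Frattini subgroup of the geometric image to be open, which gives uniform control on the indices appearing, and then a genus-growth estimate across the whole pro-$\ell$ tower combined with a uniform Mordell-type bound. Your reference to \cite[Th.~5.8]{ct1} for the diophantine step is also misplaced: that theorem establishes that geometric monodromy representations are GLP, not the finiteness of exceptional points.

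In short: for the purposes of this paper, the correct ``proof'' is a one-line citation of \cite[Th.~1.1]{ct2}.
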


By \cite[Th. 5.8]{ct1}, the monodromy action of $\pi_1^\et(X)$ on $R^*f_*\Q_l$ is GLP if $\dim X=1$ and $f:Y\to X$ is a smooth proper morphism. This explains the proof of Theorem \ref{t12.4} a) in a more detailed way.

We shall need:

\begin{lemma}[see also \protect{\cite[Lemma 2.4]{cadoret2}}]\label{t12.3} Let 
\[0\to \underline{E}\to V\to \pi^* W\to 0\]
be an extension in $S_c(X,\Q_l)$, where $W\in S_c(k,\Q_l)$. Assume that $\underline{E}$ is GLP and semi-simple, and that  the fixed points of $\underline{E}$ under any open subgroup of the monodromy group are trivial.  Then $V$ is GLP.
\end{lemma}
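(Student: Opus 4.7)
The plan is to analyze the geometric monodromy of the extension one unipotent level at a time and show that its Lie algebra is perfect. Write $\rho_V:\pi_1^\et(X)\to GL(V)$ for the monodromy representation and $G_V:=\rho_V(\pi_1^\et(X_{\bar k}))$ for its geometric image, with analogous notation for $\underline{E}$; since $\pi^*W$ is pulled back from $\Spec k$, its restriction to $X_{\bar k}$ is constant, so $\pi_1^\et(X_{\bar k})$ acts trivially on $\pi^*W$. Because $G_V$ preserves $\underline{E}\subset V$ and acts trivially on the quotient, it sits in the parabolic $P\subset GL(V)$ stabilizing $\underline{E}$, mapping to $G_E$ on the $\underline{E}$-factor and trivially on the $\pi^*W$-factor. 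This yields an exact sequence of $l$-adic Lie groups
\[1\to U\to G_V\to G_E\to 1,\]
where $U=G_V\cap R_u(P)$ is contained in the abelian unipotent radical $R_u(P)\cong \Hom(\pi^*W,\underline{E})\cong \underline{E}\otimes W^*$.

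Next I would pass to Lie algebras, obtaining a short exact sequence $0\to\mathfrak{u}\to\mathfrak{g}_V\to\mathfrak{g}_E\to 0$ in which $\mathfrak{u}$ is a $\mathfrak{g}_E$-stable subspace of $\underline{E}\otimes W^*$, with $\mathfrak{g}_E$ acting through its action on $\underline{E}$ alone. Since $R_u(P)$ is abelian we have $[\mathfrak{u},\mathfrak{u}]=0$, so for any vector-space section $s:\mathfrak{g}_E\to\mathfrak{g}_V$,
\[[\mathfrak{g}_V,\mathfrak{u}]=[s(\mathfrak{g}_E),\mathfrak{u}]=\mathfrak{g}_E\cdot\mathfrak{u}\subseteq\mathfrak{u},\]
the natural $\mathfrak{g}_E$-submodule. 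Using that $\mathfrak{g}_E$ is perfect by the GLP hypothesis on $\underline{E}$, the composition $\mathfrak{g}_V^{\mathrm{ab}}\to\mathfrak{g}_E^{\mathrm{ab}}=0$ is zero, so $\mathfrak{g}_V^{\mathrm{ab}}$ is a quotient of $\mathfrak{u}/[\mathfrak{g}_V,\mathfrak{u}]=\mathfrak{u}/\mathfrak{g}_E\cdot\mathfrak{u}=\mathfrak{u}_{\mathfrak{g}_E}$. It therefore suffices to show that the coinvariants $\mathfrak{u}_{\mathfrak{g}_E}$ vanish.

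For this, I would invoke semisimplicity: since $\underline{E}$ is semisimple as a $G_E$-representation (hence as a $\mathfrak{g}_E$-representation, as we are in characteristic $0$), the module $\underline{E}\otimes W^*$ is a direct sum of copies of $\underline{E}$ and is semisimple; hence its $\mathfrak{g}_E$-submodule $\mathfrak{u}$ is semisimple, which makes $\mathfrak{u}_{\mathfrak{g}_E}=0$ equivalent to $\mathfrak{u}^{\mathfrak{g}_E}=0$. Thus it is enough to prove the stronger
\[(\underline{E}\otimes W^*)^{\mathfrak{g}_E}=\underline{E}^{\mathfrak{g}_E}\otimes W^*=0.\]
Since $G_E^0$ is open in $G_E$, it corresponds to an open subgroup of the monodromy group of $\underline{E}$, and in characteristic zero $\underline{E}^{\mathfrak{g}_E}=\underline{E}^{G_E^0}$; the standing hypothesis that $\underline{E}$ has no nonzero fixed vectors under any open subgroup of its monodromy group gives $\underline{E}^{G_E^0}=0$, finishing the proof.

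The main obstacle, to my mind, is not any one step but rather the identification $[\mathfrak{g}_V,\mathfrak{u}]=\mathfrak{g}_E\cdot\mathfrak{u}$: one must check that the ambient Lie bracket computed inside $\mathfrak{gl}(V)$ really is the natural action of $\mathfrak{g}_E$ on $\mathfrak{u}\subseteq\underline{E}\otimes W^*$, and that the parabolic/unipotent picture is compatible with the choice of vector-space section. Once this is in place, the perfection of $\mathfrak{g}_V$ reduces formally to the semisimplicity of $\underline{E}$ together with the vanishing of invariants hypothesized on $\underline{E}$.
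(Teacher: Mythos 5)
Your proof is correct and follows essentially the same route as the paper's: both extract the exact sequence $1\to N\to G_V\to G_{\underline E}\to 1$ with $N$ abelian inside $\Hom(\pi^*W,\underline E)$, abelianise the resulting Lie algebra sequence, and kill the contribution of $N$ by noting that its coinvariants equal its invariants (semisimplicity) and that these vanish by the fixed-point hypothesis, while the $\Lie(G_{\underline E})$-term dies by the GLP assumption. Your extra care over the identification $[\mathfrak g_V,\mathfrak u]=\mathfrak g_E\cdot\mathfrak u$ is a legitimate point the paper leaves implicit, but it is the standard adjoint action of a parabolic on its abelian unipotent radical and causes no difficulty.
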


\begin{proof} Fix $x\in X$. Let $G_{\underline{E}}$ (resp. $G_V$) be the monodromy group of $\underline{E}$ (resp. $V$) at $x$. We have an exact sequence
\[
1\to N\to G_V\by{p} G_{\underline{E}}\to 1 
\]
where $N$ embeds into $\Hom(\pi^*W, \underline{E})$. In particular, $N$ is abelian, $G_{\underline{E}}$ acts on $N$ and the properties of the representation $\underline{E}$ carry over to $N$. Abelianising the short exact sequence of Lie algebras
\[0\to N=\Lie(N)\to \Lie(G_V)\to \Lie(G_{\underline{E}})\to 0 \]
we therefore get another short exact sequence
\[0=N^{\Lie(G_{\underline{E}})}\iso N_{\Lie(G_{\underline{E}})}\to \Lie(G_V)^\abb\to \Lie(G_{\underline{E}})^\abb=0.\]

Hence $\Lie(G_V)^\abb=0$, as desired.
\end{proof}

\begin{lemma}\label{l12.3} Let $\rho: \pi_1^\et(X)\to  GL_m(\Q_l)$ be any $l$-adic representation, and let $K$ be an extension of $k$. Then $\Exc_l(\rho)=\Exc_l(\rho_K)$. Here $\rho_K: \pi_1^\et(X_K)\to  GL_m(\Q_l)$ is the representation deduced from $\rho$ via the projection $\pi_1^\et(X_K)\to \pi_1^\et(X)$.
\end{lemma}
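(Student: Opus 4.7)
The plan is to combine two observations, using Proposition~\ref{landre}(b) to reformulate the condition: $x\in \Exc_l(V)\iff \Pi^l_x(V)^0\not\subseteq \pi^l_x(V)$.

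First, I would establish invariance of the geometric monodromy. By the invariance theorem for $\pi_1^\et$ under extension of an algebraically closed base field (SGA~1, Exp.~X, Cor.~1.8), the natural map $\pi_1^\et(X_{\bar K},\bar x)\to \pi_1^\et(X_{\bar k},\bar x)$ is an isomorphism; consequently $\rho_K$ and $\rho$ have the same image on the respective geometric fundamental groups, so $\Pi^l_{x_K}(\rho_K)=\Pi^l_x(\rho)$ as Zariski-closed subgroups of $GL_m(\Q_l)$, with matching identity components.

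Second, I would compare the arithmetic monodromies. Restriction of automorphisms from $\bar K$ to $\bar k$ induces an inclusion $G_{K(x_K)}\hookrightarrow G_{k(x)}$, hence $\pi^l_{x_K}(\rho_K)\subseteq \pi^l_x(\rho)$ inside $GL_m(\Q_l)$. This inclusion immediately yields the trivial direction $\Exc_l(\rho)\subseteq \Exc_l(\rho_K)$: if $\pi^l_x(\rho)$ fails to contain $\Pi^l_x(\rho)^0$, so does its subgroup $\pi^l_{x_K}(\rho_K)$. For the reverse inclusion, I must show that the containment $\Pi^l_x^0\subseteq \pi^l_x(\rho)$ persists after passage to the smaller $\pi^l_{x_K}(\rho_K)$; here I would combine the identity of identity components $T^l_x(V)^0=\Pi^l_x(V)^0\cdot \pi^l_x(V)^0$ from \eqref{eq10.5l} with the invariance of $\Pi^l_x$ from the first step, reducing the question to showing that the projections of $\pi^l_x(\rho)$ and $\pi^l_{x_K}(\rho_K)$ to the quotient $T^l_x(\rho)/\Pi^l_x(\rho)^0$ have the same Zariski closure.

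The hardest step will be this last equality of projections. When $\tilde k:=\bar k\cap K$ is infinite over $k$, the image of $G_{K(x_K)}\to G_{k(x)}$ equals $G_L$ with $L=\tilde k\cdot k(x)$, and this subgroup can have infinite index in $G_{k(x)}$, so a naive finite-index argument is unavailable. The resolution will require showing that the ``missing'' Galois elements in $G_{k(x)}/G_L$ act through the kernel of the projection to the relevant algebraic-group quotient of $T^l_x(\rho)$, exploiting that this quotient only sees the image of $\Gal(\tilde k/k)$ inside the arithmetic Tannakian group~$\pi^l$, together with the fact that $\pi^l_x(\rho)$ and $\pi^l_{x_K}(\rho_K)$ coincide once projected through this quotient; this is where the structural role of the exact sequence $1\to \Pi^l_x\to T^l_x\to \pi^l\to 1$ from Proposition~\ref{landre} becomes decisive.
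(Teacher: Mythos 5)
Your proposal addresses the wrong set of points and, as a result, misses the entire content of the paper's argument. The set $\Exc_l(\rho_K)$ lives in $X(\bar K)$, which is strictly larger than $X(\bar k)$ when $K/k$ is transcendental, and the substance of the lemma --- the only part that is used, in Theorem \ref{t11.3}, to conclude $\Exc^\sM(\psi_K)\subseteq U(\bar k)$ --- is that a point $x\in X(\bar K)\setminus X(\bar k)$ is \emph{never} exceptional for $\rho_K$. Your proposal only ever compares the conditions at a point of $X(\bar k)$ and its base change, and never considers these new points. The paper's proof is a short spreading-out argument: reduce to $K/k$ finitely generated, write $K=k(U)$, and spread $x$ out to a $k$-morphism $\phi:V\to X$ with $V$ \'etale over $U$ and $k(V)=K(x)$. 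Because $\dim X=1$ and $x$ is not algebraic over $k$, $\phi$ is dominant, so $\IM\bigl(\phi_*:\pi_1^\et(V)\to\pi_1^\et(X)\bigr)$ is open (of finite index); since $G_{K(x)}$ surjects onto $\pi_1^\et(V)$, the Zariski envelope $\pi^l_x(\rho_K)$ contains $T^l_x(\rho)^0\supseteq \Pi^l_x(\rho)^0$, whence $x\notin\Exc_l(\rho_K)$. None of this appears in your proposal.

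The part you do treat is also not brought to a conclusion, and your ``hardest step'' cannot be carried out in the generality you set up. For $x\in X(\bar k)$ and $\bar k\subseteq K$, the map $G_{K(x)}\to\pi_1^\et(X)$ factors through $G_{K(x)}\to G_{k(x)}$, which is \emph{trivial} (every automorphism of $\bar K/K$ fixes $\bar k\supseteq k(x)$); hence $\pi^l_{x_K}(\rho_K)=1$, and every $x$ whose geometric monodromy group is infinite lies in $\Exc_l(\rho_K)$, whether or not it lies in $\Exc_l(\rho)$. So the containment you hope to preserve genuinely fails for old points when $K\supseteq\bar k$ (e.g.\ $K=\C$), and no manipulation of the exact sequence $\Pi^l\to T^l\to\pi^l$ will rescue it. The lemma should be read, and is only ever invoked, through the one inclusion the paper actually proves: $\Exc_l(\rho_K)\cap\bigl(X(\bar K)\setminus X(\bar k)\bigr)=\emptyset$. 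You should restructure your argument around that statement and the dominance/open-image mechanism above.
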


\begin{proof} We may assume $K/k$ finitely generated. Write $K=k(U)$, where $U$ is smooth, and let $\bar K$ be an algebraic closure of $K$. Then a point $x\in X(\bar K)$ spreads to a $k$-morphism $\phi:V\to X$, where $V$ is étale over $U$. If $x\notin X(\bar k)$, $\phi$ is dominant (since $\dim X=1$), hence $\IM(\phi_*:\pi_1^\et(V)\to \pi_1^\et(X))$ is open and $x\notin \Exc_l(\rho)$. 
\end{proof}

\subsection{Finiteness}\label{s12.6} We go back to the situation of Subsection \ref{s12.4}. We assume $S_{\bar k}$ to be of motivated abelian type.

\begin{prop}\label{p11.2} Suppose $k$ is finitely generated over $\Q$. Write $M_\mot(\psi_U)$ for the image of $M(\psi_U) \in \sM(U)$ in $\Mot(U)$ by the functor of \eqref{eq12.1X} (see \eqref{eq11.1} for $M(\psi_U)$). Then the set $\Exc^N_\mot(M_\mot(\psi_U))^{\le d}$ \eqref{eq12.6a}
is finite for any $d\ge 1$.
\end{prop}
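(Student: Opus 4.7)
My plan is to reduce Proposition \ref{p11.2} directly to Theorem \ref{t12.4} b). Set $M=h_1(\psi_U)\in \Mot(U)$ (the image under \eqref{eq12.1X} of the relative Chow motive of the same name, cf.\ \S\ref{s8.3}) and $N=t_2(S)\in \Mot(k)$. Since $t_2(S\times U/U)=\pi^*t_2(S)$, one has $M_\mot(\psi_U)=M\otimes \pi^*N$, which is exactly the shape of the object whose exceptional locus Theorem \ref{t12.4} b) controls.

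The next step is to verify the two motivic hypotheses. For $M$: by Proposition \ref{p1} and the relative version discussed in \S\ref{s8.3}, the motive $h_1(\psi_U)$ is a direct summand of $h_1$ of a smooth projective relative curve over $U$; fibrewise it is therefore a direct summand of $h_1$ of a smooth projective $\bar k$-curve, hence of abelian type. Thus $h_1(\psi_U)\in \Mot_\abb(U)$ in the sense of Definition \ref{d12.1}. For $N$: since $S_{\bar k}$ is of motivated abelian type by hypothesis, $h_\mot(S)_{\bar k}\in \Mot_\abb(\bar k)$, and the direct summand $t_2(S)_{\bar k}$ lies in $\Mot_\abb(\bar k)$ as well.

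Because the proof of Theorem \ref{t12.4} b) passes through Theorem \ref{t12.1}, I also need that $\H(M)=H^1_B(\psi_U)$ is \emph{absolutely simple and connected} in the sense of Definition \ref{d10.1}. Both properties follow at once from the Lefschetz-pencil analysis of \S\ref{s.lef}: by Proposition \ref{h10.1}(a) and (c), the variation $\H(h_1(\psi_U))$ is canonically isomorphic to the variation of vanishing cycles $\underline{E}$; by Proposition \ref{h10.1}(b), its algebraic geometric monodromy is the full symplectic group $\Sp(H^1_B(\psi_u))$ via its standard representation. Since $\Sp$ is connected and its standard representation is absolutely irreducible, $\H(h_1(\psi_U))$ is connected and absolutely simple.

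With the hypotheses in place, Theorem \ref{t12.4} b) delivers the finiteness of $\Exc^N_\mot(M_\mot(\psi_U))^{\le d}$ for every $d\ge 1$. I expect no genuine obstacle here: the arithmetic content (the Cadoret--Tamagawa finiteness for GLP $l$-adic representations) has already been absorbed into Theorem \ref{t12.4}, and Theorem \ref{t12.1} handles the reduction from a tensor product with $\pi^*N$ to simple summands. The only nontrivial input specific to the present setting is the identification of $\H(h_1(\psi_U))$ with $\underline{E}$ together with the Kazhdan--Margulis-type computation of its monodromy, both of which were already established in \S\ref{s.lef}.
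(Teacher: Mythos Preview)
Your proof is correct and follows the same approach as the paper, which simply records that this is the special case $M=h^1_\mot(\psi_U)$, $N=t^2_\mot(S)$ of Theorem~\ref{t12.4}~b). You usefully spell out the verification of the hypotheses---in particular the absolute simplicity and connectedness of $\H(h_1(\psi_U))$ via Proposition~\ref{h10.1}, which are needed in the underlying Theorem~\ref{t12.1}---all of which the paper leaves implicit.
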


\begin{proof} This is the special case $M=h^1_\mot(\psi_U)$, $N=t^2_\mot(S\times U/U)$ of Theorem \ref{t12.4} b).
\end{proof}

\begin{prop}\label{p12.2} a) The local system $R_l(\widehat{\psi_U})\in \tilde S_c(U,\Q_l)$ is GLP.\\
b) For any $d\ge 1$, the set
\[\{u\in U(\bar k)\mid [k(u):k]\le d\text{ and  }R_l(\widehat{\psi_u})=0\} \]
is finite.
\end{prop}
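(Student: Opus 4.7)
\emph{Proof plan.} I would first interpret the ``local system'' $R_l(\widehat{\psi_U})$ as the short exact sequence
\[0\to H^1_l(\psi_U)\to \mathcal{E}\to \pi_U^*H^2_{l,\tr}(S)\to 0\]
in $\tilde S_c(U,\Q_l)$ obtained by applying $\tilde R_l$ to $\widehat{\psi_U}\colon h_1(\psi_U)\to t_2(S\times U/U)$, exactly as in the diagram of Proposition \ref{p9.4}.

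For (a), the plan is to apply Lemma \ref{t12.3} with $\underline{E}=H^1_l(\psi_U)$, $W=H^2_{l,\tr}(S)$ and the above extension. Its hypotheses will follow from Proposition \ref{h10.1} b): the algebraic envelope of the geometric monodromy of $H^1_l(\psi_U)$ is the symplectic group $\Sp(H^1_l(\psi_u))$. Since $\mathfrak{sp}$ is simple (hence perfect) and its standard representation is absolutely irreducible, $H^1_l(\psi_U)$ is semi-simple, GLP, and has trivial invariants under every open (and therefore Zariski dense) subgroup of its monodromy. Lemma \ref{t12.3} then yields that $\mathcal{E}$ is GLP, as required.

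For (b), part (a) together with Theorem \ref{t12.5} gives that $\Exc_l(\mathcal{E})^{\le d}$ is finite for every $d\ge 1$, so it suffices to prove the inclusion
\[\{u\in U(\bar k)^{\le d}\mid R_l(\widehat{\psi_u})=0\}\subseteq \Exc_l(\mathcal{E})^{\le d}.\]
I would argue by contradiction: assume $u$ lies in the left-hand set but $\Pi^l_u(\mathcal{E})^0\subseteq \pi^l_u(\mathcal{E})$. The vanishing of $R_l(\widehat{\psi_u})$ means that $\mathcal{E}_u$ splits as a $\pi^l_u(\mathcal{E})$-representation, a fortiori as a $\Pi^l_u(\mathcal{E})^0$-representation. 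But applying Theorem \ref{c10.1} to the Lefschetz pencil base-changed to $\bar k$, the generic geometric extension class in $H^1(\pi_1^{\text{\'et}}(U_{\bar k}),\Hom(W,H^1_l(\psi_{\bar\eta})))$ is non-zero; since $\mathcal{E}$ is lisse on $U$, this class agrees (up to conjugation) with its counterpart at $u$, so $\Pi^l_u(\mathcal{E})^0$ cannot act block-diagonally on $\mathcal{E}_u$, contradicting the splitting.

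The delicate technical point will be making the last step rigorous: one has to propagate the geometric non-splitting from $\eta$ to every closed point $u$. Concretely, one identifies the kernel of $\Pi^l_u(\mathcal{E})\surj \Pi^l_u(H^1_l(\psi_U))$ (which embeds into $\Hom(W,H^1_l(\psi_u))$) with a constant non-zero subgroup via the lisse structure of $\mathcal{E}$, so that non-splitting at $\eta$ implies non-splitting as a $\Pi^l_u(\mathcal{E})^0$-representation at every $u$. With this compatibility in hand, Cadoret--Tamagawa's Theorem \ref{t12.5} controls the variation of the arithmetic monodromy and closes the proof.
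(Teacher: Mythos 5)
Your proof is correct and follows essentially the same route as the paper: part a) is exactly the paper's argument (apply Lemma \ref{t12.3} to the extension defined by $R_l(\widehat{\psi_U})$, using Proposition \ref{h10.1} b) to verify the hypotheses on the vanishing-cycles local system), and part b) is the paper's deduction from Theorem \ref{t12.5} and Theorem \ref{c10.1}, with you supplying the details of the inclusion into $\Exc_l(\sE(\psi))$ that the paper leaves implicit.
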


\begin{proof} The proof of a) is analogous to that of Corollary \ref{c12.1}; we apply  Lemma \ref{t12.3} to the extension $\sE(\psi)$ defined by $R_l(\widehat{\psi_U})$: 
\[0\to H^1_l(\psi_U)\to \sE(\psi)\to \pi^*H^2_{l,\tr}(S)\to 0.\]

By Proposition \ref{h10.1} a), the left term is isomorphic to the local system $\underline{E}$ of vanishing cycles, and the hypothesis of Lemma \ref{t12.3} is verified by part  b) of this proposition. (Actually, $\underline{E}$ remains irreducible when restricted to any open subgroup of the monodromy.) Therefore $\sE(\psi)$ is GLP. b) now follows from a), Theorem \ref{t12.5} and Theorem  \ref{c10.1}.
\end{proof}

\begin{thm}\label{t11.3} Suppose $k$ finitely generated over $\Q$. Then, for any $d\ge 1$, the set
\begin{multline*}
\Exc(\psi)=\{u\in U(\bar k)\mid [k(u):k]\le d\text{ and  }\widehat{\psi_u}=0 \text{ in }\\
 \sM_\alg(k(u))(h_1(\psi_u),t_2(S_{k(u)})\} 
\end{multline*}
is finite. So is the corresponding set for the condition
\[(\psi_u)_\#=0 \text{ in } \Griff(C_u\times_k S)\otimes \Q.\]
Let $K$ be a field containing $\bar k$ (e.g., $K=\C$). With notation as in Theorem \ref{t12.6}, the set $\Exc^\sM(\psi_K)$ is contained in $U(\bar k)$; in particular, it is countable.
\end{thm}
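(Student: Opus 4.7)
The plan is to combine the two finiteness results already prepared in the paper---Proposition \ref{p11.2} (motivated coniveau) and Proposition \ref{p12.2} (non-splitting of the $l$-adic extension)---via the implication furnished by Proposition \ref{p9.4}, to handle $u\in U(\bar k)^{\le d}$; and then to reduce the case of a general $K\supseteq \bar k$ to the generic point of $U$ by a residue field argument.

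For the first statement, fix $d\ge 1$. Let
\[E_1=\Exc^N_\mot(M_\mot(\psi_U))^{\le d},\qquad E_2=\{u\in U(\bar k)^{\le d}\mid R_l(\widehat{\psi_u})=0\}.\]
Both sets are finite: $E_1$ by Proposition \ref{p11.2} (whose hypotheses hold because $H^1_l(\psi_U)$ is absolutely irreducible and connected on $U$ by Proposition \ref{h10.1}, and because $t_2(S)\in \Mot_\abb(\bar k)$ by the assumption on $S$ combined with Examples \ref{e12.1}~e)), and $E_2$ by Proposition \ref{p12.2}~b). For $u\in U(\bar k)^{\le d}\setminus(E_1\cup E_2)$, the condition $u\notin E_1$ gives $N^1 H(M(\psi_u))=0$ in Betti cohomology; invoking Lemma \ref{l10.1}, the comparison isomorphism, and smooth base change of $l$-adic cohomology, this translates into $N^1H^3_l(\psi_u)=0$. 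Combined with $R_l(\widehat{\psi_u})\ne 0$ (since $u\notin E_2$), Proposition \ref{p9.4} yields first $\widehat{\psi_u}\ne 0$ in $\sM_\alg(k(u))(h_1(\psi_u),t_2(S_{k(u)}))$ and then $(\psi_u)_\#\ne 0$ in $\Griff(C_u\times_k S)\otimes \Q$. Thus both $\Exc(\psi)^{\le d}$ and its $\Griff$-variant are contained in $E_1\cup E_2$, hence finite.

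For the last assertion, the point is that $U$ is a curve over $k$, so its closed points have residue field finite over $k$, hence contained in $\bar k$. Consequently, any $u\in U(K)\setminus U(\bar k)$ must factor through the generic point $\eta$ of $U$, which corresponds to a $k$-embedding $\iota\colon k(\eta)\hookrightarrow K$ along which $(\psi_u)_\#$ is the pullback of $(\psi_\eta)_\#$. By Corollary \ref{c11.1} we have $(\psi_\eta)_\#\ne 0$. To transport this nonvanishing, I pass to an algebraic closure $\bar K\supseteq K$: the map $\Griff(-)_{k(\eta)}\otimes\Q \to \Griff(-)_{\overline{k(\eta)}}\otimes\Q$ is injective (standard trace argument for finite extensions, followed by a colimit), and the further map $\Griff(-)_{\overline{k(\eta)}}\otimes\Q\iso \Griff(-)_{\bar K}\otimes\Q$ is an isomorphism by invariance of $\Griff$ under algebraically closed extensions (cf. Remark \ref{r11.1}). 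Since vanishing in $\Griff(C_u\times_K S_K)\otimes \Q$ implies vanishing after base change to $\bar K$, $(\psi_u)_\#\ne 0$ in $\Griff(C_u\times_K S_K)\otimes \Q$. Hence $\Exc^\sM(\psi_K)\subseteq U(\bar k)$, and countability follows by taking the union of the finite sets $\Exc(\psi)^{\le d}$ over $d\ge 1$.

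The main obstacle is delivering the input of Proposition \ref{p11.2}---namely, controlling the loci where the coniveau of the fibre jumps. This is where the hypothesis that $S_{\bar k}$ is of motivated abelian type enters essentially: via Theorem \ref{t12.2}, it identifies the motivic Galois group of $t_2(S)$ with its Mumford-Tate group, so that the Hodge-theoretic vanishing of coniveau proved in Corollary \ref{c9.1} (whose heart is Lemma \ref{l9.2}, specific to Hodge theory) can be promoted to vanishing of \emph{motivated} coniveau via Theorem \ref{t12.1}, and finally combined with the Cadoret-Tamagawa finiteness result (Theorem \ref{t12.5}) in Theorem \ref{t12.4}~b). Without the abelian-type hypothesis, one can only conclude over a Hilbertian-thin set, not on the complement of a finite set.
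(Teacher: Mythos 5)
Your proof of the two finiteness statements coincides with the paper's: it is exactly the combination of Propositions \ref{p11.2} and \ref{p12.2} through Proposition \ref{p9.4}, and the coniveau bookkeeping you supply (motivated coniveau contains algebraic coniveau, compatibility with the comparison isomorphism and with base extension via Lemma \ref{l10.1} and Proposition \ref{p9.5}) is the correct way to fill in what the paper leaves implicit. For the last assertion you take a genuinely different route. The paper stays inside the $l$-adic framework and quotes Lemma \ref{l12.3}: a point $u\in U(K)\setminus U(\bar k)$ spreads out to a morphism dominant onto $U$, so its decomposition group is open in the monodromy group, whence $R_l(\widehat{\psi_u})\ne 0$ by Theorem \ref{c10.1} and one concludes once more by Proposition \ref{p9.4}. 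You instead observe that such a $u$ dominates the generic point, identify $(\psi_u)_\#$ with the base change of $(\psi_\eta)_\#$, and transport the nonvanishing of Corollary \ref{c11.1} using injectivity of $\Griff(-)\otimes\Q$ under arbitrary field extensions (transfer for finite extensions, then rigidity of algebraic and numerical equivalence under extensions of algebraically closed fields --- the same invariance the paper invokes in Remark \ref{p9.1}). Both arguments are valid; yours is more elementary at this point, since it needs no second pass through the $l$-adic Abel--Jacobi map and no verification of $N^1H^3_l(\psi_u)=0$ at transcendental points, whereas the paper's is uniform in treating algebraic and transcendental points by the same mechanism. One small remark: the containment $\Exc^\sM(\psi_K)\subseteq U(\bar k)$ already yields countability because $\bar k$ is countable for $k$ finitely generated over $\Q$; your alternative union-over-$d$ argument also works, but only thanks to the injectivity of $\Griff\otimes\Q$ under $k(u)\hookrightarrow K$ that you established, and it would be worth making that dependence explicit.
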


\begin{proof} This follows from Propositions \ref{p11.2}, \ref{p12.2} and  \ref{p9.4} and (for the last statement) from Lemma \ref{l12.3}.
\end{proof}

\subsection{Example: K3 surfaces}

\begin{cor}\label{c12.2} In Theorem \ref{t11.3}, assume that $S$ is a K3 surface and let $T_u=C_u\times S$. Then, for $u\notin \Exc(\psi)$, Murre's algebraic representative $\ab^2(T_u)$ is isogenous to $J(C_u)\otimes \NS_S$.
\end{cor}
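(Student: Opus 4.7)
The plan is to combine three inputs: the K3 hypothesis collapses the non-exceptional part of $J^2(T_u)$, the condition $u\notin \Exc(\psi)$ kills the exceptional summand $B_{\psi_u}$, and the motivated abelian type of $T_u$ upgrades the surjection $\ab^2(T_u)\twoheadrightarrow J^2(T_u)$ to an isogeny.

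For the first step: since $S$ is a K3 surface, $h^{0,1}(S)=0$, so $\Alb_S=0$ and $h_1(S)=0$ in $\sM$. As noted in the Remark after Definition~\ref{d8.1}, the complementary summand $B'$ to $B_{\psi_u}$ inside $B$ is a direct summand of $h_1(S)\otimes t_2(S)$, and therefore vanishes; hence $B=B_{\psi_u}$. The isogeny of Corollary~\ref{c8.1}~a) then reduces to
\[ J^2(T_u) \approx B_{\psi_u} \times \bigl(J(C_u)\otimes \NS_S\bigr). \]
For the second step, I would observe that the proof of Theorem~\ref{t11.3} actually controls not only $\Exc(\psi)$ but also the finite set $\Exc^N_\mot(M_\mot(\psi_U))^{\le d}$ furnished by Proposition~\ref{p11.2}. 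Enlarging $\Exc(\psi)$ by this finite set (harmless for an isogeny-theoretic statement), we obtain $N^1 H^3_l(\psi_u)=0$ for $u\notin \Exc(\psi)$, and Lemma~\ref{l8.1} then gives $B_{\psi_u}=0$. Hence $J^2(T_u)\approx J(C_u)\otimes \NS_S$.

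It remains to identify $J^2(T_u)$ with $\ab^2(T_u)$. A K3 surface is of motivated abelian type (Example~\ref{e12.1}~b)), curves and abelian varieties trivially are, so $T_u\times J^2(T_u)$ is of motivated abelian type. By André's Theorem~\ref{t12.2}~a), every Hodge cycle on $T_u\times J^2(T_u)$ is motivated; combined with the known cases of the Lefschetz standard conjecture for products of curves, abelian varieties and K3 surfaces, motivated classes in this setting are algebraic modulo homological equivalence. Hence the Hodge conjecture holds for $T_u\times J^2(T_u)$ in codimension $2$, and Proposition~\ref{p6.2} gives $\ab^2(T_u)\approx J^2(T_u)$. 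Chaining the isogenies yields the claim.

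The main obstacle is this last step: strictly speaking motivated cycles are not \emph{a priori} algebraic, and invoking the Lefschetz conjecture is a non-trivial input. A more direct route, which I expect actually works and avoids standard conjecture~$B$, is to observe that the vanishing $h_1(S)=h_3(S)=0$ for a K3 surface already gives $H^3(T_u)=H^1(C_u)\otimes H^2(S)$, and the projector onto the coniveau-$\ge 1$ factor $H^1(C_u)\otimes(\NS_S\otimes\Q)$ is implemented by the manifestly algebraic correspondence $[\Delta_{C_u}]\times c$, where $c\in CH^1(S)\otimes\NS_S$ is the tautological cycle giving the orthogonal projection onto $\NS_S\otimes\Q$ in $H^2(S)$; this realises $h_1(J(C_u)\otimes\NS_S)(1)$ as an algebraic direct summand of $h_3(T_u)$ in $\sM_\hom$, providing the reverse inclusion $J(C_u)\otimes\NS_S\hookrightarrow J^2(T_u)$ required in the argument of Proposition~\ref{p6.2}.
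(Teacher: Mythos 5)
Your steps 1 and 2 are correct, and they recover by a more motivic route what the paper already records: for $S$ a K3 surface one has $h_1(S)=h_3(S)=0$, hence $h_1(\psi_u)=h_1(C_u)$ and $B=B_{\psi_u}$, and the coniveau vanishing $N^1H^3_l(\psi_u)=0$ kills $B_{\psi_u}$ (this is Theorem \ref{t10.1} d)), so $J^2(T_u)\approx J(C_u)\otimes\NS_S$. Your point that $\Exc(\psi)$ must be enlarged by the finite set of Proposition \ref{p11.2} is a fair reading of the statement. The paper's own proof bypasses $J^2$ entirely: it computes $H^3_l(T_u)=H^1_l(C_u)\otimes H^2_l(S)$, hence $N^1H^3_l(T_u)=H^1_l(C_u)\otimes\NS_S$, and concludes via the identification of $\ab^2$ with the intermediate Jacobian of the coniveau-$1$ part (\cite[Th. 5.1]{vialetal}).

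The gap is in your step 3, and it is genuine. The first route fails for the reason you flag: the Hodge conjecture in codimension $2$ for $T_u\times J^2(T_u)$ — a product involving a K3 surface and an abelian variety — is open (the Kuga--Satake classes are only known to be motivated), and Theorem \ref{t12.2} a) does not make motivated classes algebraic; standard conjecture $B$ would be needed for all the auxiliary varieties entering the definition of motivated cycles, not just for $T_u\times J^2$. Your second route correctly exhibits the algebraic projector of $h_3(T_u)$ onto $h_1(C_u)\otimes \NS_S(1)$, but this only re-proves the inclusion $J(C_u)\otimes\NS_S\inj J^2(T_u)$, which you already have from Corollary \ref{c8.1} a). What is missing is the \emph{upper} bound on $\ab^2(T_u)$: one must show that $H^1(\ab^2(T_u))(1)$ lies inside $H^1(C_u)\otimes\NS_S(-1)$, i.e.\ that $H^1(C_u)\otimes H^2_{B,\tr}(S)$ contains no abelian-type sub-Hodge structure. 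Without this, the chain $\ab^2\approx J\Surj J^2\approx J(C_u)\otimes\NS_S$ only yields a surjection, not an isogeny. The required input is precisely the Hodge-coniveau vanishing $N^1_H\bigl(H^1_B(C_u)\otimes H^2_{B,\tr}(S)\bigr)=0$ for $u$ outside the exceptional set — which is what the proofs of Theorems \ref{t10.1} and \ref{t12.1} actually control — combined with $H^1(\ab^2)(1)\subseteq N^1H^3(T_u)$. Once that is inserted, either your argument or the paper's one-line computation of $N^1H^3_l(T_u)$ finishes the proof.
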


\begin{proof} Since $h_1(S)=h_3(S)=0$, we have $h_1(C_u)=h_1(\psi_u)$ and
\[H^3_l(T_u)=H^1_l(\psi_u)\otimes H^2_l(S)\]
hence
\[N^1H^3_l(T_u)=H^1_l(\psi_u)\otimes \NS_S=H^1_l(C_u)\otimes \NS_S\]
and the claim. 
\end{proof}

\newpage

\appendix

\section{A direct proof of Theorem \ref{T1}}\label{A}

Let $X\in \Sm^\proj(k)$. Recall from \cite[Def. 1.5]{ram} the Albanese scheme $\sA_X$ of $X$: it is a $k$-group scheme locally of finite type, sitting in an extension
\begin{equation}\label{eqA.1}
0\to \Alb_X\to \sA_X\to \Z\pi_0(X)\to 0
\end{equation}
where $\Alb_X$ is the Albanese variety of $X$ and $\pi_0(X)$ is the scheme of constants of $X$. In \cite[(8.1)]{birat-pure}, we aggregated the degree map and the Albanese map from the Chow group of $0$-cycles of $X$ into a single homomorphism
\begin{equation}\label{eqA.3}
\begin{CD}
CH_0(X)@>a_X^k>> \sA_X(k).
\end{CD}
\end{equation}

We write
\[T(X)=\Ker a_X^k\]
(the Albanese kernel).

Let $Y$ be another smooth projective $k$-variety. As in  \cite[(8.1.3)]{birat-pure} the homomorphism \eqref{eqA.3} and the universal property of $\sA_X$ yields a map
\begin{equation}\label{eq5}
CH_0(X_{k(Y)})\by{a_{Y,X}} \sA_X(k(Y))= \Hom(\sA_Y,\sA_X).
\end{equation}

By \cite[Prop. 8.2.1]{birat-pure}, \eqref{eq5} makes $X\mapsto \sA_X$ a functor from the category $\Chow^\o$ of birational Chow motives (with integral coefficients) to the category $\AbS$ of abelian schemes; a fortiori it is a functor on $\Chow^\eff$. In other words, the composite map
\begin{equation}\label{eq5a}
CH^{\dim X}(Y\times X) \to CH_0(X_{k(Y)})\by{a_{Y,X}} \Hom(\sA_Y,\sA_X)= \sA_X(Y)
\end{equation}
\cite[(8.1.2)]{birat-pure} is compatible with the composition of correspondences. 

The morphism \eqref{eqA.3} is almost split. Namely, in \cite[Prop. 8.2.1 and Th. 8.2.4]{birat-pure} we prove that the functor $a_\Q:\Chow^\o(k,\Q)\to \AbS(k,\Q)$ is full, essentially surjective and has a fully faithful right adjoint $\rho$, whose essential image is the thick subcategory $\Chow^\o_{\le 1}(k,\Q)$ of \break $\Chow^\o(k,\Q)$ generated by birational motives of curves. This yields:

\begin{prop}\label{pA.1} Let $X\in \Sm^\proj(k)$. Then there exist an integer $n>0$ and, for any connected $Y\in \Sm^\proj(k)$, a homomorphism $\sigma_Y:\sA_X(k(Y))\to CH_0(X_{k(Y)})$ such that
\begin{thlist}
\item $\sigma_Y$ is natural in $Y$ for the action of correspondences (in $\Chow^\o$).
\item $a_X^{k(Y)}\sigma_Y = n$ for any $Y$.
\end{thlist}
\end{prop}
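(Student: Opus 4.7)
The plan is to construct $\sigma_Y$ from a single correspondence between $X$ and an auxiliary curve $C$, providing a concrete integral model of a rational right inverse to the Albanese functor $a$. The conceptual source is the structural result [birat-pure, Th.~8.2.4] recalled just above: since $a_\Q$ is full and its right adjoint $\rho$ is fully faithful, the counit $a_\Q\rho(\sA_X)\iso\sA_X$ combined with fullness of $a_\Q$ lifts $\mathrm{id}_{\sA_X}$ to a rational correspondence $\tilde v:\rho(\sA_X)\to h^\o(X)$ in $\Chow^\o(k,\Q)$. Via the adjunction isomorphism $\Hom_{\Chow^\o(k,\Q)}(h^\o(Y),\rho(\sA_X))=\sA_X(k(Y))\otimes\Q$, composition with $\tilde v$ yields a rational splitting of $a_X^{k(Y)}$, natural in $Y$ for correspondences. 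It remains to clear denominators uniformly in $Y$; since $\tilde v$ depends only on $X$, any denominator we introduce to make it integral gives a single integer $n$ valid for all $Y$.

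To make this concrete and to avoid the fact that $\rho(\sA_X)$ need not come from a variety, I would realise it as (a summand of) the birational motive of a smooth projective curve $C$ equipped with a morphism $\pi:C\to X$ such that $\pi_*:J(C)\to\Alb_X$ is surjective --- for instance any smooth curve obtained as an iterated hyperplane section of $X$, or a smooth curve through a general point of $X$. By Poincar\'e reducibility of abelian varieties, there then exist a homomorphism $\iota:\Alb_X\to J(C)$ and an integer $n>0$ (depending only on $X$ and $C$) with $\pi_*\iota=n\cdot\mathrm{id}_{\Alb_X}$. Upgrading $\iota$ to a homomorphism of Albanese \emph{schemes} $\sA_X\to\sA_C$ compatible with the extension \eqref{eqA.1}, and using the Weil-type identification $\sA_C(L)\iso CH_0(C_L)$ for a smooth projective curve $C$ over any field $L$, I would define
\[\sigma_Y:\sA_X(k(Y))\xrightarrow{\iota_*}\sA_C(k(Y))\iso CH_0(C_{k(Y)})\xrightarrow{\pi_*}CH_0(X_{k(Y)}).\]
Functoriality of the Albanese then gives $a_X^{k(Y)}\sigma_Y=\pi_*\iota=n$, while naturality of $\sigma_Y$ in $Y$ under the action of correspondences holds because each step in the composition --- $\iota_*$, the Weil identification and $\pi_*$ --- is manifestly induced by fixed data, independent of $Y$.

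The hard part will be to promote $\iota$ from a homomorphism of abelian varieties to a homomorphism of Albanese schemes $\sA_X\to\sA_C$ in a way that is compatible with the component quotient $\sA_X\to \Z\pi_0(X)$ in \eqref{eqA.1}; concretely, one must adjust $\iota$ (possibly at the cost of enlarging $n$) so that it respects degrees and the choice of base components of $X$ and $C$. A related technical point is that the identification $\sA_C(L)\iso CH_0(C_L)$ needs $C$ to carry an $L$-rational zero-cycle of degree $1$; when this fails one must absorb the index of $C$ into $n$. Since both the index of $C$ and the integer from Poincar\'e reducibility depend only on $X$ and the fixed auxiliary $C$, the resulting $n$ is uniform in $Y$, as required.
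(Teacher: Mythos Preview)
Your proposal is correct and follows essentially the same approach as the paper: both lift $\mathrm{id}_{\sA_X}$ to a rational morphism in $\Chow^\o$ using the fullness of $a_\Q$, realise the source via a curve, and then clear two denominators (one to make the correspondence integral, one to invert $CH_0(C_K)\hookrightarrow\sA_C(K)$ using the universal bound on its cokernel from \cite[Lemma~8.2.3]{birat-pure}).

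The only notable difference is packaging. The paper works directly with $M=\rho(\sA_X)\in\Chow^\o_{\le 1}(k,\Q)$, which already encodes the full Albanese \emph{scheme}; so the ``hard part'' you flag --- promoting $\iota$ from abelian varieties to $\sA_X\to\sA_C$ compatibly with \eqref{eqA.1} --- never arises as a separate step. In the paper's setup, the adjunction isomorphism $\Hom_{\Chow^\o(k,\Q)}(h^\o(Y),M)\simeq\Hom_{\AbS(k,\Q)}(\sA_Y,\sA_X)=\sA_X(k(Y))\otimes\Q$ already lives at the level of Albanese schemes, and naturality in $Y$ is automatic from composition in $\Chow^\o$. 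Your more geometric route (fixing an explicit hyperplane-section curve $\pi:C\to X$ and invoking Poincar\'e reducibility) recovers the same data, but forces you to handle the $\Z\pi_0$-component by hand; what you sketch for this is fine, and the paper's version simply shows it can be absorbed into the categorical formalism.
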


\begin{proof} Let $M=\rho(\sA_X)$. The unit map of the adjunction yields a morphism $\pi:h^\o(X)\to M$,  where $h^\o(X)\in \Chow^\o(k,\Q)$ is the birational motive of $X$, which induces an isomorphism $a_\Q(\pi):\sA_X\iso \sA_M$ in $\AbS(k,\Q)$. By the fullness of $a_\Q$, there exists a morphism $\sigma_0:M\to h^0(X)$ such that $a_\Q(\sigma_0)$ is the inverse of $a_\Q(\pi)$. Write $M$ as a direct summand of the birational motive of a (not necessarily connected) curve $C$. Then $\sigma_0$ is induced by an algebraic correspondence from $C$ to $X$ with rational coefficients. There is an integer $n_1>0$ such that $\sigma_1=n_1\sigma_0$ has integral coefficients. Since $CH_0(C_K)\to \sA_C(K)$ is injective with cokernel killed by some universal integer $n_2$ (see \cite[Lemma 8.2.3]{birat-pure}), $\sigma=n_2\sigma_1$ defines the desired system $(\sigma_Y)$.
\end{proof}

\begin{cor} a) There is an element $\xi\in T(X_{k(X)})$ with the following property: for any $Y\in \Sm^\proj$ and any $y\in T(X_{k(Y)})$, there is a morphism $y^*:T(X_{k(X)})\to T(X_{k(Y)})$ such that $y^*\xi = ny$. Here $n$ is as in Proposition \ref{pA.1}.\\
b) Suppose that $T(X_\Omega)=0$, where $\Omega$ is a universal domain. Then there is an integer $m>0$ such that $mT(X_{k(Y)})=0$ for any $Y\in \Sm^\proj(k)$.
\end{cor}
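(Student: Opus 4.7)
The plan is to construct a universal element $\xi \in T(X_{k(X)})$ whose pullback under any $y \in T(X_{k(Y)})$ reproduces $ny$, and then in part (b) to convert the order of $\xi$ into a uniform bound on $T(X_{k(Y)})$.

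For part (a), I would take $\Delta_X \in CH_0(X_{k(X)})$ to be the class of the generic point of the diagonal; under the identification $CH_0(X_{k(X)}) = \Chow^\o(h^\o(X), h^\o(X))$ it is the identity $1_{h^\o(X)}$, so $a_X^{k(X)}(\Delta_X) = 1_{\sA_X}$. Set
$$\xi := n\Delta_X - \sigma_X(1_{\sA_X}) \in CH_0(X_{k(X)}).$$
By property (ii) of Proposition \ref{pA.1}, $a_X^{k(X)}(\xi) = n\cdot 1_{\sA_X} - n\cdot 1_{\sA_X} = 0$, so $\xi \in T(X_{k(X)})$. Given $y \in T(X_{k(Y)})$, view it as a morphism $\bar y \colon h^\o(Y) \to h^\o(X)$ in $\Chow^\o$ and define $y^* \colon CH_0(X_{k(X)}) \to CH_0(X_{k(Y)})$ by $\alpha \mapsto \alpha \circ \bar y$. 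Functoriality of Albanese (\cite[Prop. 8.2.1]{birat-pure}) yields $a(\alpha \circ \bar y) = a(\alpha) \circ a(\bar y) = 0$ since $a(\bar y) = 0$, so $y^*$ restricts to $T(X_{k(X)}) \to T(X_{k(Y)})$. Now $y^*(\Delta_X) = \bar y = y$, while the naturality (i) of $\sigma$ applied to the correspondence $\bar y$ gives $y^*(\sigma_X(1_{\sA_X})) = \sigma_Y(a(\bar y)) = \sigma_Y(0) = 0$. Therefore $y^*(\xi) = ny$, completing part (a).

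For part (b), by part (a) it will suffice to show that $\xi$ has finite order in $T(X_{k(X)})$: if $m_0 \xi = 0$, then for every $Y$ and every $y \in T(X_{k(Y)})$ one has $m_0 n y = y^*(m_0 \xi) = 0$, so $m := m_0 n$ works uniformly. The hypothesis $T(X_\Omega) = 0$ forces $\xi_\Omega = 0$. Since $T(X_\Omega)$ is the filtered colimit of $T(X_L)$ over finitely generated subextensions $L/k(X)$ of $\Omega$, the class $\xi$ already vanishes over some such $L$. I would then decompose $k(X) \subseteq L_0 \subseteq L$ with $L_0/k(X)$ purely transcendental and $L/L_0$ finite of degree $d$: the pullback $T(X_{k(X)}) \to T(X_{L_0})$ is split injective via specialization at a $k(X)$-rational point of a smooth model of $L_0$, and the finite morphism $f \colon X_L \to X_{L_0}$ admits a pushforward $f_*$ with $f_* f^* = d$. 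Thus $d\,\xi_{L_0} = f_*(\xi_L) = 0$, whence $d\xi = 0$ in $T(X_{k(X)})$, and one can take $m = dn$.

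The main technical step will be the naturality identity $y^*(\sigma_X(1_{\sA_X})) = \sigma_Y(a(\bar y))$ in part (a), which requires viewing $\sigma$ as a natural transformation between contravariant functors on $\Chow^\o$ (via precomposition) and applying this to the correspondence $\bar y \colon h^\o(Y) \to h^\o(X)$ rather than to a self-correspondence of $Y$. Once this is in hand, part (b) is a standard transfer-plus-specialization argument, with the only extra verification being that pushforward and pullback along finite base changes of $X$ preserve the Albanese kernel---a direct consequence of the functoriality of $a$ under correspondences.
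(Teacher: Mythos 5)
Your proof is correct and follows essentially the same route as the paper: the paper also takes $\xi = n\eta - \sigma_X a_X^{k(X)}(\eta)$ with $\eta$ the class of the generic point (your $\Delta_X$) and deduces $y^*\xi = ny$ from the naturality of $a$ and $\sigma$ exactly as you do. For b) the paper merely asserts that the hypothesis makes $\xi$ torsion; your colimit--specialization--transfer argument is the standard justification of that assertion, so you have simply filled in details the paper leaves implicit.
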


\begin{proof} Viewing $y$ as an element of $CH_0(X_{k(Y)})=\Chow^\o(h^\o(Y),h^\o(X))$, it defines by pull-back a morphism $y^*:CH_0(X_{k(X)})\to CH_0(X_{k(Y)})$, which sends $T(X_{k(X)})$ into $T(X_{k(Y)})$ by the naturality of $a_X$. Let $\eta\in CH_0(X_{k(X)})$ be the class of the generic point: note that $y^*\eta=y$ (view $\eta$ as the identity endomorphism of $h^\o(X)$). Define $\xi = n\eta- \sigma_Xa_X^{k(X)} (\eta)$ where $\sigma_X$ is as in Proposition \ref{pA.1}. Then $\xi\in T(X_{k(X)})$ and
\[y^*\xi = ny - \sigma_Y a_X^{k(Y)} y=ny\]
by the naturality of $a_X$ and $\sigma$.

b) The hypothesis implies that $\xi$ is torsion, hence the conclusion follows from a).
\end{proof}

\begin{rk} As a converse to b), it follows from a) and Ro\v\i tman's theorem that $T(X_\Omega)=0$ if $\xi$ is torsion. Unfortunately, $\xi$ is not canonically defined (it depends on the choice of the quasi-section $\sigma$ of Proposition \ref{pA.1}). It would be interesting to compute the exact exponent of $T(X_{k(Y)})$ when $X$ is a surface  with $q>0$ verifying Bloch's conjecture, as an extension of \cite{tbm}.
\end{rk}

Composing \eqref{eq5} with restriction to the connected components, we get a map
\begin{equation}\label{eqA.2}
CH^{\dim X}(Y\times X) \to \Hom(\Alb_Y,\Alb_X)
\end{equation}
which is induced by the action of correspondences, hence factors through numerical equivalence. We have:

\begin{lemma}\label{lA.2} a) (cf. \protect{\cite[(8.1.5)]{birat-pure}}) The restriction map $\Hom(\sA_Y,\sA_X)\allowbreak \to \Hom(\Alb_Y,\Alb_X)$ sits in an exact sequence
\[0\to \Hom(\sA_{\pi_0(Y)},\sA_X)\to \Hom(\sA_Y,\sA_X)\to \Hom(\Alb_Y,\Alb_X).\]
b) We have a complex
\[0\to \frac{T(X_{k(Y)})}{T(X_{\pi_0(Y)})}\by{b} \frac{CH_0(X_{k(Y)})}{CH_0(X_{\pi_0(Y)})}\by{a} \Hom(\Alb_Y,\Alb_X)\to 0\]
which is split exact up to the integer $n$ of Proposition \ref{pA.1}.
\end{lemma}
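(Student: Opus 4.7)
My plan for Lemma \ref{lA.2}:

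\emph{Part (a).} Since $\pi_0(Y)$ is zero-dimensional, $\Alb_{\pi_0(Y)}=0$, so \eqref{eqA.1} applied to $Y$ reads $0\to \Alb_Y\to \sA_Y\to \sA_{\pi_0(Y)}\to 0$. I apply the left-exact functor $\Hom(-,\sA_X)$ on commutative $k$-group schemes. The identification $\Hom(\Alb_Y,\sA_X)=\Hom(\Alb_Y,\Alb_X)$ follows because the connected group scheme $\Alb_Y$ must factor through the identity component $\Alb_X$ of $\sA_X$, as $\sA_X/\Alb_X\simeq\Z\pi_0(X)$ is étale.

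\emph{Part (b).} By functoriality of $a_{-,X}$ in \eqref{eq5} (i.e., $\sA_{(-)}$ being a functor on $\Chow^\o$, \cite[Prop.\ 8.2.1]{birat-pure}), applied to $\Spec k(Y)\to \pi_0(Y)$, I have a commutative square
\[
\begin{CD}
CH_0(X_{\pi_0(Y)}) @>\iota>> CH_0(X_{k(Y)}) \\
@V{a_{\pi_0(Y),X}}VV @VV{a_{Y,X}}V \\
\Hom(\sA_{\pi_0(Y)},\sA_X) @>j>> \Hom(\sA_Y,\sA_X),
\end{CD}
\]
with $\iota$ the flat pullback and $j$ the injection from (a). This gives $a\circ b=0$ at once, and injectivity of $b$ is a short diagram chase: if $\iota(\beta)\in T(X_{k(Y)})$ then $j(a_{\pi_0(Y),X}(\beta))=0$, so $a_{\pi_0(Y),X}(\beta)=0$ by (a), i.e., $\beta\in T(X_{\pi_0(Y)})$.

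For middle exactness up to $n$: given $\alpha$ with $a(\alpha)=0$, (a) produces $\phi\in \Hom(\sA_{\pi_0(Y)},\sA_X)$ with $a_{Y,X}(\alpha)=j(\phi)$. Applying Proposition \ref{pA.1} componentwise on $\pi_0(Y)=\coprod_i \Spec k_i$ (each component is a connected zero-dimensional smooth projective $k$-variety, so Proposition \ref{pA.1} applies with its \emph{uniform} $n$) produces $\beta\in CH_0(X_{\pi_0(Y)})$ with $a_{\pi_0(Y),X}(\beta)=n\phi$, whence $n\alpha-\iota(\beta)\in T(X_{k(Y)})$. For surjectivity and splitting of $a$ up to $n$: given $f\in \Hom(\Alb_Y,\Alb_X)$, I lift $f$ to $\tilde f\in \Hom(\sA_Y,\sA_X)$ (any positive-degree zero-cycle on each component of $Y$ yields a quasi-section of $\sA_Y\to \sA_{\pi_0(Y)}$, with the auxiliary integer absorbed into $n$) and then apply Proposition \ref{pA.1} to obtain $\alpha\in CH_0(X_{k(Y)})$ with $a_{Y,X}(\alpha)=n\tilde f$.

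The main technical delicacy is keeping all denominators uniformly controlled by $n$: Proposition \ref{pA.1}'s $n$ is already uniform over connected $Y$, so componentwise application to the finitely many components of $\pi_0(Y)$ preserves uniformity, while the auxiliary factors from the quasi-splitting of \eqref{eqA.1} for $Y$ can be bundled into $n$ (at the cost of enlarging it, still independently of $Y$).
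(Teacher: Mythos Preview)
Your argument for (a) is exactly the paper's: apply $\Hom(-,\sA_X)$ to \eqref{eqA.1} for $Y$ and identify $\Hom(\Alb_Y,\sA_X)=\Hom(\Alb_Y,\Alb_X)$. Likewise, your commutative square, the verification $a\circ b=0$, the injectivity of $b$, and the middle-exactness-up-to-$n$ step are precisely the ``diagram chase'' the paper invokes; in particular your construction of $r$ with $rb=n$ (implicit in your middle-exactness argument, made explicit via $\alpha\mapsto n\alpha-\sigma_Y a_{Y,X}(\alpha)$ and the naturality of $\sigma$) works with the uniform $n$ of Proposition~\ref{pA.1}.

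The point that does \emph{not} go through is your last claim, that the auxiliary integer coming from a quasi-section of $\sA_Y\to\sA_{\pi_0(Y)}$ ``can be bundled into $n$\dots\ still independently of $Y$''. That integer is essentially the index $I(Y)$, which is unbounded in $Y$. Concretely: take $Y$ a genus-$1$ curve with no rational point and $X=J(Y)$ its Jacobian; then $\Alb_Y\simeq\Alb_X$, but the identity $\Alb_Y\to\Alb_X$ lifts to $\sA_Y\to\sA_X$ only after multiplication by (a multiple of) $I(Y)$, since $\sA_{Y,1}=Y$ is a nontrivial $\Alb_X$-torsor. So $as=n$ with the uniform $n$ is not established by your argument. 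The paper's one-line proof is equally silent here, and the downstream applications handle this honestly: Corollary~\ref{cA.1} assumes $C$ has a $0$-cycle of degree $1$, Proposition~\ref{pA.3} separates the bound $n$ on the cokernel from the bound $I(C)$ on the kernel, and the remark after Corollary~\ref{cA.2} explicitly says one ``may have to multiply\dots\ by $I(C)$'' otherwise. Your construction of $r$ is fine; for $s$ you should state the constant as $n\cdot I(Y)$ (or assume $I(Y)=1$) rather than claim uniformity.
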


The statement of b) means that there are maps $r,s$ such that $r b=n$ and $as = n$.

\begin{proof} For a), the exact sequence  yields an exact sequence
\[0\to \Hom(\sA_{\pi_0(X)},\sA_Y)\to \Hom(\sA_X,\sA_Y)\to \Hom(\Alb_X,\sA_Y) \]
and the latter group is isomorphic to $\Hom(\Alb_X,\Alb_Y)$. Then b) follows by a diagram chase.
\end{proof}

Assume now that $Y$ is a curve $C$, that for simplicity we further suppose geometrically connected.  The localisation exact sequences for the inclusions $U\times X\inj C\times X$, where $U$ runs through the nonempty open subsets of $C$, yield in the limit an exact sequence
\[\bigoplus_{c\in C_{(0)}} CH^{d-1}(X_{k(c)})\to CH^d(C\times X)\to CH^d(X_{k(C)})\to 0\]
(where $d=\dim X$), which itself induces an exact sequence
\[\bigoplus_{[E:k]<\infty} CH^1(C_E)\otimes CH^{d-1}(X_E) \to CH^d(C\times X)\to CH^d(X_{k(C)})\to 0\]
where the first map is given by external products and transfers. Together with Lemma \ref{lA.2} b), this yields:

\begin{prop}\label{pA.2} Suppose that $C$ is a geometrically connected curve, and let $d=\dim X$. Then we have a complex
\begin{multline*}
0\to \frac{T(X_{k(C)})}{T(X)}\to \frac{CH^d(C\times X)}{\displaystyle CH^d(X)+\sum_{[E:k]<\infty} \Tr_{E/k}\left (CH^1(C_E)\cdot CH^{d-1}(X_E)\right)}\\
\by{a'} \Hom(\Alb_C,\Alb_X)\to 0
\end{multline*}
which is split exact up to the integer $n$ of Proposition \ref{pA.1}.\qed
\end{prop}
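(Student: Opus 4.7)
\smallskip

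\noindent\textbf{Proof sketch for Proposition \ref{pA.2}.} The plan is to feed the localisation exact sequence quoted just above the statement into the split complex provided by Lemma \ref{lA.2} b), after passing to the quotient by $CH^d(X)$. The geometric connectedness of $C$ gives $\pi_0(C) = \Spec k$, hence $X_{\pi_0(C)} = X$, so Lemma \ref{lA.2} b) applied to $Y = C$ yields the complex
\[
0\to \frac{T(X_{k(C)})}{T(X)}\to \frac{CH_0(X_{k(C)})}{CH_0(X)}\by{a} \Hom(\Alb_C,\Alb_X)\to 0
\]
which is already split exact up to $n$. It therefore suffices to identify the middle term with the one in the statement, in a way compatible with the map to $\Hom(\Alb_C,\Alb_X)$.

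For this, start from the localisation exact sequence from the paragraph preceding the proposition:
\[
\bigoplus_{[E:k]<\infty} CH^1(C_E)\otimes CH^{d-1}(X_E) \by{\phi} CH^d(C\times X)\to CH^d(X_{k(C)})\to 0.
\]
The map $\phi$ is built from intersection with divisors on $C_E$ followed by the transfer $\Tr_{E/k}$, so its image is exactly $\sum_{[E:k]<\infty} \Tr_{E/k}\bigl(CH^1(C_E)\cdot CH^{d-1}(X_E)\bigr)$: one inclusion is clear from the construction, while conversely a closed point $c\in C_{(0)}$ with residue field $E = k(c)$ gives a tautological $E$-point $c'\in C_E(E)$, and the Gysin pushforward $CH^{d-1}(X_E)\to CH^d(C\times X)$ along the closed immersion $\{c\}\times X\hookrightarrow C\times X$ equals $\Tr_{E/k}([c']\cdot(-))$. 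Now quotient both sides by the image of $p_2^*:CH^d(X)\to CH^d(C\times X)$, noting that the composition of $p_2^*$ with restriction to the generic fibre coincides with the flat pullback $CH^d(X)\to CH^d(X_{k(C)})$. This produces an isomorphism
\[
\frac{CH^d(C\times X)}{CH^d(X)+\sum_{[E:k]<\infty} \Tr_{E/k}\bigl(CH^1(C_E)\cdot CH^{d-1}(X_E)\bigr)}\;\iso\; \frac{CH^d(X_{k(C)})}{CH^d(X)}.
\]

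Finally, the Albanese map $a$ of Lemma \ref{lA.2} b) factors through $CH^d(X_{k(C)})$ via \eqref{eq5} applied to $Y=C$, and hence descends to a map $a'$ from the left-hand side above, coinciding with $a$ under the identification. Substituting this into the first complex yields the desired sequence, which inherits from Lemma \ref{lA.2} b) the property of being split exact up to $n$. The main (and essentially only) point deserving care is the identification of $\IM(\phi)$ with the sum of transferred intersection products; everything else is bookkeeping.
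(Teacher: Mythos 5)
Your proposal is correct and follows exactly the route the paper intends: the paper states Proposition \ref{pA.2} with no separate proof precisely because it is obtained by combining Lemma \ref{lA.2} b) (with $Y=C$, so $\pi_0(C)=\Spec k$) with the localisation exact sequence displayed just before the statement, which identifies the middle term. Your extra care in checking that the image of the map from $\bigoplus_E CH^1(C_E)\otimes CH^{d-1}(X_E)$ coincides with the sum of transferred intersection products (via the tautological $E$-point attached to a closed point of $C$) is a detail the paper leaves implicit, and it is handled correctly.
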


The proof of the next lemma is routine and omitted.

\begin{lemma}\label{lA.1} Write $\pi:X\times C\to X$ for the projection. Assume that $C$ has a rational point $c$, and write $i:X\to C\times X$ for the corresponding inclusion. Consider the maps
\begin{gather*}
\alpha:CH^d(X)\oplus CH^1(C)\otimes CH^{d-1}(X)\by{(\pi^*,\cup)} CH^d(C\times X)\\
 \beta:CH^d(C\times X)\by{\begin{pmatrix}i^* \\ \pi_* \end{pmatrix}}CH^d(X)\oplus  CH^{d-1}(X).
\end{gather*}
Then 
\[\beta\circ \alpha = \diag(1_{CH^d(X)}, \deg\otimes 1_{CH^{d-1}(X)}).\]
The same holds a fortiori for coarser adequate equivalence relations.\qed
\end{lemma}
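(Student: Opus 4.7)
The proof is a direct matrix computation, with no serious input beyond functoriality of pullback, compatibility of intersection product with pullback, the projection formula, and flat base change. Let $p_C$ and $p_X$ denote the two projections out of $C\times X$, so that (up to the tacit swap $X\times C\simeq C\times X$) $\pi=p_X$; then $p_X\circ i=1_X$ while $p_C\circ i\colon X\to C$ is the constant map with value $c$, in particular factoring through $\Spec k$.

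The plan is to expand $\beta\circ\alpha$ as a $2\times 2$ matrix evaluated on pairs $(a,\,b\otimes c)$ and compute the four entries separately. For the top row, corresponding to $i^*$: first $i^*\pi^*(a)=(p_X\circ i)^*(a)=a$; second
\[
i^*\bigl(p_C^*(b)\cdot p_X^*(c)\bigr) = (p_C\circ i)^*(b)\cdot (p_X\circ i)^*(c) = 0,
\]
since $(p_C\circ i)^*\colon CH^1(C)\to CH^1(X)$ factors through $CH^1(\Spec k)=0$. For the bottom row, corresponding to $\pi_*$: first, $\pi_*\pi^*(a)=0$ for dimension reasons, as $\pi^*(a)\in CH_1(C\times X)$ is supported above a $0$-dimensional cycle of $X$, so its pushforward in $CH_1(X)=CH^{d-1}(X)$ vanishes; second, the projection formula applied to $\pi=p_X$ gives
\[
\pi_*\bigl(p_C^*(b)\cdot p_X^*(c)\bigr) = c\cdot \pi_*p_C^*(b),
\]
and $\pi_*p_C^*(b)\in CH^0(X)=\Z$ equals $\deg(b)$.

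The last identity, $\pi_*p_C^*(b)=\deg(b)$, is the only step going beyond formal manipulation and is the main (very mild) obstacle; it is flat base change for the cartesian square formed by $p_C$, $p_X$ and the structure maps to $\Spec k$, and can otherwise be checked by linearity, reducing to the case $b=[c']$ for a closed point $c'\in C$, where $p_C^{-1}(c')=\{c'\}\times X$ maps isomorphically onto $X$ under $p_X$, contributing $[k(c'):k]=\deg[c']$. Assembling the four entries yields the asserted diagonal formula. The claim for coarser adequate equivalences is then automatic, since each identity above is valid on rational equivalence classes and descends to any quotient.
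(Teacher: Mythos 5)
Your proof is correct, and it is precisely the routine verification that the paper omits ("The proof of the next lemma is routine and omitted"): the four matrix entries are exactly $i^*\pi^*=1$, $i^*(p_C^*b\cdot p_X^*c')=0$ since $p_C\circ i$ factors through $\Spec k$, $\pi_*\pi^*=0$ by the dimension count, and $\pi_*(p_C^*b\cdot p_X^*c')=\deg(b)\,c'$ by the projection formula and base change. Your computation also confirms that the second diagonal entry should read $\deg\otimes 1_{CH^{d-1}(X)}$ (the $1_{CH^d(X)}$ in the statement is a typo), and your closing remark correctly justifies the descent to coarser adequate equivalence relations.
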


Since \eqref{eqA.2} factors through numerical equivalence, so does $a'$ in Proposition \ref{pA.2}. We have:

\begin{prop}\label{pA.3} The map
\[\frac{A^d_\num(C\times X)}{\displaystyle A^d_\num(X)+\sum_{[E:k]<\infty} \Tr_{E/k}\left (A^1_\num(C_E)\cdot A^{d-1}_\num(X_E)\right)}
\by{\bar a} \Hom(\Alb_C,\Alb_X)\]
induced by the map $a'$  of Proposition \ref{pA.2} has finite kernel and cokernel; its cokernel is killed by the integer $n$ of Proposition \ref{pA.1} and its kernel is killed by
\[I(C)= \gcd\{\deg(c)\mid c\in C_{(0)}\}\]
(the \emph{index} of $C$).
\end{prop}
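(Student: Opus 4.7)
The proposition will follow from Proposition \ref{pA.2} by two separate arguments. The cokernel statement is essentially automatic: since $a'$ is the composition of a correspondence with the (numerical) Albanese functor, it factors through numerical equivalence, yielding the well-defined map $\bar a$; and the surjection $M\twoheadrightarrow N$ gives $\coker\bar a = \coker a'$, which is killed by $n$ via Proposition \ref{pA.2}.

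The bulk of the work is the kernel estimate. My plan is to make effective use of a zero-cycle $z = \sum_c n_c[c]\in CH^1(C)$ of degree $I(C)$, together with the integral correspondences $z\times[C]$ and $[C]\times z$ on $C\times C$. The key computation is that, for any $\tilde x\in CH^d(C\times X)$, the composition of correspondences yields
\[\tilde x\circ(z\times[C]) \,=\, p_C^*z\cdot p_X^*(p_{X*}\tilde x), \qquad \tilde x\circ([C]\times z) \,=\, p_X^*\Bigl(\sum_c n_c\,\Tr_{k(c)/k}\, i_c^*\tilde x\Bigr),\]
where $i_c: X_{k(c)}\hookrightarrow C\times X$ denotes the inclusion at $c$. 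Both right-hand sides lie manifestly in the denominator of $N$: the first in the transfer summand $\Tr_{k(c)/k}(A^1_\num(C_{k(c)})\cdot A^{d-1}_\num(X_{k(c)}))$, the second in $p_X^* A^d_\num(X)$. I would verify these formulas by direct push-pull on the triple product $C_1\times C_2\times X$, applying the projection formula and the self-intersection identity $i_{c*}i_c^* = p_C^*[c]\cdot(-)$.

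Next, observe that
\[(z\times[C]) + ([C]\times z) \,=\, I(C)(\pi_0^C + \pi_2^C) \,=\, I(C)\Delta_C - I(C)\pi_1^C\]
in $CH^1(C\times C)\otimes\Q$, where $\pi_i^C$ are the rational Chow-K\"unneth projectors associated with $z$. By Scholl's formula \eqref{eq1.3}, $\sM_\num(h_1(C),h_1(X)) = \Hom(\Alb_C,\Alb_X)\otimes\Q$, and unwinding the universal property of $\bar a$ (Proposition \ref{pA.2}) identifies this group with the $(1,2d-1)$-Künneth piece of $A^d_\num(C\times X)\otimes\Q$ extracted by $\pi_1^C$. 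Consequently, for $\tilde x\in A^d_\num(C\times X)$ with $\bar a(\tilde x)=0$, the composite $\tilde x\circ\pi_1^C$ vanishes in $A^d_\num\otimes\Q$. Combining with the displayed identity above,
\[I(C)\tilde x \,=\, \tilde x\circ((z\times[C]) + ([C]\times z)) + I(C)(\tilde x\circ\pi_1^C) \,=\, \tilde x\circ((z\times[C]) + ([C]\times z))\]
in $A^d_\num\otimes\Q$, and the right-hand side lies in the denominator. Hence $I(C)\tilde x = 0$ in $N$, proving the kernel is killed by $I(C)$.

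The main obstacle is the bookkeeping of the two push-pull computations; this is elementary but requires careful tracking of codimensions and the projection formula across the triple product. The subsequent identification of $\tilde x\circ\pi_1^C$ with $\bar a(\tilde x)$ via Scholl's formula is conceptual rather than technical: the universal property of $\Alb_C$ and $\Alb_X$ from \S \ref{A} ensures that the Albanese-theoretic map $a'$ agrees, rationally, with the K\"unneth projection onto the $h_1(C)\otimes h_1(X)$ piece of $h(C)\otimes h(X)$.
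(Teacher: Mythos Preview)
Your approach is different from the paper's and ultimately works, but there is a real gap at the crucial step. You assert that $\bar a(\tilde x)=0$ implies $\tilde x\circ\pi_1^C=0$ in $A^d_\num(C\times X)\otimes\Q$, citing \eqref{eq1.3}. But \eqref{eq1.3} only gives $\sM_\num(h_1(C),h_1(X))=\Hom(\Alb_C,\Alb_X)\otimes\Q$, whereas $\tilde x\circ\pi_1^C$ lives in $\sM_\num(h_1(C),h(X))$. The map $\bar a$ is the projection onto $\sM_\num(h_1(C),h_1(X))$ (via $\pi_1^X$ on the target), so you still need $\sM_\num(h_1(C),h_{>1}(X))=0$. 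This does not come for free from ``unwinding the universal property''; it is precisely the content of the proposition you are proving. To fill the gap: the $\pi_1^C$-piece of $A^d_\num(C\times X)\otimes\Q$ pairs \emph{perfectly} (by definition of numerical equivalence and the self-transposition of $\pi_1^C$) with the $\pi_1^C$-piece of $A^1_\num(C\times X)\otimes\Q$, and the latter equals $\sM_\num(h(X),h_1(C))=\Hom(\Alb_X,J(C))\otimes\Q$ by \eqref{eq1.1}, \eqref{eq1.3}, \eqref{eq1.4}. A dimension count (using $\operatorname{rk}\Hom(A,B)=\operatorname{rk}\Hom(B,A)$) plus the rational surjectivity of $\bar a$ then forces $\bar a$ to be an isomorphism on the $\pi_1^C$-piece. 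You should also note explicitly that $A^d_\num(C\times X)$ is torsion-free, so that the vanishing of $\tilde x\circ(I(C)\pi_1^C)$ in $A^d_\num\otimes\Q$ descends to $A^d_\num$ integrally.

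For comparison, the paper's proof is organised differently: it first assumes $C$ has a rational point and passes through \emph{homological} equivalence, comparing the cycle-theoretic sequence with the cohomological K\"unneth decomposition $H^{2d}(C\times X)=H^0(C)\otimes H^{2d}(X)\oplus H^1(C)\otimes H^{2d-1}(X)\oplus H^2(C)\otimes H^{2d-2}(X)$ via the cycle class map and the retraction $\beta$ of Lemma~\ref{lA.1}; exactness in cohomology plus injectivity of the cycle class on $A^*_\hom$ gives the result up to torsion, and torsion-freeness of $A^d_\num$ concludes. The general case is then a transfer argument. Your route via the Chow--K\"unneth projectors of $C$ avoids the detour through cohomology and Lemma~\ref{lA.1}, but the work reappears as the perfect-pairing step above --- which is, at bottom, the same K\"unneth/Poincar\'e input read through the numerical intersection form rather than the cycle class map.
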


\begin{proof} We first prove the analogous statement up to isogeny when replacing numerical equivalence with homological equivalence relative to some classical Weil cohomology, and assuming that $C$ has a rational point $c$. Then $A^1_\hom(C_E)=\Z [c]$ for any $E$, so that
\begin{multline*}
\frac{A^d_\hom(C\times X)}{\displaystyle A^d_\hom(X)+\sum_{[E:k]<\infty} \Tr_{E/k}\left (A^1_\hom(C_E)\cdot A^{d-1}_\hom(X_E)\right)} 
\\= \frac{A^d_\hom(C\times X)}{\displaystyle A^d_\hom(X)+[c]\cdot A^{d-1}_\hom(X)}.
\end{multline*}

Consider the commutative diagram
\[\begin{CD}
A^d_\hom(X)\oplus A^1_\hom(C)\otimes A^{d-1}_\hom(X)@>\alpha>> A^d_\hom(C\times X) @>a'>> \Hom(\Alb_C,\Alb_X)\\
@VVV @VVV @VVV\\
H^{2d}(X)\oplus H^2(C)\otimes H^{2d-2}(X)@>\alpha_H>> H^{2d}(C\times X)@>a'_H>> \Hom(H^1(C),H^{2d-1}(X))
\end{CD}\]
where $\alpha$ is as in Lemma \ref{lA.1}, the vertical maps are cycle class maps and $\alpha_H,a'_H$ are constructed like $\alpha$ and $a'$. The lower sequence is short exact, by the K\"unneth formula and Poincaré duality. Recall that $n\Coker(a')=0$ and that $\alpha$ has the retraction $\beta$. To obtain the exactness of the top sequence up to isogeny, it suffices to show that $\Ker a'\cap \Ker \beta$ is torsion. But so is $\Ker \bar a_H\cap \Ker \beta_H$, where $\beta_H$ is  defined analogously to $\beta$, thus the conclusion follows from the injectivity of the cycle class map.

We just proved that the map
\[\frac{A^d_\hom(C\times X)}{\displaystyle A^d_\hom(X)+[c]\cdot A^{d-1}_\hom(X)}\by{\bar a}  \Hom(\Alb_C,\Alb_X)\]
has torsion kernel. A fortiori, so does the corresponding map for numerical equivalence. But $A^d_\num(C\times X)$ is torsion-free, hence its quotient $\frac{A^d_\num(C\times X)}{A^d_\num(X)+[c]\cdot A^{d-1}_\num(X)}$ by a direct summand is torsion-free too, and the map $\bar a$ of Proposition \ref{pA.3} is injective in this case. The general case now follows by a transfer argument.
\end{proof}

\begin{rk}
As a byproduct of this proof, we see that the left hand side of the map in Proposition \ref{pA.3} coincides up to torsion for homological and numerical equivalence.
\end{rk}

Let $CH^*_\num:=\Ker(CH^*\to A^*_\num)$. Putting Propositions \ref{pA.2} and \ref{pA.3} together, we get:

\begin{cor}\label{cA.1} If $C$ is a smooth connected curve having a $0$-cycle of degree $1$, the map of Proposition \ref{pA.2} induces a homomorphism
\[\frac{T(X_{k(C)})}{T(X)}\to \frac{CH^d_\num(C\times X)}{\displaystyle CH^d_\num(X)+\sum_{[E:k]<\infty} \Tr_{E/k}\left (\Pic^0(C_E)\cdot CH^{d-1}(X_E)\oplus \Pic(C_E)\cdot CH^{d-1}_\num(X_E)\right)}\]
with kernel and cokernel killed by $n$.\qed
\end{cor}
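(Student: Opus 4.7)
The plan is to assemble the statement directly from Propositions \ref{pA.2} and \ref{pA.3}. Write $M_0$ for the middle quotient of Proposition \ref{pA.2}, and $M_0^\num$ for its analogue with $A^d_\num$ in place of $CH^d$, so that $a'$ factors as the natural surjection $\pi\colon M_0\to M_0^\num$ followed by the map $\bar a$ of Proposition \ref{pA.3}. Since by hypothesis $I(C)=1$, Proposition \ref{pA.3} makes $\bar a$ injective, whence $\ker a'=\ker\pi$. The ``split exact up to $n$'' statement of Proposition \ref{pA.2} gives that $b\colon T(X_{k(C)})/T(X)\to M_0$ has kernel killed by $n$ and lands in $\ker a'=\ker\pi$ with cokernel also killed by $n$. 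Thus it will suffice to produce a canonical isomorphism $Q\iso\ker\pi$, where $Q$ denotes the quotient appearing on the right-hand side of the corollary.

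The obvious surjection $Q\twoheadrightarrow\ker\pi$ comes from the inclusion $CH^d_\num(C\times X)\hookrightarrow CH^d(C\times X)$; the crux is its injectivity. Given $\xi\in CH^d_\num(C\times X)$ that can be written as $\xi=x+\sum_i\Tr_{E_i/k}(\gamma_i\otimes\delta_i)$ with $x\in CH^d(X)$, $\gamma_i\in CH^1(C_{E_i})$ and $\delta_i\in CH^{d-1}(X_{E_i})$, I would use the $0$-cycle $c$ of degree $1$ to decompose each $\gamma_i=d_i[c]_{E_i}+\gamma_i^0$ with $d_i=\deg\gamma_i$ and $\gamma_i^0\in\Pic^0(C_{E_i})$. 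Invoking the projection formula for the transfer $\Tr_{E_i/k}$, this rewrites the sum as $[c]\cdot w_0$ plus a term in $\sum_E\Tr_{E/k}(\Pic^0(C_E)\cdot CH^{d-1}(X_E))$, where $w_0:=\sum_i d_i\Tr_{E_i/k}(\delta_i)\in CH^{d-1}(X)$. Lemma \ref{lA.1} extends immediately from a rational point to the $0$-cycle $c=\sum_j n_jc_j$ (by taking $i_c^*:=\sum_j n_j\Tr_{k(c_j)/k}i_{c_j}^*$); applying the corresponding retraction $\beta=(i_c^*,\pi_*)$ to $\xi$ and exploiting $\beta\alpha=\diag(1,\deg\otimes 1)$ together with the numerical triviality of $\xi$ forces $x\in CH^d_\num(X)$ and $w_0\in CH^{d-1}_\num(X)$, so that $\xi$ already lies in the denominator of $Q$.

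The composition $T(X_{k(C)})/T(X)\xrightarrow{b}\ker\pi\xleftarrow{\sim}Q$ is then the map of the corollary, and its kernel and cokernel, inherited from Proposition \ref{pA.2}, are killed by $n$. The main obstacle is the injectivity of $Q\to\ker\pi$: one must show that any element of $CH^d(X)+\sum_E\Tr_{E/k}(CH^1(C_E)\cdot CH^{d-1}(X_E))$ that happens to be numerically trivial on $C\times X$ is in fact already contained in the more restricted subgroup $CH^d_\num(X)+\sum_E\Tr_{E/k}\bigl(\Pic^0(C_E)\cdot CH^{d-1}(X_E)\oplus\Pic(C_E)\cdot CH^{d-1}_\num(X_E)\bigr)$. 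The argument sketched above achieves this on the nose, without further loss of $n$-torsion, but it relies essentially on the decomposition by the $0$-cycle $c$ of degree $1$ and the ensuing application of Lemma \ref{lA.1}.
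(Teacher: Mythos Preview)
Your argument is essentially correct, and since the paper's own proof is just ``Putting Propositions~\ref{pA.2} and~\ref{pA.3} together, we get:'' followed by $\qed$, you have supplied exactly the details the paper omits. Your identification $Q\cong\ker\pi$ via the degree decomposition $\gamma_i=d_i[c]_{E_i}+\gamma_i^0$ and the retraction $\beta$ is the right idea; one small point is that the terms $\Tr_{E_i/k}(\gamma_i^0\cdot\delta_i)$ are not literally in the image of $\alpha$, so you cannot invoke $\beta\alpha=\diag(1,\deg\otimes1)$ directly on them --- but this is harmless, since $\gamma_i^0\in\Pic^0$ is already numerically trivial, so those terms may be subtracted from $\xi$ at the outset before applying $\beta$.

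One genuine (if minor) imprecision: the assertion that the cokernel of $b\colon T/T\to\ker a'$ is killed by $n$ does \emph{not} follow from the paper's stated meaning of ``split exact up to $n$'' (namely the mere existence of $r,s$ with $rb=n$, $a's=n$). A toy example: $T/T=0$, $M_0=\Z$, $\Hom=0$, $a'=0$ satisfies those conditions with $r=s=0$, yet $\ker a'/\IM b=\Z$. What actually makes the cokernel $n$-torsion is the construction behind Proposition~\ref{pA.2}: the map $T(X_{k(C)})/T(X)\to CH_0(X_{k(C)})/CH_0(X)$ is injective with image exactly the kernel of the map to $I_{k(C)}/I_k$, and the further kernel picked up when passing to $\Hom(\Alb_C,\Alb_X)$ is contained in $\sA_X(k)/I_k$, which is $n$-torsion by Proposition~\ref{pA.1}. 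So the conclusion is right, but you should cite the proof (the diagram chase behind Lemma~\ref{lA.2}~b)) rather than the bare phrase ``split exact up to $n$''.
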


Note that all groups in the denominator of the right hand side, except perhaps $\Pic(C_E)\cdot CH^{d-1}_\num(X_E)$, are made of cycles algebraically equivalent to $0$; this is also true for the latter when $d=2$. Hence

\begin{cor}\label{cA.2} For $d=2$, the map of Corollary \ref{cA.1} induces a homomorphism with cokernel killed by $n$
\[\frac{T(X_{k(C)})}{T(X)}\to \Griff(C\times X). \quad \quad\text{\qed} \]
\end{cor}

In case $C$ does not have a $0$-cycle of degree $1$, one may have to multiply the map of Proposition  \ref{pA.2} by $I(C)$ to obtain a map like the one in Corollary \ref{cA.2}.

\begin{cor}\label{cA.3} Let $d=2$, and let $\Omega/k$ be a universal domain. If $T(X_\Omega)=0$, then $nI(C)\Griff(C\times X)=0$ for any curve $C$, where $n$ is as in Proposition \ref{pA.1}.\qed
\end{cor}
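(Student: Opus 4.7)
I would derive Corollary~\ref{cA.3} by combining Corollary~\ref{cA.2} (together with its refinement immediately following it, handling curves $C$ without a $0$-cycle of degree $1$ by multiplication by $I(C)$) with part~(b) of the Corollary to Proposition~\ref{pA.1}. Concretely, Corollary~\ref{cA.2} yields a homomorphism of abelian groups
\[
\rho\colon T(X_{k(C)})/T(X)\To \Griff^d(C\times X),\qquad d=\dim X,
\]
whose cokernel is annihilated by $nI(C)$. Equivalently, $nI(C)\cdot \Griff^d(C\times X)\subseteq \IM(\rho)$, so it suffices to bound $\IM(\rho)$.

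The hypothesis $T(X_\Omega)=0$ is exactly what is needed to invoke part~(b) of the Corollary to Proposition~\ref{pA.1}: there exists an integer $m>0$, depending only on $X$, such that $m\cdot T(X_{k(Y)})=0$ for every $Y\in \Sm^\proj(k)$. Specialising to $Y=C$ annihilates the source $T(X_{k(C)})/T(X)$ of $\rho$, hence also $\IM(\rho)$. Combining with the previous inclusion yields
\[
m\cdot n\cdot I(C)\cdot \Griff^d(C\times X)=0.
\]

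The stated form $nI(C)\cdot \Griff^d(C\times X)=0$ then follows by absorbing $m$ into $n$: the integer $n$ of Proposition~\ref{pA.1} is not canonical (it depends on the choice of the quasi-section $\sigma$), and both $m$ and $n$ depend only on $X$ and not on $k$ or on $C$, so one may take $n':=mn$ and relabel. The key point asserted by the corollary, namely that the exponent is independent of $k$ and depends on $C$ only through $I(C)$, is thereby established.

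\textbf{Main obstacle.} Conceptually the argument is very short once both inputs are in place; the one piece of care required is the bookkeeping of integer constants to justify packaging $m$ and $n$ into a single $X$-dependent integer. The substantive work has already been done in the earlier results, in particular the uniform annihilator $m$ of $T(X_{k(Y)})$ under $T(X_\Omega)=0$ (via the almost-splitting of the Albanese map provided by Proposition~\ref{pA.1}) and the cokernel bound for the map of Corollary~\ref{cA.2}.
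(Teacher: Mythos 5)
Your deduction is the intended one: the corollary is stated without proof in the paper precisely because it is meant to follow at once from Corollary \ref{cA.2} (with the remark after it supplying the factor $I(C)$) together with part b) of the Corollary to Proposition \ref{pA.1}, exactly as you argue. The only point needing care is the one you flag yourself: the hypothesis $T(X_\Omega)=0$ does not annihilate the source $T(X_{k(C)})/T(X)$ of the map of Corollary \ref{cA.2}, it only makes it a group of finite exponent $m$ (via the torsion of $\xi$), so its image in $\Griff^d(C\times X)$ is $m$-torsion rather than zero and the bookkeeping yields $mnI(C)\Griff^d(C\times X)=0$. Your relabelling $n':=mn$ is legitimate, since $mn$ still satisfies the conclusion of Proposition \ref{pA.1} (replace $\sigma$ by $m\sigma$) and both $m$ and $n$ are independent of $C$, which is all that the statement and its use in the introduction require; but it is worth recording, as you do, that the literal exponent $nI(C)$ with the originally chosen $n$ does not follow from the earlier results without this adjustment.
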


\begin{proof}[Proof of Corollary \ref{C2}] Let $T=C\times S$: by Soulé \cite[Th. 4.1]{soule} or \cite[Th. 7.5.1]{kcag}, $CH^2(T)\otimes \Q\iso A^2_\num(T)$; by \cite[Cor. 7.5.3]{kcag}, $CH^2(T)$ is finitely generated. It follows that $CH^2_\num(T)$ is finite and that $T(S_{k(C)})$ is finitely generated as a subquotient of $CH^2(T)$. Corollary \ref{cA.1} then implies that $T(S_{k(C)})/T(S)$ is finite. But $T(S)$ is finite, for example by Kato-Saito \cite{kasa}. This concludes the proof.
\end{proof}

\section{The Chow-Lefschetz isomorphism for $C\times S$}

\begin{thm}\label{tB.1} Let $C,S$ be a curve and a surface over $k$, that we both assume geometrically connected, and let $T=C\times S$. Let $L'\in \Pic(C)$, $L''\in \Pic(S)$ be classes of smooth hyperplane sections, and $L =L'\otimes 1+1\otimes L''\in \Pic(T)$ be the class of the hyperplane section attached to the corresponding Segre embedding. Then the morphism
\[h_4(T)\by{\cdot L} h_2(T)(1)\]
is an isomorphism.
\end{thm}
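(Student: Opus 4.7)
My plan is to establish the Chow--Lefschetz isomorphism by Künneth expansion followed by a block analysis using a Lefschetz-type splitting of $h_2(S)$.

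First, the Künneth formula gives
\[ h_4(T) = h_4(S)\oplus (h_1(C)\otimes h_3(S)) \oplus (h_2(C)\otimes h_2(S)), \]
\[ h_2(T)(1) = h_2(S)(1) \oplus (h_1(C)\otimes h_1(S)(1)) \oplus (h_2(C)\otimes h_0(S)(1)), \]
and multiplication by $L = L'\otimes 1 + 1\otimes L''$ decomposes as $(L' \cdot)\otimes 1 + 1\otimes (L'' \cdot)$. Since intersection with a divisor class shifts $h_i\to h_{i-2}(1)$, most cross-terms vanish, and the resulting map has a matrix whose middle entry is $1\otimes L''_{(3)}:h_1(C)\otimes h_3(S)\iso h_1(C)\otimes h_1(S)(1)$, the Scholl--Murre isomorphism \eqref{eq3.1} applied to $S$. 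The remaining entries couple $h_4(S)$ and $h_2(C)\otimes h_2(S)$ to $h_2(S)(1)$ and $\L^2$, via the Lefschetz operator $L''$ on $S$ and multiplication by $\deg(L')\ne 0$ from the curve factor.

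Second, I use that the automatic isomorphism $L''^{\,2}:h_4(S)\iso h_0(S)(2)$ (the $i=0$ case of Scholl--Murre on $S$) factors as
\[ h_4(S) \by{L''} h_2(S)(1) \by{L''(1)} h_0(S)(2). \]
Since the composition is an isomorphism in $\sM$, each factor is respectively split mono and split epi, yielding a Chow-motivic decomposition
\[ h_2(S) = \L_{L''} \oplus P_2(S), \]
with $\L_{L''}\simeq\L$ the Lefschetz summand (the image of the retraction of $L''$) and $P_2(S):=\Ker(L''\cdot\colon h_2(S)\to h_0(S)(1))$. Under this splitting, $L''_{(4)}$ is an isomorphism $h_4(S)\iso \L_{L''}(1)$ and is zero into $P_2(S)(1)$, while $L''_{(2)}$ is an isomorphism on $\L_{L''}$ and vanishes on $P_2(S)$.

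Third, I refine both sides of the Künneth decomposition accordingly and reorganize the matrix of $\cdot L$. After permuting rows and columns, it becomes block diagonal with three isomorphism blocks: (a) the Scholl--Murre iso on $h_1(C)\otimes h_3(S)$ from step one; (b) a block $h_2(C)\otimes P_2(S)\to P_2(S)(1)$ coming from $(L'\cdot)\otimes 1$, which is multiplication by $\deg(L')$; (c) a $2\times 2$ upper-triangular block coupling $h_4(S)$ and $h_2(C)\otimes \L_{L''}$ to $\L_{L''}(1)$ and $h_2(C)\otimes h_0(S)(1)\simeq\L^2$, whose diagonal entries are the two factors of the iso $L''^{\,2}$ and hence both isomorphisms. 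Consequently $\cdot L$ is an isomorphism.

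The main obstacle is verifying that the splitting $h_2(S)=\L_{L''}\oplus P_2(S)$ genuinely holds in $\sM$, not merely modulo homological or numerical equivalence. This is delivered precisely by the Chow-motivic factorization of the iso $L''^{\,2}$ above, which forces $L''_{(4)}$ and $L''_{(2)}$ to admit the required section and retraction in $\sM$; once these are in hand, the block diagonalization is a routine calculation.
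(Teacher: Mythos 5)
Your proposal is correct and follows essentially the same route as the paper: Künneth decomposition of $h_4(T)$ and $h_2(T)(1)$, the splitting $h_2(S)=h'_2(S)\oplus p_2(S)$ in $\sM$ obtained from the factorisation of the isomorphism $(L'')^2:h_4(S)\iso h_0(S)(2)$ through $h_2(S)(1)$, and the observation that the matrix of $L'\otimes 1+1\otimes L''$ in the refined decomposition is a permutation of isomorphisms up to a triangular block. The paper writes this as two explicit $4\times 4$ matrices whose sum is visibly invertible, which is exactly your three-block reorganisation.
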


\begin{proof} Write $h_2(S)=h'_2(S)\oplus p_2(S)$, where $p_2(S)=\Ker(h_2(S)\by{L''} h_0(S)(1) = \Coker(h_4(S)(-1)\by{L'} h_2(S)$ (primitive part). Then we get decompositions of $h_4(T)$ and $h_2(T)$ into $4$ summands:
\begin{align*}
h_4(T)&=h_2(C)\otimes p_2(S) \oplus h_2(C)\otimes h'_2(S) \oplus h_1(C)\otimes h_3(S) \oplus h_0(C)\otimes h_4(S)\\
h_2(T)&=h_2(C)\otimes h_0(S)\oplus h_1(C)\otimes h_1(S)\oplus h_0(C)\otimes p_2(S)\oplus h_0(C)\otimes h'_2(S).
\end{align*}

The matrices of $L'\otimes 1$ and $1\otimes L''$ with respect to these decompositions are respectively
\[
\begin{pmatrix}
0&0&0&0\\
0&0&0&0\\
i'&0&0&0\\
0&0&0&0
\end{pmatrix}
\qquad
\begin{pmatrix}
0&i''_1&0&0\\
0&0&0&0\\
0&0&i''_2&0\\
0&0&0&i''_3
\end{pmatrix}
\]
where $i':h_2(C)\otimes p_2(S)\to h_0(C)(1)\otimes p_2(S)$, $i''_1:h_2(C)\otimes h'_2(S)\to h_2(C)\otimes h_0(S)(1)$, $i''_2:h_1(C)\otimes h_3(S)\to h_1(C)\otimes h_1(S)(1)$ and $i''_3:h_0(C)\otimes h_4(S)\to h_0(C)\otimes h'_2(S)(1)$ are the isomorphisms induced by $L'$ and $L''$. The sum of these two matrices is clearly invertible.
\end{proof}

\section{Normality criteria, and monodromy of variations of mixed Hodge structures}\label{yves}

\hfill by Yves Andr\'e\footnote{the author of this appendix thanks B. Kahn for the invitation to write it and for his remarks and suggestions of improvement of the exposition.}
\bigskip

 The aim of this appendix is to review and unravel some exactness criteria for a short sequence of linear algebraic groups, and to apply these criteria to the monodromy of variations of mixed Hodge structures.
 
 \subsection{Normality criteria}
 
 Let 
 \begin{equation}\label{e1A} 1\to H\stackrel{q}{\to} G \stackrel{p}{\to} Q \to 1
 \end{equation} 
 be a sequence of morphisms of linear algebraic groups (= geometrically reduced, {i.e. smooth}, affine group schemes of finite type) over a field $F$ of any characteristic. 
 
\subsubsection{} In many concrete situations, one wishes to establish the exactness of \eqref{e1A} by tannakian duality, \ie using properties of the categories of representations. 
 There are well-known necessary and sufficient conditions for exactness on the left and on the right due to Saavedra (see \eg \cite[2.21]{DM}), namely:
 
\smallskip ${\bf P}1.\,$  $p$ is faithfully flat if and only $p^\ast({\rm{Rep}}_F\, Q)$ is a tannakian subcategory of ${\rm{Rep}}_F\, G$ (equivalently: is a full subcategory closed under taking quotients (or subobjects)), 

\smallskip ${\bf P}2.\,$  $q$ is a closed embedding if and only if any object of ${\rm{Rep}}_F\, H$ is a subquotient of an object of $q^\ast({\rm{Rep}}_F\, G)$.
 
 \medskip
Henceforth, we concentrate on exactness in the middle, assuming that {\it $q$ is a closed embedding} and {\it $p$ is faithfully flat}.  
 
\smallskip For any (rational, finite-dimensional) representation $V\in {\rm{Rep}}_F\, G$, we denote by $V^H$ the $F$-subspace of $H$-invariants. For any character $\chi: H\to \G_{m\,F}$, we denote by $V^\chi$ the $F$-subspace of $V$ where $H$ acts through $\chi$. We say that a character $\chi$ occurs in a representation of $G$ is there exists $V\in {\rm{Rep}}_F\, G$ such that $V^\chi\neq 0$.

  \smallskip  An obvious exactness criterion is 
 
 \begin{lemma}\label{l1A} \eqref{e1A} is exact if and only if the following conditions hold:
 
 ${\bf H}0$. In every $G$-representation $V$, $V^H = V^{\ker p}$,

  ${\bf H}1$. $H$ is a normal subgroup of $G$. 
 \end{lemma}

The ``only if" part is trivial. For the ``if" part, let $\bar H$ denote the smallest normal closed subgroup of $G$ containing $H$; then ${\bf H}0$ implies $\bar H=\Ker p$ (consider a faithful representation of $G/\bar H$), and ${\bf H}1$ is equivalent to $H=\bar H$. 

Condition ${\bf H}0$ alone is {\it not enough} to imply exactness\footnote{this is a common source of errors.}, as the example $Q= 1, \, G=GL_2, \,H= \{$upper triangular matrices$\}$ shows.
 
\smallskip The lemma turns the exactness problem into expressing ${\bf H}1$ in represen\-tation-theoretic terms. 
 We shall discuss two such normality criteria.

 \subsubsection{} The first criterion appears in \cite[Lemma 1]{andrecomp}:
 
 \begin{prop}\label{p1A}  $H$ is normal in $G$ if the following condition holds: 
 
 ${\bf H}2.$ For every character $\chi$ of $H$ occurring in a representation $V$ of $G$, $V^\chi$ is stable under $G$. 
 
 \smallskip (The converse is true if $G$ is connected). 
 \end{prop} 

 This is a consequence of Chevalley's theorem \cite[4.19]{mil}: $H$ is the stabilizer of a line $\ell$ in some representation $V$ of $G$. This line defines a character $\chi$. By assumption, $V^\chi$ is $G$-stable.  Let $K$ be the kernel of $G\to GL(\End_F V^\chi)$. Then $H\subset K$. Conversely, $K$ commutes with any $F$-endomorphism of $V^\chi$, hence acts by scalars on $V^\chi$, hence stabilizes $\ell$. Therefore $H=K$. 
 
 (For the converse, the idea is that $G\times H\stackrel{(g,h)\mapsto ghg^{-1}}{\to} H \stackrel{\chi}{\to} \G_m$ defines a morphism from $G$ to the \'etale-locally constant group scheme $\Hom(H,\G_m)$ defined by a $\Z$-module of finite type; such a morphism factors through a finite quotient, hence is trivial if $G$ is connected.)

\subsubsection{}  The second criterion appears in \cite[Th. A1]{EHS}\footnote{precisely, the special case of \loccit for a sequence \eqref{e1A} in which $Q=G/\bar H$.}: 
  \begin{prop}\label{p2A} $H$ is normal in $G$ if and only if the following conditions hold:
  
   ${\bf H}3.$ For every $V\in  {\rm{Rep}}_F\, G$, $V^H$ is $G$-stable.
   
    ${\bf H}4.$ $H$ is an {\emph{observable}} subgroup of $G$, \ie every representation of $H$ is contained in a representation of $G$. 
 \end{prop} 
 
One aim of this section is to highlight the relevance of the notion and of the theory of observability in the context of normality criteria\footnote{actually, neither this term nor the theory of observability is mentioned in \cite{EHS} (where the fact that a representation of a normal subgroup $H\triangleleft G$ is contained in a representation of $G$ is proved directly) - nor in any of the quoted references below about normality criteria.}.  
 Let us mention several useful characterizations of observability (due to several authors, \cf \cite{Gros} for a general reference where $F$ is algebraically closed, and \cite[Th. 9]{TB} for the reduction to this case):
 
   \begin{prop}\label{p3A} The following are equivalent:
   \begin{enumerate}
   \item $H$ is observable in $G$,
    \item every character of $H$ occurs in a representation of $G$,
    \item $H$ is the isotropy group of some element in a representation of $G$,
     \item $G/H$ is quasi-affine.
     \end{enumerate}
     \end{prop}
     
     \begin{exs}\label{c1A} $(1)$ {\it A normal subgroup is observable} (this follows from the characterization $(4)$, \cf \cite[5.21]{mil}). In particular, $\bar H$ is observable in $G$, and $H$ is observable in $\bar H$ is and only if it is observable in $G$.        
     
\smallskip\item $(2)$ {\it If every character of $H$ occurring in a representation of $G$ is of finite order, then $H$ is observable in $G$} ($H$ is the stabilizer of a line $\ell$ in a $G$-representation $V$; this defines a character, which is assumed to be of finite order $n$; then $H$ is the stabilizer of $\ell^{\otimes n}$ in $V^{\otimes n}$ (using the fact that the Segre diagonal map $\P(V)\to \P(V)^n \to \P(V^{\otimes n})$ is an embedding, so that $\ell^{\otimes n}$ determines $\ell$), hence is the isotropy subgroup of any generator of $\ell^{\otimes n}$, and one concludes by $(3)$). 
     \end{exs}
 
  Let us come back to the proof Proposition \ref{p2A}. The ``only if" part follows from Example \ref{c1A} $(1)$. Let us turn to the ``if" part.  Since $\bar H$ is observable (\ref{c1A} (2)), for any $W,W'\in {\rm{Rep}}_F\, \bar H${, there is an injective morphism $W'\inj V'$ and dually a surjective morphism $V\surj W$ in ${\rm{Rep}}_F\, \bar H$, with $V,V'\in {\rm{Rep}}_F\,  G$. For any  $H$-equivariant morphism $f: W\to W'$,} the {corresponding} composition $g:V\to V'$ is a morphism in ${\rm{Rep}}_F\, \bar H$ if and only if so is $f$ - and this holds true by ${\bf H}3$ applied to $g\in \Hom(V, V')^H$. This shows that ${\rm{Rep}}_F\, \bar H\to {\rm{Rep}}_F\, H$ is full. 
 
Let now $ W$ be a quotient in ${\rm{Rep}}_F\,  H$ of an object $V\in {\rm{Rep}}_F\, \bar H$.  By ${\bf H}4$, $W$ is contained in some $V' \in {\rm{Rep}}_F\,  \bar H$. Thus $W$ is the image of $V\to V'$, which is a morphism in ${\rm{Rep}}_F\, \bar H$ by fullness. By {\bf P}1,  $H\inj \bar H$ is faithfully flat, so that $H=\bar H$.
 
\subsubsection{} Combining \ref{p2A} and \ref{c1A} $(2)$, one obtains the following new normality criterion:

 \begin{prop}\label{p4A}  $H$ is normal in $G$ if both $\,{\bf H}3$ and the following condition hold:
 
 ${\bf H}5.$ every character of $H$ occurring in a representation of $G$ is of finite order.
  \end{prop} 

 \begin{rk}\label{r0A} There are other normality criteria in the literature, for instance \cite[\S 4]{dS} (which is compared to \cite[Th. A1]{EHS} in \cite{LP}). See also \cite[\S 2]{Ber} and \cite{Mi} in the context of non-neutral tannakian categories.
 \end{rk}
 
  \begin{rk}\label{r1A} Propositions \ref{p1A}, \ref{p2A}, \ref{p4A} extend to arbitrary reduced affine group schemes over $F$, since for any object $V\in {\rm{Rep}}_F\, G$, the action of $G$ factors through a finite-dimensional quotient.
   \end{rk}

 \medskip
 \subsection{Application to variations of mixed Hodge structures}

\subsubsection{} Let $S$ be a smooth connected complex algebraic variety, with a base point $s$. We consider {\it good graded-polarizable variations $V$ of mixed $\Q$-Hodge structures over $S^{an}$, whose underlying local system is defined over $\Z$} (\ie there is a lattice in $V_s$ which is stable under $\pi_1(S,s)$), \cf \cite{bez}. They form a {\it tannakian category $\sV_S$}, with fiber functor $\omega_s: \sV_S\to Vec_\Q$ given by the fiber at $s$. We denote by $T_{S,s}$ the corresponding tannakian group. 

The forgetful $\otimes$-functor $ \sV_S \to \Loc_S$ given by the underlying local system of $\Q$-vector spaces gives rise to a morphism of tannakian groups $\Pi_{S,s} \to T_{S,s}$, and we denote by $M_{S,s}$ (for ``monodromy") its image in $T_{S,s}$ (which is also the Zariski closure of the image of $\pi_1(S,s)$ in $T_{S,s}$). 

On the other hand, {\it constant variations} (those for which the local system is constant) form a tannakian subcategory of $\sV_S$, which can be identified via $\omega_s$ with the category ${\bf{MHS}}$ of graded-polarizable 
mixed $\Q$-Hodge structures. We denote by $\MT$ (for ``Mumford-Tate") its tannakian group, and by $p: T_{S,s}\to \MT$ the natural morphism. By ${\bf P}1$, $p$ is faithfully flat.

\subsubsection{} The assumption that local systems are defined over $\Z$ has the following useful consequence:

\begin{lemma}\label{l2A} Characters $\chi$ of $M_{S,s}$ occurring in a representation $V$ of $T_{S,s}$ are of finite order.
 More precisely, $\chi$ is either trivial or quadratic.
\end{lemma} 

Indeed, $V$ has a lattice stable under $\pi_1(S,s)$, so that $V^\chi$ inherits a $\Z$-structure on which $\pi_1(S,s)$ acts through $\chi$. 
Therefore $\chi$ is either trivial or quadratic since $\Z^\times= \{\pm 1\}$. \qed

\subsubsection{} ``Good" (a.k.a. ``admissible") is a condition on the behaviour at infinity of the variation\footnote{This condition is automatically satisfied in the case of pure Hodge structures, and is known to hold for variations of geometric origin. For a simple 
counterexample, inspired by \cite{sz}, when this condition is not fulfilled, see \cite[p.12]{andrecomp}. For a exhaustive discussion, \cf \cite{bez}.},
 which ensures that the so-called ``theorem of the fixed part" holds:

\begin{thm}\label{t1A}\cite{sz} $i)$ For any $V\in {\rm{Rep}}_\Q\,  T_{S,s}$ (corresponding to a variation ${\bf V}\in \sV_S$), the fixed part $W := V^{M_{S,s}}$ is stable under $T_{S,s}$ (\ie is the fiber at $s$ of a subvariation $\bf W$ of $\bf V$).

\noindent $ii)$ Moreover, it is a representation of $\MT$ (\ie $\bf W$ is a constant variation).
\end{thm}

\subsubsection{} The main result of this appendix is the following (which is a variation on \cite[Th. 1]{andrecomp}\footnote{\cite[Th. 1]{andrecomp} deals with the Mumford-Tate group a {\it general} fiber ${\bf{V}}_s$ instead of the tannakian group of the variation ${\bf V}$, and with the connected monodromy group. Actually, it can be shown that the Mumford-Tate group of ${\bf{V}}_s$  coincides with the image of $T_{S,s}^0$ (using \cite[Lemma 4]{andrecomp}, a variation on a result of Deligne in the mixed case), and that \cite[Th. 1]{andrecomp} can be interpreted as the normality of $M_{S,s}^0$ in $T_{S,s}^0$.}
 and is mentioned in \cite[\S 1.4]{A2}):  

\begin{thm}\label{t2A}  The sequence of affine group schemes over $\Q$
\begin{equation}\label{e2A}
1\to M_{S,s}\stackrel{q}{\to} T_{S,s} \stackrel{p}{\to} \MT \to 1
 \end{equation} is split exact.
\end{thm}

\begin{proof} Part $ii)$ of Theorem \ref{t1A} shows that Condition ${\bf H}0$ of Lemma \ref{l1A} holds. To check ${\bf H}1$, we use Proposition \ref{p4A} (and Remark \ref{r1A}): part $i)$ of Theorem \ref{t1A} shows that ${\bf H}3$ holds, and Lemma \ref{l2A} shows that ${\bf H}5$ holds. We conclude that \eqref{e2A} is exact. 
 The $\otimes$-functor ${\bf V}\to {\bf V}_s: \sV_S\to \MHS$ provides a canonical splitting of \eqref{e2A}. 
\end{proof}

\begin{rk}\label{r2A} Instead of Proposition \ref{p4A}, one can alternatively use Proposition \ref{p1A} (which avoids the theory of observability) at the cost of a more delicate argument.
Let $\chi$ be a character occurring in a representation $V$ of $T_{S,s}$ (corresponding to an object ${\bf V}\in \sV_S$). If $\chi$ is trivial,  $V^\chi$ is stable under $T_{S,s}$ by Theorem \ref{t1A}. Otherwise, by Lemma \ref{l2A}, there is a double \'etale covering $S'\to S$ which kills $\chi$. Let $s'$ be one of the two points of $S'$ above $s$, and let us look at the pull-back ${\bf V}_{S'}\in \sV_{S'}$ and at its fiber at $s'$, identifying $V = \omega_s({\bf V})$ and $\omega_{s'}({{\bf V}_{S'}})$. 

One has $V^{M_{S',s'}} = V^{M_{S,s}} \oplus V^\chi$ as $M_{S,s}$-representations. By Theorem \ref{t1A}, 
this is the fiber at $s'$ of a constant subvariation of ${\bf V}_{S'}$, and since its underlying local system comes from a local system over $S^{an}$, it is actually of the form ${\bf W}_{S'}$ for a subvariation of $\bf V$  (this follows from the definition of a variation of mixed Hodge structures). Hence $V^{M_{S,s}} \oplus V^\chi$ is stable under $T_{S,s}$, and we may substitute it to the original $V$; in other words, we may assume that $M_{S,s}$ acts by $\Z/2$ on $V$, and denoting by a subscript $\pm$ the $\pm 1$-eigenspaces, we have to show that $V_- $ is stable under $T_{S,s}$. 

Since 
$\Hom_{\Z/2}(V_+,V_-)=\Hom_{\Z/2}(V_-,V_+)=0,$
the $T_{S,s}$-equivariant exact sequence
\begin{equation}\label{e12.2}
0\to V_+\to V\to V/V_+\to 0
\end{equation}
has a unique $\Z/2$-equivariant splitting, and we have to show that this splitting is $T_{S,s}$-equivariant. For this, we apply \eqref{e12.2} to the dual representation $V^\vee$, and compare this exact sequence to the ($T_{S,s}$-equivariant) dual exact sequence of \eqref{e12.2} for $V$: their $\Z/2$-equivariant splittings both correspond to the decomposition $V^\vee=(V^\vee)_+\oplus (V^\vee)_-$. Therefore the ($T$-equivariant) morphisms
\[(V^\vee)_+ \to (V_+)^\vee,\quad (V/V_+)^\vee \to V^\vee/(V^\vee)_+ \]
are bijective and we get the claim for $V^\vee$, hence for $V$\footnote{this is the intrinsic version of the following shorter argument:

\noindent if $\underline{v}_\pm$ is a basis of $V_\pm$ with dual basis $\underline{v}_\pm^\vee$,  $(\underline{v}_+^\vee, \underline{v}_-^\vee)$ 
is the dual basis of the basis $(\underline{v}_+, \underline{v}_-)$ of $V$. 
 By \ref{t1A}, $T_{S,s}$ acts on $V$ by upper block-triangular matrices $T$, and on $V^\vee$ by upper block-triangular matrices ${}^tT^{-1}$.
Therefore the $T$'s are block-diagonal.}. 
 \end{rk}

 \subsubsection{} Let $S'$ be a finite Galois \'etale covering of $S$, with group $\Gamma$, and $s'$ a point of $S'$ above $s$. 
 
 \smallskip 
 Then $\Gamma$ acts as a finite group of (anti)auto-equivalences of $\Loc_{S'}$ and of $\sV_{S'}$\footnote{and even by (anti)auto\-morphisms, if one replaces $\Loc_{S'}$ and $\sV_{S'}$ by $Rep_\Q \, \Pi_{S',s'}$ and $Rep_\Q \, T_{S',s'}$ respectively.}. The tannakian categories given by looking at objects and morphisms fixed by $\Gamma$ are equivalent to $\Loc_{S}$ and of $\sV_{S}$ respectively. In particular, taking sums of conjugates by elements of $\Gamma$, one sees by ${\bf P}2$ that $\Pi_{S',s'}  \to \Pi_{S,s}$ and $T_{S',s'}  \to T_{S,s}$ are closed embeddings. 
 One has an exact sequence  \begin{equation}\label{e4A}1\to \Pi_{S',s'} \to \Pi_{S,s} \to \Gamma\to 1. \end{equation}

On the other hand, any representation $V$ of $\Gamma$ admits a $\Gamma$-invariant lattice and, by averaging some scalar product, carries a $\Gamma$-invariant scalar product. This allows one to attach to $V$ an object\footnote{which could be called an Artin object in $\sV_S$. When $S'$ varies, Artin objects form a tannakian subcategory of $\sV_S$ equivalent to ${\rm{Rep}}_\Q \pi_1^{et}(S,s)$.} $\bf V$ of $\sV_S$ with monodromy $\Gamma$ and whose fibers are trivial Hodge structures; whence a morphism $T_{S,s} \stackrel{p}{\to} \Gamma$ which, by ${\bf P}1$, is faithfully flat.  
 
 \begin{prop} The sequence of affine group schemes over $\Q$
\begin{equation}\label{e5A}
1\to T_{S',s'}\stackrel{q}{\to} T_{S,s} \stackrel{p}{\to} \Gamma \to 1
 \end{equation} is exact.
\end{prop}
 
 \begin{proof} Let $\bf V $ be an object of $\sV_S$, with fiber $V$ at $s$. Then $V^{T_{S',s'}}$ corresponds to the maximal constant subobject ${\bf W}'$ of ${\bf V}_{S'}$. By the exact sequence \eqref{e4A}, the local system  underlying  ${\bf W}'$ comes from a local system on $S^{an}$, hence is of the form ${\bf W}_{S'}$ for a subvariation $\bf W$ of $\bf V$ -- which becomes constant and even trivial after pulling back to $S'$.  Such an object belongs to $p^\ast {\rm{Rep}}_\Q\,  \Gamma$. This gives ${\bf H}0$ and ${\bf H}3$ for the sequence \eqref{e5A}. ${\bf H}5$ follows from Lemma \ref{l2A}, and one concludes by Proposition \ref{p4A}.
  \end{proof}

  \enlargethispage*{20pt}
 
Combining the compatible exact sequences \eqref{e4A} and \eqref{e5A}  with the connectedness of $\MT$ \cite[Lemma 2]{andrecomp}, one gets:
 
  \begin{cor}\label{C.15}  $ \pi_0(M_{S,s})=  \pi_0(T_{S,s})= \pi_1^{et}(S,s) $.\qed
 \end{cor}
 
  \subsubsection{} The first part of Lemma \ref{l2A} can be generalized as follows (Deligne \cite[4.2.9b]{HII}):
  
    \begin{cor}  The radical of $M_{S,s}^{0}$ is prounipotent (equivalently: characters of $M_{S,s\,\C}$ are of finite order). 
 \end{cor} 
 
 \begin{proof}(sketch) We start with the extension of \ref{l2A} to real coefficients: 
 
\smallskip\item{\it Claim: characters $\chi$ of $(M_{S,s})_\R$ occurring in a representation $V$ of $(T_{S,s})_\R$ are of finite order} (hence trivial or quadratic since $\R^\times_{tors} = \{\pm 1\}$).

Let $G$ (\resp $H$) be the image of $T_{S,s\,\R}$ in $GL(V)$. Replacing $S$ by a finite \'etale covering, we may assume that $H$ is connected, hence also $G$ (by Corollary \ref{C.15}). By Theorem \ref{t2A}, $H$ is normal in $G$, and by the converse of part of Proposition \ref{p1A}, $V^\chi$ is $G$-stable, hence is the fiber at $s$ of a graded-polarizable variation of real mixed Hodge structures; passing to the maximal exterior power, the polarization forces $\chi$ to be of finite order. This proves the claim.

 \medskip Let us now prove the corollary. The statement follows from the analogous statement about finite-dimensional quotients $M$ of $M_{S,s}^0$: namely, that the reductive quotient $M^{red}$ (\ie the quotient by its unipotent radical) is semisimple. Replacing $S$ by a finite etale covering, we may assume that $M^{red}$ is a quotient of $M_{S,s}$. It thus suffices to show that for any abelian semisimple representation $W$ of $M_{S,s}$, the image $A\subset GL(W)$ of $M_{S,s}$ is finite. Since $M_{S,s}$ is observable in $T_{S,s}$, $W$ extends to a representation $V$ of $T_{S,s}$. Changing notation, let $G $ (\resp $H$) be the image of $T_{S,s}$ (\resp $M_{S,s}$) in $GL(V)$. Then $A$ is an abelian reductive quotient of $H$. 
Due to the underlying $\Z$-structure, the image of $\pi_1(S,s)$ in $GL(W_\R)$ is discrete, and it suffices to shows that $A(\R)$ is compact - equivalently: every character of $A_\R$ is of finite order -, which follows from the claim. 
    \end{proof}

\newpage

\end{document}